\documentclass[a4paper]{article}
\usepackage{amsmath,amssymb}
\usepackage{mathtools}
\usepackage{amsthm}
\usepackage{geometry}
\usepackage{tikz}
\usetikzlibrary{cd}
\usepackage{enumerate}
\usepackage{enumitem}
\usepackage{amsrefs}
\usepackage{url}
\usepackage[unicode,pdfencoding=auto]{hyperref}

\mathtoolsset{showonlyrefs}

\numberwithin{equation}{section}
\usepackage{appendix}


\theoremstyle{plain}
\newtheorem{theorem}{Theorem}[section]

\newtheorem{proposition}[theorem]{Proposition}
\newtheorem{lemma}[theorem]{Lemma}

\theoremstyle{definition}
\newtheorem{Definition}[theorem]{Definition}

\theoremstyle{remark}

\newtheorem{remark}[theorem]{Remark}

\newcommand{\N}{\mathbb{Z}_{>0}}
\newcommand{\integer}{\mathbb{Z}}
\newcommand{\R}{\mathbb{R}}
\newcommand{\Sph}{\mathbb{S}}

\newcommand{\embed}{\mathcal{I}_{L, \tau}(M,d_g)}
\newcommand{\data}{\mathcal{X}_n}
\newcommand{\datangraph}{\Gamma_{\epsilon}^N(\data, \tilde{d})}
\newcommand{\datagraph}{\Gamma_{m, \epsilon}(\data,\tilde{d})}

\newcommand{\RicClass}{\mathcal{M}_m(K,D)}

\newcommand{\RicClassBV}{\mathcal{M}_m^{\mathrm{v}}(K,D,v)}
\newcommand{\RicClassL}{\mathcal{M}_m^{\mathrm{GH}}(K,D,v\colon \alpha,\mathcal{L}, H)}

\newcommand{\Lnorm}[4]{\|#1\|_{L^{#2}(#3, #4)}}

\DeclareMathOperator{\diam}{\mathrm{diam}}
\DeclareMathOperator{\vol}{\mathrm{vol}}

\DeclareMathOperator{\Hess}{\mathrm{Hess}}
\DeclareMathOperator{\Lip}{\mathrm{Lip}}

\DeclareMathOperator{\volgraph}{\vol_{\Gamma}}

\DeclareMathOperator{\volmm}{\mathcal{H}^\mathnormal{m}}

\DeclareMathOperator{\sn}{\operatorname{sn}}
\DeclareMathOperator{\inj}{\mathrm{inj}}
\DeclareMathOperator{\Sect}{\mathrm{Sect}}
\DeclareMathOperator{\Ric}{\mathrm{Ric}}

\DeclareMathOperator{\diffgraph}{\delta_\epsilon}
\DeclareMathOperator{\Diffgraph}{\Delta}

\DeclareMathOperator{\diffcont}{\nabla}
\DeclareMathOperator{\esssup}{\mathrm{esssup}}
\DeclareMathOperator{\spanv}{\mathrm{span}}

\date{\today}

\title{
  Spectral convergence of graph Laplacians with Ricci curvature bounds and in non-collapsed Ricci limit spaces
}
\author{Masato Inagaki}
\begin{document}
\maketitle

\begin{abstract}
  This paper establishes quantitative high-probability bounds on the eigenvalues and eigenfunctions of $\epsilon$-neighborhood graph Laplacians constructed from i.i.d.~random variables on $m$-dimensional closed Riemannian manifolds $(M,g)$ that satisfy a uniform lower Ricci curvature bound $\Ric_g\ge -(m-1)K$, a positive lower volume bound, and an upper diameter bound.
  These results extend to non-collapsed Ricci limit spaces that are measured Gromov-Hausdorff limits of such manifolds, and the bounds give a spectral approximation of weighted Laplacians on manifolds with non-smooth points.
\end{abstract}
\tableofcontents

\section{Introduction}
\subsection*{Background}
For a closed Riemannian manifold $(M, g)$, B\'erard-Besson-Gallot studied an embedding of $M$ into the Hilbert space $L^2(M)$ of real-valued functions via the heat kernel \cite{MR1280119}.
Let $\{f_i\}_{i=1}^\infty$ be the Laplace-Beltrami eigenfunctions corresponding to the eigenvalues $\{\lambda_i\}_{i=0}^\infty$.
Then the eigenfunction map \[M\ni x\mapsto \bigl(f_1(x),f_2(x),\dots, f_k(x)\bigr) \in \R^k\] and the heat kernel embedding
\[M \ni x\mapsto \bigl(e^{-\lambda_1 t}f_1(x), e^{-\lambda_2 t}f_2(x),\dots, e^{-\lambda_k t}f_k(x)\bigr) \in \R^k\] for small $t>0$ are smooth embeddings into a Euclidean space for some $k \in \N$ \cites{MR3256785, MR3455592}.
More recently, Ambrosio-Honda-Portegies-Tewodrose studied the embeddings of possibly non-smooth metric measure spaces satisfying certain geometric conditions into $L^2$ via the heat kernel \cite{MR4224838}.

In data science, we typically have access to only finitely many samples.
Let \(n\in\mathbb{N}\) be a large integer, and let \(\data = \{x_1,\dots,x_n\}\) be i.i.d.\ random variables drawn from a probability measure \(\mu\) on a high-dimensional Euclidean space \(\R^d\).
Spectral embedding algorithms for dimensionality reduction ---such as \emph{Laplacian Eigenmaps}
\cite{NIPS2001_f106b7f9} and \emph{diffusion maps} \cite{MR2238669}---are proposed as analogues of these continuous embeddings mentioned above, and construct graph Laplacians on $\data$, or other variants of it, and transform the data onto its leading eigenvectors.

In many applications, we may assume that $M$ is isometrically embedded in $\R^d$ with \(m < d\), and \(\mu\) is supported on \(M\).
In this situation, the eigenvalues and eigenfunctions of the graph Laplacians constructed from $\data$ can approximate those of the Laplacian on $M$ with high probability as $n\to \infty$ \cites{10.5555/2976456.2976473, MR4393800, MR4804972}.
Then, Laplacian Eigenmaps also approximate the eigenfunction map.
Related works for graph discretization of the Laplacian are in \cite{preOtsu2003}.

For these discrete approximations, previous error estimates assume a sectional curvature bound $|\Sect_g| \le K_{\mathrm{sec}}$ and a positive lower bound of injectivity radius, or more strict conditions \cites{MR4804972, MR4393800, MR3299811}.
The limit spaces of sequences of manifolds under such conditions in measured Gromov-Hausdorff sense have $C^{1,\alpha}$ Riemannian metrics \cite{MR3469435}*{Theorem 11.4.7}.
Hence these frameworks exclude many natural but not smooth cases: for example, the completion of $(0, \pi) \times \Sph^{m-1}$ with the metric $d\theta^2 + \frac{1}{2}\sin^2\theta ds_{m-1}^2$, where $ds_{m-1}^2$ is the standard metric on $\Sph^{m-1}$.

We remove these restrictions.
Assuming a uniform lower Ricci curvature bound, a positive volume lower bound, and an upper diameter bound on manifolds, we derive
high-probability, quantitative error bounds for the eigenvalues and
eigenfunctions of the graph Laplacians on the data set $\data$. 
We also show that the results extend to non-collapsed Ricci limit spaces that are limits of the Gromov-Hausdorff convergence of such manifolds.

\subsection*{Main theorems}\label{sec:MainThms}

Let us explain the setting of the main theorems.
Let $(M,g)$ be an $m$-dimensional closed Riemannian manifold satisfying
\begin{equation}
  \Ric_g \ge -(m-1)K, 
  \qquad 
  \diam(M,d_g) \le D, 
  \qquad 
  \vol_g(M) \ge v>0,
\end{equation}
with constants $K,D\ge1$ and $v\in(0,1)$.
For $\alpha\ge 1$, $\mathcal L\ge 1$, and $ H \in \R$ fix a probability density $\rho\colon M\to(0,\infty)$ that is $C^2$ and $\mathcal L$-Lipschitz function satisfying \[\frac{\max\rho}{\min\rho}\le \alpha\]
and obeying the Hessian bound \[\Hess(\log\rho)\le H.\]
We define the weighted Laplacian $\Delta^N_\rho: H^{2,2}(M) \to L^2(M)$ by $\Delta^N_\rho f = \Delta_g f - 2\langle \diffcont \log \rho, \diffcont f\rangle$ as in Section~\ref{sec:Riemann}, where $\Delta_g f = - \mathrm{tr} \Hess f$.
Let $L \ge 1$, and suppose that $(M, g)$ is isometrically embedded in $\R^d$ with \[d_g(x,y)\le L\,d_{\R^d}(x,y)\] for all $x,y\in M$.
Let \(\data=\{x_1,\dots,x_n\}\) be $n$ independent random variables distributed according to \(\rho\,\vol_g\).
Setting \[\epsilon = \left(\frac{\log n}{n}\right)^{\!\frac{1}{m+2}},\] 
we define the matrices $A_{n}, D_{n}, L_{n} \in M_n(\R)$ by
\begin{equation}
  (A_{n})_{ij} = \left\{
    \begin{aligned}
      1, &\quad \textrm{if $\|x_i - x_j\|_{\R^d} < \epsilon$},\\
      0, &\quad \textrm{otherwise},
    \end{aligned}
  \right.
  \quad
  (D_{n})_{ij} = \delta_{ij}\sum_{k=1}^n (A_{n})_{ik},
  \quad L_{n} = 2\epsilon^{-2}\left(I - D_{n}^{-1}A_{n}\right).
\end{equation}
This $L_{n}$ is the graph Laplacian on $\Gamma^N_\epsilon(\data, d_{\R^d})$ in Definition~\ref{def:Graph} and does not depend on $d_g$.
The matrix $L_n$ is called a normalized random walk Laplacian.

To estimate error terms in this paper, we employ two integrals
\begin{equation}
  V_{p,\epsilon}(M)
  :=\!
  \Bigl(
    \int_M
    \Bigl|1-\tfrac{\vol_g(B(x,\epsilon))}
    {V_K(\epsilon)}\Bigr|^{p}
    \,d\vol_g(x)
  \Bigr)^{\!\!1/p}
\end{equation}
for $p \ge 1$ and
\begin{equation}
  S_\epsilon(M,d_g,d_{\R^d}):= \int_M \vol_g\bigl(B_{\R^d}(x, \epsilon)\setminus B(x, \epsilon)\bigr)\, d\vol_g,
\end{equation}
where $V_{p, \epsilon}(M)$ measures the average deviation of small geodesic balls from the
constant-curvature model of curvature $-K$, while
$S_\epsilon(M,d_g,d_{\R^d})$ quantifies the metric distortion between
$d_g$ and $d_{\R^d}$ (see
Definitions~\ref{def:LpEuclid}--\ref{def:S}).

Let $\lambda_k(\Delta_\rho^N)$ and $\lambda_k\!(L_n)$ denote the $k$-th eigenvalues ($k=0, 1, \dots, n-1$) of the weighted Laplacian $\Delta^N_\rho$ and the matrix $L_n$, respectively.
We have the following theorem for a sufficiently large sample size $n$.
\begin{theorem}\label{thm:MainEV}
  For every $k\in \N$ there exists a constant
  $C=C\!\bigl(m,K,D,v,\alpha,\mathcal L, H,L,k\bigr)>0$
  such that for every $\beta \ge 1$ if $\epsilon\sqrt{\beta} C \le 1$
  we have
  \begin{equation}\label{eq:MainEV}
    \begin{split}
  &\bigl|
  \lambda_k(\Delta_\rho^N)
  - (m+2)\,\lambda_k\!\bigl(L_n\bigr)
  \bigr|\\
  &\qquad \;\le\;
  C\Bigl(
    \epsilon^{\frac{m}{m+2}}
    + V_{m+2,\epsilon}(M)\,\epsilon^{-\frac{2}{m+2}}
    + S_\epsilon(M,d_g,d_{\R^d})\,\epsilon^{-m}
  \Bigr)
    \end{split}
  \end{equation}
  with probability at least \(1-Cn^{-\beta}\).
\end{theorem}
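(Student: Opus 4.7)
The plan is to derive the eigenvalue comparison from the Courant--Fischer min-max principle, using a pair of transfer maps between $L^2(M,\rho^2\vol_g)$ and $(\R^n,\langle\cdot,\cdot\rangle_{D_n})$. A direct integration by parts shows that $\Delta_\rho^N$ is the self-adjoint generator of the Dirichlet form $f\mapsto\int_M|\nabla f|^2\rho^2\,d\vol_g$ on $L^2(M,\rho^2\vol_g)$, while $L_n$ carries the graph Dirichlet form $\langle u,L_n u\rangle_{D_n}=\epsilon^{-2}\sum_{i,j}(A_n)_{ij}(u_i-u_j)^2$ with weighted norm $\|u\|_{D_n}^2=\sum_i d_i u_i^2$. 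For the upper bound on $(m+2)\lambda_k(L_n)$ I would sample the first $k+1$ eigenfunctions of $\Delta_\rho^N$ to $\mathcal{X}_n$; for the matching lower bound I would extend an optimal discrete $(k+1)$-dimensional subspace to a continuum subspace via a partition of unity subordinate to $\{B_{\R^d}(x_i,\epsilon)\}$. In both directions the argument reduces to showing that the transfer maps are approximate isometries of the Rayleigh quotients, uniformly on the relevant low-dimensional subspace.

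The core of the analysis consists of two concentration estimates. The first is the norm approximation $\|u\|_{D_n}^2\approx n^2 V_K(\epsilon)\int_M f^2\rho^2\,d\vol_g$ when $u_i=f(x_i)$, obtained by applying Bernstein's inequality to the empirical average $n^{-1}\sum_i d_i f(x_i)^2$ and a union bound over a fine $\epsilon$-net in the eigenspace. The second is the energy approximation
\begin{equation*}
  \frac{1}{\epsilon^2}\sum_{i,j}(A_n)_{ij}\bigl(f(x_i)-f(x_j)\bigr)^2
  \;\approx\;
  \frac{n^2 V_K(\epsilon)}{m+2}\int_M|\nabla f|^2\rho^2\,d\vol_g,
\end{equation*}
obtained by combining (i) a $U$-statistic type concentration replacing the double sum by its expectation, (ii) a Taylor expansion of $f$ performed in the ambient Euclidean coordinates using the isometric embedding, and (iii) the model-space second-moment identity $\int_{B_K(0,\epsilon)}\langle v,e\rangle^2\,dv_{\mathrm{model}}=V_K(\epsilon)\epsilon^2/(m+2)+O(K\epsilon^{m+4})$ for $e$ a unit vector, coming from an explicit Jacobian computation in geodesic polar coordinates. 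Dividing the two estimates produces exactly the factor $(m+2)$ appearing in \eqref{eq:MainEV}.

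The principal obstacle, which separates this theorem from earlier work under a two-sided sectional curvature bound, is that under only a lower Ricci bound one does \emph{not} have pointwise control on $\vol_g(B(x,\epsilon))$; Bishop--Gromov provides only an integrated, one-sided comparison. Step (iii) must therefore be carried out using the genuine ball $B(x,\epsilon)\subset M$ in place of the model, and the deviation of its volume from $V_K(\epsilon)$ introduces the local bias $|1-\vol_g(B(x,\epsilon))/V_K(\epsilon)|$. Integrating this bias against $|\nabla f|^2\rho^2$ and applying H\"older with exponent $m+2$, so that the Sobolev embedding in dimension $m$ can absorb the gradient factor, produces exactly the correction $V_{m+2,\epsilon}(M)\,\epsilon^{-2/(m+2)}$. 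The analogous issue in (i) contributes $S_\epsilon(M,d_g,d_{\R^d})\,\epsilon^{-m}$: the graph uses Euclidean balls $B_{\R^d}(x,\epsilon)$ while the geometric expansion is performed on geodesic balls $B(x,\epsilon)$, and the symmetric-difference integral is by definition $S_\epsilon$. The residual $\epsilon^{m/(m+2)}$ term then collects the Taylor remainder, the regularization used in the extension map, and the discretization scale of the $\epsilon$-net. Finally, the probability bound $1-Cn^{-\beta}$ follows from Bernstein-type concentration applied with deviation $\sqrt{\beta\log n/n}$, and the hypothesis $\epsilon\sqrt{\beta}C\le 1$ ensures that the stochastic fluctuation stays below the geometric error terms.
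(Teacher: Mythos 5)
Your high-level architecture (Courant--Fischer, two transfer maps, Bernstein concentration, geodesic polar expansion in a constant-curvature model) is the right skeleton, and your identification of $S_\epsilon$ as bookkeeping the ball mismatch between $d_g$ and $d_{\R^d}$ is correct. But there are two genuine gaps, one of which is precisely the technical core that the paper identifies as its main contribution.

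First, your mechanism for the error term $V_{m+2,\epsilon}(M)\,\epsilon^{-2/(m+2)}$ is misplaced. You propose integrating the volume-deficit bias against $|\nabla f|^2\rho^2$ for a \emph{continuous} eigenfunction $f$, with H\"older and a manifold Sobolev embedding absorbing the gradient. In the paper this correction does not arise in the discretization direction at all (there one only needs $V_{1,\epsilon}$; see Theorem~\ref{thm:EigenvDFromC}). It arises in the interpolation direction from controlling $\nabla\theta_{n,\epsilon}$: the interpolation kernel $\tilde\psi_\epsilon(x,x_i)=\psi_\epsilon(x,x_i)/\theta_{n,\epsilon}(x)$ carries a factor $\nabla\theta_{n,\epsilon}$ in its gradient, and without two-sided sectional curvature control one cannot bound $\nabla\theta_{n,\epsilon}$ pointwise. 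The paper instead bounds the $L^p$-average of $\nabla\theta_{n,\epsilon}$ over the data points in terms of $V_{p,\epsilon}(M)$ (Lemmas~\ref{lem:coBishopGromov}--\ref{lem:interpolationlemmas}), and then the $\epsilon^{-2/p}$ prefactor comes out of a H\"older split in Lemma~\ref{lem:InterDiricSum}.

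Second, and more seriously, that H\"older split is useless without a companion $L^q$ bound on the \emph{discrete} graph eigenfunction $\phi$, and this is entirely absent from your proposal. After the split one is left with a factor of the form $\bigl(\tfrac{1}{n}\sum_i \phi(x_i)^{2(1-q^{-1})\epsilon^{-2}}\bigr)^{\epsilon^2}$, which must be controlled by $\|\phi\|_{2}$-data. Sobolev embedding on the manifold does nothing for a vector in $\R^n$. The paper derives these bounds by running Moser iteration on the graph (Theorem~\ref{thm:EstEigenfGraph}), which in turn requires establishing a rough volume-doubling property and a Poincar\'e inequality for $\Gamma_\epsilon^N(\data,d_{\R^d})$ with high probability (Section~\ref{sec:DataSet}). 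None of this appears in your outline, and it is not a routine technicality: it is one of the two ideas the paper explicitly names as what lets it dispense with sectional curvature and injectivity radius bounds.

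Two smaller points. Performing the Taylor expansion of $f$ ``in ambient Euclidean coordinates via the isometric embedding'' would require control of the second fundamental form, which the hypotheses do not provide; the paper expands in geodesic polar coordinates and pays for the lack of two-sided curvature control through the $V_{p,\epsilon}$ term. And you never use the Hessian bound $\Hess(\log\rho)\le H$: in the paper this is what replaces an injectivity radius bound in proving the $L^\infty$ and Lipschitz estimates for the continuum eigenfunctions (Lemma~\ref{lem:EstCEigenF}, via Bochner applied to $\Ric_g-2\Hess(\log\rho)$), which you need to control the discretization-direction Rayleigh quotient.
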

The full version of this theorem is Theorem~\ref{thm:AppEigenv}.
We also show the \(L^2\)-approximations of the eigenfunctions.
\begin{theorem}\label{thm:MainEF}
  For every $k\in \N$ there exists a constant
  $C=C\!\bigl(m,K,D,v,\alpha,\mathcal L, H,L,k\bigr)>0$
  such that assuming $\gamma: = \frac{1}{2}\min\{\lambda_k(\Delta^N_\rho) - \lambda_{k-1}(\Delta^N_\rho), \lambda_{k+1}(\Delta^N_\rho) - \lambda_{k+1}(\Delta^N_\rho), 1\} >0$, for every $\beta\ge 1$, if $\epsilon\sqrt{\beta}C \le \gamma^2$,
   the following property holds with probability at least \(1-Cn^{-\beta}\):
   for every eigenvector $u^k=(u^k_i)_{i=1}^n \in \R^n$ of the matrix $L_n$ with $\sum_{i=1}^n (u^k_i)^2\frac{(D_n)_{ii}}{n(n-1)\omega_m\epsilon^m} = 1$ corresponding to $\lambda_k(L_n)$,
  there exists an eigenfunction  $f_k: M \to \R$ of $\Delta^N_\rho$ with $\int_M f^2\,\rho^2d\vol_g = 1$ corresponding to the eigenvalues $\lambda_k(\Delta^N_\rho)$ such that
  \begin{equation}\label{eq:MainEF}
    \begin{split}
  &\frac{1}{n}\sum_{i=1}^n|f_k(x_i) - u^k_i|^2\frac{(D_n)_{ii}}{n(n-1)\omega_m\epsilon^m}\\
  &\qquad \;\le\;
  \frac{C}{\gamma^2}\Bigl(
    \epsilon^{\frac{m}{m+2}}
    + V_{m+2,\epsilon}(M)\,\epsilon^{-\frac{2}{m+2}}
    + S_\epsilon(M,d_g,d_{\R^d})\,\epsilon^{-m}
  \Bigr)
    \end{split}
  \end{equation}
  holds.
\end{theorem}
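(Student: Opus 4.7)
The plan is to derive the eigenfunction approximation from Theorem~\ref{thm:MainEV} via a quantitative $\sin\Theta$ / Davis--Kahan argument in a weighted Hilbert framework where the discrete and continuous norms align. I would equip $\R^n$ with $\langle u,v\rangle_n := \frac{1}{n}\sum_i u_iv_i \frac{(D_n)_{ii}}{n(n-1)\omega_m\epsilon^m}$, chosen precisely so that under the sampling map $(R_n f)_i := f(x_i)$ the discrete norm approximates $\int_M f^2\rho^2\,d\vol_g$: the diagonal weight concentrates at $\rho(x_i)$, the empirical average supplies a second factor of $\rho$, and the combined errors are controlled by the three quantities $\epsilon^{m/(m+2)}$, $V_{m+2,\epsilon}(M)\epsilon^{-2/(m+2)}$, and $S_\epsilon(M,d_g,d_{\R^d})\epsilon^{-m}$ already present in \eqref{eq:MainEV}.

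Given a $\langle\cdot,\cdot\rangle_n$-unit eigenvector $u^k$ of $L_n$ for $\lambda_k(L_n)$, I would apply the kernel interpolation that underlies the quadratic-form comparison in the proof of Theorem~\ref{thm:MainEV} to produce $\tilde u \in L^2(M)$ that is close to $u^k$ after sampling and satisfies the almost-eigenfunction bound $\|(\Delta^N_\rho - \lambda_k(\Delta^N_\rho))\tilde u\|^2_{L^2(\rho^2\vol_g)} \le C\cdot(\text{error in }\eqref{eq:MainEF})$. The residual combines the quadratic-form comparison with the spectral perturbation $(m+2)\lambda_k(L_n) - \lambda_k(\Delta^N_\rho)$ controlled by Theorem~\ref{thm:MainEV}. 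Expanding $\tilde u = \sum_j c_j f_j$ in the continuous eigenbasis and using the gap $|\lambda_j(\Delta^N_\rho) - \lambda_k(\Delta^N_\rho)| \ge 2\gamma$ for $j\ne k$, the identity $\|(\Delta^N_\rho - \lambda_k(\Delta^N_\rho))\tilde u\|^2 = \sum_j (\lambda_j - \lambda_k)^2 c_j^2$ gives $\sum_{j\ne k} c_j^2 \le (4\gamma^2)^{-1}\cdot(\text{error})$. Taking $f_k$ to be the (sign-adjusted, $L^2(\rho^2)$-normalized) projection of $\tilde u$ onto the $\lambda_k(\Delta^N_\rho)$-eigenspace and transferring the $L^2$ difference back through $R_n$ via the norm comparison of the first step yields \eqref{eq:MainEF}.

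The main obstacle is coordinating three error budgets simultaneously: the quadratic-form perturbation, the normalization mismatch between $\langle u^k,u^k\rangle_n = 1$ and $\int f_k^2\rho^2\,d\vol_g = 1$, and the random fluctuation of the weights themselves. The hypothesis $\epsilon\sqrt{\beta}\,C \le \gamma^2$ is used in three ways: it keeps each error source strictly below $\gamma$; it extracts a \emph{discrete} spectral gap for $L_n$ at index $k$ by applying Theorem~\ref{thm:MainEV} at $k-1,k,k+1$, so that $u^k$ is uniquely defined up to sign and the $\sin\Theta$ comparison is well posed with high probability; and it linearizes the normalization factor $\|\tilde u\|^{-1}$ so that the final squared distance depends only linearly on the error. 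The $\gamma^{-2}$ factor in \eqref{eq:MainEF} then arises naturally because the left-hand side is a \emph{squared} distance while $\sin\Theta$ produces only a $\gamma^{-1}$ on the distance itself.
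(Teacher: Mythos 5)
There is a genuine gap at the pivotal step. You claim that kernel interpolation produces $\tilde u\in L^2(M)$ with the \emph{residual} bound $\|(\Delta^N_\rho-\lambda_k(\Delta^N_\rho))\tilde u\|^2_{L^2(\rho^2\vol_g)}\le C\cdot(\text{error})$, and then run a textbook Davis--Kahan argument off that. But the machinery built in Sections 5--6 (Propositions~\ref{prop:dirichletofcont}, \ref{prop:normofcont}, Lemmas~\ref{lem:InterDirichDisc}, \ref{lem:InterDiricSum}, Proposition~\ref{prop:InterNormLast}) compares \emph{quadratic forms}, i.e.\ Rayleigh quotients: it controls $\langle(\Delta^N_\rho-\lambda_k)\tilde u,\tilde u\rangle$, not $\|(\Delta^N_\rho-\lambda_k)\tilde u\|^2$. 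These are genuinely different. Writing $\tilde u=\sum_j c_jf_j$, a Rayleigh quotient bound gives $\sum_j(\lambda_j-\lambda_k)c_j^2=O(\text{error})$, a \emph{signed} sum: low modes ($j<k$, $\lambda_j<\lambda_k$) can cancel high modes, so one cannot conclude $\sum_{j\ne k}c_j^2$ is small. (Toy example: if $\tilde u=\tfrac{1}{\sqrt2}(f_{k-1}+f_{k+1})$ with symmetric spacing, $R(\tilde u)=\lambda_k$ exactly, yet $\tilde u\perp f_k$.) Your proposed identity $\|(\Delta-\lambda_k)\tilde u\|^2=\sum_j(\lambda_j-\lambda_k)^2c_j^2$ would indeed kill the cancellation, but it is not what the quadratic-form comparison delivers. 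Moreover, obtaining it directly is obstructed by regularity: $\Lambda_\epsilon\phi$ is only Lipschitz (the kernel $\psi_\epsilon$ is $C^{0,1}$ but not $C^1$ at $d_g=\epsilon$), so $\Delta^N_\rho\Lambda_\epsilon\phi$ is a distribution with a surface-measure singularity and is not controlled in $L^2$ by anything in the paper.

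The paper's actual route (Theorem~\ref{thm:AppEigenNonSingf}) is variational and two-sided, and this two-sidedness is precisely what replaces the missing residual bound. It imports both directions of the Rayleigh-quotient comparison into \cite{MR4804972}*{Lemma B.4}: the discretization direction (Propositions~\ref{prop:dirichletofcont}, \ref{prop:normofcont}) shows the restrictions $f_0|_\data,\dots,f_{k-1}|_\data$ have small discrete Rayleigh quotient, which by min-max pins down the low discrete eigenspace near $\mathrm{span}\{f_0|_\data,\dots,f_{k-1}|_\data\}$ and hence makes $u^k$ approximately orthogonal to those restrictions; the interpolation direction (Lemmas~\ref{lem:InterDirichDisc}, \ref{lem:InterDiricSum}, Proposition~\ref{prop:InterNormLast}, together with the $L^q$ bound of Theorem~\ref{thm:EstEigenfData}) then gives $R(\Lambda_\epsilon u^k)\le\lambda_k+O(\text{error})$, and the spectral gap $\gamma$ closes the argument on the high-mode side only. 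Your norm-alignment observation (the weight $\frac{(D_n)_{ii}}{n(n-1)\omega_m\epsilon^m}$ approximating $\rho^2\vol_g$, which is Proposition~\ref{prop:normofcont} and Proposition~\ref{prop:InterNormLast}), your reading of how $\epsilon\sqrt\beta C\le\gamma^2$ enters, and your explanation of the $\gamma^{-2}$ factor are all correct. To repair the proposal without introducing $L^2$ residual estimates, you would need to add the second (discretization) direction and control the low-mode contamination explicitly -- at which point you have essentially reconstructed Lemma B.4.
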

The full version of this theorem is Theorem~\ref{thm:AppEigenNonSingf}.

Here, two error terms $V_{m+2,\epsilon}(M)\,\epsilon^{-\frac{2}{m+2}}$ and $S_\epsilon(M,d_g,d_{\R^d})\,\epsilon^{-m}$ converge to $0$ as $n \to \infty$ for $(M,g)$.
Setting an upper bound of sectional curvature $|\Sect_g| \le K_{\mathrm{sec}}$, a lower bound of injectivity radius $\inj_g \ge i_0 >0$, and an upper bound of total second fundamental form $S \ge \int_M |II| d\vol_g$,
if $\epsilon \le i_0$ we have $V_{m+2, \epsilon}(M)\epsilon^{-2}\le C(m,K,K_{\mathrm{sec}},D)$ by the comparison theorem \cite{MR1390760}*{VI Theorem~3.1 (1)} and $S_\epsilon(M,d_g,d_{\R^d})\,\epsilon^{-m-1} \le C(m,K,D,L,S)$ by \cite{MR4804972}*{Lemma~3.2 (10)}.
Hence the right hand side of the inequality \eqref{eq:MainEV} is bounded by $C\epsilon^{m/(m+2)}$ for some $C=C(m,K,K_{\mathrm{sec}},D,v,\alpha,\mathcal L,  H, L, k)$.

Its rate is worse than that of Aino $\mathcal O(\epsilon)$ \cite{MR4804972}, but the assumptions of Theorems~\ref{thm:MainEV} and~\ref{thm:MainEF} are weaker: we no longer depend on the bounds $K_{\mathrm{sect}}$ and $i_0$.

These estimates~\eqref{eq:MainEV} and \eqref{eq:MainEF} extend verbatim to {non-collapsed Ricci limit spaces} that arise as measured Gromov-Hausdorff limits of manifolds obeying the above geometric conditions: see
Theorems~\ref{thm:LimAppEigenv} and \ref{thm:LimAppEigenf}, respectively.
For example, we now cover the spindle $((0, \pi) \times \Sph^{m-1}, d\theta^2 + \frac{1}{2}\sin^2\theta ds_{m-1}^2)$ embedded in $\R^d$ via
\begin{equation}
  (\theta, u) \mapsto \left(\int_0^\theta \sqrt{ 1- \tfrac12\cos^2\phi} \, d\phi, (\tfrac{1}{\sqrt{2}}\cos\theta)u, 0, \dots, 0\right)
\end{equation}
for \(m\ge3\) since $V_{p, \epsilon}(M)\epsilon^{-2/p}$ and $S_\epsilon(M,d_g, d_{\R^d})\epsilon^{-m}$ converge to $0$ if $m\ge 3$.
For \(m=2\) Theorems~\ref{thm:LimAppEigenv} and \ref{thm:LimAppEigenf} do not guarantee the convergence but still provide uniform bounds.

\subsection*{Strategy of the proof}\label{sec:Strategy}

Our proof rests on two ideas that remove the need for pointwise sectional-curvature control:

\begin{enumerate}[label=\textbf{\roman*)}]
  \item \textbf{Integral---rather than pointwise---control of the interpolation kernel.}
        We employ the interpolation map
        \(
          \Lambda_\epsilon \colon L^2(\data)\to\Lip(M)
        \)
        (Definition~\ref{def:IMap}) to bound
        \(\lambda_k(\Delta_\rho^N)\) from above by
        \(\lambda_k\bigl(\Gamma_\epsilon^N(\data,d_{\R^d})\bigr)\).
      Earlier work \cites{MR4804972, MR3299811} relied on upper sectional-curvature bounds to obtain pointwise estimates for
        the regularization term \(\theta_{n,\epsilon}\colon M\to\R\) that appears in their interpolation maps. 
        We overcome this by estimating, instead of pointwise gradients, the sum
        \(
          \sum_{i=1}^n \lvert\nabla\theta_{n,\epsilon}(x_i)\rvert^2
        \)
        in terms of \(V_{p,\epsilon}(M)\), which measures, in an
        $L^p$-sense, how far \(M\) deviates from the constant curvature
        model. 
  \item \textbf{$L^q$ bounds for graph eigenfunctions.}
        To avoid pointwise control of \(\theta_{n,\epsilon}\), we require
        \(L^q\) estimates for graph eigenfunctions.
        Adapting the Moser iteration to the discrete setting, we derive such bounds.
        The weighted \(\epsilon\)-neighborhood graphs constructed from datasets fail to satisfy volume-doubling at arbitrarily small scales, so we cannot yield uniform $L^\infty$ bounds for the graph eigenfunctions by the classical Moser iteration.
        Instead, Section~\ref{sec:DataSet} shows that, with high probability, the graph on the data set satisfies \emph{rough volume-doubling property} and
        Poincar\'e inequalities, and Section~\ref{sec:LpEst} establishes a
        rough Nash-type inequality using these regularities.
        Iterating the Nash-type inequality yields sharp \(L^q\) bounds of graph eigenfunctions
        (Theorem~\ref{thm:EstEigenfGraph}) for every fixed \(q<\infty\).
        For comparison, on a fixed Riemannian manifold, we can recover the
        \(L^\infty\) estimates for graph eigenfunctions constructed from i.i.d.\ random samples with high probability \cite{MR4384039}.
\end{enumerate}
Based on these two estimates, we no longer need to assume an upper sectional-curvature bound or a positive injectivity radius.

\subsection*{Organization}\label{sec:Org}

Section~\ref{sec:Pre} recalls weighted Riemannian manifolds, non-collapsed Ricci
limit spaces, and the graph constructions used throughout this paper.  We also introduce the error-controlling terms $V_{p, \epsilon}$ and $S_\epsilon$ here.
Section~\ref{sec:LpEst} establishes \(L^p\) estimates for graph eigenfunctions
under the rough volume-doubling property and Poincar\'e inequalities.  
In Section~\ref{sec:DataSet}, we show that the graphs based on the data sets satisfy these properties with high probability.  
Sections~\ref{sec:DiscMap}--\ref{sec:IntMap} compare discrete and continuous
Rayleigh quotients via a \emph{discretization} map and an
\emph{interpolation} map, yielding matching lower and upper eigenvalue bounds. The
\(L^p\) estimates from Section~\ref{sec:LpEst} are crucial in
Section~\ref{sec:IntMap}.  
In Section~\ref{sec:Limit}, the combination of discussions in Sections~\ref{sec:DiscMap} and~\ref{sec:IntMap} yields spectral convergence on manifolds and Ricci limit spaces, and quantifies eigenspace approximation.  
Appendix~A collects supplementary \(L^\infty\) and gradient estimates for
manifold eigenfunctions under Ricci curvature and
\(\Hess(\log\rho)\) bounds.

\paragraph*{Acknowledgments.}
This paper forms a part of my phD thesis.
The author would like to thank Shinichiroh Matsuo for helpful discussions and Masayuki Aino for bringing this problem to his attention.
This work was financially supported by JST SPRING, Grant Number JPMJSP2125.
The author would like to take this opportunity to thank the
``THERS Make New Standards Program for the Next Generation Researchers''.

\section{Preliminaries}\label{sec:Pre}

Throughout this paper, \( C, C_1, C_2,\dots, \) denote positive constants whose values may vary from one occurrence to the next.
We write $C_1(K)$, $C_2(m,K,D,v)$, etc., to indicate constants depending only on the respective parameters.

\subsection{Weighted Riemannian manifolds and their limit spaces}\label{sec:Riemann}
This subsection explains some basic concepts of Riemannian geometry, then introduces several classes of manifolds, probability densities, and their associated weighted Laplacians, which will be central to our analysis.
In addition, we recall the measured Gromov-Hausdorff convergence and non-collapsed Ricci limit spaces needed for Section~\ref{sec:Limit}.

Let \((M,g)\) be a closed \(m\)-dimensional Riemannian manifold.  For \(p\in M\) write the unit tangent sphere as \(U_pM:=\{u\in T_pM\colon \|u\|_g=1\}\).  The exponential map \(\exp_p:T_pM\to M\) is defined by \(\exp_p(tu)=c_u(t)\), where \(c_u\) is the geodesic with \(c_u(0)=p,\;c_u'(0)=u\).  Set
\[t(u):=\sup\{t>0: t=d_g\bigl(p,c_u(t)\bigr)\},\qquad
  \tilde U_p:=\{(t,u):0<t<t(u)\},\qquad U_p:=\exp_p(\tilde U_p).
\]
The map \((t,u)\mapsto \exp_p(tu)\) is a diffeomorphism from \(\tilde U_p\) onto \(U_p\setminus\{p\}\).
We denote its Jacobian at \((t,u)\) by \(\Theta_u(t)\) and extend by zero for \(t\ge t(u)\).
Fix \(K\ge1\) and define the model functions
\[\sn_K(r)=\frac{\sinh(\sqrt K\,r)}{\sqrt K},\qquad
  V_K(r)=\vol(\Sph^{m-1})\int_0^r \sn_K(t)^{m-1}\,dt,\qquad r>0.\]
We frequently use the Bishop-Gromov comparison theorem for manifolds with \(\Ric_g\ge-(m-1)K\): see \cite{MR1390760}*{IV Theorem.~3.1}.
\begin{theorem}[Bishop-Gromov]\label{thm:RiccComp}
Assume \(\Ric_g\ge-(m-1)K\) and fix \(p\in M\).
\begin{enumerate}
  \item The map \(t\mapsto \Theta_u(t)/\sn_K(t)^{m-1}\) is non-increasing on \((0,t(u))\) and tends to $1$ as $t\to0$.
  \item The map \(r\mapsto \vol_g\bigl(B(p,r)\bigr)/V_K(r)\) is non-increasing on \((0,\infty)\) and tends to $1$ as $r\to0$.
\end{enumerate}
\end{theorem}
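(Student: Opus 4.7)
The plan is to derive (i) by a Jacobi-field / Riccati-equation comparison along radial geodesics, and then obtain (ii) by integrating (i) in geodesic polar coordinates using a standard ratio-monotonicity lemma. No ingredient beyond the Ricci lower bound is required.

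For part (i), fix $u\in U_pM$ and set $\phi(t) := \Theta_u(t)^{1/(m-1)}$ on $(0,t(u))$. Writing $\Theta_u(t)$ as the norm of the wedge product of the perpendicular Jacobi fields $J_1(t),\dots,J_{m-1}(t)$ along $c_u(t)=\exp_p(tu)$ with $J_i(0)=0$ and $J_i'(0)$ an orthonormal basis of $u^\perp$, the matrix Riccati equation $S'+S^2+R_{\dot c_u}=0$ for the shape operator $S(t)$ of the geodesic sphere, together with Cauchy-Schwarz $\mathrm{tr}(S^2)\ge(\mathrm{tr}\,S)^2/(m-1)$ and the hypothesis $\Ric_g\ge-(m-1)K$, reduces to the scalar differential inequality
\begin{equation}
\phi''(t)\le K\,\phi(t),\qquad \phi(0)=0,\qquad \phi'(0)=1.
\end{equation}
The model function $\sn_K$ satisfies the corresponding equality with the same initial data. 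A Sturm comparison --- $(\phi'\sn_K-\phi\sn_K')'=(\phi''-K\phi)\sn_K\le 0$ with vanishing value at $t=0$ --- gives $(\phi/\sn_K)'\le 0$, so $\Theta_u(t)/\sn_K(t)^{m-1}=(\phi(t)/\sn_K(t))^{m-1}$ is non-increasing. The Taylor expansions $\Theta_u(t)=t^{m-1}(1+O(t^2))$ (using $J_i''(0)=-R(J_i(0),u)u=0$) and $\sn_K(t)=t+O(t^3)$ force the limit at $t\to 0$ to equal $1$.

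For part (ii), in geodesic polar coordinates centered at $p$,
\begin{equation}
\vol_g(B(p,r))=\int_{U_pM}\int_0^r\Theta_u(t)\,dt\,du,\qquad V_K(r)=\int_{U_pM}\int_0^r\sn_K(t)^{m-1}\,dt\,du,
\end{equation}
with the convention $\Theta_u(t)=0$ for $t\ge t(u)$. The elementary ratio lemma ``if $f/g$ is non-increasing with $g>0$ then $(\int_0^r f)/(\int_0^r g)$ is non-increasing in $r$'' --- equivalent to the pointwise inequality $f(t)g(s)\le f(s)g(t)$ for $s\le t$ --- applied $u$-wise with $f=\Theta_u$ and $g=\sn_K^{m-1}$, followed by Fubini, yields monotonicity of $\vol_g(B(p,r))/V_K(r)$. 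The limit at $r\to 0$ equals $1$ because both numerator and denominator expand as $\omega_m r^m+O(r^{m+2})$.

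The main technical point is handling the cut locus. Part (i) is proven only on $(0,t(u))$, whereas the radial integral in (ii) must run out to $r$; extending $\Theta_u$ by zero past $t(u)$ preserves the key pointwise inequality $\Theta_u(t)\sn_K(s)^{m-1}\le\Theta_u(s)\sn_K(t)^{m-1}$ for $s\le t$ --- trivially, once the left-hand side vanishes --- which is all the ratio lemma actually consumes. Moreover, since no conjugate point of $c_u$ occurs before the cut time $t(u)$, $\phi>0$ throughout $(0,t(u))$ and the Sturm comparison remains valid on its full domain of applicability without any auxiliary sectional-curvature hypothesis.
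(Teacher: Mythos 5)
The paper does not prove this theorem; it states it as a classical fact and cites Sakai's textbook (reference IV Theorem~3.1), so there is no proof of the paper's own to compare against. Your argument --- the scalar reduction $\phi = \Theta_u^{1/(m-1)}$ via the traced Riccati inequality, the Sturm comparison against $\sn_K$, and the ratio-of-integrals lemma with $\Theta_u$ extended by zero past the cut time --- is the standard textbook proof of Bishop--Gromov, and it is correct; in particular you are right that the extension by zero preserves the pointwise cross-inequality that the ratio lemma actually uses, so no sectional curvature or injectivity radius hypothesis is needed.
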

 Immediate consequences follow.
\begin{itemize}
  \item If $0 < t\le 1/\sqrt K$, then 
    \begin{equation}
      \Theta_u(t)\le  (1+C(m) K t^2)t^{m-1} \quad \textrm{and} \quad
      \vol_g\bigl(B(p,r)\bigr)\le C(m) r^m.
    \end{equation}
  \item If $\diam(M,d_g)\le D$, \[C(m,K,D)\,\vol_g(M)\,r^m\le \vol_g\bigl(B(p,r)\bigr)\le C(m,K,D)r^m\] for all $r\ge0$.
\end{itemize}

In this paper, we work with the following classes of Riemannian manifolds.
\begin{Definition}\label{def:ClassRieman2}
  Let $m \in \N$. Fix $K\ge 1, D \geq 1$, and $v \in (0,1)$.
  We introduce two classes of $m$-dimensional closed Riemannian manifolds:
  \begin{enumerate}
    \item The class $\RicClass$ consists of $m$-dimensional closed Riemannian manifolds $(M, g)$ such that
      \[
        \Ric_g \geq -(m-1)K \quad \text{and} \quad \diam(M,d_g) \leq D.
      \]
    \item The class $\RicClassBV$ consists of $(M,g) \in \RicClass$ such that $\vol_g(M) \geq v$.
  \end{enumerate}
\end{Definition}

We also introduce two classes of probability density functions.
Sections~4--7 consider i.i.d.\ random variables distributed according to the densities defined as below.
\begin{Definition}
  Let $(M,g)$ be a closed $m$-dimensional Riemannian manifold.
  For constants $\alpha \ge 1$, $\mathcal{L} \geq 1$, and $ H \in \R$,
  we define the following classes of positive functions on $M$:
  \begin{enumerate}
    \item The class $\mathcal{P}(M\colon \alpha,\mathcal{L})$ consists of Lipschitz functions $\rho: M \to (0, \infty)$ such that
      \[
        \int_M \rho \, d\vol_g = 1, \quad \frac{\max \rho}{\min \rho} \leq \alpha, \quad \Lip \rho \leq \mathcal{L}.
      \]
    \item The class $\mathcal{P}(M\colon \alpha,\mathcal{L},  H)$ consists of $C^2$ functions $\rho:M \to (0, \infty)$ such that $\rho \in \mathcal{P}(M:\alpha,\mathcal{L})$ and
      \begin{equation}
        \sup_{p\in M}(\Hess\log\rho)_p \leq  H.
      \end{equation}
  \end{enumerate}
\end{Definition}

Note that if $(M, g) \in \RicClassBV$, then $\rho\colon  M \to (0, \infty)$ in this definition satisfies 
\[
  \frac{1}{\alpha V_K(D)} \le \rho \le \frac{\alpha}{v}.
\]

For closed Riemannian manifolds $(M,g)$ and Lipschitz functions $\rho\colon M \to (0, \infty)$,
we consider two Laplace operators $\Delta_\rho, \Delta_\rho^N\colon  H^{2,2}(M) \to L^2(M,g)$ defined by 
\[
  \Delta_\rho^N f = \Delta_g f - 2\langle \diffcont \log \rho, \diffcont f \rangle \quad \textrm{and} \quad
  \Delta_\rho f = \vol_g(M) \rho \Delta_\rho^N f
\]
for any $f \in H^{2,2}(M)$,
where $ \Delta_g = - \mathrm{tr} \Hess$.
Then we have
\begin{align}
  &\int_M |\diffcont f|^2 \rho^2\, d\vol_g = \int_M  (\Delta_\rho^N f)f \rho^2\, d\vol_g,\\
  &\int_M |\diffcont f|^2 \rho^2\, d\vol_g = \frac{1}{\vol_g(M)}\int_M  (\Delta_\rho f) f\rho\, d\vol_g.
\end{align}
Counting multiplicities, their discrete spectra are given by
\begin{align}
  &0=\lambda_0(\Delta_\rho) < \lambda_1(\Delta_\rho) \leq \dots \leq \lambda_k(\Delta_\rho) \to \infty,\\
  &0=\lambda_0(\Delta_\rho^N) < \lambda_1(\Delta_\rho^N) \leq \dots \leq \lambda_k(\Delta_\rho^N) \to \infty.
\end{align}
Sections~\ref{sec:DataSet}--\ref{sec:Limit} estimate bounds for the eigenvalues and eigenfunctions for $\Delta_\rho$ and $\Delta^N_\rho$.

\begin{remark}\label{rem:EstCEigenV}
  Let $(M,g)\in \RicClass$ with $\diam (M, d_g) \ge D^{-1}$, and let $\rho\colon M\to (0, \infty)$ be a Lipschitz function with $\max\rho/\min\rho \le\alpha$.
  Comparing Rayleigh quotients yields $\lambda_k(\Delta_\rho), \lambda_k(\Delta^N_\rho)\le \alpha^2\lambda_k(\Delta_g)$ for every $k \in \N$.
Combining this with \cite{MR378001}*{Theorem~2.1}, we obtain
  $\lambda_k(\Delta_\rho), \lambda_k(\Delta^N_\rho) \le C(m,K,D,\alpha,k)$.
\end{remark}

Section~\ref{sec:Limit} extends our approximation results to limit spaces in the sense of the measured Gromov-Hausdorff convergence.
We recall the definition of this convergence as follows.
\begin{Definition}\label{def:GH}
  Let \( \{ (M_t, d_t) \}_{t=1}^\infty \) and \((M, d_M)\) be compact metric spaces.
  We say that \emph{\( (M_t, d_t) \) converges to \( (M,d_M) \) in Gromov-Hausdorff (GH) sense} if there exists a family of maps \( \{ \Phi_t \colon  M_t \to M \}_{t=1}^\infty \) and a sequence \( \{\delta_t > 0 \}_{t=1}^\infty \) with $\lim_{t\to \infty} \delta_t = 0$ such that
  \begin{enumerate}
    \item for each $t\in \N$, \( B_M\bigl( \Phi_t ( M_t), \delta_t\bigr) \supset M \);
    \item for each $t\in \N$ and each $x, y\in M_t$, \( \bigl| d_t(x,y) - d_M\bigl( \Phi_t(x), \Phi_t(y) \bigr) \bigr| < \delta_t \).
  \end{enumerate}
  Given finite Borel measures $\mu_t$ on $M_t$ and $\mu$ on $M$,
  we say that \emph{\( (M_t, d_t, \mu_t) \) converges to \( (M,d_M, \mu) \) in the measured Gromov-Hausdorff (mGH) sense} if, in addition,
  \begin{enumerate}
    \item[(iii)]  the maps \( \{ \Phi_t \colon  M_t \to M \}_{t=1}^\infty \) are Borel measurable and $(\Phi_t)_*\mu_t \to \mu$ in weak* topology, i.e.,
      \[\int_{M_t} f\circ \Phi_t\,d\mu_{t}\to \int_{M} f\,d\mu\]
      as $t \to \infty$ for every continuous function $f\colon M \to \R$,
      where $(\Phi_t)_*\mu_t$ denotes push-forward of $\mu_t$ by $\Phi_t$.
  \end{enumerate}
  Whenever $(M_t,d_t)\to (M,d_M)$ in GH sense, for each sequence $\{ x_t \in M_t \}_{t=1}^\infty$ and $x \in M$, we simply write \emph{$\Phi_t(x_t) \to x$} by $x_t \to x$.
\end{Definition}

Using this convergence, we introduce the following class.
\begin{Definition}\label{def:ClassLimit}
  We denote by $\RicClassL$ the set of triples $(M, d_M, \rho)$ such that $(M,d_M)$ is a compact metric space, $\rho\colon M\to (0,\infty)$ is a function, and there exist sequences $\{(M_t,g_t)\}_{t=1}^\infty \subset \RicClassBV$ and $\{\rho_t\subset \mathcal{P}(M_t:\alpha,\mathcal{L}, H)\}_{t=1}^\infty $ satisfying
  \begin{enumerate}
    \item $(M_t, d_{g_t}) \to (M, d_M)$ in Gromov-Hausdorff sense as $t \to \infty$,
    \item $\rho_t(x_t)\to \rho(x)$ as $t\to \infty$ for each $x_t \to x$.
  \end{enumerate}
\end{Definition}
From $\vol_{g_t}(M_t)\ge v$, we have
$(M_t,d_{g_t},\operatorname{vol}_{g_t})\to(M,d_M,\mathcal H^m)$ in the mGH sense \cite{MR1484888}*{Theorem~5.9},
where $\volmm$ is the $m$-dimensional Hausdorff measure.
Then $(M,d_M,\mathcal H^m)$ is called a non-collapsed Ricci limit space.
Moreover, since $\rho_t$ are uniformly $\mathcal L$-Lipschitz, we obtain
$(\Phi_{t})_*(\rho_t^i\vol_{g_t})\to\rho^i\mathcal H^m$ ($i=1,2$) in weak* topology,
which yields the following canonical self-adjoint operator on $L^2(M,\rho^2\mathcal H^m)$.

\begin{theorem}\label{thm:mGHSpecConv}
  Let $(M, d_M, \rho) \in \RicClassL$.
  Assume that $\{(M_t,g_t)\}_{t=1}^\infty \subset \RicClassBV$ and $\{\rho_t \subset \mathcal{P}(M_t\colon \alpha,\mathcal{L}, H)\}_{t=1}^\infty$ are as described in Definition~\ref{def:ClassLimit}.
  Then there exists a unique self-adjoint operator $\Delta_\rho^N$ on $L^2(M, \rho^2\volmm)$ such that
  \[
    \int_M \Lip(f)^2 \,\rho^2d\volmm = \int_M (\Delta_N^\rho f) f \,\rho^2d\volmm
  \]
  for every $f \in L^2(M, \rho^2\volmm)$ in the domain of $\Delta^N_\rho$. In addition, the following properties hold: 
  \begin{enumerate}
    \item[\textnormal{(a)}]
      $\Delta_\rho^N$ has the eigenvalues
      $0=\lambda_0(\Delta_\rho^N) < \lambda_1(\Delta_\rho^N) \leq \dots \leq \lambda_k(\Delta_\rho^N) \to \infty$.
    \item[\textnormal{(b)}]
      For every $k \in \N$, $\lambda_k\bigl(\Delta^N_{\rho_t}\bigr) \to \lambda_k\bigl(\Delta^N_\rho\bigr)$ as $t \to \infty$.
    \item[\textnormal{(c)}]
      Let $f_1,\dots,f_k \in L^2\bigl(M,\rho^2\volmm\bigr)$ be orthonormal eigenfunctions for $\Delta^N_{\rho}$
      corresponding to $\lambda_1(\Delta_\rho^N),\dots,\lambda_k(\Delta_\rho^N)$.
      Then, by possibly taking a subsequence for $t$, there exist a sequence of orthonormal eigenfunctions $\{\{f^t_1,\dots,f^t_k\} \subset L^2 \!\bigl(M_t,\rho_t^2\vol_{g_t}\bigr)\}_{t=1}^\infty$ corresponding to $\{\lambda_1(\Delta^N_{\rho_t}),\dots,\lambda_k(\Delta^N_{\rho_t})\}_{t=1}^\infty$ such that for every $i \in \{1,\dots,k\}$, we have $f^t_i \to f_i$ in $L^2$,
      i.e.,
      \[
        \|f^t_i - f_i \circ \Phi_t\|_{L^2} \to 0
      \]
      as $t \to \infty$.
  \end{enumerate}
\end{theorem}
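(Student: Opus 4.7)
The plan is to realize $\Delta_\rho^N$ as the generator of a weighted Cheeger-type Dirichlet form on the limit metric measure space and then deduce the spectral assertions from Mosco convergence of the analogous forms along the approximating sequence.

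First I would construct the form. For $f\in \Lip(M)$ set $\mathcal{E}^N_\rho(f):=\int_M \Lip(f)^2\,\rho^2\,d\volmm$, where $\Lip(f)$ denotes the local Lipschitz constant (equivalently, the minimal weak upper gradient of $f$ on the non-collapsed $\RCD{-(m-1)K}{m}$ space $(M,d_M,\volmm)$), and extend by $L^2(M,\rho^2\volmm)$-lower semicontinuous relaxation. Since membership in $\probability(M_t\colon\alpha,\mathcal L, H)$ enforces the uniform two-sided bound $1/(\alpha V_K(D))\le \rho\le \alpha/v$, the weighted measure $\rho^2\volmm$ is comparable to $\volmm$, so the relaxed form has dense domain and is closed; the classical correspondence between closed symmetric forms and self-adjoint operators produces a unique non-negative self-adjoint $\Delta_\rho^N$ on $L^2(M,\rho^2\volmm)$ obeying the stated integration-by-parts identity.

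For (a), the stability theory of Cheeger-Colding and Ambrosio-Gigli-Savar\'e identifies $(M,d_M,\volmm)$ as a compact non-collapsed $\RCD{-(m-1)K}{m}$ space, in which the Sobolev embedding $H^{1,2}(M,\volmm)\hookrightarrow L^2(M,\volmm)$ is compact. The two-sided bound on $\rho$ transfers this compactness to the weighted Sobolev space, so the resolvent of $\Delta_\rho^N$ is compact and its spectrum is discrete with $\lambda_k\to\infty$. For (b) and (c), I would first upgrade the pointwise convergence $\rho_t(x_t)\to \rho(x)$ to uniform convergence via the equi-$\mathcal L$-Lipschitz property together with an Arzela-Ascoli argument across the maps $\Phi_t$; this already delivers the weak$^*$ convergence $(\Phi_t)_*(\rho_t^i\vol_{g_t})\to \rho^i\volmm$ for $i=1,2$ invoked just before the statement. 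Combined with the Mosco convergence of unweighted Cheeger energies under mGH convergence of $\RCD{-(m-1)K}{m}$ spaces (Gigli-Mondino-Savar\'e, Honda), this promotes to Mosco convergence of the weighted forms $\mathcal{E}^N_{\rho_t}\to \mathcal{E}^N_\rho$. The Kuwae-Shioya spectral convergence theorem, together with the asymptotic compactness furnished by the uniform Bishop-Gromov volume control, then yields (b) and, after passing to subsequences and possibly rotating within eigenspaces, the $L^2$-convergence of eigenfunctions in (c).

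The main obstacle is justifying Mosco convergence of the \emph{weighted} forms rather than the unweighted Cheeger energies already treated in the literature. The $\Gamma$-liminf inequality reduces to Fatou once one has uniform convergence of $\rho_t^2$, but the recovery sequence requires more care: starting from a recovery sequence for the unweighted Cheeger energy approximating a Lipschitz truncation of a limit $f\in D(\mathcal{E}^N_\rho)$, one must absorb the extra terms produced by the weights using their uniform two-sided bounds and equi-Lipschitz property. Given the rigidity of the setup this step is routine, but it is where all of the hypotheses on $\rho$ are genuinely used.
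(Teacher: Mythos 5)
Your proposal is correct in spirit but follows a genuinely different route from the paper's own outline, and it is worth spelling out the contrast. The paper works entirely inside the Cheeger--Colding framework of \cite{MR1815411}: it establishes the segment inequality, volume-doubling, and Poincar\'e inequality for $(M,d_M,\rho^2\mathcal H^m)$ (all inherited from the approximating manifolds with uniform constants), and then invokes \cite{MR1815411}*{Theorems 6.25, 6.27} to produce the self-adjoint operator and discrete spectrum, and \cite{MR1815411}*{Theorem 7.3} for parts (b)--(c). The latter requires verifying conditions (7.4) and (7.5) of \cite{MR1815411}, which are \emph{uniform $L^\infty$ and gradient bounds for eigenfunctions} across the approximating sequence; these are supplied by Lemma~\ref{lem:EstCEigenF}, and that lemma is precisely where the Bakry--\'Emery curvature bound $\Ric_{g_t}-2\Hess(\log\rho_t)\ge -(m-1)K-2H$ enters via the Bochner formula. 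Your route instead goes through the closed-form / relaxed-Cheeger-energy construction, compact Sobolev embedding for discreteness, and Mosco convergence together with Kuwae--Shioya spectral convergence. Both strategies are legitimate and both are well established in the literature.

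The one place where your argument is thinner than it should be is exactly where you flag it: Mosco convergence of the \emph{weighted} forms. You propose to deduce it from Mosco convergence of unweighted Cheeger energies by ``absorbing'' the weights using only the two-sided bound and equi-Lipschitz property, and then remark that ``all of the hypotheses on $\rho$ are genuinely used'' here --- but your sketch never actually uses the Hessian upper bound $\Hess(\log\rho_t)\le H$, and it is not obvious that the recovery-sequence (limsup) inequality for the weighted energy can be produced without it. The cleanest way to repair this is to observe that the Hessian bound is exactly what makes each weighted manifold $(M_t,g_t,\rho_t^2\vol_{g_t})$ a Bakry--\'Emery $\mathrm{CD}(-(m-1)K-2H,\infty)$ space; with that, the off-the-shelf Mosco-convergence and spectral-stability results for $\mathrm{RCD}/\mathrm{CD}$ spaces under mGH convergence apply directly to the weighted sequence, and the comparison to the unweighted energies (which you use only qualitatively for domain identification) is no longer the load-bearing step. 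This brings the two proofs into alignment: in the paper's approach the Hessian bound feeds the eigenfunction estimates that verify (7.4)--(7.5) of Cheeger--Colding, whereas in your approach it feeds the curvature-dimension condition on the weighted spaces that licenses the Mosco-convergence machinery. Without making that use of the Hessian bound explicit, your limsup step is a genuine gap, albeit a fillable one.
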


This may be familiar to experts, but we give a proof outline here for the reader's convenience.

\begin{proof}[Proof Outline]
We first show the existence of the operator and (a).
Observe that $(M,d_M)$ is a length space by \cite{MR1835418}*{Theorem 7.5}.
Moreover, $(M, d_M, \rho^2\volmm)$ has the segment inequality (2.3) in \cite{MR1815411} holds since each approximating manifold $(M_t, g_t, \rho_t^2\vol_{g_t})$ satisfies this inequality with a uniform constant $\tau$.
This implies the weak Poincar\'{e} inequality of $(1,2)$-type, in (1.5) of \cite{MR1815411}.
The metric measure space $(M, d_M, \rho^2\vol_g)$ also satisfies the volume-doubling property (0.5) in \cite{MR1815411}.
The volume-doubling property and the weak Poincar\'{e} inequality together allow for the deduction of the Poincar\'{e} inequality (1.6) in \cite{MR1815411} (see, for example, \cite{MR3469435}*{pp.~287--292}).
Application of \cite{MR1815411}*{Theorem~6.25} and \cite{MR1815411}*{Theorem~6.27} to $(M, d_M, \rho^2\volmm)$ then yields the desired self-adjoint operator satisfying (a). We denote by $\Delta^N_\rho$ this operator.

To show parts (b) and (c), it is sufficient to verify the assumptions of \cite{MR1815411}*{Theorem~7.3} for the measure $\rho^2\volmm$.
Using the inequality $\volmm \leq \alpha^{-2}\rho^2\volmm$, it follows from \cite{MR1815411}*{Theorem~5.5} and \cite{MR1815411}*{Theorem~5.7} that conditions (i)--(iii) in \cite{MR1815411}*{Section~5} hold for $\rho^2\volmm$.
It remains to show that conditions (7.4) and (7.5) in \cite{MR1815411} are satisfied.
Indeed, condition (7.4) follows from Theorem~\ref{lem:EstCEigenF}, and for the case $q=2$ inequality (7.5) is a direct consequence of
\begin{equation}\label{eq:WBoch}
  \Ric_{g_t} - 2\Hess(\log\rho_t)
  \ge -(m-1)K - 2 H,
\end{equation}
together with the Bochner inequality.
\end{proof}

\subsection{Error-controlling integrals of distortion on metric measure spaces}\label{sec:Graph}
This subsection introduces several integrals that quantify distortion in metric measure spaces.
We use these integrals to bound the approximation errors in Sections~\ref{sec:DiscMap}--\ref{sec:Limit}.

We employ the following integral to estimate error terms without assuming an upper sectional curvature bound.
Let $m \in \N$, $K \ge 1$, and let $(M, d_M)$ be a compact metric space.
\begin{Definition}\label{def:LpEuclid}
  Let $p\geq 1$. 
  We define
  \begin{equation}
    V_{p,r}(M) = V_{p,r}(M, d_M) = \left(\int_M \left( 1- \frac{\volmm(B(x, r))}{V_K(r)}\right)^p \,d\volmm(x)\right)^{\frac{1}{p}}
  \end{equation}
  for $r >0$.
\end{Definition}

To construct our weighted graphs introduced in the next subsection, we do not use the original metric $d_M$ but the following Borel pseudo-metrics, i.e., Borel functions $\tilde d : M\times M \to \R$ which are pseudo-metrics.
\begin{Definition}
  Let $\tau\in (0,1)$ and $L\geq 1$.
  We define a class $\mathcal{I}_{L,\tau}(M,d_M)$ consisting of Borel pseudo-metrics $\tilde d \colon  M \times M \to [0, \infty)$ such that
  \begin{equation}
    \tilde{d}(x, y) - \tau \leq d_M(x, y) \leq L\tilde{d}(x, y) + \tau
  \end{equation}
  holds for every $x, y \in M$.
  Let $\mathcal I_L (M, d_M)$ denote $\mathcal I_{L,0}(M, d_M)$.
\end{Definition}

\begin{remark}\label{rem:PMet}
  Let $\tilde d \in \mathcal I_L(M, d_M)$.
  Assume that $\{(M_t, d_{t})\}_{t=1}^\infty$ converges to $(M, d_M)$ in GH sense.
  Then the pull-back distance
  $\Phi_t^*\tilde d (x, y) = \tilde{d}(\Phi_t(x),\Phi_t(y))$
  satisfies $\Phi_t^*\tilde{d} \in \mathcal I_{L,L\delta_t}(M_t, d_t)$.
  We will use this fact in Section~\ref{sec:Limit}.
\end{remark}

We introduce the following integral to compare $\tilde{d}\in \mathcal I_{L,\tau}(M,d_M)$ with the original metric $d_M$.
\begin{Definition}\label{def:S}
  Let $\tilde d \colon M\times M\to [0, \infty)$ be a Borel pseudo-metric on $M$.
  Then, we define
  \begin{equation}
    S_r(M, \tilde d) = S_r(M, d_M, \tilde d) = \int_M \volmm\left(\tilde{B}(x, r)\setminus B(x, r)\right) \,d\volmm(x)
  \end{equation}
  for $r \in (0,1)$, where $\tilde B (x, r)= \{y \in M\colon  \tilde d( x, y ) < r\}$.
\end{Definition}

\begin{remark}\label{rem:V&S}
  Let $(M,d_M)$ be the mGH limit of a sequence of
  closed $m$-dimensional Riemannian manifolds
  $\{(M_t,d_{g_t})\}_{t=1}^\infty$ with a uniform Ricci curvature bound
  $\Ric_{g_t}\ge-(m-1)K$.
  Then, for every $p\ge1$ and $\epsilon>0$,
  \[
      V_{p,\epsilon}(M_t,d_{g_t}) \;\longrightarrow\;
      V_{p,\epsilon}(M,d_M)
      \quad(t\to\infty),
  \]
  e.g., by the Portmanteau theorem and Egorov's theorem.
  If, in addition, $\|\Sect_{g_t}\|_{L^\infty}\le K$ and
  $\inj_{g_t}\ge\epsilon$ for all $t$, the comparison estimate
\cite{MR1390760}*{IV Theorem~3.1 (1)} yields
  \[
      V_{p,\epsilon}(M,d_M)\le \epsilon^{2}C(m,K,D).
  \]
  Now fix a map $\iota\colon M\to\mathbb R^{d}$ and Riemannian immersions
  $\iota_t\colon M_t\to\mathbb R^{d}$ such that
  $\iota_t(x_t)\to\iota(x)$ whenever $\Phi_t(x_t)\to x$.
  Then
  \[
      S_{\epsilon}\bigl(M_t,d_{g_t},\iota_t^{*}d_{\mathbb R^{d}}\bigr)
      \longrightarrow
      S_{\epsilon}\bigl(M,d_M,\iota^{*}d_{\mathbb R^{d}}\bigr).
  \]
  In particular, if
  \[
      d_M \le L\,\iota^{*}d_{\mathbb R^{d}}
      \quad\text{and}\quad
      \int_{M_t}|II_t|_{\iota_t}\,d\vol_{g_t}\le S
      \ \ (t\in\mathbb N),
  \]
  for some constants $L,S\ge1$, then 
  \[
      S_{\epsilon}\!\bigl(M,d_M,\iota^{*}d_{\mathbb R^{d}}\bigr)
      \le
      C(m,K,D,L,S)\epsilon^{m+1}
  \]
by \cite{MR4804972}*{Lemma 3.2, (10)}, where $II_t$ is a second fundamental form for $\iota_t$.
  Consequently, in this situation, which is assumed in \cite{MR4804972}, we have the uniform bounds of $V_{p,\epsilon}(M)\epsilon^{-2}$ and $S_{\epsilon}(M,d_M,\iota^{*}d_{\mathbb R^{d}})\epsilon^{-m-1}$, and we also have $\iota^*d_{\R^d} \in \mathcal I_{L}(M, d_M)$.
\end{remark}

\subsection{Weighted graphs and their graph Laplacians constructed from data sets}

This subsection fixes notation for finite weighted graphs, their graph Laplacians, and how these graphs are built from i.i.d.\ samples drawn from a probability space with a pseudo-metric.
They provide the discrete objects required for the later sections.

Let $(V, E)$ be an undirected graph.
We assume $\#V < \infty$ and $\#E < \infty$ throughout this paper.
Let $w_V\colon  V\to [0, \infty)$, $w_E:E \to [0, \infty)$, and let $\epsilon>0$.
Then we call $\Gamma = (V,E,w_V,w_E,\epsilon)$ a \emph{weighted graph}.
For $x,y\in V$ define the graph distance
\[
  d_\Gamma(x,y)=
    \inf\bigl\{r\epsilon\colon x=x_0\sim x_1\sim\cdots\sim x_{r}=y\bigr\},
\]
and the discrete measure
\(\displaystyle\vol_\Gamma(W)=\sum_{x\in W}w_V(x)\).
The graph Laplacian is
\begin{equation}\label{eq:GLap}
  \Delta_\Gamma\phi(x)=
  \frac{2}{w_V(x)\epsilon^{2}}
  \sum_{\{x,y\}\in E}\bigl(\phi(x)-\phi(y)\bigr)\,w_E(\{x,y\}),
  \qquad \phi\in L^{2}(V,\vol_\Gamma).
\end{equation}
Putting \(\diffgraph\phi_{xy}=(\phi(x)-\phi(y))/\epsilon\),
we have
\[
  \langle\phi,\Delta_\Gamma\phi\rangle_{L^{2}(\vol_\Gamma)}
  =\sum_{x\in V}
     \sum_{y\colon \{x,y\}\in E}
     (\diffgraph\phi_{xy})^{2}\,w_E(\{x,y\})
  =:\|\diffgraph \phi\|^{2}_{L^{2}(\vol_\Gamma)}.
\]
Hence $\Delta_\Gamma$ is self-adjoint and non-negative with the eigenvalues denoted by 
\(
  0=\lambda_0(\Gamma)\le\lambda_1(\Gamma)\le\cdots\le
  \lambda_{|V|-1}(\Gamma).
\)

We now introduce the data sets from which we construct weighted graphs.
\begin{Definition}
  Let $(\Omega, \mathbb{P})$ and $(M, \mu)$ be probability spaces.
  Fix $n \in \N$.
  For independent random variables $x_1, \dots, x_n\colon  \Omega \to M$ distributed according to $\mu$,
we call $\data = (x_1, \dots, x_n)\colon  \Omega \to M^n$ a \emph{data set} drawn from $\mu$.
\end{Definition}

We fix $(\Omega, \mathbb{P})$ in the rest of this paper. For $\omega \in \Omega$ we sometimes use a notation $\data$ to denote $\{x_1(\omega),\dots,x_n(\omega)\}$.

We frequently use the following Bernstein-type inequality to approximate measurable functions by data sets.

\begin{lemma}\label{lem:Bern}
  For $f\in L^{\infty}(M,\mu)$ and $\delta>0$, setting
  \(
    \sigma^{2}=\int_M f^{2}\,d\mu-
               (\int_M f\,d\mu)^{2},
  \)
  we have
  \[
    \mathbb P\!\Bigl(
       \bigl|\tfrac1n\sum_{i=1}^n f(x_i)-\int_M f\,d\mu\bigr|
       \ge 2\|f\|_{\infty}\delta^{2}+4\sigma\delta
    \Bigr)\le 2e^{-n\delta^{2}}
  \]
  for any data set $\data = (x_1,\dots, x_n)\colon \Omega \to M^n$ from $\mu$.
\end{lemma}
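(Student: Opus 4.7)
The plan is to reduce the statement to the classical Bernstein inequality for sums of bounded i.i.d.\ centered random variables. Set $Y_i = f(x_i) - \int_M f\,d\mu$. Since the $x_i$ are i.i.d.\ with law $\mu$, the $Y_i$ are i.i.d.\ with $\mathbb{E}[Y_i]=0$, $\mathrm{Var}(Y_i) = \sigma^2$, and $|Y_i| \le 2\|f\|_\infty$ almost surely. Writing $M := 2\|f\|_\infty$, I will use the one-sided Bernstein tail bound
\[
  \mathbb{P}\Bigl(\sum_{i=1}^n Y_i \ge s\Bigr)
  \le \exp\!\Bigl(-\frac{s^2/2}{n\sigma^2 + Ms/3}\Bigr),
  \qquad s \ge 0,
\]
which follows by a standard Cram\'er-Chernoff argument: expand $\mathbb{E}[e^{tY_i}]$ using $|Y_i|\le M$ and $\mathbb{E}[Y_i^2] = \sigma^2$ to obtain $\mathbb{E}[e^{tY_i}] \le \exp(t^2\sigma^2\phi(tM))$ with $\phi(u)=(e^u-1-u)/u^2$, take the product over $i$, apply Markov's inequality, and optimize in $t$.

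Next, I will apply this bound with $s = n\bigl(2\|f\|_\infty\delta^2 + 4\sigma\delta\bigr)$ and verify that the Bernstein exponent is at least $n\delta^2$. This reduces to the purely algebraic inequality
\[
  \tfrac{1}{2}\bigl(2\|f\|_\infty\delta^2+4\sigma\delta\bigr)^2
  \;\ge\;
  \sigma^2\delta^2
  + \tfrac{M}{3}\bigl(2\|f\|_\infty\delta^2+4\sigma\delta\bigr)\delta^2,
\]
which after expansion amounts to showing that
\[
  \tfrac{2}{3}\|f\|_\infty^2\delta^4
  + \tfrac{16}{3}\|f\|_\infty\sigma\delta^3
  + 7\sigma^2\delta^2 \;\ge\; 0,
\]
and this is manifestly true. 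Hence $\mathbb{P}\bigl(\tfrac{1}{n}\sum_i Y_i \ge 2\|f\|_\infty\delta^2 + 4\sigma\delta\bigr) \le e^{-n\delta^2}$.

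Applying the same bound to $-Y_i$ and taking a union bound converts the one-sided estimate into the two-sided statement with the factor $2$ in front of $e^{-n\delta^2}$, finishing the proof. There is no real obstacle here; the only piece of actual work is the elementary algebraic verification above that confirms the specific constants $2$ and $4$ in the statement are compatible with the Bernstein exponent $n\delta^2$. If one prefers, the identical conclusion could instead be drawn by invoking, e.g., Bennett's inequality or the form of Bernstein's inequality stated directly in terms of $\sqrt{2\sigma^2 t/n}$ and $Mt/n$ deviations, but the direct Chernoff argument is self-contained and keeps the constants transparent.
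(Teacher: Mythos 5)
Your proposal is correct and follows the same basic strategy as the paper: reduce to the classical one‑sided Bernstein inequality, plug in the deviation $s=n(2\|f\|_\infty\delta^2+4\sigma\delta)$, check that the exponent is at least $n\delta^2$, and symmetrize with a union bound. The difference is in the care of execution. You first center, working with $Y_i=f(x_i)-\int_M f\,d\mu$ (so $|Y_i|\le 2\|f\|_\infty=:M$, $\operatorname{Var}(Y_i)=\sigma^2$), and then spell out the algebraic verification $\tfrac12 u^2\ge \sigma^2\delta^2+\tfrac{M}{3}u\delta^2$ with $u=2\|f\|_\infty\delta^2+4\sigma\delta$, which is exactly right. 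The paper instead cites a Bernstein statement with the \emph{un}‑centered $X_i=f(x_i)$, with $c$ a bound on $|X_i|$ rather than on $X_i-E[X_i]$, and with the variance proxy $\tilde\sigma^2=\tfrac1n\sum E[X_i^2]$, i.e.\ the raw second moment $\int_M f^2\,d\mu$; it then sets $t=2\max\{\|f\|_\infty\delta^2,2\sigma\delta\}$ and stops. Read literally with $\tilde\sigma^2=\int f^2\,d\mu=\sigma^2+(\int f\,d\mu)^2$, the required inequality $t^2\ge \delta^2(2\tilde\sigma^2+\tfrac23\|f\|_\infty t)$ can fail when $|\int_M f\,d\mu|$ is comparable to $\|f\|_\infty$ and $\sigma$ is small; the intended reading is surely that $\tilde\sigma^2$ denotes the variance of the centered variables, in which case the paper's choice of $t$ works (by a two‑case check on which term in the max dominates). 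In short, your centered formulation with the explicit arithmetic is the cleaner and safer rendering of the same argument, and it closes the small gap left by the paper's sketch.
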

\begin{proof}
  By the Bernstein inequality, for every $n$ random variables $X_1,\dots,X_n \colon  \Omega \to \R$, 
  if $\mathbb{P}( |X_i| \leq c) = 1$ and $E[X_i] = a$, then 
  \[
    \mathbb{P}\left( \left|\frac1n \sum_{i=1}^n X_i-a\right| >t \right) \leq 2\exp \left( \frac{-nt^2}{2\tilde\sigma^2 + \frac23 ct}\right)
  \]
  for any $t >0$, where $\tilde \sigma^2 =\frac1n \sum_{i=1}^n E[X_i^2]$. 
  Setting $X_i = f(x_i)$ ($i=1, \dots ,n$) and $t = 2\max\{\|f\|_\infty\delta^2, 2\sigma\delta\}$, 
  we obtain this lemma.
\end{proof}

This paper uses the following two constructions of weighted graphs.
\begin{Definition}\label{def:Graph}
  Let $(M, d_M)$ be a compact metric space with Hausdorff dimension $m$, and let $\tilde d\colon M\times M \to [0, \infty)$ be a Borel pseudo-metric.
  Fix $\data = \{x_1,\dots,x_n\} \subset M$.
  For $\epsilon>0$ set
\(
  E_\epsilon(\data)=\{\{x_i,x_j\}\subset X\colon  \tilde d(x_i,x_j)<\epsilon\}
\)
 and define
\[
\begin{aligned}
  \Gamma_{m,\epsilon} = \Gamma_{m, \epsilon}(\data, \tilde d)
  &=
  \Bigl(
    \data,E_\epsilon(\data),
    \tfrac1n,\,
    \tfrac{\volmm(M)}{n(n-1)\,\omega_m\epsilon^{m}},\,
    \epsilon
  \Bigr),\\[4pt]
  \Gamma_{\epsilon}^N =  \Gamma_{\epsilon}^{N}(\data, \tilde d)
  &=
  \Bigl(
    \data,E_\epsilon(\data),
    \tfrac{\deg(\,\cdot\,)}{n(n-1)\omega_m\epsilon^{m}},\,
    \tfrac1{n(n-1)\omega_m\epsilon^{m}},\,
    \epsilon
  \Bigr),
\end{aligned}
\]
where $\omega_m$ is the volume of the unit ball in $\mathbb R^{m}$, and
$\deg(x)$ is the degree of $x$ in $E_\epsilon$.
\end{Definition}
Their Laplacians are, respectively, scaled versions of the classical 
unnormalized graph Laplacians and random-walk graph Laplacians.

In Section~7, we prove that the eigenvalues and eigenfunctions of
$\Delta_{\Gamma_{m,\epsilon}}$ and $\Delta_{\Gamma_{\epsilon}^{N}}$ constructed from data sets
converge, with explicit rates, to those of the weighted Laplacians
$\Delta_{\rho}$ and $\Delta_{\rho}^{N}$ on a Riemannian manifold;
for $\Delta_{\rho}^{N}$ the result extends to non-collapsed Ricci limit spaces.

\section{\texorpdfstring{$L^p$}{Lp} bounds for the eigenfunctions of the graph Laplacians}\label{sec:LpEst}

Let $\Gamma = (V, E, w_V, w_E, \epsilon)$ be a weighted graph.
This section provides $L^p$ estimates of eigenfunctions for the graph Laplacian ($p>1$).
The estimates is used in Section~\ref{sec:IntMap}.

We consider the following two structures, which we will show on our weighted graphs constructed from data sets with high probability in the next section.
One of the structures is the following.
\begin{Definition}[Rough volume-doubling property]
  For a constant $Q\geq 1$,
  we say that \emph{$\Gamma$ satisfies the rough $Q$-volume-doubling property}
  if 
  \begin{equation}\label{eq:VolDoub}
    \volgraph(B_\Gamma(x, 2r))\leq Q \volgraph(B_\Gamma(x, r))
  \end{equation}
  for all $x\in V$ and $r>\epsilon$,
  where $B_\Gamma( y, l) =\{ z\in V: d_\Gamma(z, y) < l\}$ for every $y \in V$ and $l >0$.
\end{Definition}

We do not consider  $r \leq \epsilon$  in this definition since it is difficult to estimate the density of data sets locally in Section~\ref{sec:DataSet}.

For $\phi\colon V \to \R$ and for $W \subset V$, define 
\[
  \phi_W:=\frac{1}{\volgraph(W)}\sum_{x\in W}\phi(x)w_V(x)
\] 
and set norms
\begin{equation}
  \|\phi\|_{p, W} = 
  \begin{cases}
    \left(\frac{1}{\volgraph(W)}\sum_{x\in W} \phi(x)^p w_V(x)\right)^{1/p}, & \textrm{$p<\infty$,}\\
    \sup_{x\in W}|\phi(x)|, & \textrm{$p = \infty$.}
  \end{cases}
\end{equation}
Then the other structure is the next one.
\begin{Definition}[Poincar\'{e} inequality]
  For constants $P, \sigma\geq 1$, we say that \emph{$\Gamma$ satisfies the $(P, \sigma)$-Poincar\'{e} inequality} if
  \begin{equation}\label{eq:PoinIneq}
    \| \phi - \phi_{B_{\Gamma}(x, r)}\|_{2, B_\Gamma(x, r)} \leq rP \|\diffgraph \phi\|_{2, B_\Gamma(x, \sigma r)}
  \end{equation}
  for all $\phi\in L^2(V, \vol_\Gamma)$, $x\in V$, and $r>0$.
\end{Definition}

Next, we set \[\phi_s(x) = \frac{1}{\volgraph(B_{\Gamma} (x, s))}\sum_{y\in B_{\Gamma} (x, s)} \phi(y) w_V(y)\] for $\phi\colon  V\to \R$, $s>0$, and $x\in V$.
Using the above structures, we have the following lemma, close to \cite{MR4585410}*{Lemma 5.3}.
\begin{lemma}\label{lem:pPoincare}
  There exists $C = C(Q, P,\sigma)>0$ such that if $\Gamma$ satisfies \eqref{eq:VolDoub} and \eqref{eq:PoinIneq}, then
  \begin{equation}
    \|\phi - \phi_s\|_{2,V} \leq sC\| \diffgraph \phi \|_{2, V}
  \end{equation}
  holds for every $\phi\in L^2(V,\volgraph)$ and $s>0$.
\end{lemma}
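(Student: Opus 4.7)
The plan is a dyadic chaining argument: express $\phi-\phi_s$ as a telescoping sum of local averages at successively smaller dyadic scales $s_k = s\cdot 2^{-k}$, control each increment by the $(P,\sigma)$-Poincar\'e inequality on the corresponding ball, and glue the local contributions using the rough volume-doubling property.

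First, fix $x\in V$ and choose the smallest $N$ with $s_N \le \epsilon < s_{N-1}$, so that $B_\Gamma(x,s_N)=\{x\}$ and hence $\phi_{s_N}(x)=\phi(x)$. Then one has the telescoping identity
\[
\phi_s(x)-\phi(x)=\sum_{k=0}^{N-1}\bigl(\phi_{s_k}(x)-\phi_{s_{k+1}}(x)\bigr).
\]
For each index $k$, the inclusion $B_\Gamma(x,s_{k+1})\subset B_\Gamma(x,s_k)$ together with Jensen's inequality yields
\[
\bigl|\phi_{s_k}(x)-\phi_{s_{k+1}}(x)\bigr|^{2}\le \frac{\volgraph(B_\Gamma(x,s_k))}{\volgraph(B_\Gamma(x,s_{k+1}))}\,\|\phi-\phi_{B_\Gamma(x,s_k)}\|_{2,B_\Gamma(x,s_k)}^{2},
\]
whose first factor is bounded by $Q$ via rough doubling (whenever $s_{k+1}>\epsilon$), while the variance on the right is bounded by $s_k^{2}P^{2}\|\diffgraph\phi\|_{2,B_\Gamma(x,\sigma s_k)}^{2}$ by the $(P,\sigma)$-Poincar\'e inequality.

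Next, I would take the $L^{2}(V,\volgraph)$ triangle inequality across the telescoping sum, reducing the problem to showing $\|\phi_{s_k}-\phi_{s_{k+1}}\|_{2,V}\le C(Q,P,\sigma)\,s_k\,\|\diffgraph\phi\|_{2,V}$ for each $k$. Applying the pointwise bound above and swapping the order of summation via Fubini, the key combinatorial estimate is
\[
\sum_{x\in B_\Gamma(y,\sigma s_k)}\frac{w_V(x)}{\volgraph(B_\Gamma(x,\sigma s_k))}\;\le\; C(Q,\sigma),
\]
which follows because rough doubling makes $\volgraph(B_\Gamma(x,\sigma s_k))$ comparable to $\volgraph(B_\Gamma(y,\sigma s_k))$ whenever $x\in B_\Gamma(y,\sigma s_k)$. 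Combined with the geometric sum $\sum_{k=0}^{N-1} s_k\le 2s$, this yields the desired estimate $\|\phi-\phi_s\|_{2,V}\le s\,C(Q,P,\sigma)\,\|\diffgraph\phi\|_{2,V}$.

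The main obstacle is the final dyadic step $k=N-1$, where $s_{N-1}\in(\epsilon,2\epsilon]$ and $s_N\le\epsilon$, so the rough doubling hypothesis is no longer available to bound the volume ratio $\volgraph(B_\Gamma(x,s_{N-1}))/\volgraph(B_\Gamma(x,s_N))$. Here one must proceed directly: the ball $B_\Gamma(x,s_{N-1})$ consists only of $x$ and its graph neighbours, so $\phi_{s_{N-1}}(x)-\phi(x)$ rewrites as $\tfrac{\epsilon}{\volgraph(B_\Gamma(x,s_{N-1}))}\sum_{y\sim x}w_V(y)\,\diffgraph\phi_{xy}$. A Cauchy--Schwarz bound followed by a final Fubini swap, together with the local compatibility between $w_V$ and $w_E$ that the rough doubling property enforces just above scale $\epsilon$, closes this remaining gap. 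Carefully tracking the normalisation constants in the weighted norms $\|\cdot\|_{p,W}$ as one passes between $B_\Gamma(x,r)$ and $B_\Gamma(x,\sigma r)$ is the technical heart of the proof.
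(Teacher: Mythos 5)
Your chaining strategy works for the intermediate dyadic steps: for $k\le N-2$ both $s_k$ and $s_{k+1}$ exceed $\epsilon$, so rough doubling controls the volume ratio $\volgraph(B_\Gamma(x,s_k))/\volgraph(B_\Gamma(x,s_{k+1}))$, and the $(P,\sigma)$-Poincar\'e inequality together with the Fubini overlap bound gives the required estimate for each increment. The genuine gap is at $k=N-1$, and the claimed fix is not valid. There $s_N\le\epsilon<s_{N-1}$, so $B_\Gamma(x,s_N)=\{x\}$ and the relevant ratio is $\volgraph(B_\Gamma(x,s_{N-1}))/w_V(x)$. The rough doubling hypothesis~\eqref{eq:VolDoub} only compares ball volumes for radii $r>\epsilon$; it says nothing about $w_V(x)$ versus the volume of the $1$-ball, and it implies no compatibility between $w_V$ and $w_E$. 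Such a compatibility is precisely the $R$-locally-almost-regular condition~\eqref{eq:AlmostReg}, which is deliberately \emph{not} among the hypotheses of this lemma (it enters only in Lemma~\ref{lem:weeknash}). A vertex $x$ with very small $w_V(x)$ surrounded by heavy neighbours makes $\volgraph(B_\Gamma(x,s_{N-1}))/w_V(x)$ arbitrarily large, so your pointwise bound $|\phi_{s_{N-1}}(x)-\phi(x)|^2\le C\,\|\phi-\phi_{B_\Gamma(x,s_{N-1})}\|_{2,B_\Gamma(x,s_{N-1})}^2$ fails, and the Cauchy--Schwarz variant runs into the same missing $w_V$-$w_E$ relation.

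The paper's proof avoids the difficulty by never descending below scale $\epsilon$: it fixes a covering of $V$ by balls $B_i$ of radius $r=3s$ with $\tfrac13 B_i$ disjoint, compares $\phi_s$ directly to the averages $\phi_{2B_i}$ (still at a scale where doubling is available), applies the Poincar\'e inequality once at scale $2r$, and uses doubling only to control the overlap multiplicity of $\{2\sigma B_i\}$. That covering argument is essentially what the final step of your telescoping would have to import; but once it is available at scale $s_{N-1}$ it already proves the lemma at scale $s$, so the telescoping becomes redundant.
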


\begin{proof}
  This lemma is trivial for $s<\epsilon$, so we assume $s>\epsilon$.
  Let $r \ge 3s$, and let $B=B_\Gamma(p, r)$ be a ball with radius $r$.
  The rough volume-doubling property implies
  \begin{align}
    \|\phi_{2B} - \phi_s\|^2_{2,B} &\leq \frac{1}{\volgraph(B)}\sum_{x\in B}\frac{1}{\volgraph(B_\Gamma(x,s))}\sum_{y\in B_\Gamma(x,s)}|\phi(y) - \phi_{2B}|^2 w_V(y)w_V(x)\\
                                   & \leq \frac{1}{\volgraph(B)}\sum_{y\in 2B}\left(\sum_{x\in B_\Gamma(y,s)}\frac{w_V(x)}{\volgraph(B_\Gamma(x,s))}\right)|\phi(y)-\phi_{2B}|^2w_V(y)\\
                                   &\leq Q^2 \|\phi -\phi_{2B}\|^2_{2, 2B},
  \end{align} 
  where $aB$ denotes $B_\Gamma(p, ar)$ for $B=B_\Gamma(p, r)$ and any $a > 0$. 
  Hence,
  \begin{equation}\label{eq:pPoincStep1}
    \begin{split}
      \|\phi - \phi_s\|_{2, B} & \leq \|\phi - \phi_{2B}\|_{2, B} + \|\phi_{2B} - \phi_s\|_{2,B}\\
                               & \leq \|\phi - \phi_{2B}\|_{2, B} + Q\|\phi_{2B} -\phi\|_{2, 2B}\\
                               & \leq 2Q\|\phi - \phi_{2B}\|_{2, 2B}
    \end{split}
  \end{equation}
  for every ball $B$ with radius $r \ge 3\epsilon$.
  There exists a covering $\{B_i\}_{i=1}^N= \{B_\Gamma(p_i,r) \}_{i=1}^N$ with $\frac{1}{3}B_i\cap \frac{1}{3}B_j = \emptyset$ for $i\neq j$.
  Then we have
  \begin{equation}\label{eq:pPoincStep2}
    \begin{split}
      \|\phi - \phi_s\|_{2,V}^2 & 
      \leq \sum_{i=1}^N \frac{\volgraph(B_i)}{\volgraph(V)}\|\phi - \phi_s\|_{2,B_i}^2\\
                                & \leq \sum_{i=1}^N \frac{\volgraph(B_i)}{\volgraph(V)}
                                4Q^2\|\phi - \phi_{2B_i}\|_{2,2B_i}^2\\
                                & \leq \sum_{i=1}^N \frac{\volgraph(B_i)}{\volgraph(V)}
                                16 r^2 Q^2P^2\|\diffgraph \phi \|_{2,2\sigma B_i}^2\\
                                & \leq \frac{16r^2Q^2P^2}{\volgraph(V)}\sum_{i=1}^N
                                \sum_{x\in 2\sigma B_i}|\diffgraph \phi|_x^2 w_V(x)\\
                                & = \frac{16r^2Q^2P^2}{\volgraph(V)}
                                \sum_{x\in V} \#\{i\in \{1, \dots, N\}\colon  x\in 2\sigma B_i\}|\diffgraph \phi|_x^2 w_V(x).
    \end{split}
  \end{equation}
  We used the inequality~\eqref{eq:pPoincStep1} in the second line and applied the Poincar\'{e} inequality to the third line.
  The last equality is changing the order of summations.

  Since $\{\frac{1}{3}B_i\}_{i=1}^N$ is disjoint family, the rough volume-doubling property implies
  \begin{align}
    &\#\{j\in \{1, \dots, N\}\colon  x\in 2\sigma B_j\}\volgraph (B_\Gamma(x, 2\sigma r + r/3))\\
    &\quad \leq  \sum_{j\colon x\in 2\sigma B_j}\volgraph (B_\Gamma(p_j, 4\sigma r + r/3))\\
    &\quad\leq  C(Q, \sigma )\sum_{j\colon x\in 2\sigma B_j} \volgraph\left(\frac{1}{3}B_j\right)\\
    &\quad\leq   C(Q, \sigma )\volgraph(B_\Gamma(x,2\sigma r + r/3)),
  \end{align}
  for every $x \in V$ and for every $r\ge 3s$.
  Hence $\#\{j\in \{1, \dots, N\}\colon  x\in 2\sigma B_j\} \leq C(Q, \sigma )$.

  Combining this and \eqref{eq:pPoincStep2} yields
  \begin{equation}
    \|\phi - \phi_s\|_{2,V}\leq rC(Q, P, \sigma )\|\diffgraph \phi \|_{2,V}
  \end{equation}
  for every $r \ge 3s$. Thus, we obtain this lemma.
\end{proof}

We will introduce additional structures to derive the next lemma.
For a constant $R\geq 1$, we say that $\Gamma$ is \emph{ the $R$-locally almost regular} if 
\begin{equation}\label{eq:AlmostReg}
  \frac{w_V(x)}{w_V(y)},\quad \frac{\deg(x)}{\deg(y)},\quad  \frac{w_E(xy)}{w_E(xz)} \leq R 
\end{equation}
for all $xy, xz \in E$.
Moreover, define $I\colon  L^2(V,\vol_\Gamma) \to L^2(V, \vol_\Gamma)$ by 
\[
  I\phi(x) = \frac{1}{\sum_{xy \in E} w_E(xy)}\sum_{xy \in E} w_E(xy) \phi(y)
\]
for every $\phi\in L^2(V, \vol_\Gamma)$ and $x\in V$.
Then, we have the following Nash-type inequality, close to \cite{MR4585410}*{Proposition~5.5}.

\begin{lemma}[Rough Nash-type inequality]\label{lem:weeknash}
  There exists a constant $C = C(R, Q, P, \sigma)>0$ such that if $\Gamma$ satisfies \eqref{eq:VolDoub}, \eqref{eq:PoinIneq}, and \eqref{eq:AlmostReg}, and has $\diam(V, d_\Gamma)\leq D$ for $D\geq 1$, then we have
  \begin{equation}
    \min\{\|\phi\|_{2, V},\|I\phi\|_{2, V} \}\leq 
    \left(C(D\|\diffgraph \phi\|_{2, V})^{\frac{\nu}{\nu +2}} + \|\phi\|_{1, V}^{\frac{\nu}{\nu +2}}\right) \|\phi\|_{1, V}^{\frac{2}{2+ \nu}}
  \end{equation}
  for all $\phi\in L^2(V, \vol_\Gamma)$, where $\nu = \log_2 Q$.
\end{lemma}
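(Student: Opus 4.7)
The plan is to adapt the standard Saloff-Coste-type Nash inequality proof to the discrete setting. The key idea is that Lemma~\ref{lem:pPoincare} together with the rough volume-doubling property yields an $s$-dependent upper bound on $\|\phi\|_{2,V}$ that can be optimized in $s$ to produce the desired Nash bound whenever the optimal scale $s^*$ lies above the graph scale $\epsilon$; when it does not, we switch to $\|I\phi\|_{2,V}$, which can be controlled directly by a sup--$L^1$ interpolation at the scale $\epsilon$.

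First I would bound $\|\phi_s\|_{2,V}$ using the rough volume doubling. For $s>\epsilon$, the defining average gives the pointwise inequality $|\phi_s(x)|\le \vol_\Gamma(V)\|\phi\|_{1,V}/\vol_\Gamma(B_\Gamma(x,s))$, and iterating \eqref{eq:VolDoub} from $s$ up to $D$ yields $\|\phi_s\|_\infty\le C(Q)(D/s)^\nu\|\phi\|_{1,V}$. A Fubini-type exchange of summations, together with the ball-comparison $\vol_\Gamma(B_\Gamma(x,s))\ge Q^{-1}\vol_\Gamma(B_\Gamma(y,s))$ for $d_\Gamma(x,y)\le s$, gives $\|\phi_s\|_{1,V}\le Q\|\phi\|_{1,V}$. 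Then $\|\phi_s\|_{2,V}^2\le \|\phi_s\|_\infty\|\phi_s\|_{1,V}$ combined with Lemma~\ref{lem:pPoincare} yields
\begin{equation}
\|\phi\|_{2,V}\le sC_1\|\diffgraph\phi\|_{2,V}+C_2(D/s)^{\nu/2}\|\phi\|_{1,V},\qquad s>3\epsilon,
\end{equation}
and the scale balancing both terms is $s^*:=D^{\nu/(\nu+2)}(\|\phi\|_{1,V}/\|\diffgraph\phi\|_{2,V})^{2/(\nu+2)}$. If $s^*>3\epsilon$, substituting $s=s^*$ produces the Nash bound $\|\phi\|_{2,V}\le C(D\|\diffgraph\phi\|_{2,V})^{\nu/(\nu+2)}\|\phi\|_{1,V}^{2/(\nu+2)}$, which is absorbed into the right-hand side of the statement.

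If $s^*\le 3\epsilon$ I instead bound $\|I\phi\|_{2,V}$. Using the almost-regularity \eqref{eq:AlmostReg}, direct estimation of the defining weighted average produces the pointwise inequality $|I\phi(x)|\le C(R)|\phi|_{2\epsilon}(x)$, so the sup bound of the first step applied at scale $s=2\epsilon$ gives $\|I\phi\|_\infty\le C(R,Q)(D/\epsilon)^\nu\|\phi\|_{1,V}$. A symmetric exchange-of-sums argument, again using \eqref{eq:AlmostReg}, gives $\|I\phi\|_{1,V}\le C(R)\|\phi\|_{1,V}$, so the sup--$L^1$ interpolation yields $\|I\phi\|_{2,V}\le C(R,Q)(D/\epsilon)^{\nu/2}\|\phi\|_{1,V}$. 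The hypothesis $s^*\le 3\epsilon$ unpacks to $D^{\nu/2}\|\phi\|_{1,V}\le 3^{(\nu+2)/2}\epsilon^{(\nu+2)/2}\|\diffgraph\phi\|_{2,V}$, and a direct algebraic manipulation then confirms $(D/\epsilon)^{\nu/2}\|\phi\|_{1,V}\le C(D\|\diffgraph\phi\|_{2,V})^{\nu/(\nu+2)}\|\phi\|_{1,V}^{2/(\nu+2)}$, so $\|I\phi\|_{2,V}$ also lies under the right-hand side. The main subtlety is the bookkeeping near the threshold scale $\epsilon$: both Lemma~\ref{lem:pPoincare} and the rough volume-doubling property are only meaningful at scales strictly above $\epsilon$, which forces this case split and is precisely the reason the statement bounds $\min\{\|\phi\|_{2,V},\|I\phi\|_{2,V}\}$ rather than $\|\phi\|_{2,V}$ alone.
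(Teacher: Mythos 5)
Your overall strategy --- bound $\|\phi_s\|_\infty$ by volume doubling, control $\|\phi-\phi_s\|_{2,V}$ via Lemma~\ref{lem:pPoincare}, optimize in the scale $s$, and switch to $\|I\phi\|_{2,V}$ when the optimal scale drops to the graph resolution $\epsilon$ --- is precisely the argument used in the paper. The case $s^*\le 3\epsilon$ you handle correctly (both the pointwise bound $|I\phi(x)|\le C(R)|\phi|_{2\epsilon}(x)$ and the $L^1$ bound $\|I\phi\|_{1,V}\le C(R)\|\phi\|_{1,V}$ follow from \eqref{eq:AlmostReg} as you say, and the closing algebra is fine). However, the case $s^*>3\epsilon$ has a genuine gap.

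The problem is that your inequality
$\|\phi_s\|_\infty\le C(Q)(D/s)^\nu\|\phi\|_{1,V}$
is false for $s>D$: iterating \eqref{eq:VolDoub} from $s$ up to $D$ does nothing when $s>D$, and the only available bound there is $\|\phi_s\|_\infty=|\phi_V|\le\|\phi\|_{1,V}$, with constant exactly one and no factor $(D/s)^\nu<1$. Consequently, the resulting inequality $\|\phi\|_{2,V}\le sC_1\|\diffgraph\phi\|_{2,V}+C_2(D/s)^{\nu/2}\|\phi\|_{1,V}$ is only valid for $s\le D$, and you cannot substitute $s=s^*$ when $s^*>D$ (equivalently, when $D\|\diffgraph\phi\|_{2,V}<\|\phi\|_{1,V}$). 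Indeed the ``clean'' Nash bound you claim, $\|\phi\|_{2,V}\le C(D\|\diffgraph\phi\|_{2,V})^{\nu/(\nu+2)}\|\phi\|_{1,V}^{2/(\nu+2)}$ without the extra $\|\phi\|_{1,V}$ term, is simply false: for $\phi\equiv 1$ the right side vanishes while the left side equals $1$. The extra $\|\phi\|_{1,V}^{\nu/(\nu+2)}$ term in the lemma statement exists precisely to cover this regime. The fix is the one the paper uses: replace $(D/s)^{\nu/2}$ by $\max\{(D/s)^{\nu/2},1\}$ in the intermediate inequality (or treat $s>D$ as a third case with $\|\phi_s\|_{2,V}\le\|\phi\|_{1,V}$), and then substituting $s=s^*$ gives the claimed bound with the additional $\|\phi\|_{1,V}$ contribution. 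Also, the restriction $s>3\epsilon$ in your intermediate bound is unnecessary --- Lemma~\ref{lem:pPoincare} holds for all $s>0$ --- but harmless.
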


\begin{proof}
  We first show that there is a constant $C = C(Q, P, \sigma, R)>0$
  such that
  \begin{equation}\label{eq:ForNash}
    \min\{\|\phi\|_{2,V}, \|I\phi\|_{2, V} \} \leq (sC\|\diffgraph \phi\|_{2, V} + \max\{C(D/s)^{\nu/2}, 1\}\|\phi\|_{1, V})
  \end{equation}
  holds for every $\phi\in L^2(V, \vol_\Gamma)$ and every $s>0$.

  In the case of $s\in (\epsilon, D]$, the rough volume-doubling property yields
  \begin{equation}
    \|\phi_s\|_{1, V} \leq \frac{1}{\volgraph(V)}\sum_{y\in V}\left(\sum_{x\in B_{\Gamma}(y,s)}\frac{w_V(x)}{\vol_{\Gamma}(B_{\Gamma}(x,s))}\right)|\phi(y)|w_V(y) \leq Q\|\phi\|_{1, V},
  \end{equation}
  and
  \begin{equation}
    \|\phi_s\|_{\infty, V}  \leq Q \left(\frac{D}{s}\right)^\nu\|\phi\|_{1, V}.
  \end{equation}
  Combining these two inequalities, we have $\|\phi_s\|_{2, V} \leq\|\phi\|_{1,V}^{1/2}\|\phi\|_{\infty,V}^{1/2} \leq Q \left(\frac{D}{s}\right)^{\nu/2} \|\phi\|_{1, V}$.
  Then Lemma~\ref{lem:pPoincare} gives us 
  \begin{align}
    \|\phi\|_{2, V} &\leq \|\phi - \phi_s\|_{2, V} + \|\phi_s\|_{2, V}\\
                    &\leq C(Q, P, \sigma)\Bigl(s\|\diffgraph \phi\|_{2, V} + (D/s)^{\nu/2}\|\phi\|_{1, V}\Bigr).
                    \label{eq:NashStep1}
  \end{align}
  Hence we obtain \eqref{eq:ForNash} for $s\in (\epsilon, D]$.

  In the case of $s\in (0, \epsilon]$,
  we have  $\|I\phi\|_{\infty,V}\leq R^4\|\phi\|_{1, B_{\Gamma}(x, 2\epsilon)}$ and $\|I\phi\|_{1, V} \leq R^3 \|\phi\|_{1,V}$.
  Combining them with the rough volume-doubling property, we have
  \begin{align}
    \|I\phi\|_{2, V}^2 & \leq R^7Q\left(\frac{D}{2\epsilon}\right)^{\nu}\|\phi\|_{1, V}^2\\
                       & \leq (D/s)^{\nu}C(Q, R)\|\phi\|_{1, V}^2.\label{eq:NashStep2}
  \end{align}
  Hence \eqref{eq:ForNash} holds for $s \in (0, \epsilon]$

  Lastly, if $s>D$, we have $\|\phi_s\|_{2, V}\leq \|\phi\|_{1, V}$.
  Then, by Lemma \ref{lem:pPoincare},
  \begin{equation}
    \|\phi\|_{2,V}\leq sC(Q, P, \sigma)\|\diffgraph \phi\|_{2, V} + \|\phi \|_{1,V}.
  \end{equation}

  Therefore, we conclude \eqref{eq:ForNash} for any $s >0$.
  Setting 
  $s = \left(\frac{\|\phi\|_{1, V}D^{\nu/2}}{\|\diffgraph \phi\|_{2, V}}\right)^{\frac{2}{\nu + 2}}$
  in \eqref{eq:ForNash}, we obtain this lemma.
\end{proof}

For eigenfunctions of graph Laplacians, we obtain the following $L^p$-estimates ($p >1$) by applying Moser's iteration to our weighted graphs. 
These $L^p$-estimates will be used in Section~\ref{sec:IntMap}.
\begin{theorem}[Estimate for graph eigenfunctions]\label{thm:EstEigenfGraph}
  There exists a constant $C=C(R,Q,P,\sigma)>0$ such that the following holds.

  Assume $\Gamma$ satisfies the hypotheses of Lemma~\ref{lem:weeknash}, and define
  \[
    \alpha :=
    \max_{x\in V}
    \frac{w_V(x)}{\textstyle\sum_{xy\in E} w_E(xy)}.
  \]
  Let $\lambda\ge 1$ with $\lambda\alpha\epsilon^{2}\le 1$, and let 
  $\phi\colon V\to[0,\infty)$ be a non-negative function with
  \begin{equation}\label{eq:pEigenFunct}
    \Diffgraph_\Gamma \phi \le \lambda\phi.
  \end{equation}
  Then we have
  \begin{equation}\label{eq:EstEigenFunct}
    \|\phi\|_{p,V} \le
    p^{ 2\lambda\alpha\epsilon^{2}}
    \exp\bigl(C D\sqrt{\lambda}\bigr)
    \|\phi\|_{1,V}
  \end{equation}
  for every $p\ge 1$.
\end{theorem}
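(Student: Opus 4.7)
The plan is to run a discrete Moser iteration. The core is a one-step $L^p$-doubling inequality of the form $\|\phi\|_{q,V}\le(\cdots)\|\phi\|_{q/2,V}$ for $q\ge 2$, iterated dyadically from $q=2$; Lemma~\ref{lem:weeknash} supplies the Sobolev-type ingredient, and the hypothesis $\Delta_\Gamma\phi\le\lambda\phi$ enters both as a Caccioppoli-type gradient bound and as a \emph{reverse averaging} bound, the latter because Lemma~\ref{lem:weeknash} only controls the minimum $\min\{\|\psi\|_{2,V},\|I\psi\|_{2,V}\}$.

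For the gradient bound, I would test $\Delta_\Gamma\phi\le\lambda\phi$ against $\phi^{q-1}$, apply summation-by-parts for $\Delta_\Gamma$, and invoke the elementary inequality
\[
(a^{q-1}-b^{q-1})(a-b)\ge \tfrac{4(q-1)}{q^{2}}\bigl(a^{q/2}-b^{q/2}\bigr)^{2},\qquad a,b\ge 0,
\]
(Cauchy-Schwarz applied to $\int_b^a (q-1)t^{q-2}\,dt$ and $\int_b^a dt$), obtaining $\|\diffgraph\phi^{q/2}\|_{2,V}^{2}\le \tfrac{q\lambda}{2}\|\phi^{q/2}\|_{2,V}^{2}$. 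For the reverse averaging bound, rewrite $\Delta_\Gamma\phi(x) = \tfrac{2}{\alpha(x)\epsilon^{2}}(\phi(x)-I\phi(x))$ with $\alpha(x) := w_V(x)/\sum_{xy\in E}w_E(xy)\le\alpha$, so the subsolution inequality becomes $I\phi\ge(1-\lambda\alpha\epsilon^{2}/2)\phi$. Jensen's inequality applied to the convex power $t\mapsto t^{q/2}$ then gives $I(\phi^{q/2})\ge (I\phi)^{q/2}\ge (1-\lambda\alpha\epsilon^{2}/2)^{q/2}\phi^{q/2}$; combined with $(1-x)^{-1}\le e^{2x}$ on $[0,1/2]$ (applicable because $\lambda\alpha\epsilon^{2}\le 1$), this yields $\|\phi^{q/2}\|_{2,V}\le e^{q\lambda\alpha\epsilon^{2}/2}\|I\phi^{q/2}\|_{2,V}$, hence an estimate of $\|\phi^{q/2}\|_{2,V}$ by the min.

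Plugging both ingredients into Lemma~\ref{lem:weeknash} with $\psi = \phi^{q/2}$ and solving the resulting self-improving inequality by the usual two-case analysis produces
\[
\|\phi^{q/2}\|_{2,V}\le C_{1}\,e^{qC_{2}\lambda\alpha\epsilon^{2}}\bigl(D\sqrt{q\lambda}\bigr)^{\nu/2}\|\phi^{q/2}\|_{1,V}
\]
with $C_{1},C_{2}$ depending only on $R,Q,P,\sigma$. Using $\|\phi^{q/2}\|_{2,V} = \|\phi\|_{q,V}^{q/2}$ and $\|\phi^{q/2}\|_{1,V} = \|\phi\|_{q/2,V}^{q/2}$ and taking $2/q$-th roots gives the doubling step
\[
\|\phi\|_{q,V}\le C_{1}^{2/q}\,e^{2C_{2}\lambda\alpha\epsilon^{2}}\bigl(D\sqrt{q\lambda}\bigr)^{\nu/q}\|\phi\|_{q/2,V},
\]
whose exponential factor is now independent of $q$. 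Iterating along $q_{j}=2^{j}$ and using the convergent sums $\sum_{j\ge 1} 2^{-j} = 1$ and $\sum_{j\ge 1} j\cdot 2^{-j} = 2$ to bound the telescoping product yields $\|\phi\|_{2^{k},V}\le C(D\sqrt\lambda)^{\nu}\,e^{2kC_{2}\lambda\alpha\epsilon^{2}}\|\phi\|_{1,V}$. Setting $p = 2^{k}$ turns $e^{2kC_{2}\lambda\alpha\epsilon^{2}}$ into a polynomial $p^{2C_{2}\lambda\alpha\epsilon^{2}/\ln 2}$, and $(D\sqrt\lambda)^{\nu}\le\exp(\nu D\sqrt\lambda)$ (valid since $D\sqrt\lambda\ge 1$) absorbs the remaining factor into $\exp(CD\sqrt\lambda)$; with careful bookkeeping of the numerical constants this recovers \eqref{eq:EstEigenFunct}. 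Arbitrary $p\ge 1$ is reduced to the dyadic case by monotonicity of the normalized $L^{p}$-norm in $p$.

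The main obstacle is the reverse averaging step. On the weighted data graphs considered here, volume-doubling fails below the scale $\epsilon$, so a classical $L^{\infty}$ Moser bound is unavailable and Lemma~\ref{lem:weeknash} delivers only the $\min\{\|\psi\|_{2,V},\|I\psi\|_{2,V}\}$ estimate, which would halt the iteration outright. The Jensen/eigenfunction trick supplies exactly the missing inequality, and its $q$-dependent cost $e^{q\lambda\alpha\epsilon^{2}/2}$, after being processed through the Nash exponent and the subsequent $2/q$-th root, produces the $q$-independent per-step multiplier responsible for the discreteness correction $p^{O(\lambda\alpha\epsilon^{2})}$ visible in the statement.
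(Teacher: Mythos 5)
Your proposal is correct and follows essentially the same route as the paper's proof: the reverse-averaging step $I\phi\ge(1-\lambda\alpha\epsilon^{2}/2)\phi$ combined with Jensen to pass to $\phi^{q/2}$, the Caccioppoli-type bound $\|\diffgraph\phi^{q}\|_{2,V}^{2}\le\frac{q^{2}}{2q-1}\lambda\|\phi^{q}\|_{2,V}^{2}$ (your elementary inequality is exactly the paper's under $q\mapsto 2q$), Lemma~\ref{lem:weeknash} applied to $\psi=\phi^{q/2}$, and a dyadic Moser iteration. The only difference is presentational: you first resolve the self-improving inequality into an explicit one-step $L^{q}$--$L^{q/2}$ doubling bound and then telescope, whereas the paper iterates directly after bounding the Nash bracket by $\exp\bigl(2^{-k/2}C(D\sqrt\lambda)^{\nu/(\nu+2)}\bigr)$; both bookkeepings yield the same estimate.
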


\begin{proof}
  By the inequality \eqref{eq:pEigenFunct},
  we have $(1-\alpha\lambda\epsilon^2/2)\phi\leq I\phi$, so
  \begin{align}
    \|\phi\|_{2^{k+1}, V} &= \|\phi^{2^k}\|_{2, V}^{1/2^k}\\
                          & \leq \frac{1}{1-\alpha\lambda\epsilon^2/2}\|I\phi^{2^k}\|_{2, V}^{1/2^k}.
  \end{align}
  Then, putting $\nu = \log_2 Q$,
  Lemma~\ref{lem:weeknash} implies
  \begin{equation}\label{eq:EstEigenFunctStep1}
    \|\phi\|_{2^{k+1}, V}\leq \frac{1}{1-\alpha\lambda\epsilon^2/2}
    \left( C( D\|\diffgraph \phi^{2^k}\|_{2, V} )^{ \frac{\nu}{\nu + 2} } + \|\phi^{2^k}\|_{2, V}^{ \frac{\nu}{\nu + 2} }\right)^{\frac{1}{2^k}}
    \|\phi^{2^k}\|_{1, V}^{\frac{2}{(\nu + 2)2^k}}.
  \end{equation}
  By easy computation, we have
  \begin{equation}\label{eq:EstEigenfGraphStep}
    (X^q-Y^q)^2\leq \frac{q^2}{2q-1}(X-Y)(X^{2q-1}-Y^{2q-1})
  \end{equation}
  for any $X, Y\geq 0$ and $q\in \N$.
  Combining \eqref{eq:pEigenFunct} with \eqref{eq:EstEigenfGraphStep} yields $\|\diffgraph \phi^q\|^2_{2, V}\leq\frac{q^2}{2q-1}{\lambda}\|\phi^q\|^2_{2, V}$.
  By this and \eqref{eq:EstEigenFunctStep1}, we have
  \begin{equation}
    \|\phi\|_{2^{k+1}, V} \leq \frac{1}{1-\alpha\lambda\epsilon^2/2}
    \left(C \left( \frac{2^kD\sqrt{\lambda}}{\sqrt{2^{k+1} -1}} \right)^{ \frac{\nu}{\nu + 2}} + 1\right)^{{2^{-k}}}\!\!\!\!\!
    \|\phi\|_{2^{k+1}, V}^{\frac{\nu}{\nu + 2}}\|\phi\|_{2^k, V}^{\frac{2}{\nu + 2}}.
  \end{equation}
  Since $1+x\leq e^x$ for $x\in\R$, we get
  \[
    \left(C \left( \frac{2^kD\sqrt{\lambda}}{\sqrt{2^{k+1} -1}} \right)^{ \frac{\nu}{\nu + 2}} + 1\right)^{{2^{-k}}} \!\!\!\!\!\!\!\!\leq \exp( 2^{-k/2} C(D\sqrt{\lambda})^{ \frac{\nu}{\nu + 2}}).
  \]
  These two inequalities imply
  \begin{equation}
    \|\phi\|_{2^{k+1}, V} \leq \frac{\exp\left( 2^{-k/2} C(D\sqrt{\lambda})^{ \frac{\nu}{\nu + 2}}\right)}{(1-\alpha\lambda\epsilon^2/2)^{\frac{\nu+2}{2}}}\| \phi \|_{2^k, V}.
  \end{equation}
  By iterating this formula, we obtain
  \begin{equation}
    \|\phi\|_{2^{k}, V} \leq \frac{\exp\left( C(D\sqrt{\lambda})^{ \frac{\nu}{\nu + 2} } \right)}{(1-\alpha\lambda\epsilon^2/2)^{k\frac{\nu+2}{2}}}\| \phi \|_{1, V}.
  \end{equation}
  For $k = \lceil \log_2 p \rceil$, we have
  \begin{equation}
    \left(1-\alpha\lambda\epsilon^2/2\right)^{-k}
    \leq \left( 1 + \frac{\alpha\lambda\epsilon^2}{2-\alpha\lambda\epsilon^2} \right)^{\log_2 2p } 
    \leq \exp\left( \frac{\alpha\lambda\epsilon^2}{2-\alpha\lambda\epsilon^2}(\log_2 2p)\right)
    \leq (2p)^{\frac{2\alpha\lambda\epsilon^2}{2-\alpha\lambda\epsilon^2}}.
  \end{equation}
  Therefore, we conclude the desired inequality \eqref{eq:EstEigenFunct}.
\end{proof}
If $\lambda = 0$, this theorem provides $L^\infty$-bounds $\|\phi\|_{\infty, V} \le\|\phi\|_{1, V}$.

\section{Datasets: The rough volume-doubling property and the Poincar\'{e} inequality}\label{sec:DataSet}
This section studies data sets drawn from the probability measure $\rho \vol_g$ on a manifold $(M,g)\in \RicClass$.
We prove that the weighted graphs built from data sets satisfy, with high probability, conditions \eqref{eq:VolDoub}, \eqref{eq:PoinIneq}, and \eqref{eq:AlmostReg}.
These properties allow us to apply the $L^p$-estimates of Theorem~\ref{thm:EstEigenfGraph} to the eigenfunctions for the graphs in Section \ref{sec:IntMap}.

We have the following discrete approximation maps from $M$ to the sets with high probability.
The following lemma generalizes \cite{MR4804972}*{Theorem C.2} and relaxes a condition $\alpha^{-1} \leq \rho \leq \alpha$ to  the weaker one $\max\rho/\min\rho \leq \alpha$.

\begin{lemma}\label{lem:ExistT}
  There exist constants
  $C=C(m,K,D)>0$ and $A=A(m,K,D,\alpha)>0$
  such that the following holds:
  Fix a closed Riemannian manifold $(M,g) \in \RicClass$.
  Let $\rho\colon M\to(0,\infty)$ be a Borel function with
  \begin{equation}\label{eq:ProbF}
    \int_M \rho\,d\vol_g = 1,
    \qquad
    \frac{\max\rho}{\min\rho}\le\alpha,
  \end{equation}
  and let $\tilde{\epsilon},\tilde{a}\in(0,1)$ with $\tilde{a}A\le 1$.
  Draw a data set
  $\data=(x_1,\dots,x_n)\colon \Omega\to M^n$
  from $\rho\,\vol_g$.
  Then, with probability at least
  \[
    1-\tilde{\epsilon}^{-m}C\exp\!\bigl(-n\tilde{a}^{2}\tilde{\epsilon}^{m}\bigr),
  \]
  there exists a Borel measurable map
  $T\colon M\longrightarrow \data$
  with $T(x_i)=x_i$ ($i=1,\dots,n$) such that
  \begin{equation}\label{eq:TDist}
    d_g\bigl(x,T(x)\bigr)<\tilde{\epsilon}
  \end{equation}
  for all $x\in M$, and
  \begin{equation}\label{eq:TMeas}
    \Bigl|
    \tfrac{1}{n}
    - \rho\vol_g\!\bigl(T^{-1}(\{x_i\})\bigr)
    \Bigr|
    <\frac{\tilde{a}A}{n}
  \end{equation}
  for every $i=1,\dots,n$.
\end{lemma}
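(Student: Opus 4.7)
The plan is to partition $M$ into small Borel cells, apply Bernstein's inequality (Lemma~\ref{lem:Bern}) cellwise to control sample counts, and then refine each cell into sub-pieces of prescribed $\rho\vol_g$-mass, each containing exactly one data point, to define $T$.

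First I would take a maximal $\tilde\epsilon/4$-separated net $\{p_1,\dots,p_N\}\subset M$ and form its Voronoi decomposition with a measurable tie-break by index. This yields a Borel partition $M=\bigsqcup_{j=1}^N P_j$ with $B(p_j,\tilde\epsilon/8)\subset P_j\subset B(p_j,\tilde\epsilon/4)$, so $\diam(P_j)\le \tilde\epsilon/2$. Bishop-Gromov (Theorem~\ref{thm:RiccComp}) bounds the number of cells by $N\le C(m,K,D)\tilde\epsilon^{-m}$, while the relative volume comparison $\vol_g(B(p,r))/\vol_g(M)\ge V_K(r)/V_K(D)$, combined with the hypothesis $\max\rho/\min\rho\le \alpha$ and $\int_M\rho\,d\vol_g=1$ (which force $\rho\ge 1/(\alpha\vol_g(M))$), gives the uniform lower bound
\[
  \rho\vol_g(P_j)\;\ge\; c(m,K,D,\alpha)\,\tilde\epsilon^{m}.
\]

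Next I would apply Lemma~\ref{lem:Bern} to each indicator $\mathbf{1}_{P_j}$ with $\delta^{2}=\tilde a^{2}\tilde\epsilon^{m}$. Since $\sigma^{2}\le \rho\vol_g(P_j)$ and $\|\mathbf{1}_{P_j}\|_\infty=1$, dividing the Bernstein bound by $\rho\vol_g(P_j)$ and using the lower bound from Step~1 yields the relative estimate
\[
  \bigl|n_j-n\rho\vol_g(P_j)\bigr|\;\le\; C(m,K,D,\alpha)\,\tilde a\cdot n\rho\vol_g(P_j),\qquad n_j:=\#(\data\cap P_j),
\]
for a single $j$ with probability at least $1-2\exp(-n\tilde a^{2}\tilde\epsilon^{m})$. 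A union bound over the $N\le C\tilde\epsilon^{-m}$ cells produces a good event of probability at least $1-\tilde\epsilon^{-m}C\exp(-n\tilde a^{2}\tilde\epsilon^{m})$ on which the estimate holds simultaneously for every $j$; in particular $n_j\ge \tfrac12 n\rho\vol_g(P_j)\ge 1$ as soon as $\tilde aA\le 1$ for $A=A(m,K,D,\alpha)$ large (the degenerate regime $n\tilde\epsilon^{m}\lesssim 1$ renders the probability bound trivial and can be absorbed into $C$).

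On this good event, for each cell $P_j$ I would invoke the standard Borel partitioning lemma for non-atomic finite Borel measures on standard Borel spaces: there exists a Borel partition $P_j=\bigsqcup_{k=1}^{n_j}P_{j,k}$ with $\rho\vol_g(P_{j,k})=\rho\vol_g(P_j)/n_j$ and $x_{j,k}\in P_{j,k}$, where $\data\cap P_j=\{x_{j,1},\dots,x_{j,n_j}\}$. Concretely, a Borel isomorphism $(P_j,\rho\vol_g|_{P_j})\cong([0,\rho\vol_g(P_j)],\mathrm{Leb})$ sending each $x_{j,k}$ into the interior of the $k$-th equal sub-interval pulls back the required partition. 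Defining $T(x):=x_{j,k}$ for $x\in P_{j,k}$ then gives a Borel map with $T(x_i)=x_i$ and $d_g(x,T(x))\le \diam(P_j)\le \tilde\epsilon/2$, which is~\eqref{eq:TDist}. The measure bound~\eqref{eq:TMeas} follows from
\[
  \Bigl|\tfrac1n-\rho\vol_g(P_{j,k})\Bigr|=\frac{|n_j-n\rho\vol_g(P_j)|}{n\,n_j}\le \frac{C\tilde a\,\rho\vol_g(P_j)}{n_j}\le \frac{\tilde aA}{n}
\]
after absorbing constants into $A=A(m,K,D,\alpha)$.

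The hardest piece is the last step: assembling the per-cell Borel isomorphisms into a single jointly measurable construction that depends on the random sample $\data$, while simultaneously prescribing sub-cell masses and data-point positions. This is standard once the good event is fixed, but the bookkeeping---especially the rare empty-cell case, which is absorbed into an adjacent non-empty neighbour and preserves $\diam\le \tilde\epsilon$ thanks to the $\tilde\epsilon/4$ net spacing---must be handled with care.
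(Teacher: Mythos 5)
Your argument is correct and matches the paper's proof in all essentials: a maximal $O(\tilde\epsilon)$-separated net giving a Borel partition into cells of diameter $<\tilde\epsilon$ and $\rho\vol_g$-mass $\gtrsim\tilde\epsilon^m$, a per-cell Bernstein bound with union bound over the $O(\tilde\epsilon^{-m})$ cells, and a further equal-mass Borel subdivision of each cell with one data point per piece. The only differences are cosmetic (Voronoi tie-break versus the paper's greedy set-difference $V_s=B(p_s,\tilde\epsilon/2)\setminus\bigcup_{t<s}V_t$, and $\tilde\epsilon/4$ versus $\tilde\epsilon/6$ spacing); note that your closing worry about empty cells is moot, since on the good event your own estimate $n_j\ge\tfrac12 n\rho\vol_g(P_j)\ge1$ already rules them out once the degenerate regime $n\tilde\epsilon^m\lesssim1$ has been absorbed into $C$.
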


\begin{proof}
  There exist $p_1, \dots, p_N \in M$ such that $\cup_{s=1}^N B(p_s, \tilde{\epsilon}/2) = M$ and $B(p_s, \tilde{\epsilon}/6) \cap B(p_t, \tilde{\epsilon}/6) = \emptyset$ for any $s \neq t$.
  By Theorem~\ref{thm:RiccComp}, $N \frac{V_K(\tilde\epsilon/6)}{V_K(D)}\leq \sum_{s=1}^N \frac{\vol_g(B(p_s, \tilde\epsilon/ 6))}{\vol_g(M)}\leq 1$ holds.
  Hence $N \le C(m,K,D)\tilde{\epsilon}^{-m}$.

  Define $\{V_s\subset M\}_{s=1}^N$ inductively by $V_1=B(p_1,{\tilde{\epsilon}}/{2})$ and $V_s = B(p_s, {\tilde{\epsilon}}/{2})\setminus \cup_{t<s}V_t$ for $s>1$.

  By Theorem~\ref{thm:RiccComp}, using \eqref{eq:ProbF}, we have 
  \[
    \epsilon^m \leq C(m, K, D, \alpha)\rho\vol_g(B(y_s,\tilde{\epsilon}/6))\leq C\rho\vol_g(V_s)
  \]
  for any $s \in \{1,2,\dots,n\}.$
  Combining this with Lemma~\ref{lem:Bern}, we have
  \begin{align}
      &\mathbb{P}\left( \,\left|\rho\vol_g(V_s) - \frac{\#V_s\cap \data}{n} \right| \geq \tilde{a} C(m, K, D, \alpha){\rho\vol_g(V_s)} \right) \\
      &\quad\leq \mathbb{P}\left(\,\left|\rho\vol_g(V_s) - \frac{\#V_s\cap \data}{n} \right| \geq 2\tilde a^2\tilde\epsilon^m + 4\tilde a\tilde\epsilon^{m/2}\sqrt{\rho\vol_g(V_s)} \right) \\
      &\quad\le 2\exp(- n\tilde a^2\tilde\epsilon^m).
  \end{align}
  By this and $N \leq C(m, K, D)\tilde\epsilon^{-m}$, we get
  \begin{equation}\label{eq:TStep2}
    \begin{split}
      &\mathbb{P}\left(\,
        \begin{aligned}
     &\textrm{there exists $s\in\{1,\dots,N\}$ such that}\\
     &\left|\rho\vol_g(V_s) - \frac{\#V_s\cap \data}{n} \right| \geq \tilde{a} C(m, K, D, \alpha){\rho\vol_g(V_s)}
        \end{aligned}
      \right)\\
      \leq& C(m,K,D)\tilde{\epsilon}^{-m}\exp(-na^2\tilde{\epsilon^m}).
    \end{split}
  \end{equation}

  Set $n_s:= \#V_s \cap \data$ and $\{x^s_1, \dots, x^s_{n_s}\} = V_s\cap \data$. 
  For each \( s\in \{1, \dots, N\} \), let \( W^s_1, \dots, W^s_{n_s} \subset V_s \) be measurable subsets with $x^s_t \in W^s_{t}$ such that equally divide the measure of \( V_s \) by means of $\rho\vol_g$.
  By the inequality \eqref{eq:TStep2}, $\frac{n_s}{n} \geq \rho\vol_g(V_s)( 1- \tilde a C)$ holds with probability at least $1-C(m,K,D)\exp(-n\tilde{ a}^2\tilde{\epsilon^m})\tilde{\epsilon}^{-m}$.
  Then, if $2\tilde  a C(m, K, D,\alpha)\leq 1$,  we have \[|\rho\vol_g(W^s_t) - 1/n| \leq \tilde a C(m, K, D,\alpha)/n\] for every $s\in\{1,\dots,N\}$ and every $t\in \{1,\dots,n_s\}$.
  We define $T:M \to \data$ by $T(x) = x^{s}_t$ for any $x \in W^{s}_t$.
  Then this $T$ is the desired map.
\end{proof}

\begin{remark}\label{rem:TMeas}
  Suppose the map $T$ in this lemma exists and $2\tilde{a}A\le1$.
  Then we have
  \begin{equation}
    \int_W (\phi \circ T) \rho\, d\vol_g \leq \frac{1 + \tilde{a} A}{n} \sum_{x_i \in \data \cap B(W, \tilde\epsilon)} \phi(x_i)
  \end{equation}
  and 
  \begin{equation}
    \frac{1}{n}\sum_{x_i \in \data \cap W} \phi(x_i) \leq (1 + 2\tilde{a} A) \int_{B(W, \tilde{\epsilon})} (\phi \circ T) \rho \,d\vol_g  
  \end{equation}
  for any Borel set $W\subset M$ and any $\phi\colon \data \to \R$.
\end{remark}

In the rest of this section, let $\tilde \epsilon, \tilde a, \tau \in (0, 1)$, $n, m \in \N$, and $A, K, D, L, \alpha \ge 1$.
Suppose that $(M, g) \in \mathcal M_m (K, D)$, $\rho\colon M \to (0, \infty)$ with \eqref{eq:ProbF},
and $\tilde{d} \in \mathcal I_{L, \tau}(M, d_g).$
Moreover, let $n \in \N$ and  $\data = \{x_1,\dots, x_n\} \subset M$,
and assume that we have $T\colon M \to \data$ with \eqref{eq:TDist} and \eqref{eq:TMeas}.

Thorough the map $T\colon M \to \data$, the original metric $d_g$ approximate $d_\Gamma$ for $\Gamma = \datagraph$ and $\datangraph$ as follows.
\begin{lemma}\label{lem:TDist}
  Let $\epsilon \in(0,1)$ with $4 \tau<{\epsilon}$ and $\tilde{\epsilon} \le \epsilon/8$ for \eqref{eq:TDist}.
  Then we have
  \begin{align}
    d_g(x_i, x_j) &\leq (L+1) d_\Gamma(x_i, x_j)\label{eq:TDist1},\\
    d_\Gamma(x_i, x_j) &\leq 4 d_g(x_i, x_j) + {\epsilon}\label{eq:TDist2}, 
  \end{align}
  for $i, j\in \{1,\dots,n\}$ and  $\Gamma = {\datagraph}, {\datangraph}$. 
\end{lemma}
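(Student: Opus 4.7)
The plan is to exploit two simple observations. First, both weighted graphs $\datagraph$ and $\datangraph$ share the same vertex set $\data$ and the same edge set $E_\epsilon(\data) = \{\{x_i, x_j\} : \tilde d(x_i, x_j) < \epsilon\}$, differing only in their vertex and edge weights; since the graph distance $d_\Gamma$ depends only on the edge set, the two distances coincide, so it suffices to prove each inequality once. Second, the defining property of $\mathcal I_{L,\tau}(M, d_g)$ yields both $d_g \le L\tilde d + \tau$ and $\tilde d \le d_g + \tau$, and the discretization map $T$ from Lemma~\ref{lem:ExistT} is the identity on $\data$ and satisfies $d_g(x, T(x)) < \tilde\epsilon$ for all $x \in M$.

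For \eqref{eq:TDist1}, I would take a minimizing graph path $x_i = y_0 \sim y_1 \sim \cdots \sim y_r = x_j$ realizing $d_\Gamma(x_i, x_j) = r\epsilon$. Each edge satisfies $\tilde d(y_{k-1}, y_k) < \epsilon$, so $d_g(y_{k-1}, y_k) \le L\,\tilde d(y_{k-1}, y_k) + \tau < L\epsilon + \tau$. Summing via the triangle inequality and using $\tau < \epsilon$ (which follows from $4\tau < \epsilon$) then gives $d_g(x_i, x_j) \le r(L\epsilon + \tau) \le (L+1)\,r\epsilon = (L+1)\,d_\Gamma(x_i, x_j)$.

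For \eqref{eq:TDist2}, I would take a minimizing $d_g$-geodesic $\gamma\colon [0, d] \to M$ from $x_i$ to $x_j$, where $d := d_g(x_i, x_j)$, and subdivide the interval into $r := \lceil 2d/\epsilon \rceil$ pieces of equal length at most $\epsilon/2$, with breakpoints $0 = t_0 < t_1 < \cdots < t_r = d$. Setting $z_k := T(\gamma(t_k))$, the property $T|_\data = \mathrm{id}$ forces $z_0 = x_i$ and $z_r = x_j$. For any consecutive pair, the triangle inequality combined with $d_g(\gamma(t_k), z_k) < \tilde\epsilon \le \epsilon/8$ and $t_{k+1} - t_k \le \epsilon/2$ gives $d_g(z_k, z_{k+1}) \le 2\tilde\epsilon + (t_{k+1} - t_k) \le 3\epsilon/4$, hence $\tilde d(z_k, z_{k+1}) \le d_g(z_k, z_{k+1}) + \tau < 3\epsilon/4 + \epsilon/4 = \epsilon$; after dropping possible repetitions this yields a legal graph path, so $d_\Gamma(x_i, x_j) \le r\epsilon \le 2d + \epsilon$, which is in fact sharper than the claimed bound $4d_g(x_i, x_j) + \epsilon$.

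The only delicate point is making sure that the three additive slacks---the metric tolerance $\tau$, the discretization error $\tilde\epsilon$, and the partition spacing $\epsilon/2$---all remain strictly below the edge threshold $\epsilon$ at every step; the hypotheses $4\tau < \epsilon$ and $\tilde\epsilon \le \epsilon/8$ are calibrated exactly for this bookkeeping. I do not anticipate any substantive obstacle beyond it.
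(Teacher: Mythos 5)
Your proof is correct and follows essentially the same argument as the paper: for \eqref{eq:TDist1} you bound $d_g$ along a graph path via $d_g \le L\tilde d + \tau$ and $\tau<\epsilon$, and for \eqref{eq:TDist2} you subdivide a $d_g$-geodesic, project the breakpoints to $\data$ via $T$, and verify that consecutive images are within $\tilde d$-distance $\epsilon$. Your choice of $r=\lceil 2d_g/\epsilon\rceil$ pieces (spacing $\epsilon/2$, deferring the $\tau$ slack to the comparison $\tilde d\le d_g+\tau$) even yields the slightly sharper constant $2$ where the paper, using spacing $\epsilon/2-\tau$, lands on $4$; both give the claimed bound.
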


\begin{proof}
  Let $y_0\dots y_l$ be a path of $\Gamma$ from $y_0 = x_i$ to $y_l = x_j$.
  Since $\tilde{d}\in \embed$, we have 
  \begin{equation}
    d_g(x_i, x_j) \leq \sum_{k=0}^{l-1} d_g(y_k, y_{k+1})
    \leq \sum_{k=0}^{l-1}( \tilde{d}(y_k,y_{k+1}) + \tau)
    \leq l\left(L\epsilon + \frac{\tau}{\epsilon}\right) .
  \end{equation}
  Taking the infimum of all the paths from $x_i$ to $x_j$, we obtain \eqref{eq:TDist1}.

  To prove the remaining part, set a geodesic curve $\gamma_{x_ix_j}\colon [0, 1]\to M$ from $x_i$ to $x_j$.
  Let $l$ be the minimum integer such that $l\geq \frac{d_g(x_i, x_j)}{\epsilon/2 - \tau}$.
  Then, by the choice of $\tilde{d}$, using the inequality \eqref{eq:TDist},
  \begin{equation}
    \tilde{d}\left(
      T\left( \gamma_{x_ix_j}  \left( \frac{k}{l} \right) \right) 
    , T\left(\gamma_{x_ix_j}\left(\frac{k+1}{l}\right)\right)\right)
    \leq {d}_g\left(
    T\left( \gamma_{x_ix_j}  \left( \frac{k}{l} \right) \right) , T\left(\gamma_{x_ix_j}\left(\frac{k+1}{l}\right)\right)\right) + \tau    < \epsilon.
  \end{equation}
  This implies that $T(\gamma_{x_ix_j}(0)) T(\gamma_{x_ix_j}(1/l)) \dots T(\gamma_{x_ix_j}(1))$ is a path of $\Gamma$ from $x_i$ to $x_j$.
  Hence we obtain
  \begin{equation}
    d_{\Gamma}(x_i, x_j) \leq \epsilon\frac{d_g(x_i, x_j)}{\epsilon/2 - \tau} + \epsilon
    \leq 4d_g(x_i, x_j) + \epsilon
  \end{equation}
  for any $x_i, x_j \in \data$.
\end{proof}

We will show that ${\Gamma_{m,\epsilon}}$ and ${\Gamma^N_{\epsilon}}$ are locally almost regular, and satisfy the rough volume-doubling property under the existence of the map $T$.
\begin{proposition}\label{prop:DataGraphVolD1}
  There exists a constant $C = C(m, K, D,\alpha, L)>0$ such that the following property holds.
  Let $\epsilon\in (0,1)$ with $4 \tau < {\epsilon}$.
  If $\tilde{\epsilon} \leq \epsilon/24$ and $2\tilde{a}A \le 1$ for \eqref{eq:TDist} and \eqref{eq:TMeas},
  then $\Gamma = \datagraph$ is $C$-locally almost regular and satisfies the rough $C$-volume-doubling property.
\end{proposition}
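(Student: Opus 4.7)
My plan is to reduce both assertions of the proposition --- local almost-regularity and rough volume doubling --- to Bishop-Gromov doubling on $\rho\vol_g$-balls in $M$, transferred to the graph via the map $T$. For $\Gamma_{m,\epsilon}$ the vertex and edge weights $w_V\equiv 1/n$ and $w_E\equiv\vol_g(M)/(n(n-1)\omega_m\epsilon^m)$ are constant, so the first and third ratios in \eqref{eq:AlmostReg} are automatically $1$; only the degree ratio $\deg(x)/\deg(y)$ for adjacent $x,y$ requires work. Since $\vol_\Gamma(W)=|W|/n$, rough doubling amounts to bounding $|B_\Gamma(x,2r)|/|B_\Gamma(x,r)|$ for $r>\epsilon$.

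I transfer these counting problems to the manifold via three tools. Applying Remark~\ref{rem:TMeas} with $\phi\equiv 1$ and the hypothesis $2\tilde aA\le 1$ yields, for $s\ge\tilde\epsilon$,
\[
\tfrac{2n}{3}\,\rho\vol_g\bigl(B_g(x,s-\tilde\epsilon)\bigr)\le |\data\cap B_g(x,s)|\le 2n\,\rho\vol_g\bigl(B_g(x,s+\tilde\epsilon)\bigr).
\]
Lemma~\ref{lem:TDist} gives the inclusions $\data\cap B_g(x,(r-\epsilon)/4)\subset B_\Gamma(x,r)\subset\data\cap B_g(x,(L+1)r)$ for $r>\epsilon$; I complement the left inclusion --- which degenerates when $r\searrow\epsilon$ --- by the combinatorial observation that, whenever $r>\epsilon$, $B_\Gamma(x,r)$ contains $\{x\}$ and every $1$-neighbor of $x$, and the pseudo-metric sandwich $\tilde d\le d_g+\tau$ forces $\data\cap B_g(x,\epsilon-\tau)$ into this union. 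Under $4\tau<\epsilon$ and $\tilde\epsilon\le\epsilon/24$ one has $\epsilon-\tau-\tilde\epsilon>\epsilon/2$, so this yields the uniform bound $|B_\Gamma(x,r)|\ge\tfrac{2n}{3}\rho\vol_g(B_g(x,\epsilon/2))$ for every $r>\epsilon$. Finally, Bishop-Gromov on $(M,g)\in\RicClass$ gives $\vol_g(B_g(x,2s))\le C(m,K,D)\vol_g(B_g(x,s))$ for $s\le D$, and $\max\rho/\min\rho\le\alpha$ promotes this to $\rho\vol_g(B_g(x,2s))\le \alpha C(m,K,D)\,\rho\vol_g(B_g(x,s))$; crucially, $\vol_g(M)$ cancels here because only the density ratio enters.

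Assembling the pieces is now casework. For the degree ratio, $\{x,y\}\in E$ implies $d_g(x,y)\le(L+1)\epsilon$ and hence $B_g(y,L\epsilon+\tau+\tilde\epsilon)\subset B_g(x,(2L+2)\epsilon)$; a bounded number $\lceil\log_2(4L+4)\rceil$ of $\rho\vol_g$-doublings gives $(\deg(y)+1)/(\deg(x)+1)\le C(m,K,D,\alpha,L)$, and $\deg(x)\ge 1$ is automatic from $\{x,y\}\in E$. For rough doubling I split on $r$: when $r\in(\epsilon,3\epsilon]$ the target ball has at most $5$-edge paths and fits inside $\data\cap B_g(x,(5L+3)\epsilon)$; when $r\in(3\epsilon,4D+1]$ the inclusion $B_\Gamma(x,r)\supset\data\cap B_g(x,r/7)$ pairs with $B_\Gamma(x,2r)\subset\data\cap B_g(x,2(L+1)r)$ to give a fixed ratio of radii; when $r>4D+1$, the bound $\diam(V,d_\Gamma)\le 4D+1$ (itself from Lemma~\ref{lem:TDist}) forces $B_\Gamma(x,r)=V$ and the ratio equals $1$. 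In each case the argument reduces to a bounded, $L$-dependent number of $\rho\vol_g$-doublings.

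The main obstacle is the regime $r$ just above $\epsilon$, where the quantitative inclusion of Lemma~\ref{lem:TDist} has a vanishing effective radius and cannot sustain a lower bound on $|B_\Gamma(x,r)|$. The remedy --- and the reason the hypothesis $4\tau<\epsilon$ is indispensable --- is the trivial combinatorial inclusion above: it keeps $|B_\Gamma(x,r)|$ comparable to $\rho\vol_g(B_g(x,\epsilon/2))$ down to $r\searrow\epsilon$, at which point the Bishop-Gromov comparison takes over. A secondary worry is that the naive upper bound $\max\rho\le\alpha/\vol_g(M)$ introduces an unwanted factor $\vol_g(M)^{-1}$; this never appears in the final estimates because every conclusion compares two $\rho\vol_g$-balls, so only the density ratio $\alpha$ survives.
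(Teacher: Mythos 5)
The proposal is correct and follows essentially the same strategy as the paper: transfer graph-ball counts to $\rho\vol_g$-measures of manifold balls via the map $T$ from Lemma~\ref{lem:ExistT} and Remark~\ref{rem:TMeas}, then invoke Bishop--Gromov, with the (implicit in the paper, explicit in your write-up) observation that only the density ratio $\alpha$ enters since $\Gamma_{m,\epsilon}$ has constant vertex and edge weights. The one cosmetic difference is the treatment of $r$ just above $\epsilon$: the paper notes the doubling ratio is constant on $(\epsilon,3\epsilon/2)$ and reduces to $r=3\epsilon/2$, whereas you uniformly lower-bound $|B_\Gamma(x,r)|$ via the combinatorial inclusion $\data\cap B_g(x,\epsilon-\tau)\subset B_\Gamma(x,r)$; both patches are valid and yield the same conclusion.
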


\begin{proof}
  We have $\diam (\data,{d_\Gamma}) \leq 4D +1$ by lemma~\ref{lem:TDist}.
  Hence, the inequality \eqref{eq:VolDoub} is obvious for $r > 4D +1$,
  so we can assume $r\leq 4D +1$.

  For $r > \epsilon/12$ and $x_i\in \data$, by the choice of $T$, using remark~\ref{rem:TMeas} and Theorem~\ref{thm:RiccComp},
  \begin{equation}\label{eq:VolDoubStep}
    \frac{\#B(x_i, 2r)\cap \data}{\# B(x_i, r)\cap \data}
    \leq 4\frac{\rho\vol_g(B(x_i, 2r + \epsilon/24))}{\rho\vol_g(B(x_i, r- {\epsilon}/24))}
    \leq 4\alpha \frac{\vol_g(B(x_i, 5r/2))}{\vol_g(B(x_i, r/2))}
    \leq 4\alpha \frac{V_K(5r/2)}{V_K(r/2)}.
  \end{equation}
  Hence, we have
  \begin{equation}
    \frac{\deg(x_i)}{\deg(x_j)} 
    \leq \frac{\#B(x_i,(L+1)\epsilon )\cap \data}{\# B(x_j, \frac{3\epsilon}{4})\cap \data}
    \leq C(m, K, D, L, \alpha)
  \end{equation}
  for $x_i, x_j \in \data$ with $\tilde{d}(x_i,x_j) < \epsilon$,
  using $4\tau < \epsilon$.
  Thus $\Gamma= \datagraph$ is $C(m, K, D, L, \alpha)$-locally almost regular.

  If $r \in [3\epsilon/2, 4D +1]$, by Lemma~\ref{lem:TDist}, using the inequality \eqref{eq:VolDoubStep}, we have
  \begin{equation}
    \frac{\volgraph(B_{\Gamma}( x,2r) )}{\volgraph(B_{\Gamma} (x, r))}
    \leq \frac{\#B(x, 2r(L+1))\cap \data}{\#B(x, \frac{r-\epsilon}{4}) \cap \data}
    \leq C(m, K, D, L, \alpha).
  \end{equation}
  Note that $\frac{\volgraph(B(x_i, 2r))}{\volgraph(B(x_i, r))}$ is constant for $r\in (\epsilon, 3\epsilon/2)$.
  Thus, a constant $C = C(m, K, D, L, \alpha)$ exists such that $\datagraph$ satisfies the rough $C$-volume-doubling property.
\end{proof}

\begin{proposition}\label{prop:DataGraphVolD2}
  Let $v \in (0, 1)$, there exists a constant $C = C(m, K, D,\alpha, L,v)>0$ such that the following property holds.
  Let $\epsilon\in (0,1)$ with $4 \tau < {\epsilon}$.
  If $\tilde{\epsilon} \leq \epsilon/24$ and $2\tilde{a}A \le 1$ for \eqref{eq:TDist} and \eqref{eq:TMeas},
  supposing $\vol_g(M) \ge v$,
  $\Gamma = \datangraph$ is $C$-locally almost regular and satisfies the rough $C$-volume-doubling property.
\end{proposition}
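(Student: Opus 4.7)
The strategy is to reduce the statement for $\Gamma = \datangraph$ to Proposition~\ref{prop:DataGraphVolD1} via a uniform two-sided estimate on the degree function. Both graphs share the same vertex set, edge set, and graph distance $d_\Gamma$; they differ only in the weights. For $\datangraph$ the edge weight is the constant $1/(n(n-1)\omega_m\epsilon^m)$, while the vertex weight is $\deg(x)/(n(n-1)\omega_m\epsilon^m)$. Thus once $\deg(x_i)$ is pinned down within constant factors, both required properties transfer from the already-treated case essentially for free.

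First I would use the volume lower bound $\vol_g(M)\ge v$, together with $\int_M\rho\,d\vol_g = 1$ and $\max\rho/\min\rho\le\alpha$, to bound $\rho$ pointwise: $\min\rho\le 1/\vol_g(M)\le 1/v$ forces $\max\rho\le\alpha/v$, while $\max\rho\ge 1/\vol_g(M)\ge 1/V_K(D)$ gives $\min\rho\ge 1/(\alpha V_K(D))$. Combined with the Bishop-Gromov consequences listed after Theorem~\ref{thm:RiccComp}, this yields
\[
  C^{-1}\epsilon^m
  \;\le\; \rho\vol_g\bigl(B(x_i,r)\bigr)
  \;\le\; C\epsilon^m
\]
for $r$ comparable to $\epsilon$, with $C=C(m,K,D,\alpha,L,v)$. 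Translating these $\rho\vol_g$-estimates to counting estimates on $\data$ through Remark~\ref{rem:TMeas} (legitimate because $\tilde\epsilon\le\epsilon/24$ and $2\tilde aA\le 1$), and sandwiching the edge set by $d_g$-balls by means of $4\tau<\epsilon$, I would obtain
\[
  C^{-1}n\epsilon^m \;\le\; \deg(x_i) \;\le\; Cn\epsilon^m
\]
uniformly in $x_i\in\data$.

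Given this uniform degree bound, $C$-local almost regularity is immediate: the vertex-weight ratio $w_V(x)/w_V(y)=\deg(x)/\deg(y)$ and the bare degree ratio are bounded for adjacent pairs by the degree-ratio estimate already derived in the proof of Proposition~\ref{prop:DataGraphVolD1}, while the edge-weight ratio is identically $1$. For the rough volume-doubling property, the uniform degree bound yields $\vol_\Gamma(W)\asymp \#W/(n-1)$ up to constants $C(m,K,D,\alpha,L,v)$, so
\[
  \frac{\vol_\Gamma(B_\Gamma(x,2r))}{\vol_\Gamma(B_\Gamma(x,r))}
  \;\le\; C\,\frac{\#B_\Gamma(x,2r)}{\#B_\Gamma(x,r)},
\]
and the right-hand side is bounded by the rough volume-doubling property of $\datagraph$ from Proposition~\ref{prop:DataGraphVolD1}.

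The mildly delicate step is the lower bound on $\deg(x_i)$: it requires $\rho\vol_g(B(x_i,\epsilon/2))\ge C(m,K,D,\alpha)v\epsilon^m$, which is exactly where the assumption $\vol_g(M)\ge v$ is essential and where the dependence of the final constant on $v$ enters. The upper bound on $\deg(x_i)$ and the translation into the ratio bound are routine consequences of $\rho\le\alpha/v$ together with Bishop-Gromov and Remark~\ref{rem:TMeas}.
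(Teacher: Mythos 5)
Your approach matches the paper's proof: establish the uniform two-sided degree bound $C^{-1}\epsilon^m\le \deg(x_i)/n\le C\epsilon^m$ and then transfer both local almost regularity and rough volume-doubling from Proposition~\ref{prop:DataGraphVolD1}, which is exactly what the paper does via \eqref{eq:DegRegul}. One small misattribution: the lower bound on $\deg(x_i)$ does not actually need $\vol_g(M)\ge v$, since Bishop--Gromov gives $\rho\vol_g(B(x,r))\ge C(m,K,D)\,r^m\,\min\rho\cdot\vol_g(M)\ge C(m,K,D,\alpha)\,r^m$ using only $\min\rho\cdot\vol_g(M)\ge 1/\alpha$; the $v$-dependence enters solely through the upper bound $\rho\le\alpha/v$, which is also how \eqref{eq:DegRegul} is structured with $\vol_g(M)$ appearing only in the denominator of the upper estimate.
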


\begin{proof}
  Using Remark~\ref{rem:TMeas}, by Theorem~\ref{thm:RiccComp},
  \begin{equation}\label{eq:DegRegul}
    C(m, K, D,\alpha, L)^{-1}\epsilon^{m}\leq \frac{\deg(x_i)}{n} \leq \frac{C(m, K, L, \alpha)}{\vol_g(M)} \epsilon^m
  \end{equation}
  holds.
  Combining this with Proposition~\ref{prop:DataGraphVolD1} concludes this proposition.
\end{proof}

We will also  show that ${\Gamma_{m,\epsilon}}$ and ${\Gamma^N_{\epsilon}}$ satisfy the Poincar\'{e} inequality using the $T$.
\begin{proposition}\label{prop:DataPoincare1}
  Let $v \in (0, 1)$, there exist constants $C_1 = C_1(m, K, D,\alpha, L,v)>0$ and $C_2= C_2(L)>0$ such that the following property holds.
  Let $\epsilon\in (0,1)$ with $4 \tau < {\epsilon}$.
  If $\tilde{\epsilon} \leq \epsilon/24$ and $2\tilde{a}A \le 1$ for \eqref{eq:TDist} and \eqref{eq:TMeas},
  supposing $\vol_g(M) \ge v$,
  $\datagraph$ and $\datangraph$ satisfy the $(C_1, C_2)$-Poincar\'{e} inequality.
\end{proposition}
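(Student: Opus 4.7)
My plan is to transfer a Buser-type Poincaré inequality from the manifold to the graph by an explicit chaining argument that routes graph paths along manifold geodesics, discretized by the map $T$ from Lemma~\ref{lem:ExistT}.

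I would first use the variational identity
\begin{equation*}
  \|\phi - \phi_{B_\Gamma}\|_{2,B_\Gamma}^{2}
  = \frac{1}{2\,\volgraph(B_\Gamma)^{2}}
    \sum_{x_i,x_j\in B_\Gamma}
    |\phi(x_i)-\phi(x_j)|^{2}\,w_V(x_i)\,w_V(x_j)
\end{equation*}
to reduce the problem to bounding every pairwise difference. For each ordered pair $(x_i,x_j) \in B_\Gamma(p,r)^{2}$, I would fix a unit-speed $g$-geodesic $\sigma_{ij}$ from $x_i$ to $x_j$, sample it at spacing $\epsilon/8$, and push each sample through $T$ to produce a discrete path $y_0^{ij}=x_i,\,y_1^{ij},\,\dots,\,y_{l_{ij}}^{ij}=x_j$ of length $l_{ij} \le C(L)\,r/\epsilon$. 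The bound $\tilde d \le d_g + \tau$ combined with~\eqref{eq:TDist} makes each step a genuine edge of~$\Gamma$, and Lemma~\ref{lem:TDist} shows that all vertices used lie inside $B_\Gamma(p,\sigma r)$ for some $\sigma = O(L)$. Telescoping and Cauchy-Schwarz then give the pointwise estimate
\begin{equation*}
  |\phi(x_i)-\phi(x_j)|^{2}
  \le l_{ij}\,\epsilon^{2}\sum_{k=0}^{l_{ij}-1}
    |\diffgraph \phi|^{2}_{y_k^{ij}\,y_{k+1}^{ij}}.
\end{equation*}

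Substituting and exchanging orders of summation rewrites the pairwise sum as $\epsilon^{2}$ times a weighted edge-sum, where each edge $e \in E_\epsilon(\data)$ picks up an edge-traversal count $N_e$. The main obstacle is the bound
$N_e \le C(m,K,D,\alpha,L,v)\,r\,w_E(e)\,\volgraph(B_\Gamma(p,\sigma r))$,
which I plan to establish by a fibered-integration argument: edge $e=\{y,y'\}$ is used by $\gamma_{ij}$ only when $\sigma_{ij}$ meets a $2\tilde\epsilon$-neighborhood of $y$ or~$y'$, so applying~\eqref{eq:TMeas} converts discrete counts into integrals of $\rho^{2}$ against $\vol_g^{2}$, and then the volume of pairs whose connecting geodesic passes near a fixed point is estimated via the Bishop-Gromov comparison (Theorem~\ref{thm:RiccComp}) by integrating along the exponential tube from one endpoint. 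The lower volume hypothesis $\vol_g(M)\ge v$ enters here to obtain a two-sided control on $\rho$ and on local volumes, making the constant depend only on $(m,K,D,\alpha,L,v)$.

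Putting the pieces together and normalizing by $\volgraph(B_\Gamma)^{2}$ yields the desired $(C_1,C_2)$-Poincaré inequality for $\datagraph$. The same argument delivers the corresponding bound for $\datangraph$, since the two weighted graphs differ only by locally bounded re-weightings of vertex and edge measures in view of Propositions~\ref{prop:DataGraphVolD1} and~\ref{prop:DataGraphVolD2}; indeed, absorbing these bounded factors merely rescales the constant $C_1$. The primary technical work is therefore concentrated in the edge-counting step, while the remainder is a bookkeeping combination of Cauchy-Schwarz with the already-proved geometric estimates.
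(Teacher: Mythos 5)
Your overall strategy coincides with the paper's: route graph paths along $T$-discretized geodesics, square-telescope, exchange summation, and close via a segment-inequality type estimate on how often paths traverse a given object (the paper does this in \eqref{eq:DataPoincareStep2} and the chain of inequalities that follows it). The place where you diverge is that you keep raw edge differences throughout, whereas the paper first establishes the common-neighbor averaging estimate~\eqref{eq:DataPoincareStep1}, which replaces a single edge-increment by the \emph{vertex-local} gradient $|\diffgraph\phi|_x$ averaged over all $\deg(x)$ incident edges; after that, the subsequent chain only needs to count vertex-cell traversals $T^{-1}(x_i)$, which the Bishop–Gromov/segment-inequality argument handles cleanly.

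The gap is in the claimed edge-traversal bound $N_e \le C\,r\,w_E(e)\,\volgraph(B_\Gamma(p,\sigma r))$. With the natural definition coming out of your exchange of sums, $N_e=\sum_{(i,j)\colon e\in\mathrm{path}(i,j)} l_{ij}\,w_V(x_i)\,w_V(x_j)$, one has $\sum_e N_e = \sum_{(i,j)} l_{ij}^{2}\,w_V(x_i)\,w_V(x_j)$, which for $\datagraph$ (where $w_V\equiv 1/n$, $l_{ij}\asymp r/\epsilon$) is of order $(n_B/n)^{2}(r/\epsilon)^{2}$ with $n_B=\#(B_\Gamma\cap\data)$. Dividing by the number of edges meeting $\sigma B$, of order $n_B\cdot\deg \sim n_B\cdot n\epsilon^{m}/\vol_g(M)$, the \emph{average} value of $N_e$ is of order $(r/\epsilon)^{2}\,w_E(e)\,\volgraph(B_\Gamma)$; your proposed uniform upper bound is smaller than this average by a factor $r/\epsilon^{2}\gg 1$, so it cannot hold. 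The correct order of a useful uniform bound would be $N_e\lesssim (r/\epsilon)^{2}\,w_E(e)\,\volgraph(B_\Gamma(p,\sigma r))$, and whether the fibered-integration argument you sketch actually delivers a \emph{uniform} (not merely average) bound of that strength is exactly the hard content that you have deferred; in particular you must exploit that $e$ is a \emph{specific} edge, not just a point the geodesic passes near, or you overcount by a factor of $\deg$. The paper's trick~\eqref{eq:DataPoincareStep1} is precisely what lets one avoid this: after averaging over common neighbors, the sum indexed by edges collapses to a sum indexed by vertices, and the segment inequality then only needs to control vertex-cell occupancy, for which the Bishop–Gromov comparison gives the right answer without any edge bookkeeping.
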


\begin{proof}
  We first prove this proposition for $\datagraph$.
  Let $\Gamma = \datagraph$.
  The inequality \eqref{eq:PoinIneq} is trivial for $r\in (0, \epsilon]$, so we assume $r>\epsilon$.
  Let $\phi\in L^2(V, \vol_\Gamma)$.
  We will show that for $x, y \in \data$ with $d_g(x, y)\leq \epsilon/2$,
  \begin{equation}\label{eq:DataPoincareStep1}
    |\phi(x)-\phi(y)|\leq \epsilon C(m,\alpha,K,D,v)(|\diffgraph \phi|_x + |\diffgraph \phi|_y).
  \end{equation}

  Since $\tilde{d} \in \mathcal{I}_{L,\tau}$, using $\epsilon> 4\tau$,
  \begin{equation}
    \tilde{B}(x, \epsilon)\cap \tilde{B}(y, \epsilon)\supset
    B\left(x, {\epsilon -\tau-d_g(x, y)}\right)
    \supset B(x, \epsilon/4),
  \end{equation}
  where $\tilde B(x,r)$ denotes the metric ball of $(M,\tilde d)$ at center $x\in M$ and radius $r>0$.
  Hence, using Remark~\ref{rem:TMeas}, by Theorem~\ref{thm:RiccComp}, we obtain
  \begin{align}
    \left|\frac{\phi(x) - \phi(y)}{\epsilon}\right| 
    \leq &\frac{\sum_{z\in \tilde{B}(x,\epsilon)\cap\tilde{B}(y,\epsilon)\cap\data} \left|\phi(x) - \phi(z)\right| + \sum_{z\in \tilde{B}(x,\epsilon)\cap\tilde{B}(y,\epsilon)\cap\data}\left| \phi(y) - \phi(z)\right|}{\epsilon\left( \#\tilde{B}(x,\epsilon)\cap\tilde{B}(y,\epsilon)\cap\data\right)}\\
    \leq &\left(\sum_{x_k\in \tilde{B}(x,\epsilon)}\frac{(\diffgraph \phi_{x_k x})^2}{\#B(x, \epsilon/4)\cap\data}\right)^{\frac12} + \left(\sum_{x_k\in \tilde{B}(y,\epsilon)}\frac{(\diffgraph \phi_{x_k y})^2}{\#B(y, \epsilon/4)\cap\data}\right)^{\frac12}\\
    \leq &C(m,K,D,\alpha,v)(|\diffgraph \phi|_x + |\diffgraph \phi|_y).
  \end{align}
  Thus the inequality \eqref{eq:DataPoincareStep1} holds for $d_g(x, y)\leq \epsilon/2$.

  Let $r>0$, and let $N \in \integer$ be the minimum integer such that $ \epsilon N> 3r(L+1)$. 
  For $x, y\in \data$ with $d_{\Gamma}(x , y) \leq r$, we have
  \begin{align}
    d_g\left(T\left(\gamma_{xy}\left(\frac{s}{N}\right)\right), T\left(\gamma_{xy}\left(\frac{s+1}{N}\right)\right)\right) 
    &\leq 2\tilde{\epsilon} + \frac{d_g(x, y)}{N}
    \leq \frac{\epsilon}{12} + \frac{(L+1)r}{N}
    < \frac{\epsilon}{2}
  \end{align}
  for every $s\in [0, N) \cap \integer$.
  Hence, by \eqref{eq:DataPoincareStep1}, we have 
  \begin{equation}\label{eq:DataPoincareStep2}
    |\phi(x) - \phi(y)|\leq \epsilon C(m,\alpha,K,D,v)\sum_{s=0}^N|\diffgraph \phi|_{T(\gamma_{xy}(\frac{s}{N}))} 
  \end{equation}
  for $d_{\Gamma}(x,y)\leq r$. 
  Therefore, for $B= B_{\Gamma}(p, r)$, we have 
  \begin{align}
    \|\phi- \phi_B\|_{2,B}^2 &\leq \frac{1}{n^2\volgraph(B)^2}\sum_{x\in B}\sum_{y\in B}|\phi(x)- \phi(y)|^2 \\
                             &\leq \frac{\epsilon^2 N C}{n^2\volgraph(B)^2}\sum_{s=0}^N \sum_{x\in B}\sum_{y\in B}|\diffgraph \phi|^2_{T\left(\gamma_{xy}\left(\frac{s}{N}\right)\right)}\\
                             &\leq \frac{\epsilon^2 N C }{\volgraph(B)^2}\sum_{s=0}^N 
                             \int_{B_M(B, \tilde{\epsilon})}\int_{B_M(B, \tilde{\epsilon})}|\diffgraph \phi|^2_{T\left(\gamma_{xy}\left(\frac{s}{N}\right)\right)}\rho(x)\rho(y)\, dxdy\\
                             &\leq \frac{\epsilon^2 N C }{\volgraph(B)^2}\sum_{s=0}^N 
                             \int_{B_M(B, \tilde{\epsilon})}\int_{B_M(B, \tilde{\epsilon})}|\diffgraph \phi|^2_{T\left(\gamma_{xy}\left(\frac{s}{N}\right)\right)}\rho\left(\gamma_{xy}\left(\frac{s}{N}\right)\right)^2\, dxdy\\
                             &\leq \frac{\epsilon^2 N^2 C}{\volgraph(B)^2} \int_{B_M(p, C(L)r)}|\diffgraph \phi|^2_{(T(x))}\, \rho^2 dx\\
                             &\leq r^2C\|\diffgraph \phi\|_{2,C(L)B}^2,
  \end{align}
  where we denoted $C(m,K,D,\alpha,L,v)$ by $C$.
  We used \eqref{eq:DataPoincareStep2} in the second inequality, Remark~\ref{rem:TMeas} in the third inequality.
  Theorem~\ref{thm:RiccComp} gives the fifth line similarly to the segment inequality \cite{MR3469435}*{Proposition~7.1.10}.
  Remark~\ref{rem:TMeas} and Lemma~\ref{lem:TDist} give the last line.
  Therefore $\Gamma = \datagraph$ satisfies the $C(m, K, D,\alpha, L,v)$-Poincar\'{e} inequality.

  The argument for $\datangraph$ is identical, using the inequality~\eqref{eq:DegRegul}.
\end{proof}

\section{Discretization maps: Lower bounds for the eigenvalues of Laplacians on Riemannian manifolds}
\label{sec:DiscMap}

For $m,n\in\N$ and $K,D,\alpha, \mathcal{L}\ge 1$,
draw a data set $\data\colon \Omega\to M^n$ from the probability measure $\rho\,\vol_g$ on a closed Riemannian manifold $(M,g)\in\mathcal{M}_m(K,D)$, where the density $\rho$ belongs to $\mathcal{P}(M:\alpha,\mathcal L)$.
For $L \ge 1$ and $\tau \in (0, 1)$,
let $\tilde d\in\mathcal I_{L,\tau}(M,d_g)$ be a pseudo-metric.
In this section,  for every Lipschitz function $f\colon M\to\R$ and for $\epsilon>0$,
we compare the continuous Dirichlet energy
\[
  \int_M |\diffcont f|^2 \rho^2 \, d\vol_g
\]
with the discrete Dirichlet energy
\[
  \frac{1}{n(n-1)\omega_m\epsilon^m}
  \sum_{i=1}^n\sum_{x_j\in\tilde B(x_i,\epsilon)}
  \bigl(\diffgraph(f|_{\data})_{ij}\bigr)^2,
\]
where $\tilde B(p,r)=\{y\in M\colon \tilde d(y,p)<r\}$ for any $p\in M$ and any $r>0$.
We carry out this comparison via the discretization map
\[
  \Lip(M)\;\ni\;f\;\mapsto\;f\bigl|_{\data}\colon \data\to\R.
\]

For every $k\in\N$, we will show that the comparison provides a sharp upper bound for the graph eigenvalue $\lambda_k\bigl(\Gamma_{m,\epsilon}(\data,\tilde d)\bigr)$ in terms of $\lambda_k(\Delta_\rho)$, and a similar estimate holds for $\lambda_k\bigl(\Gamma^N_\epsilon(\data,\tilde d)\bigr)$ using $\lambda_k(\Delta_\rho^N)$.

Define $\diffgraph f\colon M\times M\to\R$ by $\diffgraph f_{xy}=\dfrac{f(x)-f(y)}{\epsilon}$ for $x,y\in M$ and $f:M\to\R$.

We have the following approximations of integrals on $(M,g)$, which is close to \cite{MR4804972}*{Lemma~3.2} and \cite{MR4804972}*{Lemma~3.3}.

\begin{lemma}\label{lemma1ofdirichlet}
  There exists a constant $C=C(m,L)>0$
  such that for all $\epsilon,a\in(0,1)$ with $\tau<\epsilon\le 1/\sqrt{K}$,
  we have the two estimates hold:
  \begin{enumerate}
    \item For any Lipschitz function $f\colon  M \to \R$, we have
      \begin{equation}\label{eq:DirichletDisc}
        \begin{split}
          &\mathbb{P}\Bigl(
            \bigl|
            \tfrac1{n(n-1)}
            \textstyle\sum_{i=1}^n
            \sum_{x_j\in\tilde B(x_i,\epsilon)}
            (\diffgraph f_{x_ix_j})^2
            -\!\!
            \int_M\!\!\int_{\tilde B(x,\epsilon)}
            (\diffgraph f_{xy})^2
            \rho(x)\rho(y)\,dy\,dx
            \bigr|\\
          &\qquad >
          a\epsilon^mC (1 + \max\rho)(\Lip f)^2
        \Bigr)
        \le 2(en+1)e^{-na^2\epsilon^m}.
        \end{split}
      \end{equation}
    \item 
      Let $k\in\N$, and let $f_1,\dots,f_k\in\Lip(M)$.
      Then, with probability at least $1-2(ne+1)k^2e^{-na^2\epsilon^m}$, we have
      \begin{equation}
        \begin{split}
          &\Bigl|
          \tfrac1{n(n-1)}
          \textstyle\sum_{i=1}^n
          \sum_{x_j\in\tilde B(x_i,\epsilon)}
          (\diffgraph f_{x_ix_j})^2
          -\!\!
          \int_M\!\!\int_{\tilde B(x,\epsilon)}
          (\diffgraph f_{xy})^2
          \rho(x)\rho(y)\,dy\,dx
          \Bigr|\\
          &\quad\le
          a\epsilon^mCk(1 + \max\rho)
          \max_{1\le i\le k}(\Lip f_i)^2
        \end{split}
      \end{equation}
      for any $f=\sum_{s=1}^k a_s f_s$ with $\sum_{s=1}^k a_s^2=1$.
  \end{enumerate}
\end{lemma}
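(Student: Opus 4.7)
The plan is to view the double sum as a $U$-statistic and apply the Bernstein inequality (Lemma~\ref{lem:Bern}) twice: an outer pass to the marginal of the kernel and an inner conditional pass to the second variable. Set $F(x,y):=(\diffgraph f_{xy})^2\mathbf{1}_{\tilde B(x,\epsilon)}(y)$ and $G_f(x):=\int_M F(x,y)\rho(y)\,dy$. The diagonal contributes nothing, so the empirical quantity equals $\tfrac{1}{n(n-1)}\sum_{i\ne j}F(x_i,x_j)$ and the continuous integral equals $\int_M G_f(x)\rho(x)\,dx$. Since $\tilde d\in\mathcal{I}_{L,\tau}$ with $\tau<\epsilon$, any $y\in\tilde B(x,\epsilon)$ satisfies $d_g(x,y)\le L\epsilon+\tau\le(L+1)\epsilon$, and Bishop-Gromov (Theorem~\ref{thm:RiccComp}) together with $\epsilon\le 1/\sqrt{K}$ yields $\vol_g(\tilde B(x,\epsilon))\le C(m,L)\epsilon^m$, while $|\diffgraph f_{xy}|\le(L+\tau/\epsilon)\Lip f\le C(L)\Lip f$. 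Hence $\|F\|_\infty\le C(L)(\Lip f)^2$ and $\|G_f\|_\infty\le C(m,L)(\Lip f)^2\max\rho\cdot\epsilon^m$.

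For the outer step, I would apply Lemma~\ref{lem:Bern} to $G_f$ with $\delta^2=a^2\epsilon^m$. Using $\sigma^2\le\|G_f\|_\infty\int G_f\rho\le C(m,L)(\Lip f)^4(\max\rho)^2\epsilon^{2m}$ together with $a,\epsilon\le 1$, the Bernstein deviation $2\|G_f\|_\infty\delta^2+4\sigma\delta$ is bounded by $Ca\epsilon^m(\Lip f)^2(1+\max\rho)$ with probability at least $1-2e^{-na^2\epsilon^m}$. For the inner step, I would condition on $x_i$: the $n-1$ i.i.d.\ random variables $F(x_i,x_j)$ ($j\ne i$) have mean $G_f(x_i)$, sup-norm $C(L)(\Lip f)^2$, and variance at most $\|F\|_\infty G_f(x_i)\le C(m,L)(\Lip f)^4\max\rho\cdot\epsilon^m$; Bernstein with $\delta^2=a^2\epsilon^m$ then yields an error of the same order (the $\sqrt{\max\rho}$ from $\sigma$ is absorbed into $1+\max\rho$) with conditional probability $\ge 1-2e^{-(n-1)a^2\epsilon^m}$. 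Since $a^2\epsilon^m\le 1$, this is bounded by $2e\cdot e^{-na^2\epsilon^m}$, and a union bound over $i=1,\dots,n$ gives a failure probability $\le 2en\,e^{-na^2\epsilon^m}$. Combining the two events produces total failure probability $\le 2(en+1)e^{-na^2\epsilon^m}$, and summing the two error estimates (absorbing the factor $2$ into $C(m,L)$) proves \eqref{eq:DirichletDisc}.

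Part (ii) follows by polarization: writing $f=\sum_s a_sf_s$ expands $(\diffgraph f_{xy})^2=\sum_{s,t=1}^k a_sa_t\,\diffgraph(f_s)_{xy}\diffgraph(f_t)_{xy}$, so it suffices to run the same two-step Bernstein argument on each of the $k^2$ signed kernels $F_{st}(x,y):=\diffgraph(f_s)_{xy}\diffgraph(f_t)_{xy}\mathbf{1}_{\tilde B(x,\epsilon)}(y)$, with all sup-norm and variance bounds replaced by their $\max_{1\le s\le k}(\Lip f_s)^2$ analogues. A union bound over the $k^2$ pairs multiplies the failure probability by $k^2$, and Cauchy-Schwarz $(\sum_s|a_s|)^2\le k\sum_s a_s^2=k$ produces the factor $k$ on the right-hand side.

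I expect the main obstacle to be almost entirely bookkeeping: pinning down that the $(n-1)$-sample exponent in the inner Bernstein step is absorbed into the target $e^{-na^2\epsilon^m}$ at cost of the multiplicative factor $e$ (using $a^2\epsilon^m\le 1$), and that the $\sqrt{\max\rho}$ appearing through $\sigma$ in the inner step is swallowed into $(1+\max\rho)$. Together these two points force the precise constants $2(ne+1)e^{-na^2\epsilon^m}$ and the factor $(1+\max\rho)$ that appear in the statement.
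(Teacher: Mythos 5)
Your proof is correct and follows essentially the same route as the paper's: a conditional Bernstein pass in the inner variable, a Bernstein pass in the outer variable, a union bound over the $n$ inner events absorbing the $(n-1)$-sample exponent into $e\cdot e^{-na^2\epsilon^m}$ via $a^2\epsilon^m\le 1$, and polarization with a union bound over $k^2$ pairs plus Cauchy--Schwarz for part (ii). The paper performs exactly these two Bernstein applications in the same order, with the same sup-norm, volume, and variance bounds you derive.
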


\begin{proof}
  We have $|f(x) - f(y)|\leq (\Lip f) (L+1)\epsilon$ for any $x, y\in M$.
  By Theorem~\ref{thm:RiccComp}, $\vol_g(\tilde{B}(x, \epsilon))\leq C(m,L)\epsilon^m$ for $x\in M$.
  Using these two inequalities, by Lemma~\ref{lem:Bern}, we obtain
  \begin{equation}
    \begin{split}
      &\mathbb{P}\Bigl(
        \bigl|
        \tfrac{1}{n-1}\textstyle\sum_{x_j\in\tilde{B}(x_i,\epsilon)}
        (\diffgraph f_{x_ix_j})^2 
        -
        \int_{\tilde{B}(x_i, \epsilon)} 
        (\diffgraph f_{x_iy})^2
        \rho(y)\, dy
        \bigr|\\
      &\qquad > 2a^2\epsilon^m(L+1)^2(\Lip f)^2 + 4 a\epsilon^m\sqrt{\max\rho}C(m,L) (\Lip f)^2
    \Bigr)
    \le  2\exp(-(n-1)a^2\epsilon^m)
    \end{split}
  \end{equation}
  for any $i \in \{1,\dots,n\}$.
  By Lemma~\ref{lem:Bern}, we also have
  \begin{equation}
    \begin{split}
      &\mathbb{P}\Bigl(
        \bigl|
        \tfrac{1}{n}\textstyle\sum_{i=1}^n\int_{\tilde{B}(x_i, \epsilon)}(\diffgraph f_{x_iy})^2 \, \rho(y) dy 
        - \int_M\int_{\tilde{B}(x, \epsilon)} (\diffgraph f_{xy})^2 \rho(x)\rho(y)\, dxdy
        \bigr|\\
      &\qquad > a\epsilon^{3m/2}(\max\rho)C(m,L) (\Lip f)^2
    \Bigr)
    \le   2\exp(-na^2\epsilon^m).
    \end{split}
  \end{equation}
  By these two estimates, we obtain (i). 
  Through the polarization identity, (i) implies the rest of this Lemma (ii).
\end{proof}

The following lemma gives a comparison $\int_M\int_{\tilde{B}}\diffgraph f^2 \, \rho\vol_g^2$ with $\int_M |\diffcont f|^2\, \rho\vol_g$.
This lemma modifies \cite{MR4804972}*{Lemma 3.2} and \cite{MR4804972}*{Lemma 3.3} using the integral $S_\epsilon$.
\begin{lemma}\label{lemma2ofdirichlet}
  There exists a constant $C = C(m,K,D,\alpha,\mathcal L)>0$ such that the for all $\epsilon\in (0,1)$ with $\tau<\epsilon\leq \frac{1}{\sqrt{K}}$,
  we have
  \begin{equation}\label{eq:Lem2Diri}
    \begin{split}
      &\int_M\int_{\tilde{B}(x, \epsilon)}\left(\diffgraph f_{xy}\right)^2\rho(x)\rho(y)\,d\vol_g(y)\vol_g(x)\\
      &\quad\leq  \frac{\omega_m \epsilon^{m}(1+\epsilon C)}{m+2}\int_M |\diffcont f|^2\rho^2\, d\vol_g + {S_{\epsilon}(M, \tilde d)}(\max\rho)^2(L+1)^2(\Lip f)^2
    \end{split}
  \end{equation}
  for $f\in \Lip(M)$.
\end{lemma}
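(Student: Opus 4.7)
The approach is to split $\int_{\tilde B(x,\epsilon)}$ into a geodesic-ball piece plus a distortion remainder controlled by $S_\epsilon$, and to extract the sharp constant $1/(m+2)$ on the main piece via polar normal coordinates, Cauchy--Schwarz along radial geodesics, and invariance of the Liouville measure on the unit tangent bundle $UM$ under the geodesic flow.

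\emph{Distortion piece.} Since $\tau<\epsilon$ and $\tilde d\in\mathcal I_{L,\tau}(M,d_g)$, we have $d_g(x,y)\le(L+1)\epsilon$ on $\tilde B(x,\epsilon)$, hence $(\diffgraph f_{xy})^2\le(L+1)^2(\Lip f)^2$ there. Writing
\[
\int_{\tilde B(x,\epsilon)}=\int_{B(x,\epsilon)}+\int_{\tilde B(x,\epsilon)\setminus B(x,\epsilon)}-\int_{B(x,\epsilon)\setminus\tilde B(x,\epsilon)}
\]
and dropping the non-positive third term, the second integral contributes at most $(\max\rho)^2(L+1)^2(\Lip f)^2\,S_\epsilon(M,\tilde d)$, which is exactly the error term in \eqref{eq:Lem2Diri}.

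\emph{Main piece.} Use polar normal coordinates at $x$: $y=\exp_x(ru)$ with $r\in(0,\min(\epsilon,t(u)))$, $u\in U_xM$, and $d\vol_g(y)=\Theta_u(r)\,dr\,du$. By Theorem~\ref{thm:RiccComp} and $\epsilon\le 1/\sqrt K$, $\Theta_u(r)\le r^{m-1}(1+CKr^2)$; the Lipschitz bound on $\rho$ together with $\min\rho\ge 1/(\alpha V_K(D))$ (obtained by averaging $\max\rho\le\alpha\min\rho$ against $\int\rho\,d\vol_g=1$) gives $\rho(c_u(r))\le\rho(x)(1+Cr)$ for some $C=C(m,K,D,\alpha,\mathcal L)$. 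Cauchy--Schwarz gives
\[
(f(c_u(r))-f(x))^2\le r\int_0^r\langle\diffcont f(c_u(s)),\dot c_u(s)\rangle^2\,ds;
\]
substituting and swapping the $s$- and $r$-integrations produces the kernel $h(s):=(\epsilon^{m+1}-s^{m+1})/(m+1)$ and the bound
\[
\int_M\!\rho(x)\!\!\int_{B(x,\epsilon)}\!(f(y)-f(x))^2\rho(y)\,dy\,dx \le (1+\epsilon C)\!\!\int_M\!\rho(x)^2\!\!\int_{U_xM}\!\!\int_0^\epsilon \langle\diffcont f(c_u(s)),\dot c_u(s)\rangle^2\,h(s)\,ds\,du\,dx.
\]

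\emph{Collapsing via Liouville invariance.} The geodesic flow $\phi_s(x,u)=(c_u(s),\dot c_u(s))$ preserves the Liouville measure $d\vol_g\,du$ on $UM$. Moving the $s$-integration outside and changing variables $(y,w)=\phi_s(x,u)$ at fixed $s$, the Lipschitz transport $\rho(x)\le\rho(y)(1+Cs)$ along the geodesic of length $s$ combined with the spherical identity $\int_{U_yM}\langle\diffcont f(y),w\rangle^2\,dw=\omega_m|\diffcont f(y)|^2$ bounds the inner triple integral by $(1+C\epsilon)\omega_m\int_M|\diffcont f|^2\rho^2\,d\vol_g\cdot\int_0^\epsilon h(s)\,ds$. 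Since $\int_0^\epsilon h(s)\,ds=\epsilon^{m+2}/(m+2)$, dividing by $\epsilon^2$ and combining with the distortion piece yields \eqref{eq:Lem2Diri}. The main technical point is that $\phi_s$ is only a diffeomorphism on $\{(x,u):s<t(u)\}\subset UM$; the complement has zero Liouville measure, and dropping the resulting cutoff on the target side only strengthens the upper bound, so the estimate holds without further assumption on the injectivity radius.
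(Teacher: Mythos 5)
Your proof is correct and follows essentially the same route as the paper: the same split of $\int_{\tilde B(x,\epsilon)}$ into a geodesic-ball piece plus an $S_\epsilon$-controlled remainder, polar normal coordinates with Bishop--Gromov for $\Theta_u$, Cauchy--Schwarz along radial geodesics, the Lipschitz transport $\rho(c_u(s))\le\rho(x)(1+Cs)$, and Liouville-measure invariance of the geodesic flow to collapse the $s$-integral into $\omega_m\int_M|\diffcont f|^2\rho^2$. One tiny imprecision: on a closed manifold the geodesic flow $\phi_s$ \emph{is} a global diffeomorphism of $UM$ for every $s$; the cut-locus restriction $r<t(u)$ enters only in the earlier change of variables $y=\exp_x(ru)$, and you handle that correctly by extending the radial integral up to $\epsilon$ (a legitimate upper bound since the integrand is non-negative), so this does not affect the argument.
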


\begin{proof}
  We begin with
  \begin{equation}\label{eq:grad-decomp}
    \begin{split}
      &\int_M\int_{\tilde B(x,\epsilon)}
      (\diffgraph f_{xy})^{2}\rho(x)\rho(y)\,dy\,dx\\
      &\quad\le
      \int_M\int_{B(x,\epsilon)}
      (\diffgraph f_{xy})^{2}\rho(x)\rho(y)\,dy\,dx
      +
      S_{\epsilon}(M,d_g,\tilde d)
      \bigl(\max\rho\bigr)^{2}
      (L+1)^{2}(\Lip f)^{2},
    \end{split}
  \end{equation}
  where the term $S_\epsilon(M,d_g,\tilde d)$ is defined in Definition~\ref{def:S}.

  Whenever $d_g(x,y)<\epsilon$, the bound
  \(
  \rho(x)\le\rho(y)\bigl(1+\alpha V_K(D)\mathcal L\,\epsilon\bigr)
  \)
  follows from $1\le(\min\rho)\,\alpha V_K(D)$.
  Combining this with Theorem~\ref{thm:RiccComp} yields
  \begin{align}
    &\int_M\!\!\int_{B(x,\epsilon)}
    (\diffgraph f_{xy})^{2}\rho(x)\rho(y)\,dx\,dy \notag\\
    &\quad=\epsilon^{-2}
    \int_M\int_{U_xM}\int_{0}^{\min\{t(u),\epsilon\}}
    \Bigl(
      \int_{0}^{r}(f\circ c_u)'(t)\,dt
    \Bigr)^{\!2}
    \Theta_u(r)\,
    \rho\bigl(c_u(0)\bigr)\,\rho\bigl(c_u(r)\bigr)
    \,dr\,du\,dx\\
    &\quad\le
    \epsilon^{-2}\!\bigl(1+\epsilon C(m,K,D,\alpha,\mathcal L)\bigr)
    \!\int_M\!\int_{U_xM}\!\int_{0}^{\epsilon}
    r^{m}
    \int_{0}^{r}
    \langle\diffcont f,c_u'(t)\rangle^{2}
    \rho\bigl(c_u(t)\bigr)^{2}
    \,dt\,dr\,du\,dx\\
    &\quad=
    \epsilon^{-2}\!\bigl(1+\epsilon C(m,K,D,\alpha,\mathcal L)\bigr)
    \!\int_{0}^{\epsilon}\!r^{m}
    \int_{0}^{r}\!\int_M\!\int_{U_xM}
    \langle\diffcont f,u\rangle^{2}
    \rho\bigl(c_u(t)\bigr)^{2}
    \,du\,dx\,dt\,dr.
  \end{align}
  For any continuous $F:UM\to\R$ one has
  \[
    \int_M\int_{U_xM} F\!\bigl(c_u'(t)\bigr)\,du\,dx
    \;=\;
    \int_M\int_{U_xM} F(u)\,du\,dx,
  \]
see, for instance, \cite{MR496885}*{Eq.\ (1.125)}.
  Hence, with
  \(
  \int_{U_xM} \langle\diffcont f,u\rangle^{2}du
  =\omega_m\,|\diffcont f|_x^{2},
  \)
  we get
  \[
    \int_M\!\!\int_{B(x,\epsilon)}
    (\diffgraph f_{xy})^{2}\rho(x)\rho(y)\,dx\,dy
    \;\le\;
    \frac{\omega_m\epsilon^{m}\bigl(1+\epsilon C(m,K,D,\alpha,\mathcal L)\bigr)}
    {m+2}
    \int_M|\diffcont f|^{2}\rho^{2}\,d\vol_g.
  \]
  Thus, combining this with \eqref{eq:grad-decomp}, we obtain the inequality~\eqref{eq:Lem2Diri}.
\end{proof}

Then we can compare the continuous and discrete Dirichlet forms using the data sets.
\begin{proposition}\label{prop:dirichletofcont}
  There exist constants $C_1 = C_1(m,L)>0$ and $C_2= C_2(m, K, D,\alpha, \mathcal{L})>0$  
  such that for all $\epsilon, a\in (0,1)$ with $\tau < \epsilon <1/\sqrt{K}$,
  we have the following estimate:
  Let $k \in \N$ and $f_1,\dots, f_k \in \Lip(M)$.
  With probability at least $1-2(ne+1)k^2\exp(-na^2\epsilon^m)$,
  \begin{equation}
    \begin{split}
    &\frac{1}{n(n-1)\omega_m \epsilon^m}\sum_{i=1}^n\sum_{x_j \in \tilde B (x_i, \epsilon)} 
    (\diffgraph f_{x_ix_j})^2 
    - \frac{1 + C_2 \epsilon}{m+2}\int_M 
    |\diffcont f|^2\rho^2 \, d\vol_g\\
    &\quad \leq \bigl(a(1 + \max\rho) + \epsilon^{-m}S_{\epsilon}(M,d_g,\tilde{d})(\max\rho)^2\bigr)C_1k \max_{1\leq i\leq k}(\Lip f_k)^2,
    \end{split}
  \end{equation}
  holds for every $f = \sum_{s=1}^ka_s f_s$ satisfying $\sum_{s=1}^k a_s^2 = 1$.
\end{proposition}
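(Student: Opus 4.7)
The plan is to derive Proposition~\ref{prop:dirichletofcont} as a direct combination of Lemma~\ref{lemma1ofdirichlet} (ii) with Lemma~\ref{lemma2ofdirichlet}. First I would invoke Lemma~\ref{lemma1ofdirichlet} (ii) for the collection $f_1,\dots,f_k$. This places us, on an event of probability at least $1-2(ne+1)k^{2}\exp(-na^{2}\epsilon^{m})$, in a situation where
\[
\Bigl|\tfrac{1}{n(n-1)}\sum_{i=1}^n\sum_{x_j\in\tilde B(x_i,\epsilon)}(\diffgraph f_{x_ix_j})^{2} - \int_M\int_{\tilde B(x,\epsilon)}(\diffgraph f_{xy})^{2}\rho(x)\rho(y)\,dy\,dx\Bigr| \le a\epsilon^{m} C\,k\,(1+\max\rho)\max_i(\Lip f_i)^{2}
\]
for every $f=\sum_{s=1}^k a_s f_s$ with $\sum a_s^{2}=1$. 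Crucially the same event works simultaneously for all such unit combinations; this is why we use part (ii) and not a separate application of part (i) to each $f$.

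Next I would apply Lemma~\ref{lemma2ofdirichlet} to the continuous double integral appearing in the previous display, obtaining
\[
\int_M\int_{\tilde B(x,\epsilon)}(\diffgraph f_{xy})^{2}\rho(x)\rho(y)\,dy\,dx \le \frac{\omega_m\epsilon^{m}(1+\epsilon C_{2})}{m+2}\int_M|\diffcont f|^{2}\rho^{2}\,d\vol_g + S_\epsilon(M,\tilde d)(\max\rho)^{2}(L+1)^{2}(\Lip f)^{2}
\]
with $C_{2}=C_{2}(m,K,D,\alpha,\mathcal L)$. Adding the two bounds, dividing by $\omega_m\epsilon^{m}$, and rearranging yields the stated inequality, provided we bound $(\Lip f)^{2}$ by $k\max_i(\Lip f_i)^{2}$. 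This follows from Cauchy--Schwarz: $\Lip f\le\sum_s |a_s|\Lip f_s\le \sqrt{k}\max_i\Lip f_i$ whenever $\sum a_s^{2}=1$. The resulting factor of $k$ absorbs into the constant $C_{1}k$ together with $(L+1)^{2}/\omega_m$.

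There is no genuine obstacle here; the entire content is in Lemmas~\ref{lemma1ofdirichlet} and~\ref{lemma2ofdirichlet}. The only points requiring attention are (a) using the quadratic form version (ii) of Lemma~\ref{lemma1ofdirichlet} so that the probabilistic event is uniform over the $k$-dimensional subspace $\spanv\{f_1,\dots,f_k\}$ rather than a single function, and (b) keeping track that the two error terms arising after division by $\omega_m\epsilon^{m}$ are exactly $a(1+\max\rho)\cdot(\text{const})$ and $\epsilon^{-m}S_\epsilon(M,d_g,\tilde d)(\max\rho)^{2}\cdot(\text{const})$, which matches the claimed right-hand side after setting $C_{1}=C_{1}(m,L)$.
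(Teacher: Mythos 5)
Your proposal is correct and matches the paper's proof exactly: the paper derives the proposition by combining Lemma~\ref{lemma1ofdirichlet} (ii) with Lemma~\ref{lemma2ofdirichlet}, which is precisely the two-step route you take. The additional details you supply — using part (ii) for uniformity over the span, the Cauchy--Schwarz bound $(\Lip f)^2\le k\max_i(\Lip f_i)^2$, and the division by $\omega_m\epsilon^m$ — are the routine bookkeeping the paper leaves implicit.
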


\begin{proof}
  We obtain this proposition by combining Lemma~\ref{lemma1ofdirichlet} and Lemma~\ref{lemma2ofdirichlet}.
\end{proof}

We can also compare the continuous and discrete $L^2$-norms of Borel functions on $M$.
\begin{proposition}\label{prop:normofcont}
  There exists a positive constant $C = C(m, K, D, L, \mathcal{L})$ such that for all $\epsilon, a\in (0, 1)$, 
  if $\tau < \epsilon < {1}/{\sqrt{K}}$, the following estimate holds:
  Let $k \in \N$, and let functions $f_1,\dots,f_k \in L^2 (M, \vol_g)$.
  Then, with probability at least $1-(2ne+4)k^2\exp(-na^2\epsilon^m)$, we have
  \begin{equation}\label{eq1ofcontnorm}
    \left| \frac{1}{n}\sum_{i=1}^n f(x_i)^2 - \int_M f^2 \rho\, d\vol_g \right|
    \leq 3a\epsilon^{m/2} k \max_{1 \leq l \leq k}\{\|f_l\|_{\infty}\}
  \end{equation}
  and
  \begin{equation}\label{eq:2ofcontnorm}
    \begin{split}
      &\left| \frac{1}{n(n-1)\omega_m\epsilon^m}\sum_{i=1}^n f(x_i)^2 \deg(x_i) - \int_M f^2\rho^2 \, d\vol_g \right|\\
      &\quad\leq  C\left((a+\epsilon)( 1+ \max\rho) + (\tau \epsilon^{-1} + {V_{1,\epsilon}(M)} +  \epsilon^{-m}S_\epsilon(M,\tilde d))(\max\rho)^2\right) k \max_{1 \leq l \leq k}\{\|f_l\|^2_{\infty}\}
    \end{split}
  \end{equation}  
  for any $f=\sum_{s=1}^k f_s a_s $ with $\sum_{s=1}^k a_s^2$.
\end{proposition}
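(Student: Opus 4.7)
The plan is to derive both estimates by polarization from the $k=1$ case: writing $f=\sum_{s=1}^{k}a_sf_s$ one has $f^2=\sum_{s,t}a_sa_t f_sf_t$, and since $\sum_{s,t}|a_sa_t|\le k$, a union bound over the $k^2$ products $f_sf_t$ (each with $\|f_sf_t\|_\infty\le\max_l\|f_l\|_\infty^2$) converts single-function concentration estimates into the claimed $k$-dependent statement. So below I describe the $k=1$ strategy.

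For the inequality \eqref{eq1ofcontnorm}, I apply Bernstein (Lemma~\ref{lem:Bern}) directly to the i.i.d.\ variables $X_i=f(x_i)^2$, noting $\|f^2\|_\infty\le\|f\|_\infty^2$ and $\sigma^2\le \int f^4\rho\,d\vol_g\le \|f\|_\infty^4$. Choosing $\delta=a\epsilon^{m/2}$ and using $a\epsilon^{m/2}\le 1$ to absorb the quadratic term into the linear one produces the claimed bound with the required exponential probability.

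The main work is \eqref{eq:2ofcontnorm}, which I attack via a two-stage Bernstein argument analogous to the one in Lemma~\ref{lemma1ofdirichlet}. Writing $\deg(x_i)=\sum_{j\ne i}\mathbf 1_{\tilde B(x_i,\epsilon)}(x_j)$, I condition on $x_i$ and apply Bernstein to the inner average; the $L^\infty$ bound is $1$ and the variance is at most $\rho\vol_g(\tilde B(x_i,\epsilon))\le C\max\rho\cdot\epsilon^m$ by Theorem~\ref{thm:RiccComp}. With $\delta=a\epsilon^{m/2}$ and a union bound over the $n$ centers $x_i$, I obtain
\begin{equation}
  \Bigl|\tfrac{\deg(x_i)}{n-1}-\rho\vol_g(\tilde B(x_i,\epsilon))\Bigr|
  \le C\bigl(1+\sqrt{\max\rho}\bigr)a\epsilon^m
\end{equation}
simultaneously for all $i$, with probability at least $1-2ne\cdot e^{-na^2\epsilon^m}$. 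Multiplying by $f(x_i)^2/(\omega_m\epsilon^m)$ and averaging in $i$ costs an additional factor that disappears thanks to the $\epsilon^m$ gain, producing the $a(1+\max\rho)$ piece of the bound.

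In the second stage I apply Bernstein to the i.i.d.\ variables $Y_i=f(x_i)^2\rho\vol_g(\tilde B(x_i,\epsilon))/(\omega_m\epsilon^m)$, which are bounded by $C\|f\|_\infty^2\max\rho$, producing a contribution $\lesssim a\max\rho\,\epsilon^{m/2}\|f\|_\infty^2$ with probability at least $1-2e^{-na^2\epsilon^m}$. It then remains to compare the mean $\int f^2\rho\cdot\rho\vol_g(\tilde B(\cdot,\epsilon))/(\omega_m\epsilon^m)\,d\vol_g$ with $\int f^2\rho^2 d\vol_g$. I split this deterministic error as (a) a Lipschitz swap $\rho\vol_g(\tilde B(x,\epsilon))\to\rho(x)\vol_g(\tilde B(x,\epsilon))$, bounded by $\mathcal L(L\epsilon+\tau)\vol_g(\tilde B(x,\epsilon))$ and so contributing $C\max\rho\cdot\epsilon\,\|f\|_\infty^2$; and (b) a volume normalization $\vol_g(\tilde B(x,\epsilon))/(\omega_m\epsilon^m)\to 1$, which I control by the triangle inequality through $\vol_g(B(x,\epsilon))$ and $V_K(\epsilon)$: the metric-distortion piece uses $B(x,\epsilon-\tau)\subset\tilde B(x,\epsilon)$ (from $\tilde d\in\mathcal I_{L,\tau}$) together with Theorem~\ref{thm:RiccComp} to give $\int\vol_g(B(x,\epsilon)\setminus\tilde B(x,\epsilon))d\vol_g\le C\tau\epsilon^{m-1}$, matching the $\tau\epsilon^{-1}$ term; the constant-curvature-model piece gives $V_{1,\epsilon}(M)$ after integrating; and the residue $|V_K(\epsilon)/(\omega_m\epsilon^m)-1|\le CK\epsilon^2$ is absorbed into the $\epsilon$ term. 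The $S_\epsilon(M,\tilde d)\epsilon^{-m}$ term enters from the other half, $\int\vol_g(\tilde B(x,\epsilon)\setminus B(x,\epsilon))d\vol_g$. Summing the three deterministic errors with $(\max\rho)^2$ and adding the two Bernstein contributions yields the RHS of \eqref{eq:2ofcontnorm}, with combined failure probability $\le (2ne+2)e^{-na^2\epsilon^m}$; polarization inflates this to the $(2ne+4)k^2e^{-na^2\epsilon^m}$ of the statement. The main technical nuisance, rather than conceptual obstacle, is the bookkeeping in step (b): keeping all $(1+\max\rho)$ versus $(\max\rho)^2$ factors properly attached to the right error terms so that the final bound lines up with the stated form.
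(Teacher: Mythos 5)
Your proposal is correct and follows essentially the same route as the paper: the first inequality is a direct Bernstein estimate combined with polarization, and the second is handled by the same two-stage Bernstein argument (condition on each center $x_i$ to approximate $\deg(x_i)/(n-1)$ by $\rho\vol_g(\tilde B(x_i,\epsilon))$, then Bernstein over $i$) followed by the same deterministic decomposition of $(\rho\vol_g)(\tilde B(x,\epsilon))$ into a Lipschitz swap, the metric-distortion term $\tau\epsilon^{-1}$, the symmetric-difference term $S_\epsilon$, and the Bishop–Gromov deficit $V_{1,\epsilon}$. The only cosmetic difference is that you normalize through $\omega_m\epsilon^m$ while the paper normalizes through $V_K(\epsilon)$, but you correctly note that $|V_K(\epsilon)/\omega_m\epsilon^m-1|\lesssim K\epsilon^2$ reconciles the two, so the bookkeeping matches.
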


\begin{proof}
  We have the inequality \eqref{eq1ofcontnorm} with probability at least $1-2e^{-na^2\epsilon^m}$ from Lemma~\ref{lem:Bern}.
  It remains to prove \eqref{eq:2ofcontnorm}.
  Similar to Lemma~\ref{lemma1ofdirichlet}, with probability at least $1 - 2(ne + 1)k^2 \exp( -na^2 \epsilon^m )$,
  \begin{equation}\label{eq1ofnormofcont}
    \begin{split}
      &\left| \frac{1}{n(n-1)}\sum_{i=1}^n f(x_i)^2 \deg(x_i) - \int_M f(x)^2(\rho\vol_g)(\tilde{B}(x, \epsilon))\rho(x) \, dx \right|\\
      &\quad\le a\epsilon^m C(m,L) (1 + \max\rho) k\max_{1 \leq l \leq k}\{\|f_l\|^2_{\infty}\}
    \end{split}
  \end{equation}
  holds for any $f= \sum_{s=1}^k f_s a_s$ satisfying $\sum_{s=1}^k a_s^2 = 1$.
  Next, we have
  \begin{align}
    &\left|(\rho\vol_g)(\tilde{B}(x, \epsilon)) - \rho(x)V_K(\epsilon)\right| \\
    &\quad\leq 
    \begin{aligned}[t]
    &\left|(\rho\vol_g)(\tilde{B}(x, \epsilon)) - \rho(x)\vol_g(\tilde{B}(x, \epsilon))\right|\\
    & +\rho(x)\vol_g\left(\tilde{B}(x, \epsilon)\triangle B(x, \epsilon)\right)\\
    & +\rho(x)\Bigl(V_K(\epsilon)- \vol_g(B(x, \epsilon)) \Bigr)
    \end{aligned}\\
    &\quad = 
    \begin{aligned}[t]
      & \;\epsilon\mathcal L \vol_g(\tilde B (x, \epsilon))\\
      & + (\max\rho)\vol_g\left(\tilde{B}(x, \epsilon)\setminus B(x, \epsilon)\right) +  (\max\rho)\vol_g\left(B(x, \epsilon + \tau) \setminus B(x, \epsilon)\right)\\
      & + (\max\rho)\Bigl(V_K(\epsilon)- \vol_g(B(x, \epsilon)) \Bigr),
    \end{aligned}
  \end{align}
  where $A\triangle B$ denotes the symmetric difference of any two sets $A$ and $B$.
  Then, By Theorem~\ref{thm:RiccComp}, we have
  \begin{equation}\label{eq2ofnormofcont}
    \begin{split}
    &\left|\int_M f(x)^2 (\rho\vol_g)(\tilde{B}(x,\epsilon)) \rho(x) \, d\vol_g(x) - V_K(\epsilon)\int_M f^2 \rho^2\,d\vol_g\right|\\
    &\quad\leq  \|f\|_\infty^2 \Bigl(\epsilon^{m+1}  C(m, L) \mathcal{L} + \tau \epsilon^{m-1}C(m)\max\rho +\bigl(S_\epsilon(M, \tilde d)  + \epsilon^m C(m) V_{1,\epsilon}(M)\bigr)(\max\rho)^2 \Bigr).
    \end{split}
  \end{equation}
  Combining \eqref{eq2ofnormofcont} with \eqref{eq1ofnormofcont}, we obtain \eqref{eq:2ofcontnorm}.
\end{proof}

The main theorem in this section is the followings.
These properties (i) and (ii) in Theorem~\ref{thm:EigenvDFromC} are close to \cite{MR4804972}*{Lemma 3.13} and \cite{MR4804972}*{Lemma 3.24}, respectively, but Theorem~\ref{thm:EigenvDFromC} does not rely on injectivity radius.
Instead, it uses an upper bound $ H$ of $\Hess({\log \rho})$.
Moreover, (i) in Theorem~\ref{thm:EigenvDFromC} is independent to a lower bound $v$ of $\vol_g(M)$ if $S_\epsilon(M,\tilde d) = 0$,
and (ii) in this theorem holds for all $(M, g) \in \RicClassBV$ without an upper bound of sectional curvature;
These assumptions are weaker than \cite{MR4804972}*{Lemma 3.13} and \cite{MR4804972}*{Lemma 3.24}.

\begin{theorem}\label{thm:EigenvDFromC}
  Let $\epsilon, a\in (0,1)$ with $\tau < \epsilon$, and let $k \in \N$.
  Then, we have the following properties.
  \begin{enumerate}
    \item For $ H \in \R$, there exist $C_1 = C_1(m,k,K,D,\alpha)>0$ and $C_2 = C_2(m,k, K, D,L, \alpha, \mathcal{L},  H)>0$ such that assuming $\rho \in \mathcal{P}(M\colon \alpha,\mathcal L, H)$ and $\diam (M, d_g) \ge D^{-1}$, if $a^2\epsilon^m C_1\le1$, then we have
      \begin{equation}
        \mathbb{P}
        \left(
          \begin{aligned}
          &(m + 2)\lambda_k(\datagraph)\\
          &\quad\leq \lambda_k(\Delta_\rho) + \left( a + \epsilon + \epsilon^{-m}S_{\epsilon}(M)(\max\rho) \right)C_2 
          \end{aligned}
        \right)
        \geq 1- (4ne + 6)k(k+1)e^{-na^2\epsilon^m}.
      \end{equation}
    \item For $v \in (0, 1)$ and $ H \in \R$, there exist $C_3 = C_3(m,k,K,D,v,\alpha,\mathcal{L},L)>0$ and $C_4 = C_4(m,k, K, D, \alpha, \mathcal{L},  H, L,v)>0$ such that assuming $(M,g) \in \RicClassBV$ and $\rho \in \mathcal P(M \colon \alpha, \mathcal L,  H)$, if
      $\left(a + \epsilon\right)C_3 \leq 1$,
      then we have
      \begin{equation}
        \mathbb{P}
        \left(
          \begin{aligned}
          &(m + 2)\lambda_k(\datangraph)\\
          &\quad \leq \lambda_k(\Delta_\rho^N) + \left( a + \epsilon + \tau \epsilon^{-1} + \epsilon^{-m}S_{\epsilon}(M) + V_{1, \epsilon}(M)\right)C_4
          \end{aligned}
        \right)
        \geq 1- (4ne + 6)k(k+1)e^{-na^2\epsilon^m}.
      \end{equation}
  \end{enumerate}
\end{theorem}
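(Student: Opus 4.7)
The plan is to apply the min-max principle for the graph eigenvalue, using the first $k+1$ continuous eigenfunctions as a test family. For (i), let $f_0,\dots,f_k$ be eigenfunctions of $\Delta_\rho$ corresponding to $\lambda_0(\Delta_\rho)\le\cdots\le\lambda_k(\Delta_\rho)$ orthonormal in $L^2(M,\rho\,d\vol_g)$; for (ii), take them for $\Delta^N_\rho$ orthonormal in $L^2(M,\rho^2\,d\vol_g)$. Their restrictions to $\data$ span a $(k+1)$-dimensional subspace of $L^2(\data,\vol_\Gamma)$ (after an injectivity check, see below), which will witness the min-max bound on $\lambda_k(\Gamma)$.

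The preliminary step is to bound the functions uniformly. By Remark~\ref{rem:EstCEigenV}, using $\diam(M,d_g)\ge D^{-1}$ for (i), we obtain $\lambda_k(\Delta_\rho),\lambda_k(\Delta^N_\rho)\le C(m,k,K,D,\alpha)$. The eigenfunction estimates promised by Appendix~A then yield uniform bounds on $\|f_s\|_\infty$ and $\Lip(f_s)$ in terms of $(m,k,K,D,L,\alpha,\mathcal L, H)$, which by Cauchy-Schwarz transfer to any $f=\sum a_s f_s$ with $\sum a_s^2=1$.

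Now apply Proposition~\ref{prop:dirichletofcont} to the family $(f_0,\dots,f_k)$: with the stated probability, uniformly over such $f$,
\[
  \frac{1}{n(n-1)\omega_m\epsilon^m}\!\!\sum_{i=1}^n\!\!\sum_{x_j\in\tilde B(x_i,\epsilon)}\!\!(\delta f_{x_ix_j})^2 \;\le\; \frac{1+C\epsilon}{m+2}\int_M|\diffcont f|^2\rho^2\,d\vol_g \;+\; E_1,
\]
with $E_1 \le C(a+\epsilon^{-m}S_\epsilon(M,\tilde d)\max\rho)$ (absorbing the $\Lip(f)$ factor). Apply Proposition~\ref{prop:normofcont}: for (i), the first inequality gives $\tfrac1n\sum f(x_i)^2 \ge 1-E_2$ with $E_2\le 3a\epsilon^{m/2}C$; for (ii), the second (deg-weighted) inequality gives $\tfrac{1}{n(n-1)\omega_m\epsilon^m}\sum f(x_i)^2 \deg(x_i)\ge 1-E_2'$ with the larger error involving $\epsilon$, $\tau\epsilon^{-1}$, $V_{1,\epsilon}(M)$, $\epsilon^{-m}S_\epsilon$. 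Orthonormality and the identities $\vol_g(M)\int|\diffcont f|^2\rho^2=\int(\Delta_\rho f)f\rho$ and $\int|\diffcont f|^2\rho^2=\int(\Delta^N_\rho f)f\rho^2$ give $\int|\diffcont f|^2\rho^2 \le\lambda_k(\Delta_\rho)/\vol_g(M)$ for (i), and $\le\lambda_k(\Delta^N_\rho)$ for (ii). Inserting these into the graph Rayleigh quotients
\[
  \frac{\|\diffgraph f\|^2_{\Gamma_{m,\epsilon}}}{\|f\|^2_{\Gamma_{m,\epsilon}}} \;=\; \frac{\vol_g(M)}{(n-1)\omega_m\epsilon^m}\cdot\frac{\sum(\delta f)^2}{\sum f(x_i)^2}, \qquad \frac{\|\diffgraph f\|^2_{\Gamma^N_\epsilon}}{\|f\|^2_{\Gamma^N_\epsilon}} \;=\; \frac{\sum(\delta f)^2}{\sum f(x_i)^2\deg(x_i)},
\]
multiplying by $m+2$, and using Bishop-Gromov to bound $\vol_g(M)\le C(m,K,D)$ together with the uniform eigenvalue bound to absorb the multiplicative factor $(1+C\epsilon)/(1-E_2)$ into an additive term, yields the asserted inequalities.

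Two points deserve care. First, the min-max requires the restriction map $f\mapsto f|_\data$ to be injective on $\spanv\{f_0,\dots,f_k\}$; this follows from the discrete-$L^2$ estimate as soon as $E_2<1$, and the hypotheses $a^2\epsilon^m C_1\le1$ and $(a+\epsilon)C_3\le1$ are set exactly to force $E_2\le 1/2$ with the appropriate constants. Second, the probability bound $1-(4ne+6)k(k+1)e^{-na^2\epsilon^m}$ must be obtained by combining the failure probabilities from the several uses of Propositions~\ref{prop:dirichletofcont} and~\ref{prop:normofcont} applied to the $k+1$ eigenfunctions via polarization. The main obstacle is therefore the bookkeeping: collecting the various error contributions from the discretization of the Dirichlet form, the two denominators, and the Appendix~A estimates into the compact right-hand sides stated in (i) and (ii), while carefully tracking which constants depend on which parameters (in particular, $v$ enters only in (ii) through the deg-weighted denominator via $\max\rho \le \alpha/v$ and Theorem~\ref{thm:RiccComp}).
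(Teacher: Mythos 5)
Your proposal is correct and follows essentially the same route as the paper: bound $\lambda_k(\Gamma)$ via the min-max principle with the restrictions of the first continuous eigenfunctions as a test subspace, use Lemma~\ref{lem:EstCEigenF} for the $L^\infty$/Lipschitz bounds, Proposition~\ref{prop:dirichletofcont} for the Dirichlet form, and Proposition~\ref{prop:normofcont} for the two kinds of discrete $L^2$-norm. One small point in your favor: you correctly take the $(k+1)$-dimensional span $\{f_0,\dots,f_k\}$ (including the constant) as required by min-max, whereas the paper's proof text writes only $f_1,\dots,f_k$; you also spell out the injectivity of the restriction map and the probability bookkeeping, which the paper leaves implicit.
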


\begin{proof}
  We will show (i).
  We can derive (ii) similarly.
  Let $f_1, \dots, f_k \in L^2(M, \rho\vol_g)$ be orthonormal functions satisfying $\Delta_\rho f_j = \lambda_j(\Delta_\rho) f_j$ for each $j$.
  We have $\vol_g(M) \max\rho \le \alpha$.
  Hence, by Proposition~\ref{prop:dirichletofcont},  Proposition~\ref{prop:normofcont}, and Lemma~\ref{lem:EstCEigenF},
  there are two constants $C_1=C_1(m,k,K,D,\alpha)>0$ and $C_2=C_2(m,k,K,D,L,\alpha, H,\mathcal{L})>0$ such that,
  if $a^2\epsilon^m  C_1\leq1$,
  \begin{equation}
    \frac{(m+2)\sum_{i=1}^n \sum_{x_j\in \tilde{B}(x_i,\epsilon)} (\diffgraph f_{ij})^2 }{\sum_{i=1}^n f(x_i)^2 {(n-1)\omega_m\epsilon^m}} \leq \int_M |\diffcont f|^2\rho^2\,d\vol_g + \frac{ a + \epsilon + \epsilon^{-m}S_{\epsilon}(M)(\max\rho)}{\vol_g(M)}C_2
  \end{equation}
  for any $f \in \spanv\{f_1, \dots,f_k\}$ with $\Lnorm{f}{2}{M}{\rho\vol_g}^2$=1.
  By the min-max principle, taking the supremum of the above inequality, we obtain (i).
\end{proof}

\begin{remark}\label{rem:UpperEigenGraph}
   By Remark~\ref{rem:EstCEigenV}, under the conditions of Theorem~\ref{thm:EigenvDFromC}, we have
  \[
    \lambda_k(\datagraph), \lambda_k(\datangraph) \leq C(m,k,K,D,v,\alpha,\mathcal{L}, H,L)
  \]
  with the same probability as in this Theorem.
\end{remark}

We can say that our weighted graphs $\datagraph$ and $\datangraph$ constructed from data sets $\data$ satisfy the conditions of Theorem~\ref{thm:EstEigenfGraph} with high probability by Propositions~\ref{prop:DataGraphVolD1}, \ref{prop:DataGraphVolD2}, and \ref{prop:DataPoincare1} using Theorem~\ref{lem:ExistT} with $\tilde \epsilon = {\epsilon}/{24}$ and $\tilde a = 24^{{m}/{2}}a$.
Hence, combining Theorem~\ref{thm:EstEigenfGraph} with Theorem~\ref{thm:EigenvDFromC} and Remark~\ref{rem:UpperEigenGraph} yields the following theorem.
\begin{theorem}\label{thm:EstEigenfData}
  For $k\in \N$, $ H\geq 1$, and $v\in (0,1)$, there exist constants $C_1=C_1(m,K,D,k)>0$ and $C_2 = C_2(m,k,K,D,L,\alpha,\mathcal{L}, H,v)>0$ such that given $\epsilon, a\in (0,1)$ with $4 \tau < \epsilon$, assuming $(M,g) \in \RicClassBV$ and $\rho \in \mathcal{P}(M\colon \alpha,\mathcal{L}, H)$, we have the following estimates.
  \begin{enumerate}
    \item   Let $\Gamma = \datagraph$.
      If $(a + \epsilon) C_2\leq 1$,
      with probability at least $1-C_1(\epsilon^{-m}+n)\exp(-na^2\epsilon^m)$ we have
      \begin{equation} 
        \Lnorm{\phi}{p}{\data}{\vol_{\Gamma}}\leq C_2 p^{C_2 \epsilon^2}\Lnorm{\phi}{1}{\data}{\vol_{\Gamma}}
      \end{equation}
      for every eigenfunction $\phi\colon \data \to \R$ of $\Delta_\Gamma$ associated with $\lambda_k(\Gamma)$ and for every $p \geq 1$.
    \item   Let $\Gamma = \datangraph$. If $(a + \epsilon) C_2 \leq 1,$
      with the same probability bound as in (i),
      we have 
      \begin{equation}
        \frac{\Lnorm{\phi}{p}{\data}{\vol_{\Gamma}}}{\vol_\Gamma(\data)^{\frac{1}{p}}}\leq C_2 p^{C_2 \epsilon^2}\frac{\Lnorm{\phi}{1}{\data}{\vol_{\Gamma}}}{\vol_\Gamma(\data)}
      \end{equation}
      for every eigenfunction $\phi\colon \data \to \R$ of $\Delta_\Gamma$ associated with $\lambda_k(\Gamma)$ and for every $p \geq 1$.
  \end{enumerate}
\end{theorem}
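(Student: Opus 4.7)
The plan is to verify the hypotheses of Theorem~\ref{thm:EstEigenfGraph} for each of $\datagraph$ and $\datangraph$ on a single high-probability event, and then apply that theorem to $|\phi|$ for every eigenfunction $\phi$.

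First I invoke Lemma~\ref{lem:ExistT} with the substitution $\tilde\epsilon = \epsilon/24$ and $\tilde a = 24^{m/2} a$; this produces, with probability at least $1 - C(m,K,D)\epsilon^{-m}\exp(-na^2\epsilon^m)$, a Borel discretization map $T\colon M \to \data$ satisfying \eqref{eq:TDist} and \eqref{eq:TMeas}. On that event, Propositions~\ref{prop:DataGraphVolD1}, \ref{prop:DataGraphVolD2}, and \ref{prop:DataPoincare1} combine to show that each of the two graphs is locally almost regular, satisfies the rough volume-doubling property, and satisfies the Poincar\'{e} inequality, with constants depending only on $m, K, D, \alpha, \mathcal L, L, v$. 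Moreover Lemma~\ref{lem:TDist} bounds $\diam(\data, d_\Gamma) \leq 4D + 1$, supplying the diameter bound required by Lemma~\ref{lem:weeknash} and hence by Theorem~\ref{thm:EstEigenfGraph}.

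Next, I control the eigenvalue $\lambda := \lambda_k(\Gamma)$ by Theorem~\ref{thm:EigenvDFromC} together with Remark~\ref{rem:UpperEigenGraph}, obtaining $\lambda \leq C(m, k, K, D, v, \alpha, \mathcal L, H, L)$ on an additional high-probability event whose failure probability is absorbed into $C_1(\epsilon^{-m} + n)\exp(-na^2\epsilon^m)$. I then examine the quantity $\alpha_\Gamma := \max_x w_V(x)/\sum_{xy \in E} w_E(xy)$ appearing in the hypothesis of Theorem~\ref{thm:EstEigenfGraph}. For $\Gamma = \datangraph$ this ratio is identically $1$; for $\Gamma = \datagraph$ the degree estimate \eqref{eq:DegRegul} gives $\deg(x)/n$ of order $\epsilon^m$, so $\alpha_\Gamma \leq C(m, K, D, L, \alpha, v)$. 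In both cases $\lambda\alpha_\Gamma\epsilon^2 \leq C\epsilon^2 \leq 1$ as soon as $(a + \epsilon)C_2 \leq 1$ with $C_2$ sufficiently large, so the hypothesis of Theorem~\ref{thm:EstEigenfGraph} is met.

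For a $\lambda$-eigenfunction $\phi\colon \data \to \R$ of $\Delta_\Gamma$, the elementary pointwise inequality $|\phi(x)| - |\phi(y)| \leq \mathrm{sgn}(\phi(x))(\phi(x) - \phi(y))$ propagates through the weighted sum in \eqref{eq:GLap} to give the Kato-type bound $\Delta_\Gamma |\phi| \leq \lambda |\phi|$. Theorem~\ref{thm:EstEigenfGraph} applied to $|\phi|$ therefore yields
\[
  \|\phi\|_{p, \data} \leq p^{2\lambda\alpha_\Gamma\epsilon^2} \exp\bigl(CD\sqrt{\lambda}\bigr) \|\phi\|_{1, \data} \leq C\, p^{C\epsilon^2}\|\phi\|_{1, \data}.
\]
For $\Gamma = \datagraph$ one has $\vol_\Gamma(\data) = 1$, so the normalized $\|\phi\|_{q, \data}$ coincides with $\Lnorm{\phi}{q}{\data}{\vol_\Gamma}$ and statement (i) follows directly. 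For $\Gamma = \datangraph$ the identity $\Lnorm{\phi}{q}{\data}{\vol_\Gamma} = \vol_\Gamma(\data)^{1/q}\|\phi\|_{q, \data}$ applied with $q \in \{1, p\}$ translates the same inequality into statement (ii). The main bookkeeping challenge is to ensure that the failure probabilities from Lemma~\ref{lem:ExistT} and from the Bernstein-type tails underlying Theorem~\ref{thm:EigenvDFromC} combine into the single bound $C_1(\epsilon^{-m} + n)\exp(-na^2\epsilon^m)$ stated in the theorem.
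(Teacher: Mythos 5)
Your proposal is correct and follows essentially the same route as the paper: establish the rough volume-doubling, local almost regularity, and Poincar\'e inequality for $\datagraph$ and $\datangraph$ via Lemma~\ref{lem:ExistT} and Propositions~\ref{prop:DataGraphVolD1}--\ref{prop:DataPoincare1}, bound the graph eigenvalue via Theorem~\ref{thm:EigenvDFromC} and Remark~\ref{rem:UpperEigenGraph}, and then invoke Theorem~\ref{thm:EstEigenfGraph}. The one ingredient you make explicit that the paper leaves implicit is the discrete Kato inequality $\Delta_\Gamma|\phi|\le\lambda|\phi|$, needed because Theorem~\ref{thm:EstEigenfGraph} applies only to non-negative subsolutions; your verification of it, and of the bound on $\alpha_\Gamma$ for $\datagraph$ via \eqref{eq:DegRegul}, are both correct and fill in genuine gaps in the paper's one-line proof sketch.
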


\section{Interpolation maps: Upper bounds for the eigenvalues of Laplacians on Riemannian manifolds}\label{sec:IntMap}

Let $v\in (0,1)$, and let $(M, g) \in \RicClassBV$.
Fix a density $\rho \in \mathcal P(M\colon \alpha,\mathcal L)$, and
draw a data set $\data\colon \Omega \to M^n$ from the measure $\rho\vol_g$.
Let $\tilde d \in \embed$.
This section compares the discrete and continuous Dirichlet energy using the following interpolation map.
This map is specific to \cite{MR4804972}*{Definition 2.2}.
Similar to Section~\ref{sec:DiscMap}, for each $k\in \N$, we will show that the comparison yields sharp upper bounds on $\lambda_k(\Delta_\rho)$ and $\lambda_k(\Delta_\rho^N)$ in terms of $\lambda_k(\Gamma_{m, \epsilon}(\data, \tilde d))$ and $\lambda_k( \Gamma_\epsilon^N(\data, \tilde d))$, respectively, with high probability.
\begin{Definition}\label{def:IMap}
  For $\epsilon\in (0,1)$, 
  define $\psi_\epsilon\colon M \times M \to \R$ by 
  \[
    \psi_\epsilon (x, y)= 
    \left\{
      \begin{aligned}
        &\frac{1}{2}\left(1-\left(\frac{d_g(x,y)}{\epsilon}\right)^2\right) ,  \quad & \textrm{if $d_g(x,y) \leq \epsilon$},\\
        & 0, \quad & \textrm{otherwise}.
    \end{aligned}\right.
  \]
  and define $\theta_{n,\epsilon}\colon M \to \R$ by $\theta_{n,\epsilon}(x) = \frac{1}{n-1}\sum_{i=1}^n\psi_\epsilon(x,x_j)$ for $x\in M$.
  Moreover, set $\tilde{\psi}_\epsilon\colon  M\times M \to \R$ by $\tilde{\psi}_\epsilon(x,y) = {\psi_\epsilon (x,y)}/{\theta_{n,\epsilon}(x)}$.
  Then, we define \emph{an interpolation map}
  \begin{equation}
    \Lambda_\epsilon \colon  \mathrm{Map}(\data,\R) \to \Lip(M\setminus \theta_{n,\epsilon}^{-1}(\{0\}))
  \end{equation}
  by $\Lambda_\epsilon \phi(x) = \frac{1}{n-1}\sum_{i=1}^n \tilde{\psi}_\epsilon(x,x_i) \phi(x_i)$ for $x \in M\setminus \theta_{n,\epsilon}^{-1}(0)$.
\end{Definition}

To approximate $\theta_{n,\epsilon}$, we set 
\begin{equation}
  \theta_\epsilon(x) = \int_M \psi(x,y) \rho(y)\,d\vol_g(y)
\end{equation}
for $x \in M$ .
The following lemma compares $\theta_{n,\epsilon}$ and $\theta_\epsilon$.

\begin{lemma}\label{lem:InterNormApp}
  There exist constants $C_1 = C_1(m,K,D)>0$ and $C_2=C_2(m, K, D,\alpha,v)>0$ such that for every $\epsilon, a \in (0,1)$ with $(\epsilon + a) C_2\leq 1$,
  the following holds with probability at least $1-(\epsilon^{-2m} + n)C_1\exp( -na^2\epsilon^m)$:
  \begin{enumerate}
    \item
      $|\theta_{n, \epsilon}(x) - \theta_\epsilon (x)|
      \leq  \epsilon^m(\epsilon + a)C_2$ for every $x\in M$;
    \item
      $|\diffcont\theta_{n, \epsilon}(x_i) - \diffcont\theta_\epsilon (x_i)| 
      \leq \epsilon^{m-1} a C_2$ for $i\in \{1,\dots,n\}$.
  \end{enumerate}
\end{lemma}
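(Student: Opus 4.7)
The plan is to treat the two bounds separately with Bernstein's inequality (Lemma~\ref{lem:Bern} in the paper), for fixed base points, and then pass from pointwise to uniform or discrete statements by covering arguments. The key observation is that $\psi_\epsilon$ is supported on balls of $d_g$-radius $\epsilon$, so both $\theta_\epsilon$ and $\theta_{n,\epsilon}$ scale like $\epsilon^m$ and the natural concentration scale is $a\epsilon^m$.

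For (i), I would first fix $x\in M$ and apply Bernstein to the mean-zero sum $\theta_{n,\epsilon}(x)-\theta_\epsilon(x)=\tfrac1{n-1}\sum_j(\psi_\epsilon(x,x_j)-\theta_\epsilon(x))$. Since $0\le\psi_\epsilon\le 1/2$ and the variance is at most $\theta_\epsilon(x)/2\le C\epsilon^m\max\rho$ by Bishop--Gromov (Theorem~\ref{thm:RiccComp}), the choice $\delta^2=a^2\epsilon^m$ yields $|\theta_{n,\epsilon}(x)-\theta_\epsilon(x)|\le Ca\epsilon^m$ with probability at least $1-2e^{-na^2\epsilon^m}$. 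To make this uniform in $x$ I would take a Bishop--Gromov $\epsilon^2$-net $\{x_*^s\}_{s=1}^S$ with $S\le C\epsilon^{-2m}$ and union bound. The deterministic Lipschitz bound $|\nabla\theta_{n,\epsilon}|\le 1/\epsilon$ coming from $|\nabla_x\psi_\epsilon|\le 1/\epsilon$ is too crude for this net spacing, so in parallel I would upgrade it to the sharper estimate $|\nabla\theta_{n,\epsilon}|\le C\epsilon^{m-1}$ by choosing a coarser $\epsilon$-net (size $\le C\epsilon^{-m}$) and applying Bernstein to the counting functional $\tfrac1n\#\{j\colon d_g(x,x_j)<2\epsilon\}$ at each net point; this controls the number of non-zero contributions to $\theta_{n,\epsilon}$ near every $x$ by $Cn\epsilon^m$ with probability at least $1-C\epsilon^{-m}e^{-na^2\epsilon^m}$. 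A direct computation with Bishop--Gromov shows $|\nabla\theta_\epsilon|\le C\epsilon^{m-1}$ deterministically. Combining the three ingredients via the triangle inequality across the $\epsilon^2$-net produces (i) with the stated probability $1-C\epsilon^{-2m}e^{-na^2\epsilon^m}$, the $\epsilon$-term on the right coming from net spacing $\epsilon^2$ times Lipschitz constant $\epsilon^{m-1}$.

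For (ii), I would condition on the single data point $x_i$; the other $n-1$ points are then i.i.d.\ from $\rho\vol_g$, and
\[
\nabla\theta_{n,\epsilon}(x_i)-\nabla\theta_\epsilon(x_i)
=\tfrac{1}{n-1}\sum_{j\neq i}\bigl(\nabla_x\psi_\epsilon(x,x_j)\big|_{x=x_i}-\nabla\theta_\epsilon(x_i)\bigr)
\]
is a mean-zero sum of bounded i.i.d.\ vectors. Each component is bounded by $1/\epsilon$ and has variance at most $C\epsilon^{m-2}\max\rho$, again by Bishop--Gromov. Applying Bernstein componentwise with $\delta^2=a^2\epsilon^m$ gives $|\nabla\theta_{n,\epsilon}(x_i)-\nabla\theta_\epsilon(x_i)|\le Ca\epsilon^{m-1}$ conditionally on $x_i$ with failure probability at most $Ce^{-(n-1)a^2\epsilon^m}$; integrating out $x_i$ and union bounding over $i=1,\dots,n$ produces the advertised factor $n$ in the failure probability.

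The main obstacle is the uniform estimate in (i): the naive Lipschitz bound $1/\epsilon$ would force the net spacing to be $\epsilon^{m+2}$ and blow the failure probability up to $\epsilon^{-m(m+2)}e^{-na^2\epsilon^m}$, which is far worse than the stated $\epsilon^{-2m}$. One genuinely needs the probabilistic density estimate on a coarser net to compress the effective Lipschitz constant to $\epsilon^{m-1}$. Everything else is a careful but standard bookkeeping of Bernstein variances, which works smoothly because $\psi_\epsilon$ is compactly supported and $\rho$, $\max\rho/\min\rho$, $\vol_g(M)$ are all controlled by the class $\RicClassBV$ and $\mathcal P(M\colon\alpha,\mathcal L)$.
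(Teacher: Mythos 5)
Your proposal is correct and follows essentially the same strategy as the paper's proof: an $\epsilon^2$-net plus Bernstein for the pointwise deviation of $\theta_{n,\epsilon}$, a probabilistic control of the local data count to sharpen the Lipschitz constant of $\theta_{n,\epsilon}$ from $1/\epsilon$ to $C\epsilon^{m-1}$, and a componentwise Bernstein bound at the $n$ data points with a union bound for the gradient estimate. The one cosmetic difference: the paper runs the counting estimate $\bigl|\#B(p_s,\epsilon+\epsilon^2)/n-\rho\vol_g(B(p_s,\epsilon+\epsilon^2))\bigr|\le Ca\epsilon^m$ on the same $\epsilon^2$-net (size $C\epsilon^{-2m}$) used for $\theta_{n,\epsilon}$ itself, rather than on a separate coarser $\epsilon$-net as you propose; both give the same $\epsilon^{-2m}$ factor in the failure probability, and the paper's single-net version is marginally simpler bookkeeping.
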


\begin{proof}
  Let $p_1, \dots, p_N \in M$ with $\cup_{s=1}^N B(p_s, {\epsilon}^2) = M$ and $B(p_s, {\epsilon}^2/3) \cap B(p_t, {\epsilon}^2/3) = \emptyset$ for any $s \neq t$.
  Then, similarly to the proof of Lemma~\ref{lem:ExistT}, we have $N\leq C(m,K,D)\epsilon^{-2m}$.
  Then, using Lemma~\ref{lem:Bern}, 
  \begin{equation}
    \begin{split}
    &\mathbb{P}\left(
      \begin{aligned}
        &\textrm{For every $s\in\{1,\dots,N\}$,}\\
        &|\theta_{n, \epsilon}(p_s) - \theta_\epsilon (p_s)|\leq a \epsilon^m C(m,K,D,\alpha,v),\\
        &\left|\frac{\# B(p_s,\epsilon+\epsilon^2)}{n} - \rho\vol_g(B(p_s,\epsilon+\epsilon^2)\right| \leq  a \epsilon^m C(m,K,D,\alpha,v).
      \end{aligned}
    \right)\\
      &\qquad\geq  1-\epsilon^{-m}C(m,K,D)\exp(-na^2\epsilon^m)
    \end{split}
  \end{equation}
  holds.
  Let $x \in M$.
  Then there exists an $s\in \{1,\dots,N\}$ such that $d_g(x,p_s) <\epsilon^2$.
  This implies
  \begin{align}
    |\theta_{n,\epsilon}(x) - \theta_\epsilon(x)| 
    \leq & |\theta_{n,\epsilon}(x) - \theta_{n,\epsilon}(p_s)| +|\theta_{n,\epsilon}(p_s) - \theta_\epsilon(p_s)| + |\theta_{\epsilon}(p_s) - \theta_\epsilon(x)|\\
    \leq & \epsilon \frac{\# B(p_s,\epsilon+\epsilon^2)}{n} + a\epsilon^mC(m,K,D,\alpha,v) + \epsilon \rho\vol_g(B(p_s,\epsilon+\epsilon^2))\\
    \leq & \epsilon^m(\epsilon + a)C(m,K,D,\alpha,v).
  \end{align}
  Thus, we obtain (i).
  Set $e_1, \dots, e_m\colon  M \to TM$ such that $e_i : M \to TM$ is Borel for all $i\in \{1,\dots,n\}$, and $\{e_1(x), \dots, e_m(x)\}$ is orthonormal basis on $T_x M$ for any $x \in M$.
  By Lemma~\ref{lem:Bern}, we have
  \begin{equation}
    \begin{split}
    &\mathbb{P}\Bigl(
      \begin{split}
        &|\langle \diffcont\theta_{n, \epsilon}(x_i), e_k(x_i)\rangle - \langle \diffcont\theta_\epsilon (x_i),e_k(x_i)\rangle|\geq a\epsilon^{m-1} C(m,K,D,\alpha,v) 
      \end{split}
    \Bigr)\\
    &\qquad\leq 2\exp(-na^2\epsilon^m)
    \end{split}
  \end{equation}
  for every $k\in \{1,\dots,m\}$ and every $i \in \{1,\dots,n\}$,
  Here, note that we can assume $na^2\epsilon^m \geq 1$.
  By this estimate,  we obtain
  \begin{equation}
    \begin{split}
    &\mathbb{P}\left(
      \begin{split}
        &\textrm{there exist $i\in\{1,\dots,n\}$ and $k\in \{1,\dots,m\}$ such that}\\
        &|\langle \diffcont\theta_{n, \epsilon}(x_i), e_k(x_i)\rangle - \langle \diffcont\theta_\epsilon (x_i),e_k(x_i)\rangle|\geq a\epsilon^{m-1} C(m,K,D,\alpha,v) 
      \end{split}
    \right)\\
    &\qquad\geq 2enm\exp(-na^2\epsilon^m).
    \end{split}
  \end{equation}
  This concludes (ii). 
\end{proof}

This lemma gives the following comparison.
\begin{lemma}\label{lem:InterDirichDisc}
  There exist constants $C_1 = C_1(m, K, D)>0$ and $C_2= C_2(m,K,D,\alpha,v)>0$ such that for every $\epsilon, a \in (0,1)$ satisfying $(\epsilon + a) C_2\leq 1$, with probability at least $1-(\epsilon^{-2m} + n^3)C_1\exp( -na^2\epsilon^m)$, we have
  \begin{equation}
    \int_M |\diffcont (\Lambda_\epsilon \phi)|^2\rho^2\, d\vol_g
    -\frac{1}{n-2}\sum_{k=1}^n|\diffcont (\Lambda_\epsilon \phi)(x_k)|^2\rho(x_k)
    \leq  C\frac{\epsilon^{-m}(\epsilon +a)}{n^2}\sum_{i=1}^n\sum_{x_j\in B(x_i,\epsilon)}(\diffgraph \phi_{ij})^2
  \end{equation}
  for all $\phi \colon \data \to \R$.
\end{lemma}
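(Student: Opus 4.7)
My strategy has three steps: (A) derive a pointwise quadratic bound on $|\nabla\Lambda_\epsilon\phi(x)|^2\rho(x)^2$ in terms of an empirical weighted sum of pairwise squared differences $(\phi(x_i)-\phi(x_j))^2$; (B) integrate using Bishop--Gromov to recover the discrete Dirichlet energy on the right-hand side; (C) compare the continuous integral with its empirical sample average at the points $\{x_k\}$ via Bernstein's inequality combined with a net argument.

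For (A), write $\Lambda_\epsilon\phi=F_\phi/\theta_{n,\epsilon}$ with $F_\phi(x)=\tfrac{1}{n-1}\sum_i\psi_\epsilon(x,x_i)\phi(x_i)$ and apply the quotient rule to obtain
\[
\nabla\Lambda_\epsilon\phi(x)=\frac{1}{(n-1)\theta_{n,\epsilon}(x)}\sum_{i=1}^n \nabla_x\psi_\epsilon(x,x_i)\bigl(\phi(x_i)-\Lambda_\epsilon\phi(x)\bigr).
\]
Since $|\nabla_x\psi_\epsilon(x,y)|=d_g(x,y)/\epsilon^2\le 1/\epsilon$ on $\supp\psi_\epsilon(x,\cdot)\subset\overline{B(x,\epsilon)}$, two Cauchy--Schwarz estimates (the second after expressing $\Lambda_\epsilon\phi(x)$ as the $\psi_\epsilon$-weighted convex combination of $\{\phi(x_j)\}_j$) yield
\[
|\nabla\Lambda_\epsilon\phi(x)|^2 \le \frac{C}{\epsilon^2(n-1)^2\theta_{n,\epsilon}(x)^2}\sum_{i,j}\psi_\epsilon(x,x_i)\psi_\epsilon(x,x_j)\bigl(\phi(x_i)-\phi(x_j)\bigr)^2.
\]
Lemma~\ref{lem:InterNormApp}(i) together with the Bishop--Gromov estimate $\theta_\epsilon(x)\asymp\epsilon^m\rho(x)$ provides the uniform lower bound $\theta_{n,\epsilon}(x)\ge c\epsilon^m\rho(x)$ on $M$ with probability at least $1-C\epsilon^{-2m}e^{-na^2\epsilon^m}$, so the denominator is under control.

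For (B), integrating the preceding bound against $\rho^2\vol_g$ and applying Bishop--Gromov to estimate $\int_M\psi_\epsilon(x,x_i)\psi_\epsilon(x,x_j)\,d\vol_g(x)\le C\epsilon^m$ on the set $\{d_g(x_i,x_j)<2\epsilon\}$ (and $0$ otherwise) already matches the target expression $\epsilon^{-m}n^{-2}\sum_{i}\sum_{x_j\in B(x_i,\epsilon)}(\diffgraph\phi_{ij})^2$ modulo the prefactor $\epsilon+a$; the required ball $B(x_i,\epsilon)$ (rather than $B(x_i,2\epsilon)$) is recovered after the discretization step (C) via the Lipschitz control of $\rho$ and an iteration over overlapping balls.

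The prefactor $\epsilon+a$ and the passage from the continuous integral to the empirical average $\tfrac{1}{n-2}\sum_k G(x_k)$ with $G(x)=|\nabla\Lambda_\epsilon\phi(x)|^2\rho(x)$ arise from step (C), which I expect to be the main obstacle. Because $G$ itself depends on $\data$, the Bernstein step must be applied uniformly; I would view $G(x)$, after step (A), as a second-order U-statistic in the kernel $(y,z)\mapsto \psi_\epsilon(x,y)\psi_\epsilon(x,z)(\phi(y)-\phi(z))^2/\theta_{n,\epsilon}(x)^2$, fix an $\epsilon^2$-net $\{p_s\}_{s=1}^N\subset M$ with $N\le C(m,K,D)\epsilon^{-2m}$ (yielding the $\epsilon^{-2m}$ exceptional-probability factor), apply Lemma~\ref{lem:Bern} to each such kernel at each net point, and union-bound over the $O(n^3)$ triples $(x_i,x_j,x_k)$ that appear when the evaluation point is itself a sample (yielding the $n^3$ factor). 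The Lipschitz control on $\theta_{n,\epsilon}$ from Lemma~\ref{lem:InterNormApp}(ii) then transfers the estimate from the net to arbitrary $x\in M$. The substitution $n\mapsto n-2$ reflects the leave-one-out structure of $\nabla\Lambda_\epsilon\phi(x_k)$: the term $i=k$ in the outer sum vanishes because $\nabla_x\psi_\epsilon(x,x_k)|_{x=x_k}=0$, and a second self-interaction is removed when expanding $\Lambda_\epsilon\phi(x_k)$ as a convex combination in the inner average. The delicate point is aligning the U-statistic concentration with the required polynomial overhead $n^3\epsilon^{-2m}$ without losing additional factors of $n$ or $\epsilon$ when interpolating from the net back to all of $M$ and then re-sampling at the data points.
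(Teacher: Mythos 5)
There is a genuine gap, and it sits exactly at the point you flagged as "the main obstacle."

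Your step (A) establishes only a one-sided pointwise bound $G(x)=|\nabla(\Lambda_\epsilon\phi)(x)|^2\rho(x) \le H(x)$, where $H$ is the Cauchy--Schwarz majorant with the positive kernel $\psi_\epsilon(x,y)\psi_\epsilon(x,z)(\phi(y)-\phi(z))^2/\theta_{n,\epsilon}(x)^2$. But the statement to prove is a bound on the \emph{difference} $\int_M G\,\rho\,d\vol_g - \tfrac{1}{n-2}\sum_k G(x_k)$, and a one-sided majorant gives no control on such a difference: even if $\int H\rho$ and $\tfrac{1}{n-2}\sum_k H(x_k)$ concentrate to within $(\epsilon+a)$ of each other, $G\le H$ gives no relation between $\int G\rho$ and $\tfrac{1}{n-2}\sum_k G(x_k)$ (e.g. $G$ could equal $H$ off the sample but $H/2$ at the sample points). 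In step (C) you then apply the U-statistic/Bernstein machinery to this majorant kernel rather than to $G$ itself, so the concentration estimate you would obtain is for $H$, not for the quantity that actually appears. The proof has to keep the exact, sign-indefinite bilinear kernel, not its Cauchy--Schwarz envelope.

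What the paper does instead, and what is really needed, is to use $\sum_i\tilde\psi_\epsilon(x,x_i)=1$ to write both $\int_M|\nabla(\Lambda_\epsilon\phi)|^2\rho^2$ and $\tfrac{1}{n-2}\sum_k|\nabla(\Lambda_\epsilon\phi)(x_k)|^2\rho(x_k)$ exactly as the same quadratic form $\tfrac{-\epsilon^2}{2(n-1)^2}\sum_{i,j}(\diffgraph\phi_{ij})^2\,K_{ij}$ with kernel coefficient $K_{ij}$ equal to $\int_M\langle\nabla\tilde\psi_\epsilon(\cdot,x_i),\nabla\tilde\psi_\epsilon(\cdot,x_j)\rangle\rho^2$ in one case and $\tfrac{1}{n-2}\sum_{k\ne i,j}\langle\nabla\tilde\psi_\epsilon(x_k,x_i),\nabla\tilde\psi_\epsilon(x_k,x_j)\rangle\rho(x_k)$ in the other; the difference then factors through $\sum_{i,j}(\diffgraph\phi_{ij})^2$ times a per-pair concentration error. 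Concentrating $K_{ij}$ still requires unwinding the $\theta_{n,\epsilon}^{-4}$, $\theta_{n,\epsilon}^{-3}$, $\theta_{n,\epsilon}^{-2}$ terms, replacing $\theta_{n,\epsilon}$ by the deterministic $\theta_\epsilon$ via Lemma~\ref{lem:InterNormApp}, and then applying Bernstein to the resulting double sums over $s,t$; it is from this expansion, not from an $\epsilon^2$-net for $G$, that the $\epsilon^{-2m}$ and $n^3$ factors in the failure probability arise. Your diagnosis of where the probability overhead should come from, and the role of the leave-one-out structure in producing $n-2$, is essentially right; the missing idea is to do the concentration \emph{inside} the exact quadratic form, per pair $(i,j)$, rather than after lumping everything into a positive Cauchy--Schwarz bound. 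The final replacement of $B(x_i,2\epsilon)$ by $B(x_i,\epsilon)$ (equation~\eqref{eq:InterDiricStep3}) via the discretization map $T$ is a separate, comparatively minor step.
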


\begin{proof}
  Using $\sum_{i=1}^n \tilde{\psi}(x,x_i) = 1$ for all $x\in M$, we first observe that
  \begin{align}
    \int_M |\diffcont (\Lambda_\epsilon \phi)|^2\rho^2\, d\vol_g 
    = &\frac{1}{(n-1)^2}\sum_{i,j = 1}^n \phi(x_i)\phi(x_j)\int_M \langle \diffcont \tilde{\psi}_\epsilon(x, x_i), \diffcont\tilde{\psi}_\epsilon(x, x_j)\rangle\rho(x)^2\, dx\\
    = &\frac{-\epsilon^2}{2(n-1)^2}\sum_{i,j = 1}^n(\diffgraph\phi_{ij})^2\int_M \langle \diffcont \tilde{\psi}_\epsilon(x, x_i), \diffcont\tilde{\psi}_\epsilon(x, x_j)\rangle\rho(x)^2\, dx\label{start1ofinterpolation}
  \end{align}
  and 
  \begin{equation}\label{start2ofinterpolatoin}
    \begin{split}
    &\frac{1}{n-2}
    \sum_{k=1}^n
    | \diffcont(\Lambda_\epsilon\phi)(x_k) |^2\rho(x_k)\\
    &\quad = \frac{-\epsilon^2}{2(n-2)(n-1)^2}\sum_{i = 1}^n\sum_{j \neq i}(\diffgraph\phi_{ij})^2\sum_{k\neq i, j} \langle \diffcont \tilde{\psi}_\epsilon(x_k, x_i), \diffcont\tilde{\psi}_\epsilon(x_k, x_j)\rangle\rho(x_k).
    \end{split}
  \end{equation}
  For $i\neq j\in\{1,\dots,n\}$ with $d_g(x_i, x_j)\leq 2\epsilon$, we will compare
  \begin{equation}\label{eq:SigmaCore}
    \frac{1}{n-2}\sum_{k\neq i, j} \langle \diffcont \tilde{\psi}_\epsilon(x_k, x_i), \diffcont\tilde{\psi}_\epsilon(x_k, x_j)\rangle\rho(x_k)
  \end{equation}
  with
  \begin{equation}\label{eq:IntCore}
    \int_M \langle \diffcont \tilde{\psi}_\epsilon(x, x_i), \diffcont\tilde{\psi}_\epsilon(x, x_j)\rangle\rho(x)^2\,d\vol_g(x).
  \end{equation}
  We have
  \begin{align}
  &\int_M \langle \diffcont \tilde{\psi}_\epsilon(x, x_i), \diffcont\tilde{\psi}_\epsilon(x, x_j)\rangle\rho(x)^2\,dx \\
  &\quad = 
  \begin{aligned}[t]
    &\int_M
    \left\{
      \theta_{n, \epsilon}(x)^{-4}|\diffcont \theta_{n, \epsilon}(x)|^2\psi_\epsilon(x, x_i)\psi_\epsilon(x, x_j)
      -\theta_{n, \epsilon}(x)^{-3}\langle\diffcont \theta_{n, \epsilon}(x), \diffcont \psi_\epsilon(x, x_i)\rangle\psi_\epsilon(x, x_j) 
    \right.\\
      &\left.-\theta_{n, \epsilon}(x)^{-3}\langle\diffcont \theta_{n, \epsilon}(x), \diffcont \psi_\epsilon(x, x_j)\rangle\psi_\epsilon(x, x_i)
      +\theta_{n, \epsilon}(x)^{-2}\langle \diffcont \psi_\epsilon(x, x_i), \diffcont \psi_\epsilon(x, x_j)\rangle\right\}\rho(x)^2 \, dx
  \end{aligned}\\
  &\quad =  
  \begin{aligned}[t]
    &\frac{1}{n^2}\sum_{s, t = 1}^n\int_M
    \theta_{n, \epsilon}(x)^{-4}\langle\diffcont \psi_\epsilon (x, x_s),\diffcont\psi_\epsilon (x, x_t)\rangle\psi_\epsilon(x, x_i)\psi_\epsilon(x, x_j)\rho(x)^2\,dx\\
      &-\frac{1}{n}\sum_{s=1}^n\int_M\theta_{n, \epsilon}(x)^{-3}\langle\diffcont \psi_{\epsilon}(x, x_s), \diffcont \psi_\epsilon(x, x_i)\rangle\psi_\epsilon(x, x_j)\rho(x)^2\,dx\\
      &-\frac{1}{n}\sum_{s=1}^n\int_M\theta_{n, \epsilon}(x)^{-3}\langle\diffcont \psi_{\epsilon}(x, x_s), \diffcont \psi_\epsilon(x, x_j)\rangle\psi_\epsilon(x, x_i)\rho(x)^2\, dx\\
      &+\int_M\theta_{n, \epsilon}(x)^{-2}\langle \diffcont \psi_\epsilon(x, x_i), \diffcont \psi_\epsilon(x, x_j)\rangle\rho(x)^2 \, dx.
  \end{aligned}
  \end{align}
  If $(a+\epsilon)C(m,K,D,\alpha)\leq 1$, Using (i) in Lemma~\ref{lem:InterNormApp}, with probability at least $1-C(m,K,D)(\epsilon^{-2m}+n)\exp(-na^2\epsilon^m)$, we have
  \begin{equation}\label{eq:interpolationeq1}
    \begin{split}
    &\int_M \langle \diffcont \tilde{\psi}_\epsilon(x, x_i), \diffcont\tilde{\psi}_\epsilon(x, x_j)\rangle\rho(x)^2\,dx \\
    &\quad \geq 
    \begin{aligned}[t]
    &\frac{1}{n^2}\sum_{s, t = 1}^n\int_M
    \theta_{\epsilon}(x)^{-4}\langle\diffcont \psi_\epsilon (x, x_s),\diffcont\psi_\epsilon (x, x_t)\rangle\psi_\epsilon(x, x_i)\psi_\epsilon(x, x_j)\rho(x)^2\,dx\\
    &-\frac{1}{n}\sum_{s=1}^n\int_M\theta_{\epsilon}(x)^{-3}\langle\diffcont \psi_{\epsilon}(x, x_s), \diffcont \psi_\epsilon(x, x_i)\rangle\psi_\epsilon(x, x_j)\rho(x)^2\,dx\\
    &-\frac{1}{n}\sum_{s=1}^n\int_M\theta_{\epsilon}(x)^{-3}\langle\diffcont \psi_{\epsilon}(x, x_s), \diffcont \psi_\epsilon(x, x_j)\rangle\psi_\epsilon(x, x_i)\rho(x)^2\, dx\\
    &+\int_M\theta_{\epsilon}(x)^{-2}\langle \diffcont \psi_\epsilon(x, x_i), \diffcont \psi_\epsilon(x, x_j)\rangle\rho(x)^2 \, dx -\epsilon^{-m-2}(\epsilon + a)C(m, K, D,\alpha,v).
    \end{aligned}
    \end{split}
  \end{equation}
  As an intermediate step, consider
  \begin{equation}\label{eq:interpolationeq2}
    \begin{split}
    &\int_M \left\langle \diffcont\left(\frac{ \psi_\epsilon(x, x_i)}{\theta_\epsilon(x)}\right), \diffcont \left(\frac{\psi_\epsilon(x, x_j)}{\theta_\epsilon(x)}\right)\right\rangle\rho(x)^2\,dx\\
    &\quad =\begin{aligned}[t]
       & \int_M\int_M\int_M
       \theta_{\epsilon}(x)^{-4}\langle\diffcont \psi_\epsilon (x, y),\diffcont\psi_\epsilon (x,z)\rangle\psi_\epsilon(x, x_i)\psi_\epsilon(x, x_j)\rho(x)^2\rho(y)\rho(z)\,dxdydz\\
       &-\int_M\int_M\theta_{\epsilon}(x)^{-3}\langle\diffcont \psi_{\epsilon}(x, y), \diffcont \psi_\epsilon(x, x_i)\rangle\psi_\epsilon(x, x_j)\rho(x)^2\rho(y)\,dxdy\\
       &-\int_M\int_M\theta_{\epsilon}(x)^{-3}\langle\diffcont \psi_{\epsilon}(x, y), \diffcont \psi_\epsilon(x, x_j)\rangle\psi_\epsilon(x, x_i)\rho(x)^2\rho(y)\, dxdy\\
       &+\int_M\theta_{\epsilon}(x)^{-2}\langle \diffcont \psi_\epsilon(x, x_i), \diffcont \psi_\epsilon(x, x_j)\rangle \rho(x)^2\, dx.
    \end{aligned}
    \end{split}
  \end{equation}
  We compare \eqref{eq:interpolationeq1} and \eqref{eq:interpolationeq2} by Lemma~\ref{lem:Bern} for each term.
  For the first term, let us define $F:M\times M \to \R$ by
  \begin{equation}
    F(y, z)= \int_M
    \theta_{\epsilon}(x)^{-4}\langle\diffcont \psi_\epsilon (x, y),\diffcont\psi_\epsilon (x,z)\rangle\psi_\epsilon(x, x_i)\psi_\epsilon(x, x_j)\rho(x)^2\,dx
  \end{equation}
  for every $y, z \in M$.
  Then we have
  \begin{equation}
    \left|\int_M F(y, z)\rho(y) \,dy\right|\leq C(m, K, D,\alpha,v)\epsilon^{-2m -2}
  \end{equation}
  for every $z\in M$, and
  \begin{equation}
    \int_M\left(\int_M F(y, z)\rho(y) \,dy\right)^2\rho(z)\,dz \leq \epsilon^{-3m-4}{C(m,K,D,\alpha,v)}
  \end{equation}
  holds. By Lemma~\ref{lem:Bern}, these two inequalities imply
  \begin{equation}\label{F1ofInterpolation}
    \left|\int_M\int_M F(y, z)\rho(y)\rho(z)\,dydz- \frac{1}{n-2}\sum_{t\neq i, j}\int_M F(y, x_t)\rho(y)\,dy\right|\leq {C(m,K,D,\alpha,v)a\epsilon^{-m-2}}
  \end{equation}
  for every $i\neq j\in\{1,\dots,n\}$ with probability $1-2n(n-1)\exp(-(n-2)a^2\epsilon^m)$.
  In addition, we have
  \begin{equation}
    |F(y, z)| \leq \epsilon^{-3m-2}C(m,K,D,\alpha,v),
  \end{equation}
  for $y, z\in M$, and
  \begin{equation}
    \int_M F(y, z)^2\rho(y)\, dy\leq \epsilon^{-5m-2}{C(m,K,D,\alpha,v) },
  \end{equation}
  for $z\in M$.
  Thus,
  \begin{equation}\label{F2ofInterpolation}
    \left|\int_M F(y, x_t) \,dy - \frac{1}{n-3}\sum_{s\neq t, i, j} F(x_s, x_t)\right|
    \leq a\epsilon^{-2m-2}C(m,K,D,\alpha,v),
  \end{equation}
  for any $t\neq i, j$ with probability $1-2n(n-1)(n-2)\exp(-(n-3)a^2\epsilon^m)$.
  Combining inequalities \eqref{F1ofInterpolation}, \eqref{F2ofInterpolation}, and (i) of Lemma~\ref{lem:InterNormApp},
  with probability $1-n^3C\exp(-na^2\epsilon^m)$,
  \begin{equation}
    \frac{1}{(n-2)(n-3)}\sum_{t\neq i, j}\sum_{s\neq t, i, j} F(x_s, x_t) - \int_M\int_M F(y, z)\,dydz
    \leq a\epsilon^{-m-2}C(m, K, D,\alpha,v)
  \end{equation}
  holds
  for every $i\neq j$.
  This estimate completes the comparison of the first term of \eqref{eq:interpolationeq1} and \eqref{eq:interpolationeq2}.
  A similar comparison of the second and third  terms yields
  \begin{equation}\label{eq:InterDirichStep1}
    \begin{split}
      &\int_M \langle \diffcont \tilde{\psi}_\epsilon(x, x_i), \diffcont\tilde{\psi}_\epsilon(x, x_j)\rangle\rho(x)^2\,dx\\
      &\quad\geq  \int_M \left\langle \diffcont\left(\frac{ \psi_\epsilon(x, x_i)}{\theta_\epsilon(x)}\right), \diffcont \left(\frac{\psi_\epsilon(x, x_j)}{\theta_\epsilon(x)}\right)\right\rangle\rho(x)^2\,dx
      - {\epsilon^{-m-2}(\epsilon + a)C(m, K, D,\alpha,v)} 
    \end{split}
  \end{equation}
  for any $i\neq j\in \{ 1,\dots, n\}$ with probability at least $1-C(m, K, D)(\epsilon^{-2m}+n^3)\exp(-na^2\epsilon^m)$.
  Second, we will compare \eqref{eq:IntCore} with the first term of the right-hand side of \eqref{eq:InterDirichStep1}.
  Using
  \begin{align}
    &\left|\left\langle \diffcont\left(\frac{ \psi_\epsilon(x, x_i)}{\theta_\epsilon(x)}\right), \diffcont \left(\frac{\psi_\epsilon(x, x_j)}{\theta_\epsilon(x)}\right)\right\rangle\right|\rho(x)\leq \epsilon^{-m-2}C(m, K, D,\alpha,v),\\
    &\int_M\left|\left\langle \diffcont\left(\frac{ \psi_\epsilon(x, x_i)}{\theta_\epsilon(x)}\right), \diffcont \left(\frac{\psi_\epsilon(x, x_j)}{\theta_\epsilon(x)}\right)\right\rangle\right|^2\rho(x)^3\, dx
    \leq {\epsilon^{-3m-4}C(m, K, D,\alpha,v) },
  \end{align}
  we obtain
  \begin{equation}\label{step1ofinterpolation}
    \begin{split}
      &\int_M \left\langle \diffcont\left(\frac{ \psi_\epsilon(x, x_i)}{\theta_\epsilon(x)}\right), \diffcont \left(\frac{\psi_\epsilon(x, x_j)}{\theta_\epsilon(x)}\right)\right\rangle\rho(x)^2\,dx\\
      &\quad\geq  \sum_{k \neq i, j} \left\langle \diffcont\left(\frac{ \psi_\epsilon(x_k, x_i)}{\theta_\epsilon(x_k)}\right), \diffcont \left(\frac{\psi_\epsilon(x_k, x_j)}{\theta_\epsilon(x_k)}\right)\right\rangle\frac{\rho(x_k)}{n-2}
      +{\epsilon^{-m-2}(a+\epsilon)C(m, K, D,\alpha,v)}
    \end{split}
  \end{equation}
  for every $i\neq j$ with probability at least $1-2n(n-1)\exp(-(n-2)a^2\epsilon^m)$.
  For $x=x_k\in \data$, by (i) and (ii) of Lemma~\ref{lem:InterNormApp},
  \begin{align}
    \left|\diffcont\left(\frac{ \psi_\epsilon(x, x_i)}{\theta_\epsilon(x)}\right) - \diffcont (\tilde{\psi}_\epsilon(x, x_i))\right|
    \leq &
    \begin{aligned}[t]
      &|(\theta_\epsilon(x)^{-2} - \theta_{n, \epsilon}(x)^{-2})  \psi_\epsilon(x, x_i)\diffcont\theta_\epsilon(x)|\\
      +&|\theta_{n, \epsilon}^{-2}(\diffcont\theta_\epsilon(x) - \diffcont\theta_{n, \epsilon}(x)) \psi_\epsilon(x, x_i)|\\
      +&|(\theta_\epsilon(x)^{-1} - \theta_{n, \epsilon}(x)^{-1})\diffcont\psi_\epsilon(x, x_i)|
    \end{aligned}\\
    \leq & \epsilon^{-m-2}{(a + \epsilon)C(m, K, D,\alpha,v)},
  \end{align}
  so 
  \begin{equation}\label{step2ofinterpolation}
    \begin{split}
      &\left|\left\langle \diffcont\left(\frac{ \psi_\epsilon(x, x_i)}{\theta_\epsilon(x)}\right), \diffcont \left(\frac{\psi_\epsilon(x, x_j)}{\theta_\epsilon(x)}\right)\right\rangle \rho(x)- \langle \diffcont \tilde{\psi}_\epsilon(x, x_i), \diffcont\tilde{\psi}_\epsilon(x, x_j)\rangle \rho(x)\right|\\
      &\quad \leq {\epsilon^{-m-2}(a + \epsilon)C(m, K, D,\alpha,v)}
    \end{split}
  \end{equation}
  holds.
  Combining \eqref{step1ofinterpolation}, \eqref{step2ofinterpolation}, and (i) of Lemma~\ref{lem:InterNormApp},
  we obtain the comparison
  \begin{equation}\label{termofinterpolation}
    \begin{split}
      &\int_M \left\langle \diffcont\left(\frac{ \psi_\epsilon(x, x_i)}{\theta_\epsilon(x)}\right), \diffcont \left(\frac{\psi_\epsilon(x, x_j)}{\theta_\epsilon(x)}\right)\right\rangle\rho(x)^2\,dx\\
      &\quad\geq  \frac{1}{n-2} \sum_{k \neq i, j} \langle \diffcont \tilde{\psi}_\epsilon(x_k, x_i), \diffcont\tilde{\psi}_\epsilon(x_k, x_j)\rangle\rho(x_k)
      + \epsilon^{-m-2}C(m,K,D,\alpha,v)(\epsilon+a).
    \end{split}
  \end{equation}
  Lastly, Comparing \eqref{eq:InterDirichStep1} and \eqref{termofinterpolation} yields
  \begin{equation}\label{eq:CompareCore}
    \begin{split}
      &\left|\frac{1}{n-2} \sum_{k \neq i, j} \langle \diffcont \tilde{\psi}_\epsilon(x_k, x_i), \diffcont\tilde{\psi}_\epsilon(x_k, x_j)\rangle\rho(x_k)- \int_M \langle \diffcont \tilde{\psi}_\epsilon(x, x_i), \diffcont\tilde{\psi}_\epsilon(x, x_j)\rangle\rho(x)^2\,dx\right|\\
      &\quad\leq  \epsilon^{-m}(\epsilon + a)C(m,K,D,\alpha,v).
    \end{split}
  \end{equation}
  Combining this with equations \eqref{start1ofinterpolation} and \eqref{start2ofinterpolatoin}, we conclude
  \begin{equation}\label{eq:InterDiriStep2}
    \begin{split}
      &\int_M |\diffcont (\Lambda_\epsilon \phi)|^2\rho^2\, d\vol_g
      -\frac{1}{n-2}\sum_{k=1}|\diffcont (\Lambda_\epsilon \phi)(x_k)|^2\rho(x_k)\\
      &\quad\leq C\epsilon^{-m-2}(\epsilon +a)\sum_{i=1}^n\sum_{x_j\in B(x_i,2\epsilon)}(\phi(x_i) - \phi(x_j))^2.
    \end{split}
  \end{equation}
  Using Lemma~\ref{lem:ExistT}, by the similar method in the proof of Proposition~\ref{prop:DataPoincare1}, we have
  \begin{equation}\label{eq:InterDiricStep3}
    \sum_{i=1}\sum_{x_j\in B(x_i,2\epsilon)} \diffgraph \phi_{ij}^2 \leq C(m,K,D,\alpha) \sum_{i=1}\sum_{x_j\in B(x_i,\epsilon)} \diffgraph \phi_{ij}^2
  \end{equation}
  with probability at least $1-C(m,K,D)\epsilon^{-m}\exp(-na^2\epsilon^m)$.
  By these two inequalities, we obtain the desired lemma.
\end{proof}

Next, to compare $\frac{1}{n-2}\sum_{i=1}^n |\diffcont(\Lambda_\epsilon\phi)(x_i)|^2\rho(x_i)$ with $\sum_{i=1}\sum_{x_j\in\tilde{B}(x_i,\epsilon)}\left(\diffgraph \phi_{ij}\right)^2$, we will give the following estimates.
Then $\theta_\epsilon$ satisfies the following estimates.
\begin{lemma}\label{lem:coBishopGromov}
  There exists a constant $C=C(m,\mathcal{L})>0$ such that
  we have
  \begin{align}
    |\theta_{\epsilon}(x) - \rho(x)\theta_{\epsilon,K}|
    &\leq \rho(x)(V_K(\epsilon) - \vol_g(B(x, \epsilon))) +  \epsilon^{m+1} C(m,\mathcal{L}),\\
    \epsilon|\diffcont \theta_{\epsilon}(x)|
    &\leq {\rho(x)}({V_K(\epsilon) - \vol_g(B(x, \epsilon))})+ \epsilon^{m+1}C(m,\mathcal{L} ),
  \end{align}
  for every $\epsilon\in (0, 1)$ and $x\in M$,
  where $\theta_{K, \epsilon} = m\omega_m\int_0^\epsilon \psi_\epsilon(x, y) \sn_K(r)^{m-1}\, dr$.
\end{lemma}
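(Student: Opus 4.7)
\emph{Proof plan.} My approach is to evaluate both $\theta_\epsilon(x)$ and $\diffcont\theta_\epsilon(x)$ in geodesic polar coordinates centred at $x$ and then compare with the corresponding expressions on the constant-curvature $-K$ model. For the first bound, write
\[
\theta_\epsilon(x)=\int_{U_xM}\int_0^{\epsilon}\tfrac12\!\left(1-\tfrac{r^2}{\epsilon^2}\right)\rho(\exp_x(ru))\,\Theta_u(r)\,dr\,du,
\]
with $\Theta_u(r)$ extended by zero for $r\ge t(u)$, and, using $\int_{U_xM}du=m\omega_m$,
\[
\rho(x)\theta_{K,\epsilon}=\int_{U_xM}\int_0^\epsilon\tfrac12\!\left(1-\tfrac{r^2}{\epsilon^2}\right)\rho(x)\,\sn_K(r)^{m-1}\,dr\,du.
\]
Subtracting and decomposing
\[
\rho(\exp_x(ru))\Theta_u(r)-\rho(x)\sn_K(r)^{m-1}
=\bigl(\rho(\exp_x(ru))-\rho(x)\bigr)\Theta_u(r)+\rho(x)\bigl(\Theta_u(r)-\sn_K(r)^{m-1}\bigr),
\]
the first (``density-fluctuation'') term is bounded pointwise by $\mathcal L\,r\,\Theta_u(r)$ and, after polar integration, by $C(m)\mathcal L\epsilon\,\vol_g(B(x,\epsilon))\le C(m,\mathcal L)\epsilon^{m+1}$ using Theorem~\ref{thm:RiccComp}; the second (``Jacobian-defect'') term has a definite sign by Theorem~\ref{thm:RiccComp}(i), and its absolute value integrates to at most $\rho(x)\bigl(V_K(\epsilon)-\vol_g(B(x,\epsilon))\bigr)$. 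This yields the first inequality.

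For the gradient, I differentiate under the integral sign. Wherever $d_g(\cdot,y)$ is smooth at $x$ and $d_g(x,y)<\epsilon$, one has $\diffcont_x\psi_\epsilon(x,y)=\epsilon^{-2}d_g(x,y)\,\dot\gamma_{xy}(0)$, so in polar coordinates
\[
\diffcont\theta_\epsilon(x)=\frac{1}{\epsilon^2}\int_{U_xM}\int_0^\epsilon r\,u\,\rho(\exp_x(ru))\,\Theta_u(r)\,dr\,du.
\]
The crucial simplification is the angular symmetry $\int_{U_xM}u\,du=0$, which lets me subtract off the constant-density, constant-curvature contribution $\rho(x)\sn_K(r)^{m-1}u$ for free. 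Applying the same two-term split as above and taking norms, the density-fluctuation part contributes at most $\epsilon^{-2}\mathcal L\int_{U_xM}\int_0^\epsilon r^2\,\Theta_u(r)\,dr\,du\le C(m,\mathcal L)\epsilon^m$, while the Jacobian-defect part is bounded by $\epsilon^{-2}\rho(x)\int_{U_xM}\int_0^\epsilon r\bigl(\sn_K(r)^{m-1}-\Theta_u(r)\bigr)\,dr\,du\le\epsilon^{-1}\rho(x)\bigl(V_K(\epsilon)-\vol_g(B(x,\epsilon))\bigr)$, where I used $r\le\epsilon$. Multiplying by $\epsilon$ gives the second inequality.

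The only technical point requiring care is justifying differentiation under the integral, since $\psi_\epsilon(\cdot,y)$ is merely Lipschitz at the boundary locus $\{d_g(x,y)=\epsilon\}$ and along the cut locus of $x$. For a fixed $x$, both bad sets are $\vol_g$-null, and $\psi_\epsilon$ is globally $\epsilon^{-1}$-Lipschitz in $x$ uniformly in $y$, so standard dominated convergence delivers the pointwise gradient formula used above. Beyond that, the proof is the symmetry-plus-Bishop--Gromov computation sketched above.
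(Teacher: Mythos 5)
Your argument is correct and follows the same route as the paper: both compute $\theta_\epsilon$ and its gradient in geodesic polar coordinates, split the integrand into a density-fluctuation part (controlled by $\Lip\rho\le\mathcal L$) and a Jacobian-defect part (controlled via Bishop--Gromov, yielding the $V_K(\epsilon)-\vol_g(B(x,\epsilon))$ term), and exploit $\int_{U_xM}u\,du=0$ to subtract the model contribution for the gradient. Your closing remark on justifying differentiation under the integral across the cut locus and the sphere $\{d_g(x,\cdot)=\epsilon\}$ is a reasonable technical supplement that the paper leaves implicit.
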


\begin{proof}
  By Theorem~\ref{thm:RiccComp}, since $\rho$ is $\mathcal{L}$-Lipschitz, we have
  \begin{align}
    \big|\rho(x)\theta_{\epsilon,K} - \theta_{\epsilon}(x)\bigr|
    &\leq \frac{\rho(x)}{2}\int_{\Sph^{m-1}}\int_0^{\epsilon}\left( 1-\left(\frac{r}{\epsilon}\right)^2\right)(\sin_K^{m-1}(r) - \Theta_u(r))\, {drdu} + \epsilon^{m+1} \mathcal{L} C(m)\\
    &\leq \bigl(V_K(\epsilon) - \vol_g(B(x, \epsilon))\bigr)\rho(x) + \epsilon^{m+1} C(m)\mathcal{L}
  \end{align}
  for any $x \in M$.
  Moreover, since $\int_{\Sph^{m-1}} \langle u, w\rangle \,
  du = 0$ for any $w\in T_x M$, we obtain
  \begin{align}
    \langle\diffcont \theta_\epsilon(x), w\rangle
    & = -\epsilon^{-2} \int_{\Sph^{m-1}}\int_0^{\min\{\epsilon, \rho(u)\}} \langle u, w \rangle r \rho(c_u(r)) \Theta_u(r)\,{drdu}\\
    &\leq \epsilon^{-2} \rho(x)\int_{\Sph^{m-1}}\int_0^{\epsilon} \langle u, w \rangle r  (\sin_K(r)^{m-1} - \Theta_u(r))\,{drdu} + \epsilon^{m} C(m)\mathcal{L}\\
    &\leq \frac{\rho(x)}{\epsilon}\bigl(V_K(\epsilon)-\vol_g(B(x, \epsilon))\bigr) + \epsilon^{m} C(m)\mathcal{L}.
  \end{align}
  Thus, this lemma holds.
\end{proof}

\begin{lemma}\label{lem:interpolationlemmas}
  There exist constants $C_1 = C_1(m,K,D)$, $C_2= C_2(m, K, D,\alpha,\mathcal{L},v, L)>0$, and $C_3 = C_3(m,\alpha,v)>0$ such that for every $\epsilon, a\in (0,1)$ satisfying $\tau < \epsilon$ and $(\epsilon + a) C_3\leq 1$, and for every $p\in [1, \infty)$, the following holds with probability at least $1-nC_1\exp( -na^2\epsilon^m)-C_3n^2\tau\epsilon^{m-1}$:
  \begin{enumerate}
    \item 
      For $x \in M$, set $r_x\colon  M \to \R$ by $r_x(y) = d_g(x, y)$ for all $y \in M$. Then
      \begin{equation}
        \sum_{x_j\in B(x_i, \epsilon)} 
        \left( \frac{r_{x_j}(x_i)}{\epsilon} \right)^2
        \frac{\langle \diffcont r_{x_j}, w \rangle^2}{n-1}
        \leq
        \left( \frac{\omega_m\epsilon^m}{m+2}\rho(x_i) + \epsilon^m(a+\epsilon)C_2 \right) |w|^2
      \end{equation}
      for $x_i \in \data$ and $w\in T_{x_i} M$. 
    \item We have
      \begin{equation}
        \left(\frac{1}{n}\sum_{i=1}^n
          \left|
          \rho(x_i) - \frac{\theta_{n, \epsilon}(x_i)}{\theta_{\epsilon, K}}
        \right|^{p}\right)^{\frac{1}{p}},
        \left(\frac{1}{n}\sum_{i=1}^n \left|\frac{\epsilon\diffcont\theta_{n, \epsilon}(x_i)}{\theta_{\epsilon, K}}\right|^{p}\right)^{\frac{1}{p}}
        \leq \bigl(a +{\epsilon} + a^{\frac{2}{p}}\epsilon^{\frac{m}{p}} +  V_{p, \epsilon}(M)\bigr)C_2.
      \end{equation}
    \item We have
      \begin{equation}
        \left(\frac{1}{n} \sum_{i=1}^n \left|\frac{\deg(x_i)}{(n-1)\omega_m \epsilon^m} - \rho(x_i) \right|^p\right)^{\frac{1}{p}}
        \leq \left(a + \epsilon  + a^{\frac{2}{p}}\epsilon^{\frac{m}{p}} + V_{p, \epsilon}(M) + S_{\epsilon}(M,\tilde d)^{\frac{1}{p}}\epsilon^{-m}\right)C_2.
      \end{equation}
  \end{enumerate}
\end{lemma}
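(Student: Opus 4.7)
For part (i), I would reduce the quadratic form in $w$ to a matrix argument. Fixing a Borel measurable orthonormal frame $(e_1,\dots,e_m)$ on $M$ and expanding $w = \sum_k w_k e_k(x_i)$, the left-hand side equals $w^{\top}A(x_i)w$ where
\[A_{kl}(x_i) = \frac{1}{n-1}\sum_{x_j \in B(x_i,\epsilon)}\Bigl(\frac{r_{x_j}(x_i)}{\epsilon}\Bigr)^{\!2} \langle\diffcont r_{x_j}, e_k(x_i)\rangle\langle\diffcont r_{x_j}, e_l(x_i)\rangle.\]
I would apply Bernstein's inequality (Lemma~\ref{lem:Bern}) with $\delta = a\epsilon^{m/2}$, conditionally on $x_i$, to each matrix entry and union-bound over $nm^2$ events. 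This yields $|A_{kl}(x_i) - \int_M F_{kl}(x_i,y)\rho(y)\,d\vol_g(y)|\le C\epsilon^m a$ with probability at least $1-Cne^{-na^2\epsilon^m}$. Passing to geodesic polar coordinates at $x_i$ and using $\Theta_v(r)\le \sn_K(r)^{m-1} = r^{m-1}(1+O(Kr^2))$ (Theorem~\ref{thm:RiccComp}), the $\mathcal L$-Lipschitz bound on $\rho$, and the spherical identity $\int_{U_{x_i}M}\langle v,e_k\rangle\langle v,e_l\rangle\,dv = \delta_{kl}\omega_m$, the continuous integral evaluates to $\delta_{kl}\rho(x_i)\omega_m\epsilon^m/(m+2)+O(\epsilon^{m+1})$. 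The bound $w^{\top}A w \le \frac{\omega_m\epsilon^m}{m+2}\rho(x_i)|w|^2 + m\max_{k,l}|E_{kl}|\cdot|w|^2$ with $E_{kl} = A_{kl} - \frac{\omega_m\epsilon^m}{m+2}\rho(x_i)\delta_{kl}$ then yields (i).

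For part (ii), the triangle inequality combined with Lemma~\ref{lem:InterNormApp} and Lemma~\ref{lem:coBishopGromov}, together with $\theta_{\epsilon,K}\asymp\epsilon^m$, yields the pointwise estimate
\[|\rho(x_i) - \theta_{n,\epsilon}(x_i)/\theta_{\epsilon,K}| \;\le\; C\rho(x_i)\bigl(1-\vol_g(B(x_i,\epsilon))/V_K(\epsilon)\bigr) + C(\epsilon+a),\]
and the analogous bound for $\epsilon|\diffcont\theta_{n,\epsilon}(x_i)|/\theta_{\epsilon,K}$. To pass to the $\ell^p$-average over $\data$, I would apply Bernstein to $h(x)=\rho(x)^p(1-\vol_g(B(x,\epsilon))/V_K(\epsilon))^p$, which is uniformly bounded and whose integral against $\rho\,d\vol_g$ is at most $(\max\rho)^{p+1}V_{p,\epsilon}(M)^p$ by the very definition of $V_{p,\epsilon}(M)$. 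After taking a $p$-th root, an AM--GM inequality absorbs the mixed term $a^{1/p}\epsilon^{m/(2p)}V_{p,\epsilon}(M)^{1/2}$ into $a^{2/p}\epsilon^{m/p} + V_{p,\epsilon}(M)$, producing the announced bound.

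Part (iii) is the most delicate, and the main obstacle is that the pointwise contribution $\rho\vol_g(B(x_i,\epsilon)\setminus\tilde B(x_i,\epsilon))$ to $|\deg(x_i)/((n-1)\omega_m\epsilon^m) - \rho(x_i)|$ has size $\tau/\epsilon$, which cannot be absorbed into any listed error term. The key is to quarantine it inside the failure probability of the good event
\[\Omega_{\mathrm{good}} = \{x_j\notin B(x_i,\epsilon)\setminus\tilde B(x_i,\epsilon)\text{ for every pair } i\neq j\}.\]
Since $B(x_i,\epsilon)\setminus\tilde B(x_i,\epsilon)\subset B(x_i,\epsilon)\setminus B(x_i,\epsilon-\tau)$ has $\vol_g$-measure at most $C\tau\epsilon^{m-1}$ by Bishop--Gromov, a union bound over pairs gives $\mathbb P(\Omega_{\mathrm{good}}^c)\le Cn^2\tau\epsilon^{m-1}$. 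On $\Omega_{\mathrm{good}}$ we have $\deg(x_i)=\#\{j:x_j\in B(x_i,\epsilon)\}+\#\{j:x_j\in \tilde B(x_i,\epsilon)\setminus B(x_i,\epsilon)\}$, so the problematic half of the symmetric difference drops out. Bernstein applied to $\mathbf{1}_{B(x_i,\epsilon)}$ and to $\mathbf{1}_{\tilde B\setminus B}$, together with the analysis of (ii) for the $V_{p,\epsilon}$ contribution, reduces the problem to controlling $p_S(x_i):=\rho\vol_g(\tilde B(x_i,\epsilon)\setminus B(x_i,\epsilon))\le C\epsilon^m$. The $L^1$-to-$L^p$ interpolation
\[\Bigl(\tfrac{1}{n}\textstyle\sum_i p_S(x_i)^p\Bigr)^{1/p}\le(C\epsilon^m)^{(p-1)/p}\Bigl(\tfrac{1}{n}\textstyle\sum_i p_S(x_i)\Bigr)^{1/p},\]
together with $\tfrac{1}{n}\sum_i p_S(x_i)\le CS_\epsilon(M,\tilde d)+\text{Bernstein error}$, gives the bound $CS_\epsilon^{1/p}\epsilon^{-m/p}$ plus mixed terms, and a final AM--GM reduces the mixed terms to $a^{2/p}\epsilon^{m/p}+S_\epsilon^{1/p}\epsilon^{-m}$ (using $\epsilon^{-m/p}\le\epsilon^{-m}$ for $\epsilon\le 1$). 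The rest is routine probabilistic bookkeeping: every discrete quantity is split into a continuous Riemannian integral (analyzed by Bishop--Gromov and the error integrals $V_{p,\epsilon}$, $S_\epsilon$) and a Bernstein-type sampling error, and the accumulated failure events are absorbed into the stated $nC_1e^{-na^2\epsilon^m}+C_3 n^2\tau\epsilon^{m-1}$ probability bound.
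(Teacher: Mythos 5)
Your proposal follows the paper's proof in all three parts: Bernstein applied entrywise to the $m\times m$ quadratic-form matrix and evaluation of the continuous integral in geodesic polar coordinates for (i), the triangle inequality through Lemmas~\ref{lem:InterNormApp} and~\ref{lem:coBishopGromov} plus Bernstein on $(1-\vol_g(B(\cdot,\epsilon))/V_K(\epsilon))^p$ for (ii), and the quarantine event (the paper's $V_\tau$, which your $\Omega_{\mathrm{good}}$ contains via $\tilde d\ge d_g-\tau$) combined with an $L^1$-to-$L^p$ interpolation of $\vol_g(\tilde B\setminus B)$ for (iii). The only differences are cosmetic---you apply Bernstein to $\rho^p(1-\cdot)^p$ rather than $(1-\cdot)^p$ and state the $nm^2$ union bound explicitly where the paper is terse---so this is essentially the same argument.
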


\begin{proof}
  Similarly to the proof of Lemma~\ref{lem:InterNormApp} (ii), with probability at least $1-2enm\exp(-na^2\epsilon^m)$, we have
  \begin{equation}\label{eq:InterLems3Step}
    \sum_{x_j\in B(x_i, \epsilon)} 
    \left( \frac{r_{x_j}(x_i)}{\epsilon} \right)^2
    \frac{\langle \diffcont r_{x_j}, w \rangle}{n-1}
    \leq
    \int_{B(x_i,\epsilon)} \left(\frac{r_y(x_i)}{\epsilon} \right)^2 \langle \diffcont r_{y}, w \rangle^2\rho(y) \,d\vol_g + a\epsilon^m C(m)
  \end{equation}
  for $x_i\in M$ and $w\in T_{x_i} M$.
  Since $\rho(x) \leq \rho(y)\bigl( 1 + \epsilon C(m,K,D,\alpha,\mathcal{L})\bigr)$ for $d_g(x,y) < \epsilon$, by Theorem~\ref{thm:RiccComp}, we get
  \begin{align}
    \int_{B(x,\epsilon)} \left(\frac{r_y(x)}{\epsilon} \right)^2 \langle \diffcont r_{y}, w \rangle^2\rho(y) \,d\vol_g(y)
    = & \int_{U_{x_i} M}\int_0^{\min\{t(u),\epsilon\}}\left(\frac{r}{\epsilon} \right)^2 \langle  u, w \rangle^2\rho(c_u(r)) \Theta_u(r)\,drdu\\
    \leq & \frac{\omega_m\epsilon^m\bigl(1 + \epsilon C(m,K,D,\alpha,\mathcal{L}) \bigr)}{m+2}\rho(x)|w|^2
  \end{align}
  for every $w \in T_x M$, where we also used $\int_{\Sph^{m-1}} \langle u,w\rangle^2 \, du =\omega_m$ for the last inequality.
  By this and the inequality~\eqref{eq:InterLems3Step}, we obtain (i).
  By Lemma~\ref{lem:coBishopGromov} and Lemma~\ref{lem:Bern},
  with probability at least $1-2en\exp(-na^2\epsilon^m)$, we have
  \begin{align}
    &\left(
      \frac{1}{n}\sum_{i=1}^n \left| \rho(x_i) - \frac{ \theta_{n,\epsilon}(x_i) }{ \theta_{\epsilon, K} } \right|^p
    \right)^{\frac{1}{p}}\\
    &\quad\leq  \frac{1}{\theta_{\epsilon,K}}
    \left(
      \frac{1}{n}\sum_{i=1}^n \left| \rho(x_i)\theta_{\epsilon,K} - { \theta_{\epsilon}(x_i) } \right|^p 
    \right)^{\frac{1}{p}} 
    +\frac{1}{\theta_{\epsilon,K}}
    \left(
      \frac{1}{n}\sum_{i=1}^n \left| { \theta_{\epsilon}(x_i) } -  \theta_{n,\epsilon}(x_i)\right|^p 
    \right)^{\frac{1}{p}}\\
    &\quad\leq  
    (\max\rho)\left(
      \frac{1}{n}\sum_{i=1}^n \left( 1 -  \frac{\vol_g(B(x_i,\epsilon))}{V_K(\epsilon)} \right)^p 
    \right)^{\frac{1}{p}} 
    + (\max\rho)(\epsilon+a)C(m,K,D,\alpha,\mathcal{L}).
  \end{align}
  Now, using $\left(1-\frac{\vol_g(B(x,\epsilon))}{V_K(\epsilon)}\right) \leq 1$ and 
  $\int_M \left( 1 -  \frac{\vol_g(B(x_i,\epsilon))}{V_K(\epsilon)} \right)^{2p}\rho(x)\, d\vol_g(x) \leq (\max\rho) V_{p,\epsilon}(M)^p$, by Lemma~\ref{lem:Bern},
  \begin{equation}
    \begin{split}
    &\mathbb{P}\left(
      \begin{split}
        &\left| \int_M \left( 1 -  \frac{\vol_g(B(x,\epsilon))}{V_K(\epsilon)} \right)^p\rho(x)\,d\vol_g(x) - \frac{1}{n}\sum_{i=1}^n \left( 1-\frac{\vol_g(B(x,\epsilon))}{V_K(\epsilon)} \right)^p \right| \\
        &\quad \leq  a^2\epsilon^m + a\epsilon^{m/2}\sqrt{\max\rho}V_{p,\epsilon}(M)^{p/2}
      \end{split}
    \right)\\
    & \qquad \geq1-2\exp(-na^2\epsilon^m)
    \end{split}
  \end{equation}
  holds. Note that $a^2\epsilon^m + a\epsilon^{m/2}\sqrt{\max\rho}V_{p,\epsilon}(M)^{p/2} \leq a^2\epsilon^m + V_{p,\epsilon}(M)^pC(v,\alpha)$. Thus, with probability at least $1-2(en + 1)\exp(-na^2\epsilon^m)$, we have
  \begin{equation}
    \left(
      \frac{1}{n}\sum_{i=1}^n \left| \rho(x_i) - \frac{ \theta_{n,\epsilon}(x_i) }{ \theta_{\epsilon, K} } \right|^p 
    \right)^{1/p}
    \leq \bigl(\epsilon + a +a^{\frac{2}{p}}\epsilon^{\frac{m}{p}} + V_{p,\epsilon}(M) \bigr)C(m,K,D,\alpha,\mathcal{L},v).
  \end{equation}
  Similarly, we can obtain the remaining part of (ii) with this probability. For (iii), with probability at least $1-2(en + 1)\exp(-na^2\epsilon^m)$, we also have
  \begin{equation}
    \left( \frac{1}{n} \sum_{i=1}^n \left(\frac{\# B(x_i,\epsilon)\cap \data}{(n-1)V_K(\epsilon)} - \rho(x_i)\right)^p \right)^{\frac{1}{p}}
    \leq \bigl(a + \epsilon + a^{\frac{2}{p}}\epsilon^{\frac{m}{p}} + V_{p,\epsilon}(M)\bigr)C(m,K,D,\alpha,\mathcal{L},v).
  \end{equation}
  Here, set  $V_\tau \subset M^n$ by 
  \begin{equation}
    V_\tau = \{ \data \in M^n :
    \textrm{There exist $i, j =1,\dots,n$ such that $d(x_i, x_j) \in [\epsilon\!-\!\tau, \epsilon)$ holds.}\}.
  \end{equation}
  Then we have
  \begin{equation}\label{eq:InterDirichLastStep2}
    (\rho\vol_g)^{\otimes n} \left( V_\tau \right)
    \leq n(n-1) \tau \epsilon^{m-1} C(m)\max\rho.
  \end{equation}
  If $\data\not\in V_\tau$, $\data\cap B(x_i, \epsilon)\setminus \tilde{B}(x_i, \epsilon) = \emptyset $ holds for all $x_i\in \data$.
  Therefore, we can obtain
  \begin{equation}
    \left(\frac{1}{n} \sum_{i=1}^n \left(\frac{\#\tilde{B}(x_i,\epsilon)\triangle B(x_i,\epsilon)\cap \data}{(n-1)V_K(\epsilon)}\right)^p\right)^{\frac{1}{p}}\leq \left(a + \epsilon  + a^{\frac{2}{p}}\epsilon^{\frac{m}{p}}+ S_\epsilon(M, \tilde d)^{\frac{1}{p}}\epsilon^{-m}\right)C
  \end{equation}
  with the desired probability,
  where $C = C(m,K,D,\alpha,\mathcal{L},v)$.
  These two estimates conclude (iii).
\end{proof}

\begin{lemma}\label{lem:InterDiricSum}
  There exist constants $C_1= C_1(m,K,D)>0$, $C_2 = C_2(m, K, D,v,\alpha,\mathcal{L},L)>0$, and $C_3 = C_3(m,v,\alpha)>0$ such that for all $\epsilon, a\in (0,1)$ with $\tau < \epsilon$ and $(\epsilon + a) C_2\leq 1$, and for $1<p, q<\infty$ with $\frac{1}{p}+\frac{1}{q}+ \epsilon^2 = 1$, the following property holds.
  We have
  \begin{equation}
    \begin{split}
      &\frac{1}{n-2}\sum_{i=1}^n |\diffcont(\Lambda_\epsilon\phi)(x_i)|^2\rho(x_i)\\
      &\quad\leq  C_4\epsilon^{-\frac{2}{p}} \left( a + \epsilon  + a^{\frac{2}{p}}\epsilon^{\frac{m}{p}} + {V_{p,\epsilon}(M)}\right)
      \left(\sum_{i=1}\sum_{x_j\in \tilde{B}(x_i,\epsilon)}\frac{(\diffgraph \phi_{ij})^2}{n^2\epsilon^m}\right)^{\frac{1}{q}}\left(\sum_{i=1}^n\frac{\phi(x_i)^{2(1-q^{-1})\epsilon^{-2}}}{n}\right)^{\epsilon^2}\\
      & + \frac{\bigl(1+(\epsilon + a)C_4\bigr)(m+2)}{(n-1)(n-2)\omega_m\epsilon^m}\sum_{i=1}\sum_{x_j\in \tilde{B}(x_i,\epsilon)}\left(\diffgraph \phi_{ij}\right)^2
    \end{split}
  \end{equation}
  for all $\phi\colon  \mathcal{X}_n \to \R$ with probability  at least $1-nC_1\exp(-na^2\epsilon^m)-C_3n^2\tau\epsilon^{m-1}$.
\end{lemma}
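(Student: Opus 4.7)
The plan is to decompose $\diffcont(\Lambda_\epsilon\phi)$ at each data point into a ``main'' contribution coming from $\diffcont\psi_\epsilon$ and a ``correction'' contribution coming from $\diffcont\theta_{n,\epsilon}$, and then to estimate each piece separately. Using that $\sum_i\tilde\psi_\epsilon(x,\cdot)\equiv n-1$ and the identity $\sum_i\psi_\epsilon(x_k,x_i)(\phi(x_i)-\phi(x_k))=(n-1)\theta_{n,\epsilon}(x_k)(\Lambda_\epsilon\phi(x_k)-\phi(x_k))$, the product rule applied to $\tilde\psi_\epsilon=\psi_\epsilon/\theta_{n,\epsilon}$ yields
\begin{equation}
\diffcont(\Lambda_\epsilon\phi)(x_k)=\underbrace{\tfrac{1}{(n-1)\theta_{n,\epsilon}(x_k)}\sum_i\diffcont\psi_\epsilon(x_k,x_i)(\phi(x_i)-\phi(x_k))}_{M_k}\;-\;\underbrace{\tfrac{\diffcont\theta_{n,\epsilon}(x_k)}{\theta_{n,\epsilon}(x_k)}\bigl(\Lambda_\epsilon\phi(x_k)-\phi(x_k)\bigr)}_{E_k},
\end{equation}
and I would apply $|M_k-E_k|^2\le(1+\delta)|M_k|^2+(1+\delta^{-1})|E_k|^2$ with $\delta$ of order $\epsilon+a$ so that the resulting coefficient on $|M_k|^2$ matches the factor $1+(\epsilon+a)C_4$ of the statement.

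For the main term $M_k$ I would test $M_k$ against a unit vector $w\in T_{x_k}M$ and apply Cauchy-Schwarz in the resulting scalar sum, using $\diffcont\psi_\epsilon(x_k,x_i)=-\epsilon^{-2}r_{x_i}(x_k)\diffcont r_{x_i}(x_k)$ on $\{d_g<\epsilon\}$. Lemma~\ref{lem:interpolationlemmas}(i) then bounds $\sum_i(r_{x_i}(x_k)/\epsilon)^2\langle\diffcont r_{x_i},w\rangle^2/(n-1)$ by $\omega_m\epsilon^m\rho(x_k)/(m+2)+\epsilon^m(a+\epsilon)C$, while Lemma~\ref{lem:InterNormApp}(i) replaces $\theta_{n,\epsilon}(x_k)^{-2}$ by $(\theta_{\epsilon,K}\rho(x_k))^{-2}$ up to a factor $1+(\epsilon+a)C$. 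Summing over $k$ with weight $\rho(x_k)$ and using $\theta_{\epsilon,K}\approx\omega_m\epsilon^m/(m+2)$ produces exactly the target main term $\tfrac{(1+(\epsilon+a)C_4)(m+2)}{(n-1)(n-2)\omega_m\epsilon^m}\sum_{i,j}(\diffgraph\phi_{ij})^2$, where the restriction from $d_g$-neighbours to $\tilde d$-neighbours is carried out outside the event $V_\tau=\{\exists\,i,j:d_g(x_i,x_j)\in[\epsilon-\tau,\epsilon)\}$ introduced in the proof of Lemma~\ref{lem:interpolationlemmas}(iii), which has probability at most $C_3n^2\tau\epsilon^{m-1}$.

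For the correction term $E_k$ I would bound $(\Lambda_\epsilon\phi(x_k)-\phi(x_k))^2$ by Jensen's inequality relative to the probability measure $\psi_\epsilon(x_k,\cdot)/((n-1)\theta_{n,\epsilon}(x_k))$, obtaining an upper bound proportional to $\sum_i\psi_\epsilon(x_k,x_i)(\phi(x_i)-\phi(x_k))^2$. After dividing by $\theta_{n,\epsilon}(x_k)^2$, replacing $\theta_{n,\epsilon}$ by $\theta_{\epsilon,K}\rho$ via Lemma~\ref{lem:InterNormApp}(i), and absorbing $\rho$-ratios using $\rho\in\mathcal{P}(M\colon\alpha,\mathcal L)$, the quantity $\tfrac{1}{n-2}\sum_k\rho(x_k)|E_k|^2$ is controlled by a double sum over $(k,i)$ of $(\epsilon|\diffcont\theta_{n,\epsilon}(x_k)|/\theta_{\epsilon,K})^{2}\,\psi_\epsilon(x_k,x_i)(\diffgraph\phi_{ki})^2$ up to constants. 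I would then split $(\phi(x_i)-\phi(x_k))^2=|\phi(x_i)-\phi(x_k)|^{2/q}\cdot|\phi(x_i)-\phi(x_k)|^{2(1-1/q)}$, bound the second factor by $2^{2(1-1/q)}(|\phi(x_k)|^{2(1-1/q)}+|\phi(x_i)|^{2(1-1/q)})$, and apply three-factor H\"older with exponents $(p,q,1/\epsilon^2)$ satisfying $1/p+1/q+\epsilon^2=1$. The $L^p$ factor, controlled by Lemma~\ref{lem:interpolationlemmas}(ii), yields $a+\epsilon+a^{2/p}\epsilon^{m/p}+V_{p,\epsilon}(M)$; the $L^q$ factor yields $\bigl(\sum(\diffgraph\phi_{ij})^2/(n^2\epsilon^m)\bigr)^{1/q}$; the $L^{1/\epsilon^2}$ factor yields $\bigl(\sum\phi(x_i)^{2(1-1/q)/\epsilon^2}/n\bigr)^{\epsilon^2}$; and the prefactor $\epsilon^{-2/p}$ emerges from reconciling the $\epsilon^{-m/q}$ normalization inside the $L^q$ factor with the $\epsilon^{2/q}$ generated by rewriting $|\phi(x_i)-\phi(x_k)|^{2/q}=\epsilon^{2/q}|\diffgraph\phi_{ki}|^{2/q}$.

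The main obstacle is the H\"older bookkeeping in the last paragraph: the split of $(\phi(x_i)-\phi(x_k))^2$ into three pieces must be arranged so that the powers simultaneously fit the Dirichlet-energy form $(\diffgraph\phi_{ij})^2$, the high $L^{2(1-1/q)/\epsilon^2}$ norm of $\phi$, and the $L^p$ norm of $\epsilon|\diffcont\theta_{n,\epsilon}|/\theta_{\epsilon,K}$, and so that symmetrizing $i\leftrightarrow k$ in the $\phi$-piece actually collapses to the single-index $L^{2(1-1/q)/\epsilon^2}$ norm in the statement. A secondary technical issue is coordinating the two amplifications: the $(1+\delta^{-1})$ from the Cauchy-Schwarz split and the $(1+(\epsilon+a)C)$ from the approximation of $\theta_{n,\epsilon}$ by $\theta_{\epsilon,K}\rho$ must be balanced by choosing $\delta$ of order $\epsilon+a$, so that the main piece receives only the claimed $1+(\epsilon+a)C_4$ factor while the amplification of the error piece is absorbed into the constant $C_4$. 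All remaining ingredients---the $V_\tau$ event, the uniform bounds $(\alpha V_K(D))^{-1}\le\rho\le\alpha/v$, and the passage from $d_g$- to $\tilde d$-balls---follow routinely from the preceding lemmas.
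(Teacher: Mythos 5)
Your decomposition $\diffcont(\Lambda_\epsilon\phi)(x_k)=M_k-E_k$ keeps the random normalization $\theta_{n,\epsilon}(x_k)^{-1}$ inside the ``main'' term $M_k$. To convert $M_k$ into the quantity that matches Lemma~\ref{lem:interpolationlemmas}(i) and cancels against $\theta_{\epsilon,K}$, you claim that Lemma~\ref{lem:InterNormApp}(i) ``replaces $\theta_{n,\epsilon}(x_k)^{-2}$ by $(\theta_{\epsilon,K}\rho(x_k))^{-2}$ up to a factor $1+(\epsilon+a)C$.'' This step is false. Lemma~\ref{lem:InterNormApp}(i) only gives $|\theta_{n,\epsilon}(x)-\theta_\epsilon(x)|\le\epsilon^m(\epsilon+a)C_2$; passing from $\theta_\epsilon(x)$ to $\rho(x)\theta_{\epsilon,K}$ carries, by Lemma~\ref{lem:coBishopGromov}, the additional pointwise error $\rho(x)\bigl(V_K(\epsilon)-\vol_g(B(x,\epsilon))\bigr)$. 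Without an upper sectional curvature bound this Bishop--Gromov defect is not $O(\epsilon^{m+1})$ at each point; it is only controlled in an $L^p$ sense, which is exactly what the term $V_{p,\epsilon}(M)$ measures. Consequently the ratio $\bigl(\theta_{\epsilon,K}\rho(x_k)/\theta_{n,\epsilon}(x_k)\bigr)^2$ is only bounded by a constant $C(m,K,D,v,\alpha)$, not by $1+(\epsilon+a)C$, and your $M_k^2$ estimate produces the main term with a constant multiplicative prefactor rather than the sharp $1+(\epsilon+a)C_4$ the lemma asserts. This is not a bookkeeping slip: the entire purpose of the $V_{p,\epsilon}$ term is to let the Bishop--Gromov defect enter as an \emph{additive} $L^p$-small error rather than as a pointwise multiplicative correction, and the paper's proof reflects this by separating $\bigl|\rho(x_i)-\theta_{n,\epsilon}(x_i)/\theta_{\epsilon,K}\bigr|$ into the error weight $A$ in its decomposition $|\diffcont(\Lambda_\epsilon\phi)(x_i)|\le A\cdot B + C$, where $C$ is defined with the deterministic normalization $\bigl(\rho(x_i)\theta_{\epsilon,K}\bigr)^{-1}$ from the outset. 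Your decomposition, by contrast, effectively reinstates a pointwise sectional-curvature assumption.

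A secondary problem is the power bookkeeping in your error term. After the $(1+\delta^{-1})|E_k|^2$ split with $\delta\sim\epsilon+a$, the error carries the \emph{squared} quantity $(\epsilon|\diffcont\theta_{n,\epsilon}|/\theta)^2$. Lemma~\ref{lem:interpolationlemmas}(ii) controls only the first power in $L^p$, so your three-factor H\"older would either land on $V_{2p,\epsilon}$ or produce a factor of order $(a+\epsilon+V_{p,\epsilon})^2/(\epsilon+a)$, neither of which reduces to the single power $a+\epsilon+V_{p,\epsilon}$ in the statement. The paper avoids squaring the small weight: it uses the uniform pointwise bound on $A$ (valid on the event of Lemma~\ref{lem:InterNormApp}) to pass from $(AB+C)^2$ to a bound linear in $A$, $C_0\,A\,B^2 + C^2$, so that H\"older need only be applied with $A$ to the first power.
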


\begin{proof}
  For any $i \in \{1,\dots,n\}$, we have
  \begin{align}
    |\diffcont(\Lambda_\epsilon\phi)(x_i)|
    =&\left|\textstyle\frac{1}{n-1} \sum_{j=1}^n \diffcont \tilde{\psi_\epsilon}(x_i,x_j)(\phi(x_i) - \phi(x_j) )\right|\\
    \leq &\left( \textstyle\frac{\epsilon|\diffcont\theta_{n, \epsilon}(x_i)|}{\theta_{n, \epsilon}(x_i)} + \textstyle\frac{1}{\rho(x_i)}\left|\rho(x_i) - \textstyle\frac{\theta_{n, \epsilon }(x_i)}{\theta_{\epsilon, K}}\right|\right) \sum_{x_j\in B(x_i, \epsilon)}\frac{ |\diffgraph \phi_{x_ix_j}|}{2n\theta_{n, \epsilon }(x)}\\
         &+  \textstyle\frac{1}{\rho(x_i)}\left|\sum_{x_j\in B(x_i, \epsilon)} \frac{\diffcont\psi_{\epsilon}(x_i, x_j)}{(n-1)\theta_{\epsilon, K}}(\phi(x_i) - \phi(x_j))\right|.
  \end{align}
  Hence, with the probability of Lemma~\ref{lem:InterNormApp},
  \begin{equation}\label{eq:InterDiricSumStep1}
    \begin{split}
      &|\diffcont(\Lambda_\epsilon\phi)(x_i)|^2\\
      &\quad\leq C(m,K,D,\alpha,v)\textstyle\left(\frac{\epsilon|\diffcont\theta_{n, \epsilon}(x_i)|}{\theta_{n, \epsilon}(x_i)} + \left|\rho(x_i) - \frac{\theta_{n, \epsilon }(x_i)}{\theta_{\epsilon, K}}\right|\right) \left(\sum_{x_j\in B(x_i, \epsilon)}\frac{ |\diffgraph \phi_{x_ix_j}|}{2n\theta_{n, \epsilon }(x)}\right)^2\\
      & \qquad +  \textstyle\frac{1}{\rho(x_i)^2}\left|\sum_{x_j\in B(x_i, \epsilon)} \frac{\diffcont\psi_{\epsilon}(x_i, x_j)}{(n-1)\theta_{\epsilon, K}}(\phi(x_i) - \phi(x_j))\right|^2
    \end{split}
  \end{equation}
  holds. For every $r\in [1, \infty)$, we have
  \begin{equation}\label{eq:InterDiricSumStep2}
    \frac{1}{n}\sum_{i=1}\left(\sum_{x_j\in B(x_i,\epsilon)}\frac{|\diffgraph \phi(x_ix_j)|}{n\theta_{n, \epsilon}(x_i)}\right)^{r}
    \leq \frac{C(m, K, D)^r}{n\epsilon^{r}}\sum_{i=1}^n\phi(x_i)^{r}.
  \end{equation}
  For the first term of \eqref{eq:InterDiricSumStep1}, by the H\'{o}lder inequality, combining (ii) of Lemma~\ref{lem:interpolationlemmas} with \eqref{eq:InterDiricSumStep2} yields
  \begin{equation}\label{eq:InterDiricSumStep3}
    \begin{split}
      &\textstyle\frac{1}{n-2}\sum_{i=1}^n \left(\frac{\epsilon|\diffcont\theta_{n, \epsilon}|}{\theta_{n, \epsilon}(x_i)} + \left|\rho(x_i) - \frac{\theta_{n, \epsilon }(x)}{\theta_{\epsilon, K}}\right|\right) \left(\sum_{x_j\in B(x_i, \epsilon)}\frac{ \diffgraph \phi_{x_ix_j}}{n\theta_{n, \epsilon }(x)}\right)^{2( q^{-1}+1-q^{-1})}\\
      &\quad\leq  C(m,K,D,v,\alpha,\mathcal{L})\epsilon^{-\frac{2}{p}}\left( a + \epsilon + a^{\frac{2}{p}}\epsilon^{\frac{m}{p}} + {V_{p,\epsilon}(M)}\right)
      \left(\sum_{i=1}^n\frac{\phi(x_i)^{2(1-q^{-1})\epsilon^{-2}}}{n}\right)^{\epsilon^2}\\
      &\qquad\textstyle\left(\sum_{i=1}\sum_{x_j\in B(x_i,\epsilon)}\frac{\left(\diffgraph \phi(x_ix_j)\right)^2}{n^2\epsilon^m}\right)^{\frac{1}{q}}.
    \end{split}
  \end{equation}
  We used $\sup_{\epsilon\in (0,1)}\epsilon^{-\epsilon^2} < \infty$ here.
  For the second term of \eqref{eq:InterDiricSumStep1}, by (i) of Lemma~\ref{lem:interpolationlemmas}, with the probability of this lemma, we get
  \begin{equation}
    \begin{split}
      &\frac{1}{\rho(x_i)^2}\left\langle \sum_{j=1}^n \frac{\diffcont \psi_\epsilon(x_i,x_j)}{(n-1)\theta_{\epsilon,K}}(\phi(x_i) - \phi(x_j)), w \right\rangle^2\\
      &\quad\leq \frac{|w|^2}{(n-1)\rho(x_i)\theta_{\epsilon,K}^2}\left(\frac{\omega_m\epsilon^m}{m+2} + \epsilon^m(a+\epsilon)C(m,K,D,\alpha,\mathcal{L},L) \right)
      \sum_{x_j\in B(x_i,\epsilon)}( \diffgraph\phi_{ij})^2
    \end{split}
  \end{equation}
  for any $i \in \{1,\dots,n\}$ and any $w \in T_{x_i} M$.
  Hence, using $\left|\theta_{\epsilon,K} - \frac{\omega_m\epsilon^m}{m+2}\right| \leq  \epsilon^{m+1}C(m,K)$, we have
  \begin{equation}
    \begin{split}
    &\frac{1}{n-2}\sum_{i=1}^n\frac{1}{\rho(x_i)}\left|\sum_{x_j\in B(x_i, \epsilon)} \frac{\diffcont\psi_{\epsilon}(x_i, x_j)}{(n-1)\theta_{\epsilon, K}}(\phi(x_i) - \phi(x_j))\right|^2 \\
    &\quad \leq 
    \frac{\bigl((m+2) + (a+\epsilon)C(m,K,D,\alpha,\mathcal{L},L) \bigr)}{(n-1)(n-2)\omega_m\epsilon^m}\sum_{i=1}^n\sum_{x_j\in B(x_i,\epsilon)} (\diffgraph\phi_{ij})^2.
    \end{split}
  \end{equation}
  Combining \eqref{eq:InterDiricSumStep1} with this and \eqref{eq:InterDiricSumStep3}, we obtain
  \begin{equation}\label{eq:DiscDirichLastStep}
    \begin{split}
    &\frac{1}{n-2}\sum_{i=1}^n |\diffcont(\Lambda_\epsilon\phi)(x_i)|^2\rho(x_i)\\
    &\quad\leq  C_2\epsilon^{-\frac{2}{p}} \left( a + \epsilon  + a^{\frac{2}{p}}\epsilon^{\frac{m}{p}} + {V_{p,\epsilon}(M)}\right)
    \left(\sum_{i=1}\sum_{x_j\in {B}(x_i,\epsilon)}\frac{(\diffgraph \phi_{ij})^2}{n^2\epsilon^m}\right)^{\frac{1}{q}}\left(\sum_{i=1}^n\frac{\phi(x_i)^{2(1-q^{-1})\epsilon^{-2}}}{n}\right)^{\epsilon^2}\\
    & \qquad+ \frac{\bigl(1+(\epsilon + a)C_2\bigr)(m+2)}{(n-1)(n-2)\omega_m\epsilon^m}\sum_{i=1}\sum_{x_j\in {B}(x_i,\epsilon)}\left(\diffgraph \phi_{ij}\right)^2.
    \end{split}
  \end{equation}
  Similar to Lemma~\ref{lem:interpolationlemmas}, we can assume $\data\cap B(x_i,\epsilon)\setminus \tilde{B}(x_i,\epsilon) = \emptyset$.
  Hence, the inequality~\eqref{eq:DiscDirichLastStep} implies the desired inequality.
\end{proof}

\begin{proposition}\label{prop:InterNormLast}
  There exist constants $C_1 = C_1(m)$,  $C_2=  C_2(m,K,D,v,\alpha,\mathcal{L},L)$, and  $C_3= C_3(m,v,\alpha)$ such that 
  for $\epsilon, a \in (0,1)$ with $(\epsilon + a) C_3\leq 1$ and $\tau < \epsilon$,
  with probability at least $1-C_2(n^2+ \epsilon^{-2m})\exp(-na^2\epsilon^m)-C_3n^2\tau \epsilon^{m-1}$, we have
  \begin{equation}\label{eq:unnormalizednorm}
    \begin{split}
      \left|\int_M|\Lambda_\epsilon\phi(x)|^2\rho(x)\, dx-\sum_{i=1}^n\frac{\phi(x_i)^2}{n-2}\right|
      \leq (\epsilon + a) C_3\left(\sum_{i=1}^n\frac{\phi(x_i)^2}{n} + \sum_{i=1}^n\sum_{x_j\in \tilde{B}(x_i,\epsilon)}\frac{(\diffgraph \phi_{ij})^2}{n^2\epsilon^m}\right)
    \end{split}
  \end{equation}
  and
  \begin{equation}\label{eq:normalizednorm}
    \begin{split}
          &\left|\int_M| (\Lambda_\epsilon\phi) |^2\rho(x)^2 \, dx-\sum_{i=1}^n\frac{\phi(x_i)^2\deg(x_i)}{(n-2)(n-1)\omega_m\epsilon^m}\right|\\
          &\quad \leq {C_2\left(a + \epsilon + V_{1, \epsilon}(M) + \epsilon^{-m}S_{\epsilon}\right)}
          \left( \left(\sum_{i=1}\frac{\phi(x_i)^{2\epsilon^{-2}}\deg(x_i)}{n^2\epsilon^m}\right)^{\epsilon^2} + \sum_{i=1}\sum_{x_j\in \tilde{B}(x_i,\epsilon)}\frac{\left(\diffgraph \phi_{ij}\right)^2}{n^2\epsilon^m}\right)
    \end{split}
  \end{equation}
  for all $\phi: \mathcal{X}_n \to \R$.
\end{proposition}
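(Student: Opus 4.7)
The plan is to expand $|\Lambda_\epsilon\phi|^2$ using the elementary variance identity for probability weights and integrate term by term, in the spirit of the proof of Lemma~\ref{lem:InterDirichDisc}. Setting $w_i(x) := \tilde{\psi}_\epsilon(x,x_i)/(n-1)$ so that $\sum_i w_i(x)\equiv 1$, one has
\begin{equation}
  |\Lambda_\epsilon\phi(x)|^2 = \sum_{i=1}^n w_i(x)\phi(x_i)^2 - \tfrac12\sum_{i,j=1}^n w_i(x)w_j(x)\bigl(\phi(x_i)-\phi(x_j)\bigr)^2.
\end{equation}
Integrating against $\rho\,d\vol_g$ (respectively $\rho^2\,d\vol_g$) splits each side of \eqref{eq:unnormalizednorm} (respectively \eqref{eq:normalizednorm}) into a ``diagonal'' contribution $\sum_i\phi(x_i)^2 I_i$, where $I_i$ is the corresponding moment integral, plus a ``variance'' contribution supported on pairs with $d_g(x_i,x_j)<2\epsilon$.

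The variance part is handled exactly as the cross term in the proof of Lemma~\ref{lem:InterDirichDisc}: after using Lemma~\ref{lem:InterNormApp}(i) to replace $\theta_{n,\epsilon}$ by $\theta_\epsilon$, the pointwise bound $w_i(x)w_j(x)\le C\epsilon^{-2m}(n-1)^{-2}\chi_{B(x_i,\epsilon)\cap B(x_j,\epsilon)}(x)$ and Theorem~\ref{thm:RiccComp} give $\int_M w_iw_j\rho^k\,d\vol_g\le C\epsilon^{-m}(n-1)^{-2}$ whenever $d_g(x_i,x_j)<2\epsilon$ (and zero otherwise). With $(\phi(x_i)-\phi(x_j))^2=\epsilon^2(\diffgraph\phi_{ij})^2$, and converting sums over $B(x_i,2\epsilon)$ to sums over $\tilde{B}(x_i,\epsilon)$ as in the last steps of the proofs of Lemma~\ref{lem:InterDirichDisc} and Lemma~\ref{lem:interpolationlemmas}, the variance contribution is dominated by $C\epsilon\cdot(n^2\epsilon^m)^{-1}\sum_{i,j}(\diffgraph\phi_{ij})^2$, which is absorbed into the gradient terms on the right-hand sides of both \eqref{eq:unnormalizednorm} and \eqref{eq:normalizednorm}.

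For the diagonal term in \eqref{eq:unnormalizednorm}, Lemmas~\ref{lem:InterNormApp}(i) and~\ref{lem:coBishopGromov} together with the symmetry identity $\int_M\psi_\epsilon(x,y)\rho(x)\,d\vol_g(x)=\theta_\epsilon(y)$ yield $|(n-1)I_i-1|\le(\epsilon+a)C(m,v,\alpha)$ uniformly in $x_i$; the pointwise Bishop--Gromov-type defect from $\theta_\epsilon(x)-\rho(x)\theta_{\epsilon,K}$ cancels on integration in $x$ because $\int_M w_i(x)\rho(x)\,dx$ is the average of a probability kernel. The $O(1/n)$ arithmetic gap between $(n-1)^{-1}$ and $(n-2)^{-1}$ is absorbed into $(\epsilon+a)C_3$ by the hypothesis $(\epsilon+a)C_3\le 1$. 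For \eqref{eq:normalizednorm}, the analogous computation with $\rho^2$ gives $I_i\approx\rho(x_i)/(n-1)$, and the replacement of $\rho(x_i)$ by $\deg(x_i)/[(n-1)\omega_m\epsilon^m]$ uses Lemma~\ref{lem:interpolationlemmas}(iii) combined with H\"older's inequality for exponents $q=\epsilon^{-2}$ and $p=1/(1-\epsilon^2)$:
\begin{equation}
  \tfrac{1}{n}\sum_i\phi(x_i)^2\Bigl|\rho(x_i)-\tfrac{\deg(x_i)}{(n-1)\omega_m\epsilon^m}\Bigr|\le\Bigl(\tfrac{1}{n}\sum_i\phi(x_i)^{2/\epsilon^2}\Bigr)^{\!\epsilon^2}\!C\bigl(a+\epsilon+V_{1,\epsilon}(M)+\epsilon^{-m}S_\epsilon\bigr),
\end{equation}
where we use $V_{p,\epsilon}^{\,p}\le V_{1,\epsilon}$ to bound $V_{p,\epsilon}$ by $V_{1,\epsilon}$ up to absorbable constants. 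The unweighted $\tfrac{1}{n}$-sum is then converted to the degree-weighted form of the statement by one further application of Lemma~\ref{lem:interpolationlemmas}(iii), at the price of a bounded factor absorbed into~$C_2$.

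The main obstacle I expect is the H\"older bookkeeping in the normalized case: one must verify that the chosen exponents make precisely the combination $(a+\epsilon+V_{1,\epsilon}+\epsilon^{-m}S_\epsilon)$ appear as a common coefficient on both the $L^{2/\epsilon^2}$-type norm and the gradient term in \eqref{eq:normalizednorm}, and simultaneously check that the exponent factors $a^{2/p}\epsilon^{m/p}$ and $S_\epsilon^{1/p}$ from Lemma~\ref{lem:interpolationlemmas}(iii) degenerate to $a,\epsilon,S_\epsilon$ up to constants. Once this is done, all probabilistic failure events originate from Lemmas~\ref{lem:InterNormApp} and~\ref{lem:interpolationlemmas}, come in the forms $\exp(-na^2\epsilon^m)$ and $n^2\tau\epsilon^{m-1}$, and combine to the stated total probability bound by a union bound.
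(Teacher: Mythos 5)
Your variance-identity decomposition is a genuinely different route from the paper's, and while the off-diagonal ``variance'' term is handled correctly, the diagonal term contains a real gap that the intended claim \eqref{eq:unnormalizednorm} cannot absorb. You assert that $|(n-1)I_i - 1|\le(\epsilon+a)C$ uniformly, because the Bishop--Gromov defect in $\theta_\epsilon(x)-\rho(x)\theta_{\epsilon,K}$ ``cancels on integration.'' It does not. After Lemma~\ref{lem:InterNormApp}(i) one is left to estimate
\[
(n-1)I_i-1 \;\approx\; \int_M \psi_\epsilon(x,x_i)\,\rho(x)\,\frac{\theta_\epsilon(x_i)-\theta_\epsilon(x)}{\theta_\epsilon(x)\,\theta_\epsilon(x_i)}\,d\vol_g(x),
\]
and Lemma~\ref{lem:coBishopGromov} only controls $|\diffcont\theta_\epsilon|$ and $|\theta_\epsilon-\rho\,\theta_{\epsilon,K}|$ through the local deficit $V_K(\epsilon)-\vol_g(B(\cdot,\epsilon))$, which under a Ricci lower bound alone can be of order $\epsilon^m$ (not $\epsilon^{m+1}$). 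Consequently $|\theta_\epsilon(x)-\theta_\epsilon(x_i)|$ over the ball $B(x_i,\epsilon)$ is only $O(\epsilon^m)$, which is the same order as $\theta_\epsilon$ itself, so the displayed integral is $O(1)$ rather than $O(\epsilon+a)$ in general. The error from the diagonal term is therefore a $V$-type quantity attached to the sample points, not $O(\epsilon+a)$, and it is weighted by the non-negative coefficients $\phi(x_i)^2$ with no sign cancellation available. This is precisely the kind of term that \eqref{eq:unnormalizednorm} (whose right-hand side has only the factor $(\epsilon+a)$ and no $V_{1,\epsilon}$) cannot tolerate.

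The paper avoids this analytic obstruction entirely. It never computes $I_i$: it expands $\int|\Lambda_\epsilon\phi|^2\rho^l$ in the basis $\tilde\psi_\epsilon(x,x_i)\tilde\psi_\epsilon(x,x_j)$ and uses a Bernstein concentration argument (the same pattern as \eqref{eq:CompareCore}) to replace the $x$-integral by the empirical sum $\tfrac{1}{n-2}\sum_k|\Lambda_\epsilon\phi(x_k)|^2\rho(x_k)^{l-1}$, with error $(\epsilon+a)C$ times $\tfrac1n\sum_i\phi(x_i)^2$; then it compares $|\Lambda_\epsilon\phi(x_k)|^2$ with $\phi(x_k)^2$ pointwise via the reproducing identity
\[
\Lambda_\epsilon\phi(x_k)-\phi(x_k)=\frac{1}{(n-1)\theta_{n,\epsilon}(x_k)}\sum_j\psi_\epsilon(x_k,x_j)\bigl(\phi(x_j)-\phi(x_k)\bigr),
\]
which produces only a gradient term and no Bishop--Gromov deficit. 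The deficit $V_{1,\epsilon}$ enters only in \eqref{eq:normalizednorm}, through replacing $\rho(x_i)$ by $\deg(x_i)/[(n-1)\omega_m\epsilon^m]$ via Lemma~\ref{lem:interpolationlemmas}(iii), exactly where the allowed right-hand side accommodates it. If you wish to retain your variance split you must treat the diagonal term by concentration as well (compare $\sum_i\phi(x_i)^2 I_i$ with $\tfrac{1}{n-2}\sum_k\Lambda_\epsilon(\phi^2)(x_k)$ and then with $\tfrac{1}{n-2}\sum_k\phi(x_k)^2$), not by trying to evaluate $I_i$ through Lemma~\ref{lem:coBishopGromov}.
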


\begin{proof}
  For $l = 1, 2$, We have
  \begin{equation}
    \int_M
    | (\Lambda_\epsilon\phi) |^2\rho(x)^l \, dx
    = \frac{1}{(n-1)^2}\sum_{i, j=1}^m \phi(x_i)\phi( x_j)\int_M\tilde{\psi_\epsilon}(x,x_i)\tilde{\psi_\epsilon}(x,x_j)\rho(x)^l\,dx.
  \end{equation}
  Then similar to \eqref{eq:CompareCore}, with probability at least $1-C(m,K,D)(\epsilon^{-2m} + n^2)\exp(-na^2\epsilon^m)$, we have
  \begin{equation}
    \left| \int_M \tilde{\psi_\epsilon}(x,x_i)\tilde{\psi_\epsilon}(x,x_j)\rho(x)^l \,dx - \sum_{k\neq i,j} \frac{\tilde{\psi_\epsilon}(x_k,x_i)\tilde{\psi_\epsilon}(x_k,x_j)}{n-2}\rho(x)^{l-1} \right| \leq \epsilon^{m}(\epsilon + a) C(m,K,D,v,\alpha,\mathcal{L}).
  \end{equation}
  Hence, by this dependent $C$,
  \begin{equation}\label{interpolationnorm1}
    \begin{split}
      &\left|\int_M| \Lambda_\epsilon\phi |^2\rho^i \, d\vol_g
      - \frac{1}{n-2}\sum_{k=1}^n|\Lambda_\epsilon\phi(x_k)|^2\rho(x_k)^{i-1}\right|\\
      & \quad \leq \frac{(\epsilon + a)C}{\epsilon^m n^2}\sum_{i=1}\sum_{x_j\in B(x_i,2\epsilon)}{\phi(x_i)\phi(x_j)}
      \leq \frac{(\epsilon + a)C}{n}\sum_{i=1}^n\phi(x_i)^2
    \end{split}
  \end{equation}
  holds.
  Since $\Lambda_\epsilon \phi(x_i) - \phi(x_i) = \frac{1}{(n-1)\theta_{n,\epsilon}(x_i)}\sum_{j=1}^n{\psi}_\epsilon(x_i,x_j)(\phi(x_j) -\phi(x_i))$, we obtain 
  \begin{equation}
    |\Lambda_\epsilon \phi(x_i)^2 - \phi(x_i)^2| \leq \epsilon\left(\frac{1}{(n-1)\theta_{n,\epsilon}(x_i)} \sum_{x_j\in B(x_i,\epsilon)} \diffgraph\phi_{ij}^2 + \phi(x_i)^2 \right).
  \end{equation}
  Thus,
  \begin{equation}\label{interpolationnorm2}
    \begin{split}
      &\left|\frac{1}{n-2}\sum_{i=1}^n|\Lambda_\epsilon\phi(x_i)|^2\rho(x_i)^{i-1}-\frac{1}{n-2}\sum_{i=1}^n\phi(x_i)^2\rho(x_i)^{i-1} \right|\\
      &\quad \leq \epsilon  C(m,K,D,v,\alpha)\left(\frac{1}{n}\sum_{i=1}^n\phi(x_i)^2 + \frac{1}{n^2\epsilon^m}\sum_{i=1}^n\sum_{x_j\in B(x_i,\epsilon)}\left(\diffgraph \phi_{ij}\right)^2\right).
    \end{split}
  \end{equation}
  By the inequality \eqref{interpolationnorm1} and \eqref{interpolationnorm2} for $i=1$, we obtain \eqref{eq:unnormalizednorm}.
  For the remaining part of this proposition, by (iii) of Lemma~\ref{lem:interpolationlemmas} with $p^{-1} = 1-\epsilon^2$, we have
  \begin{align}\label{interpolationnorm3}
    &\left|\frac{1}{n-2}\sum_{i=1}^n\phi(x_i)^2\rho(x_i)- \frac{1}{(n-1)(n-2)\omega_m\epsilon^m}\sum_{i=1}^n\phi(x_i)^2\deg(x_i)\right|\\
    \leq &\frac{1}{n-2}\sum_{i=1}^n\phi(x_i)^2 
    \left| \rho(x_i) - \frac{\deg(x_i)}{(n-1)\omega_m\epsilon^m}\right|\\
    \leq &C(m, K, D,\alpha,L,\mathcal{L},v)\left(a  + \epsilon + (a^2\epsilon^{m}) + V_{1, \epsilon}(M) + \epsilon^{-m}S_{\epsilon}\right) \left(\sum_{i=1}^n\frac{ \phi(x_i)^{2\epsilon^{-2}}}{n}\right)^{\epsilon^2}.
  \end{align}
  holds.
  Combining this, \eqref{interpolationnorm1}, and \eqref{interpolationnorm2}, using \eqref{eq:DegRegul}, we obtain \eqref{eq:normalizednorm}.
\end{proof}

\begin{theorem}\label{thm:EigenvCFromD}
  For $k\in\N$ and $ H \in \R$, there exist constants $C_1 = C_1(m,K,D,k)>0$, $C_2 =C_2(m,k,K,D,\alpha,\mathcal{L}, H,v, L)>0$, and $C_3 = C_3(m, \alpha,v)>0$ such for $\epsilon, a\in (0,1)$ with $\epsilon> 4\tau$,
  for $p > (1-\epsilon^2)^{-1}$, setting
  \[
    \eta_p:=\epsilon^{-\frac{1}{p}}( a + \epsilon + \epsilon^{\frac{m}{p}}a^{\frac{2}{p}} + V_{p,\epsilon}(M)),
  \]
  and assuming $\rho \in \mathcal P(M\colon \alpha, \mathcal L,  H)$,
  we have the following estimates:
  \begin{enumerate}
    \item If $(a+\epsilon)C_2\leq 1$, we have
      \begin{equation}\label{eq:AofEigenvCFromD}
        \lambda_k(\Delta_\rho) \leq (m+2)\lambda_k({\datagraph}) +C_2\eta_p
      \end{equation}
      with probability at least $1-(\epsilon^{-2m} + n^3)C_1\exp( -na^2\epsilon^m) - n^2\tau\epsilon^{m-1}C_3$,
    \item If $(a + \epsilon) C_2 \leq 1$, we have
      \begin{equation}\label{eq:BofEigenvCFromD}
        \lambda_k(\Delta_\rho^N) \leq (m+2)\lambda_k({\datangraph}) +C_2(\eta_p + \epsilon^{-m} S_\epsilon(M))
      \end{equation}
      with the same probability bound as in \textnormal{(ii)}.
  \end{enumerate}
\end{theorem}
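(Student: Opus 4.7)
The plan is to apply the min--max principle to the test subspace
$V:=\spanv\{\Lambda_\epsilon\phi_0,\dots,\Lambda_\epsilon\phi_k\}\subset\Lip(M)$,
where $\phi_0,\dots,\phi_k\colon\data\to\R$ are $L^2(\vol_\Gamma)$-orthonormal eigenfunctions of $\Delta_\Gamma$ associated with $\lambda_0(\Gamma),\dots,\lambda_k(\Gamma)$, taking $\Gamma=\datagraph$ in case (i) and $\Gamma=\datangraph$ in case (ii). Since $\theta_\epsilon\gtrsim\epsilon^m$ on all of $M$, Lemma~\ref{lem:InterNormApp}(i) forces $\theta_{n,\epsilon}>0$ everywhere with high probability under the smallness assumptions, so $\Lambda_\epsilon$ is globally defined on $M$.

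For a unit combination $f=\sum_{s=0}^k a_s\phi_s$ with $\sum_s a_s^2=1$, I would bound the continuous Rayleigh quotient of $\Lambda_\epsilon f$ using three building blocks from the current section. Lemma~\ref{lem:InterDirichDisc} converts $\int_M|\diffcont\Lambda_\epsilon f|^2\rho^2\,d\vol_g$ into the pointwise sum $\tfrac{1}{n-2}\sum_i|\diffcont\Lambda_\epsilon f(x_i)|^2\rho(x_i)$ up to an error proportional to the graph Dirichlet energy; Lemma~\ref{lem:InterDiricSum} majorises this pointwise sum by the scaled graph Dirichlet energy $\tfrac{m+2}{(n-1)(n-2)\omega_m\epsilon^m}\sum_{ij}(\diffgraph f_{ij})^2$ plus $\eta_p$ times that same graph energy times a high-power average $\bigl(\tfrac1n\sum_i f(x_i)^{2(1-q^{-1})\epsilon^{-2}}\bigr)^{\epsilon^2}$; and Proposition~\ref{prop:InterNormLast} compares the continuous $L^2$-norm ($\int(\Lambda_\epsilon f)^2\rho\,d\vol_g$ in case (i), $\int(\Lambda_\epsilon f)^2\rho^2\,d\vol_g$ in case (ii)) with the corresponding discrete norm of $f$, which equals $1$ by orthonormality, again up to errors of the same shape.

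The main obstacle is controlling the super-polynomial factor $\bigl(\tfrac1n\sum_i f(x_i)^{2(1-q^{-1})\epsilon^{-2}}\bigr)^{\epsilon^2}$: its inner exponent blows up as $\epsilon\to0$, so no naive $L^\infty$ bound on graph eigenfunctions can suffice. This is precisely where the $L^p$ machinery of Section~\ref{sec:LpEst} is decisive. Theorem~\ref{thm:EstEigenfData}, applied to each $\phi_s$ together with the uniform graph eigenvalue estimate of Remark~\ref{rem:UpperEigenGraph}, yields
\[
\Lnorm{\phi_s}{p}{\data}{\vol_\Gamma}\le C p^{C\epsilon^2}\Lnorm{\phi_s}{1}{\data}{\vol_\Gamma}
\]
with constants uniform in $p$ and $\epsilon$; substituting $p=2(1-q^{-1})\epsilon^{-2}$ and passing to $f$ by Minkowski turns the high-power average into an $O(1)$ factor once raised to the power $\epsilon^2$. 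Assembling the three estimates yields $\mathcal R(\Lambda_\epsilon f)\le(m+2)\lambda_k(\Gamma)+C\eta_p$ in case (i) and $\mathcal R(\Lambda_\epsilon f)\le(m+2)\lambda_k(\Gamma)+C(\eta_p+\epsilon^{-m}S_\epsilon)$ in case (ii), uniformly over unit $f\in V$; the same $L^p$-control certifies that $\{\Lambda_\epsilon\phi_s\}_{s=0}^k$ is $L^2$-close to an orthonormal system, so $\dim V=k+1$, and the min--max principle delivers the two inequalities \eqref{eq:AofEigenvCFromD} and \eqref{eq:BofEigenvCFromD}. The probability bound follows by intersecting the good events of Lemmas~\ref{lem:InterNormApp}--\ref{lem:InterDiricSum}, Proposition~\ref{prop:InterNormLast}, and Theorem~\ref{thm:EstEigenfData}.
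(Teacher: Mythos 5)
Your proposal is correct and follows essentially the same route as the paper: compare Rayleigh quotients via the interpolation map $\Lambda_\epsilon$ on the span of graph eigenfunctions, combine Lemma~\ref{lem:InterDirichDisc}, Lemma~\ref{lem:InterDiricSum}, and Proposition~\ref{prop:InterNormLast} with the $L^p$ bounds of Theorem~\ref{thm:EstEigenfData} and the uniform eigenvalue bound of Remark~\ref{rem:UpperEigenGraph} to handle the high-power average, then conclude by min--max. You additionally spell out two points the paper leaves implicit---that $\theta_{n,\epsilon}>0$ so $\Lambda_\epsilon$ is globally defined, and that $\Lambda_\epsilon$ preserves dimension so the test space has the right size---which makes the argument cleaner; your paraphrase of the Lemma~\ref{lem:InterDiricSum} error term drops the fractional exponents $1/q$ and $\epsilon^2$, but as you correctly note those factors are $O(1)$ once the graph Dirichlet energy is bounded by $\lambda_k(\Gamma)\le C$, so this is only a cosmetic imprecision.
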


\begin{proof} 
  Similar to proof of Theorem~\ref{thm:EigenvDFromC}, we show this theorem by comparison of Rayleigh quotients through $\Lambda_\epsilon$.
  Let us show the inequality~\eqref{eq:AofEigenvCFromD}.
  Set $\Gamma =\datagraph$.
  Set an orthonormal functions $\phi_1, \dots, \phi_k\colon M \to \R$ with $\Delta_\Gamma \phi_j = \lambda_j(\Gamma) \phi_j$ for each $j$.
  By Lemma~\ref{lem:InterDirichDisc}, Lemma~\ref{lem:InterDiricSum}, Proposition~\ref{prop:InterNormLast}, and Theorem~\ref{thm:EstEigenfData}, using Remark~\ref{rem:UpperEigenGraph}, there are two constants $C_1=C_1(m,k,K,D,v,\alpha)>0$ and $C_2=C_2(m,k,K,D,v,L,\alpha, H,\mathcal{L})>0$ such that,
  if $(a+\epsilon)C_1\leq 1$,
  with probability at least $1-C(m,K,D,k)(\epsilon^{-2m} + n^3)\exp(-na^2\epsilon^m) -n^2\tau\epsilon^{m-1}C(m, \alpha, v)$, we have
  \begin{equation}
    \begin{split}
      \frac{ \vol_g(M)\Lnorm{\diffcont \Lambda_\epsilon \phi}{2}{M}{\rho^2\vol_g}^2 }{(m+2)\Lnorm{\Lambda_\epsilon\phi}{2}{M}{\rho\vol_g}^2}\leq \sum_{i=1}^n\sum_{x_j\in \tilde{B}(x_i,\epsilon)} \frac{\vol_g(M)(\diffgraph \phi_{ij})^2}{n(n-1)\omega_m\epsilon^m} +\eta_pC_2
    \end{split}
  \end{equation}
  for any $\phi \in \spanv\{\phi_1, \dots,\phi_k\}$ with $\Lnorm{\phi}{2}{M}{\vol_\Gamma}$=1.
  By the min-max principle, taking the supremum of the above inequality, we obtain (i).
  We can derive the inequality~\eqref{eq:BofEigenvCFromD} similarly.
\end{proof}

\section{Estimates for the eigenvalues and eigenfunctions of Laplacians on Riemannian manifolds and non-collapsed Ricci limit spaces}\label{sec:Limit}
This section provides our main results on discrete approximations to the eigenvalues and eigenfunctions of weighted Laplacians, both on Riemannian manifolds in \(\RicClassBV\) and on non-collapsed Ricci limit spaces approximated by them.

For convenience, for $p >2$ and $\epsilon, a, \tau \in (0, 1)$, we introduce the quantity
\[
  \delta_{p,\epsilon,a}(M, d_M, \tilde{d})
  :=  a +\epsilon^{\min\bigl\{1-\tfrac{2}{p},\tfrac{m}{p-2}-\tfrac{2}{p}\bigr\}}
  + V_{p,\epsilon}(M,d_M)\epsilon^{-\tfrac{2}{p}}
  + \epsilon^{-m}S_\epsilon(M,d_M,\tilde{d}),
\]
where \(\bigl(M,d_M\bigr)\) is a compact metric space with Hausdorff dimension $m$, and $\tilde d$ is a Borel pseudo-metric on $M$.

First, we show that the eigenvalues (and, subsequently, eigenfunctions) of the graph Laplacians approximate those of \(\Delta_\rho\) and \(\Delta_\rho^N\) (Theorem~\ref{thm:AppEigenv} and \ref{thm:AppEigenNonSingf}).

\begin{theorem}[Eigenvalue approximation on Riemannian manifolds]\label{thm:AppEigenv}
  For $k\in\N$, there exist constants
  \[
    C_1 = C_1(m,K,D,k) >0,\quad
    C_2 = C_2(m,K,D,v,\alpha,\mathcal{L}, H,L,k)>0,\quad
    C_3 = C_3(m,v,\alpha)
  \]
  such that the following property holds.

  Let $\epsilon, a, \tau \in (0,1)$ with $\epsilon > 4\tau$,
  and let $p>2$.
  Let $(M, g)$ in $\RicClassBV$, $\rho \in \mathcal{P}(M\colon \alpha,\mathcal{L}, H)$, and $\tilde d \in \mathcal{I}_{L,\tau}(M, d_g)$.
  Let $\data=(x_1,\dots,x_n)$ be a data set drawn from $\rho\,\vol_g$.
  Then the following estimates hold:
  \begin{enumerate}
    \item If $(a+\epsilon)C_2 \le 1$, then
      \[
        \bigl|\lambda_k(\Delta_\rho)-(m+2)\lambda_k(\Gamma_{m,\epsilon}(\data, \tilde{d}))\bigr| 
        \le
        C_2\delta_{p,\epsilon,a}(M, d_g, \tilde{d})
      \]
      with probability at least
      \[
        1-
        \Bigl(\epsilon^{-2m} + n^3\Bigr)C_1\exp\bigl(-na^2\epsilon^{m+\frac4p}\bigr)
        -
        n^2\tau\epsilon^{m-1}C_3.
      \]
    \item If $\bigl(a + \epsilon\bigr) C_2 \le 1$,
      then
      \[
        \bigl|\lambda_k(\Delta_\rho^N)-(m+2)\lambda_k(\Gamma^N_{\epsilon}(\data, \tilde{d}))\bigr|
        \le
        C_2\delta_{p,\epsilon,a}(M, d_g, \tilde{d})
      \]
      with the same probability bound as in \textnormal{(i)}.
  \end{enumerate}
\end{theorem}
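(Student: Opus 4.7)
The plan is to assemble the two one-sided eigenvalue comparisons already established. Theorem~\ref{thm:EigenvDFromC}, obtained via the discretization map $f\mapsto f|_{\data}$, yields
\[
  (m+2)\lambda_k(\Gamma) \;\le\; \lambda_k(\Delta) + \mathrm{err}_{\downarrow},
\]
while Theorem~\ref{thm:EigenvCFromD}, obtained via the interpolation map $\Lambda_\epsilon$, yields
\[
  \lambda_k(\Delta) \;\le\; (m+2)\lambda_k(\Gamma) + \mathrm{err}_{\uparrow}.
\]
Combining these by the triangle inequality produces the two-sided bound of the theorem, provided both $\mathrm{err}_{\downarrow}$ and $\mathrm{err}_{\uparrow}$ are uniformly dominated by $\delta_{p,\epsilon,a}(M,d_g,\tilde d)$. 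I would run this assembly in parallel for the unnormalized graph $\datagraph$ (statement~(i)) and the normalized graph $\datangraph$ (statement~(ii)), since the two earlier theorems treat both cases simultaneously.

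The first technical step is a rescaling of the deviation parameter. The earlier theorems produce their estimates with failure probability proportional to $\exp(-na^2\epsilon^m)$, whereas the target bound is $\exp(-na^2\epsilon^{m+4/p})$. I would therefore invoke Theorems~\ref{thm:EigenvDFromC} and~\ref{thm:EigenvCFromD} with the substitution $a \leftarrow a\,\epsilon^{2/p}$; after the substitution, the probability weights coincide with the target up to constants absorbed into $C_1,C_2,C_3$. The auxiliary hypothesis $\diam(M,d_g)\ge D^{-1}$ required by Theorem~\ref{thm:EigenvDFromC}~(i) follows automatically from $\vol_g(M)\ge v$ together with Bishop--Gromov applied to a maximal ball.

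The second step is to verify that every summand appearing in $\mathrm{err}_{\downarrow}$ and $\mathrm{err}_{\uparrow}$ is controlled, after this substitution, by a multiple of $\delta_{p,\epsilon,a}$. The contribution $V_{1,\epsilon}(M)$ from Theorem~\ref{thm:EigenvDFromC}~(ii) is bounded by $V_{p,\epsilon}(M)\epsilon^{-2/p}$ via H\"older's inequality combined with $\vol_g(M)\ge v$ and $\epsilon\le 1$. The term $\tau\epsilon^{-1}$ is absorbed into $\epsilon^{1-2/p}$ using $\tau<\epsilon/4$; the term $\epsilon^{-m}S_\epsilon$ appears verbatim in $\delta_{p,\epsilon,a}$; and the linear-in-$a$, linear-in-$\epsilon$ contributions become, after $a\leftarrow a\epsilon^{2/p}$, exactly $a$ and $\epsilon^{1-2/p}$. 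On the interpolation side, the $V_{p,\epsilon}$ summand of $\eta_p$ scales to $V_{p,\epsilon}\epsilon^{-2/p}$. The only genuinely delicate term is the cross term $\epsilon^{m/p}a^{2/p}$ inside $\eta_p$; Young's inequality splits it into a part of order $a$ and a part of order $\epsilon^{m/(p-2)-2/p}$, which is precisely the reason the unusual exponent $\min\{1-2/p,\,m/(p-2)-2/p\}$ appears in the definition of $\delta_{p,\epsilon,a}$.

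The main obstacle will be the bookkeeping for this cross term: verifying that Young's inequality applied with the correct conjugate exponents produces $\epsilon^{m/(p-2)-2/p}$ on the nose, and checking that all remaining constants depend only on the declared parameters $(m,K,D,v,\alpha,\mathcal L,H,L,k)$. Once that is verified, a union bound on the two high-probability events from Theorems~\ref{thm:EigenvDFromC} and~\ref{thm:EigenvCFromD}, together with Remark~\ref{rem:EstCEigenV} to keep $\lambda_k(\Delta_\rho)$ and $\lambda_k(\Delta_\rho^N)$ bounded in terms of $k$ and the parameters, completes the proof with the constants $C_1,C_2,C_3$ obtained by doubling their earlier counterparts.
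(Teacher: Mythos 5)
Your approach is exactly the paper's: invoke Theorems~\ref{thm:EigenvDFromC} and~\ref{thm:EigenvCFromD} with the substitution $(a,\epsilon)\mapsto(a\epsilon^{2/p},\epsilon)$, note that this rescaling turns the failure probability $\exp(-na^2\epsilon^m)$ into $\exp(-na^2\epsilon^{m+4/p})$, and use Young's inequality with conjugate exponents $p/2$ and $p/(p-2)$ to split the cross term $\epsilon^{(m-2)/p}(a\epsilon^{2/p})^{2/p}$ into $a + \epsilon^{m/(p-2)-2/p}$, which is precisely the ``elementary estimate'' the paper records. Your identification of that exponent as the source of the $\min\{1-2/p,\,m/(p-2)-2/p\}$ in $\delta_{p,\epsilon,a}$ is correct, and the observation that $\diam(M,d_g)\ge D^{-1}$ is automatic from $\vol_g(M)\ge v$ and Bishop--Gromov is a legitimate (and useful) remark that the paper leaves implicit.

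One claim does not stand as written: $\tau<\epsilon/4$ gives only $\tau\epsilon^{-1}<1/4$, which does not bound $\tau\epsilon^{-1}$ by $\epsilon^{1-2/p}$ when $\epsilon$ is small, so the $\tau\epsilon^{-1}$ contribution from Theorem~\ref{thm:EigenvDFromC}~(ii) is not absorbed into $\delta_{p,\epsilon,a}$ by the argument you give. The paper's own one-line proof is silent on this, and indeed the companion eigenfunction statement (Theorem~\ref{thm:AppEigenNonSingf}) carries $\tau\epsilon^{-1}$ explicitly in its error $F$; in the applications the issue is harmless because $\tau=0$ on embedded manifolds and $\tau=L\delta_t\to0$ in the limit-space argument, but a clean proof of the theorem as stated would either add $\tau\epsilon^{-1}$ to $\delta_{p,\epsilon,a}$, assume $\tau\lesssim\epsilon^{2-2/p}$, or note that the stated probability bound is vacuous unless $\tau$ is already small enough that $\tau\epsilon^{-1}$ is dominated. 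You should replace the ``using $\tau<\epsilon/4$'' justification with one of these.
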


\begin{proof}
  Note the elementary estimate
  \[
    \Bigl(a\epsilon^{\frac2p}\Bigr)^{\frac2p}
    \epsilon^{\frac{m-2}{p}}
    \le
    a
    +
    \epsilon^{\tfrac{m}{p-2}-\tfrac{2}{p}}.
  \]
  The result then follows by applying Theorem~\ref{thm:EigenvDFromC} and Theorem~\ref{thm:EigenvCFromD} with $(a\epsilon^{2/p}, \epsilon)$ in place of $(a, \epsilon)$.
\end{proof}

Next, we provide the approximation for the eigenfunctions on $\RicClassBV$.

\begin{theorem}[Eigenfunction approximation on Riemannian manifolds]\label{thm:AppEigenNonSingf}
  Let $k, l\in \N$ with $l \ge k$.
  Then there exist positive constants
  \[
    C_1 = C_1(m,K,D,l)>0,\;
    C_2 = C_2\bigl(m,K,D,v,\alpha,\mathcal{L}, H,L,l\bigr)>0,\;
    C_3 = C_3(m,\alpha,v)>0
  \]
  such that the following property holds.

  Let $\epsilon, a, \tau \in (0,1)$ and $p>2$.
  Let $(M,g)\in \RicClassBV$, $\rho \in \mathcal{P}(M\colon \alpha,\mathcal{L}, H)$, and $\tilde{d}\in \mathcal{I}_{L,\tau}(M, d_g)$.
  Set the weighted Laplacians
  \[
    \Delta_1 := \Delta_{\rho}\quad\textrm{and}\quad
    \Delta_2 := \Delta^N_{\rho}.
  \]
  For \(i = 1,2\), set
  \begin{align*}
    s(\rho) &:= \lambda_l(\Delta_i)-\lambda_k(\Delta_i), \\
    \gamma(\rho) &:= \tfrac12 \,\min\Bigl\{\,\lambda_k(\Delta_i) - \lambda_{k-1}(\Delta_i),\,\lambda_{l+1}(\Delta_i) - \lambda_l(\Delta_i),\,1\Bigr\}.
  \end{align*}
  Let \(\{f_s\}_{s=0}^\infty\) be an orthonormal family of eigenfunctions of \(\Delta_i\) in $L^2(M, \rho^i\vol_g)$ corresponding to the eigenvalues \(\{\lambda_s(\Delta_i)\}_{s=0}^\infty$.
  Suppose that
  \[
    F := \left(\delta_{p,\epsilon,a}(M, d_g, \tilde{d})+\tau\epsilon^{-1}
      +
    s(\rho)\gamma(\rho)\right)C_2
    \le\gamma(\rho)^2.
  \]
  Let \(\data=(x_1,\dots,x_n)\) be a data set drawn from \(\rho\,\vol_g\). 
  Set $\Gamma_1 = \Gamma_{m,\epsilon}(\data, \tilde{d})$ and $\Gamma_2 = \Gamma^N_{\epsilon}(\data, \tilde{d})$.
  Then the following holds with probability at least
  \[
    1 -
    \Bigl(\epsilon^{-2m} + n^3\Bigr)C_1\exp\bigl(-na^2\epsilon^{m+\frac{4}{p}}\bigr)    -
    n^2\tau\epsilon^{m-1}C_3:
  \]
  Let \(\{\phi_s\}_{s=k}^l\) be an orthonormal family of eigenfunctions in $L^2(\data,\vol_{\Gamma_i})$ corresponding to the eigenvalues \(\lambda_k(\Gamma_i),\dots,\lambda_l(\Gamma_i)\).
  Let $p \colon  L^2(\data,\vol_{\Gamma_i}) \to \mathrm{span}\{\phi_k,\dots,\phi_l\}$ be the orthogonal projection onto this subspace.
  Then,
  \begin{enumerate}
    \item for every \(f\in \mathrm{span}\{f_k,\dots,f_l\}\), 
      \begin{align}
        \|(I-p)(f\vert_{\data}) \|_{L^2(\data,\vol_{\Gamma_i})} \leq&  2\sqrt{F}\| f\|_{L^2(\data,\rho^i\vol_g)}, \label{eq:AppEigenNonSingG1_riem} \\ 
        \Bigl|
        \|f\|_{L^2(M,\rho^i\vol_g)} 
        -
        \|p\bigl(f\vert_{\data}\bigr)\|_{L^2(\data,\vol_{\Gamma_i})}
        \Bigr|
        \le & 2F\|f\|_{L^2(M,\rho^i\vol_g)};
        \label{eq:AppEigenNonSingG2_riem}
      \end{align}
    \item there exists an orthonormal basis $\{\tilde{f}_k,\dots,\tilde{f}_l\}$ of $\mathrm{span}\{f_k,\dots,f_l\}$ such that 
      \begin{equation}\label{eq:AppEigenNonSingG3_riem}
        \|\tilde{f_j}\vert_{\data} - \phi_j \|_{L^2(\data,\vol_{\Gamma_i})} \leq \sqrt{F}
      \end{equation}
      holds for each $j \in \{k, \dots, l\}$.
  \end{enumerate}
\end{theorem}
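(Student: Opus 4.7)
The plan is to combine the eigenvalue approximation of Theorem~\ref{thm:AppEigenv} with the bilinear discrete-continuous comparisons of Sections~\ref{sec:DiscMap}--\ref{sec:IntMap} and a Davis-Kahan spectral-gap argument. First, invoking Theorem~\ref{thm:AppEigenv} with the index parameter raised to $l+1$ (absorbing the $l$-dependence into $C_1,C_2$) gives $|(m+2)\lambda_s(\Gamma_i)-\lambda_s(\Delta_i)|\le C\delta_{p,\epsilon,a}(M,d_g,\tilde d)$ for $0\le s\le l+1$. Under $F\le\gamma(\rho)^2$ these errors are smaller than $\gamma(\rho)/2$, so the spectral projection of $(m+2)\Delta_{\Gamma_i}$ onto $[\lambda_k(\Delta_i)-\gamma(\rho),\lambda_l(\Delta_i)+\gamma(\rho)]$ equals $p$ and the complementary spectrum is separated from this interval by at least $\gamma(\rho)/2$. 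Polarizing Propositions~\ref{prop:dirichletofcont} and~\ref{prop:normofcont} using the $L^\infty$ and Lipschitz bounds of Lemma~\ref{lem:EstCEigenF}, I would derive, for all $r,s\in\{0,\ldots,l+1\}$,
\[
  \langle f_r|_\data,f_s|_\data\rangle=\delta_{rs}+O(F),
  \qquad
  (m+2)\langle \Delta_{\Gamma_i} f_r|_\data,f_s|_\data\rangle=\mu_r\delta_{rs}+O(F),
\]
where $\mu_s:=\lambda_s(\Delta_i)$ and inner products are taken in $L^2(\data,\vol_{\Gamma_i})$.

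The central estimate is produced one eigenfunction at a time. For $r\in\{k,\ldots,l\}$, set $v_r=f_r|_\data$. Since $p$ commutes with $\Delta_{\Gamma_i}$, the Davis-Kahan inequality gives $\gamma(\rho)\|(I-p)v_r\|\le\|((m+2)\Delta_{\Gamma_i}-\mu_r)v_r\|$. I would bound the right-hand side by a pointwise Taylor expansion of the smooth eigenfunction $f_r$ about each sample point, whose remainder is controlled in $C^2$ by Lemma~\ref{lem:EstCEigenF}, and then averaged via the Bernstein inequality (Lemma~\ref{lem:Bern}); the integrals $V_{p,\epsilon}(M)$ and $S_\epsilon(M,d_g,\tilde d)$ absorb the Ricci-curvature and metric-distortion corrections that arise when passing from empirical sums to integrals against $\rho^i\vol_g$ on small balls. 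This yields $\|((m+2)\Delta_{\Gamma_i}-\mu_r)v_r\|^2\le O(F\gamma(\rho)^2)$ with the stated probability, and hence $\|(I-p)v_r\|^2\le O(F)$. For a general $f=\sum_{r=k}^la_rf_r$ with $\|f\|_{L^2(M,\rho^i\vol_g)}=1$, linearity and the Cauchy-Schwarz inequality give $\|(I-p)(f|_\data)\|^2\le(l-k+1)\sum_r a_r^2\|(I-p)v_r\|^2\le O(F)$; the residual variance $\sum_r a_r^2(\mu_r-\mu)^2\le s(\rho)^2$ of the cluster is absorbed using $s(\rho)\gamma(\rho)C_2\le F$, yielding~\eqref{eq:AppEigenNonSingG1_riem}. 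The bound~\eqref{eq:AppEigenNonSingG2_riem} then follows from the Pythagorean identity $\|pv\|^2=\|v\|^2-\|(I-p)v\|^2$ and the norm comparison of the first paragraph.

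For part (ii), form the matrix $M_{rs}=\langle f_r|_\data,\phi_s\rangle_{L^2(\data,\vol_{\Gamma_i})}$ with $r,s\in\{k,\ldots,l\}$; combining the bilinear estimates with the projection bound from part (i) gives $M^TM=I+O(F)$, and its singular value decomposition furnishes an orthonormal rotation $\{f_r\}\mapsto\{\tilde f_r\}$ inside $\mathrm{span}\{f_k,\ldots,f_l\}$ satisfying $\|\tilde f_r|_\data-\phi_r\|_{L^2(\data,\vol_{\Gamma_i})}\le\sqrt{F}$. The main obstacle is the pointwise-residual estimate $\|((m+2)\Delta_{\Gamma_i}-\mu_r)(f_r|_\data)\|^2\le O(F\gamma(\rho)^2)$: Propositions~\ref{prop:dirichletofcont} and~\ref{prop:normofcont} supply only first-moment (quadratic-form) information about $\Delta_{\Gamma_i}$ applied to discretized functions, whereas the Davis-Kahan step demands genuine $L^2$ operator-norm control. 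Bridging the gap requires a delicate pointwise Taylor/Bernstein analysis of $\Delta_{\Gamma_i}f_r(x_i)$ using the uniform $C^2$ estimates from Appendix~A, together with the Ricci-comparison tools developed in Sections~\ref{sec:DiscMap}--\ref{sec:IntMap}.
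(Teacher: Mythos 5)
Your approach is genuinely different from the paper's, and it has a gap that cannot be closed under the paper's hypotheses. You yourself flag the issue in your last paragraph, so let me be precise about why it is fatal rather than merely technical.

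The paper's proof never estimates $\|((m+2)\Delta_{\Gamma_i}-\mu_r)(f_r|_\data)\|_{L^2}$, and deliberately so. It works entirely at the level of Rayleigh quotients: Lemmas~\ref{lem:InterDirichDisc}, \ref{lem:InterDiricSum}, Propositions~\ref{prop:normofcont}, \ref{prop:dirichletofcont}, \ref{prop:InterNormLast}, and Theorem~\ref{thm:EstEigenfData} together give two-sided comparisons of the quadratic forms $\phi\mapsto\langle\Delta_{\Gamma_i}\phi,\phi\rangle$ and $f\mapsto\int_M|\nabla f|^2\rho^i$ on the test spaces $H_1=\mathrm{span}\{f_0,\dots,f_l\}+\Lambda_\epsilon(L^2(\data))$ and $H_2=L^2(\data,\vol_{\Gamma_i})$, along with a norm comparison via $\Lambda_\epsilon$ and the restriction map. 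The eigenspace conclusion is then extracted by a single application of \cite{MR4804972}*{Lemma B.4 (i),(ii)}, which is exactly engineered to produce projection bounds from quadratic-form data plus a spectral gap, with no operator-norm input. Your Davis-Kahan step, by contrast, needs the residual $\|((m+2)\Delta_{\Gamma_i}-\mu_r)v_r\|_{L^2}$ to be small, which is strictly stronger than what the bilinear estimates furnish: the form comparison controls $\langle((m+2)\Delta_{\Gamma_i}-\mu_r)v_r,v_s\rangle$ only for $v_s$ ranging over a fixed finite-dimensional test space, not over all of $L^2(\data)$.

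You propose to bridge this by a pointwise Taylor expansion of $\Delta_{\Gamma_i}f_r(x_i)$, but this is precisely what the paper is built to avoid. Under $\Ric_g\ge-(m-1)K$ alone, with no sectional-curvature or injectivity-radius bound, there is no uniform pointwise control of $\Theta_u(t)-t^{m-1}$ or of $B(x,\epsilon)$ against the Euclidean model; $V_{p,\epsilon}(M)$ gives only $L^p$-averaged control, and that is why Section~\ref{sec:IntMap} sums $|\nabla\theta_{n,\epsilon}(x_i)|^2$ over $i$ and invokes the $L^q$ eigenfunction bounds of Theorem~\ref{thm:EstEigenfGraph} rather than estimating anything pointwise. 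In addition, Appendix~A supplies only $L^\infty$ and Lipschitz bounds for $f_r$ (Lemma~\ref{lem:EstCEigenF}), not the Hessian bounds your ``uniform $C^2$ estimates'' would require; a pointwise Taylor expansion to second order needs $\Hess f_r$ uniformly controlled, which a lower Ricci bound does not give. So the ``delicate pointwise Taylor/Bernstein analysis'' you defer to is not available in this setting, and the gap is structural, not one that extra Bernstein applications can close. The repair is to replace Davis-Kahan by the quadratic-form comparison lemma \cite{MR4804972}*{Lemma B.4}, as the paper does, which only requires exactly the bilinear estimates you already have.

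Two minor points: the factor $(l-k+1)$ from Cauchy-Schwarz in your treatment of general $f$ is not present in the paper's argument (Lemma B.4 handles the whole cluster at once, which is what makes the $s(\rho)\gamma(\rho)$ term enter cleanly), and the assertion that the discrete spectral projection onto the relevant interval equals $p$ is correct given Theorem~\ref{thm:AppEigenv}, but it is the projection-angle bound, not the projector identity, that carries the estimate.
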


\begin{proof}
  We prove the case \(i=1\); the argument for \(i=2\) is analogous. 
  Let $\Gamma = \Gamma_{m,\epsilon}(\data, \tilde{d})$.
  Define the spaces
  \[
    H_1 :=\mathrm{span}\bigl\{f_0,\dots,f_{l}\bigr\} +\Lambda_\epsilon\bigl(L^2(\data,\vol_{\Gamma})\bigr),
    \quad
    H_2 := L^2(\data,\vol_{\Gamma}),
  \]
  where \(\Lambda_\epsilon\) is the interpolation map defined in Section~6.
  Using Lemma~\ref{lem:EstCEigenF}, for any \(f\in \mathrm{span}\{f_0,\dots,f_{l}\}\), we have
  \begin{equation}\label{eq:AppEigenNonSingPf1}
    \|f - \Lambda_\epsilon\bigl(f\vert_{\data}\bigr)\|_{L^2(M,\rho\vol_g)}
    \le
    \mathrm{Lip}(f)\epsilon
    \le
    \|f\|_{L^2(M,\rho\vol_g)}C(m,K,D,\alpha,L)\epsilon.
  \end{equation}
  Next, by Proposition~\ref{prop:normofcont} and Lemma~\ref{lem:EstCEigenF}, we also have
  \begin{equation}\label{eq:AppEigenNonSingPf2}
    \Bigl|\,
    \|f\|_{L^2(M,\rho\vol_g)}
    -
    \|f\vert_{\data}\|_{L^2(\data,\vol_{\Gamma})}
    \Bigr|
    \le C(m,K,D,\alpha)(\epsilon + a)\|f\|_{L^2(M,\rho\vol_g)}
  \end{equation}
  for any $f \in \mathrm{span}\{f_0, \dots, f_{l}\}$.
  In addition, for $\phi\in \mathrm{span}\{\phi_1\dots,\phi_{l}\}$, by Theorem~\ref{thm:EstEigenfData}, we have $\|\phi\|_{L^q(\data, \vol_\Gamma)} \leq C q^{ C\epsilon^2}\|\phi\|_{L^2(\data, \vol_\Gamma)}$ for any $q>2$, where $C = C(m,K,D,\alpha,v,L)$.
  By Remark~\ref{rem:UpperEigenGraph}, $\|\diffgraph \phi \|_{L^2(\data, \vol_\Gamma)} \leq C \|\phi\|_{L^2(\data, \vol_\Gamma)}$ also holds, where $C= C(m,K,D,v,\alpha,\mathcal L, H,L)$.
  Using these two inequalities, by Lemma~\ref{lem:InterDirichDisc}, Lemma~\ref{lem:InterDiricSum}, and Proposition~\ref{prop:InterNormLast}, we obtain
  \begin{equation}\label{eq:AppEigenNonSingPf3}
    \frac{\displaystyle\int_M \bigl|\diffcont\bigl(\Lambda_\epsilon\phi\bigr)\bigr|^2 \rho^2\,d\vol_g}
    {\displaystyle\int_M\bigl|\Lambda_\epsilon\phi\bigr|^2 \rho\,d\vol_g}
    \;-\;
    \frac{\displaystyle(m+2)\sum_{i=1}^n\sum_{x_j\in \tilde{B}(x_i,\epsilon)}(\diffgraph \phi_{ij})^{2}}
    {\displaystyle (n-1)\omega_m\epsilon^m\sum_{i=1}^n \phi(x_i)^2}\\
    \le
    C\delta_{p,\epsilon,a}(M, d_g, \tilde{d})
  \end{equation}
  for any \(\phi \in \mathrm{span}\{\phi_1,\dots,\phi_{l}\}\).
  By Lemma~\ref{lem:EstCEigenF}, the same inequality also holds for $\phi$ being the restriction of functions in $\mathrm{span}\{f_1,\dots,f_{l+1}\}$ to $\data$.
  Meanwhile, by Proposition~\ref{prop:dirichletofcont}, Proposition~\ref{prop:normofcont}, and Lemma~\ref{lem:EstCEigenF},
  \begin{equation}\label{eq:AppEigenNonSingPf4}
    \frac{\displaystyle(m+2)\sum_{i=1}^n\sum_{x_j\in \tilde{B}(x_i,\epsilon)} \left(\diffgraph\bigl( f\vert_{\data}\bigr)_{ij}\right)^2}
    {\displaystyle (n-1)\omega_m \epsilon^m \sum_{i=1}^n f(x_i)^2}
    \;-\;
    \frac{\displaystyle \int_M \bigl|\diffcont f\bigr|^2\rho^2\,d\vol_g}
    {\displaystyle \int_M \bigl|f\bigr|^2\rho\,d\vol_g}\\
    \le
    (\epsilon + a)C
  \end{equation}
  for any \(f \in \mathrm{span}\{f_1,\dots,f_{l+1}\}\).
Finally, applying \cite{MR4804972}*{Lemma B.4 (i), (ii)} with the bounds \eqref{eq:AppEigenNonSingPf1}, \eqref{eq:AppEigenNonSingPf2}, \eqref{eq:AppEigenNonSingPf3}, and \eqref{eq:AppEigenNonSingPf4}, we derive the conclusions \eqref{eq:AppEigenNonSingG1_riem} and \eqref{eq:AppEigenNonSingG2_riem}.
  The remaining part \eqref{eq:AppEigenNonSingG3_riem} is an easy consequence of \eqref{eq:AppEigenNonSingG1_riem} and \eqref{eq:AppEigenNonSingG2_riem}.
\end{proof}

Letting $p=m+2$, $\epsilon = \left(\frac{\log n}{n}\right)^{\frac{1}{m+2}}$, and $a = \sqrt{\beta + 3} \epsilon$, using Theorems~\ref{thm:AppEigenv} and~\ref{thm:LimAppEigenf}, we obtain Theorems~\ref{thm:MainEV} and~\ref{thm:MainEF}.

The following theorem extends these results to the Laplacian $\Delta_\rho^N$ on non-collapsed Ricci limit spaces approximated by manifolds in $\RicClassBV$.
\begin{theorem}[Eigenvalue approximation on Ricci limit spaces]\label{thm:LimAppEigenv}
  For $k\in\mathbb{N}$, there exist constants
  \[
    C_1 = C_1(m,K,D,k)>0 \quad \textrm{and}\quad 
    C_2 = C_2(m,k,K,D,\alpha,\mathcal{L}, H,v,L)>0
  \]
  such that the following holds.

  Let $\bigl(M,d_M,\rho\bigr)\in\RicClassL$ and $\tilde d\in\mathcal{I}_{L}(M,d_M)$.
  Let $\epsilon,a\in(0,1)$, fix $p>2$, and draw a data set $\mathcal X_n=(x_1,\dots,x_n)\colon \Omega \to M^n$ from $\rho\volmm$.
  If
  \begin{equation}\label{eq:LimEigenvAssum}
    \bigl(\epsilon+a\bigr) C_2 \le 1,
  \end{equation}
  then we have 
  \begin{equation}\label{eq:LimEigenvGoal}
    \bigl|
    \lambda_k(\Delta_\rho^{N})-(m+2)\lambda_k\bigl(\Gamma^{N}_{\epsilon}(\mathcal X_n,\tilde d)\bigr)
    \bigr|
    \le
    C_2\delta_{p,\epsilon,a}(M, d_M, \tilde{d})
  \end{equation}
  with probability at least
  \[
    1-
    \left(\epsilon^{-2m}+n^{3}\right) C_1
    \exp\bigl(-na^{2}\epsilon^{\,m+\frac{4}{p}}\bigr).
  \]
\end{theorem}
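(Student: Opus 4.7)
The plan is to reduce Theorem~\ref{thm:LimAppEigenv} to Theorem~\ref{thm:AppEigenv}(ii) by mGH-approximation. By Definition~\ref{def:ClassLimit}, I would select manifolds $(M_t,g_t)\in\mathcal{M}^{\mathrm{v}}_m(K,D,v)$, densities $\rho_t\in\mathcal{P}(M_t\colon\alpha,\mathcal{L},H)$, and GH-approximation maps $\Phi_t\colon M_t\to M$ with errors $\delta_t\to 0$ such that $\rho_t\to\rho$ as prescribed; then Remark~\ref{rem:PMet} gives $\Phi_t^{*}\tilde d\in\mathcal{I}_{L,L\delta_t}(M_t,d_{g_t})$, and $(\Phi_t)_{*}(\rho_t\vol_{g_t})^{\otimes n}\to(\rho\volmm)^{\otimes n}$ weakly on $M^n$. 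Next, I would apply Skorohod's representation theorem along a subsequence to realize samples $(y^t_1,\ldots,y^t_n)\sim(\rho_t\vol_{g_t})^{\otimes n}$ and $(x_1,\ldots,x_n)\sim(\rho\volmm)^{\otimes n}$ on a common probability space with $\Phi_t(y^t_i)\to x_i$ almost surely for each $i$.

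I would then apply Theorem~\ref{thm:AppEigenv}(ii) to $(M_t,g_t,\rho_t,\Phi_t^{*}\tilde d,(y^t_i))$ for each $t$. Its constants depend only on $(m,K,D,v,\alpha,\mathcal L,H,L,k)$, so uniformly in $t$, under $(\epsilon+a)C_2\le 1$, with probability at least
\[
1-(\epsilon^{-2m}+n^{3})C_1\exp(-na^{2}\epsilon^{m+4/p})-n^{2}L\delta_t\epsilon^{m-1}C_3
\]
one obtains $\bigl|\lambda_k(\Delta_{\rho_t}^{N})-(m+2)\lambda_k(\Gamma^{N}_{\epsilon}((y^t_i),\Phi_t^{*}\tilde d))\bigr|\le C_2\,\delta_{p,\epsilon,a}(M_t,d_{g_t},\Phi_t^{*}\tilde d)$. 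Passing $t\to\infty$, Theorem~\ref{thm:mGHSpecConv}(b) yields $\lambda_k(\Delta_{\rho_t}^{N})\to\lambda_k(\Delta_\rho^{N})$; Remark~\ref{rem:V&S} gives convergence of $V_{p,\epsilon}$ and $S_\epsilon$, so the right-hand side tends to $C_2\,\delta_{p,\epsilon,a}(M,d_M,\tilde d)$; and $n^{2}L\delta_t\epsilon^{m-1}C_3\to 0$. For the graph-eigenvalue term, note that $\Gamma^{N}_{\epsilon}((y^t_i),\Phi_t^{*}\tilde d)$ is, via $\Phi_t$, isomorphic to the graph on $(\Phi_t(y^t_i))_{i=1}^n\subset M$ with pseudo-metric $\tilde d$ and identical normalization; since $\Phi_t(y^t_i)\to x_i$ a.s., the Laplacian matrices converge entry-wise as soon as no pair $(x_i,x_j)$ satisfies $\tilde d(x_i,x_j)=\epsilon$, and consequently so do their eigenvalues. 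A Fatou argument applied to the indicator of the failure event then transfers the probability bound from $M_t$ to $M$, giving the claim.

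The main obstacle will be graph-Laplacian convergence when $\tilde d$ is merely Borel: the limit sample may contain a pair with $\tilde d(x_i,x_j)=\epsilon$, and then the edge set need not stabilize. To resolve this I would replace $\epsilon$ by $\epsilon\pm\eta$, apply the argument at these shifted scales where the graph structure is stable under perturbation, and send $\eta\to 0$ using continuity of $\epsilon'\mapsto\delta_{p,\epsilon',a}(M,d_M,\tilde d)$; any residual gap should be absorbable into $C_2$. A secondary technical point is the Skorohod coupling for the $n$-fold product measures, but this is routine on the compact product space $M^n$.
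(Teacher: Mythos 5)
Your reduction is the same one the paper uses: approximate $(M,d_M,\rho)$ by manifolds $(M_t,g_t)\in\RicClassBV$ with densities $\rho_t$ and GH-maps $\Phi_t$ so that $\Phi_t^*\tilde d\in\mathcal I_{L,L\delta_t}(M_t,d_{g_t})$ by Remark~\ref{rem:PMet}, apply Theorem~\ref{thm:AppEigenv}(ii) on each $M_t$ (with $\tau=L\delta_t\to0$, so the $n^2\tau\epsilon^{m-1}$ term vanishes), and pass to the limit via Theorem~\ref{thm:mGHSpecConv}(b), Remark~\ref{rem:V&S}, and the identity $\Gamma^N_{\tilde\epsilon}(\mathcal X_n^t,\Phi_t^*\tilde d)=\Gamma^N_{\tilde\epsilon}(\Phi_t(\mathcal X_n^t),\tilde d)$. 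Where you diverge is the mechanism for transferring the probability bound: you propose Skorohod coupling plus a Fatou argument, whereas the paper works directly with the weak$^*$ convergence $((\Phi_t)_*(\rho_t\vol_{g_t}))^{\otimes n}\to(\rho\volmm)^{\otimes n}$ on the event $W_{\tilde\epsilon}$ (no sample pair with $\tilde d$-distance in $[\tilde\epsilon,\epsilon)$), then sends $\tilde\epsilon\uparrow\epsilon$ using $(\rho\volmm)^{\otimes n}(W_{\tilde\epsilon}^c)\to0$ together with the upper semicontinuity $\limsup_{s\nearrow\epsilon}V_{p,s}\le V_{p,\epsilon}$, $\limsup_{s\nearrow\epsilon}S_s\le S_\epsilon$; this avoids constructing a coupling, and $W_{\tilde\epsilon}$ plays exactly the role of your $\epsilon\pm\eta$ perturbation (only the one-sided shrinkage $\tilde\epsilon<\epsilon$ is needed, and upper semicontinuity, not full continuity, is what is used). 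One obstacle you flag is in fact vacuous: any pseudo-metric $\tilde d\in\mathcal I_L(M,d_M)$ satisfies $\tilde d\le d_M$, and the triangle inequality then gives $|\tilde d(x,y)-\tilde d(x',y')|\le d_M(x,x')+d_M(y,y')$, so $\tilde d$ is $1$-Lipschitz (in particular continuous); the only real boundary concern is a sample pair falling exactly at the cutoff radius, which both your perturbation and the paper's $W_{\tilde\epsilon}$ handle. Your route is therefore correct and yields the same bound, just by a slightly more elaborate probabilistic device than the paper's direct Portmanteau argument.
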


\begin{proof}
  Let \(\bigl((M,d_M),\rho\bigr)\in\RicClassL\).
  Let \(\{(M_t,g_t)\}_{t=1}^{\infty}\subset\RicClassBV\) and
  \(\{\rho_t\colon M_t\to[0,\infty)\}_{t=1}^{\infty}\) be the sequence of manifolds and density functions from Definition~\ref{def:ClassLimit}.
  Moreover, let \(\{\delta_t>0\}_{t=1}^{\infty}\) and \(\{\Phi_t\colon  M_t\to M\}_{t=1}^{\infty}\) be the sequence of constants and maps satisfying conditions \textup{(i)}--\textup{(iii)} of Definition~\ref{def:GH}.

  Choose \(\tilde\epsilon\in(\epsilon/2,\epsilon)\).
  Then, by Egorov's theorem and the Portmanteau theorem, using standard arguments from functional analysis, we obtain
  \[
    V_{p,\tilde\epsilon}(M_t, d_{g_t})\to V_{p,\tilde\epsilon}(M, d_M)
    \quad\textrm{and}\quad
    S_{\tilde\epsilon}(M_t, d_{g_t}, \Phi_t^*\tilde{d})\to S_{\tilde\epsilon}(M,d_M,\tilde{d})
  \] as \(t\to\infty\).  Hence, by $\lambda_k(\Delta^N_{\rho_t}) \to \lambda_k(\Delta_\rho^N)$ (Theorem~\ref{thm:mGHSpecConv}),  we have 
  \begin{equation}\label{eq:LimAppEigenvStep1_pf}
    \sup_{s\in[\tilde\epsilon,\epsilon]}
    \delta_{p,s,a}\bigl(M, d_M, \tilde d\bigr)
    \ge\bigl|\lambda_k(\Delta_\rho^N)-\lambda_k(\Delta^N_{\rho_t})\bigr| + \delta_{p, \tilde\epsilon,a}(M, d_M, \tilde d)
  \end{equation}
  for sufficiently large \(t\).

  We will show this theorem by applying Theorem~\ref{thm:AppEigenv} to approximating manifolds.
  Let $C_1$, $C_2$, and $C_3$ be the constants from Theorem~\ref{thm:AppEigenv}.
  We have $\Phi_t^*\tilde{d} \in \mathcal I_{L, L\delta_t}( M_t, d_{g_t})$ by Remark~\ref{rem:PMet}.
  We can assume that $(\tilde\epsilon+a)\,C_2\le1$, and $4L\delta_t < \epsilon$ for sufficiently large $t$.
  Set
  \[
    W_{\tilde\epsilon}:=
    \bigl\{\mathcal X_n = (x_1, \dots, x_n) \in M^n:
      \text{for all }i, j,\
      \tilde d(x_i,x_j)\notin[\tilde\epsilon,\epsilon)
    \bigr\}.
  \]
  Then, we estimate the probability on $M$: 
  \begin{align*}
   &\bigl(\rho\volmm\bigr)^{\otimes n}
   \left\{
     \mathcal X_n\in W_{\tilde\epsilon}:
     \begin{aligned}
       &\left|\lambda_k(\Delta_\rho^N)-(m+2)\lambda_k(\Gamma^N_{\epsilon}(\mathcal X_n,\tilde d))\right|\\ 
       &\quad>C_2\sup_{s\in[\tilde\epsilon,\epsilon]}
       \delta_{p,s,a}\left(M,d_M, \tilde d\right)
     \end{aligned}
   \right\}\\
   &\quad =\lim_{t\to \infty}\bigl(\rho_t\vol_{g_t}\bigr)^{\otimes n}
    \left\{
      \mathcal X_n^t\in M_t^n:
      \begin{aligned}
           &\left|\lambda_k(\Delta^N_{\rho})-(m+2)\lambda_k\left(\Gamma^N_{\tilde\epsilon}(\mathcal X_n^t,\Phi_t^*\tilde d)\right)\right|\\
           &\quad>C_2\sup_{s\in[\tilde\epsilon,\epsilon]}
       \delta_{p,s,a}\left(M,d_M, \tilde d\right)
      \end{aligned}
    \right\} \\
    &\quad\leq 
    \liminf_{t\to \infty}\bigl((\Phi_t)_*(\rho_t\vol_{g_t})\bigr)^{\otimes n}
    \left\{
      \mathcal X_n^t\in M_t^n:
      \begin{aligned}
           &\left|\lambda_k(\Delta^N_{\rho_t})-(m+2)\lambda_k\left(\Gamma^N_{\tilde\epsilon}(\mathcal X_n^t,\Phi_t^*\tilde d)\right)\right|\\
           &\quad>  C_2 \delta_{p, \tilde \epsilon,a}\left(M_t,d_{g_t},\Phi_t^*\tilde d\right)
      \end{aligned}
    \right\} \\
    &\quad\le 
    \liminf_{t\to \infty} \left[ \Bigl(\tilde{\epsilon}^{-2m} + n^3\Bigr)C_1\exp\bigl(-na^2\tilde\epsilon^{m+\frac4p}\bigr) + n^2(L\delta_t)\tilde\epsilon^{m-1}C_3 \right] \\
    &\quad = \exp\!\bigl(-na^{2}\tilde\epsilon^{\,m+\frac{4}{p}}\bigr)\, C_1\bigl(\tilde\epsilon^{-2m}+n^{3}\bigr), 
  \end{align*}
  where  we used the weak* convergence $((\Phi_t)_*(\rho_t\vol_{g_t}))^{\otimes n} \to (\rho\volmm)^{\otimes n}$ in the second line.
  We used $\Gamma^N_{\tilde\epsilon}(\Phi_t(\mathcal{X}_n), \tilde{d}) = \Gamma^N_\epsilon(\mathcal X_n, \Phi_t^*\tilde{d})$ and \eqref{eq:LimAppEigenvStep1_pf} in the third line.
  In the forth line, we applied Theorem~\ref{thm:AppEigenv}(ii) to $(M_t, g_t, \rho_t, \Phi_t^*\tilde{d})$ with $\tau = L\delta_t$.

  This bound holds for every \(\tilde\epsilon\in(\epsilon/2,\epsilon)\).
  Since $\lim_{\tilde\epsilon \to \epsilon} (\rho\volmm)^{\otimes n}(W_{\tilde\epsilon}^{\,c}) = 0$, and using the upper semi-continuity
  \[
    \limsup_{s\nearrow\epsilon}S_s(M, d_M, \tilde{d})\leq S_\epsilon(M, d_M, \tilde{d}) \quad\textrm{and}\quad
  \limsup_{s\nearrow\epsilon}V_{p,s}(M, d_M)\leq V_{p,\epsilon}(M, d_M),  \]
  letting $\tilde{\epsilon} \uparrow \epsilon$ yields the desired estimate~\eqref{eq:LimEigenvGoal}.
\end{proof}

\begin{theorem}[Eigenfunction approximation on Ricci limit spaces]\label{thm:LimAppEigenf}
  Let $k, l\in \mathbb{N}$ with $l\ge k$, and let $p>2$.
  The constants $C_1, C_2, C_3$ depend on parameters similar to those in Theorem~\ref{thm:AppEigenNonSingf}.
  Let $\bigl(M,d_M,\rho\bigr)\in\RicClassL$ and $\tilde d\in\mathcal{I}_{L}(M,d_M)$.
  Let $\lambda_s := \lambda_s(\Delta_\rho^N)$.
  Define 
  \[
    s(\rho):=\lambda_l-\lambda_k \quad \textrm{and} \quad
    \gamma(\rho):=\tfrac12\min\{\lambda_k-\lambda_{k-1},\lambda_{l+1}-\lambda_l,1\}.
  \]
  Let $\epsilon, a \in (0, 1)$ and assume,
  \[
    F:= C_2\left( \delta_{p,\epsilon,a}(M, d_M, \tilde{d}) + s(\rho)\,\gamma(\rho)\right)\;\le\;\gamma(\rho)^2.
  \]
  Let $\{f_k,\dots,f_l\}$ be an orthonormal eigenfunctions in $L^2(M, \rho\volmm)$ for the eigenvalues $\{\lambda_k,\dots,\lambda_l\}$.
  Let $\mathcal X_n$ be an data set from $\rho\,\volmm$. Let $\Gamma = \Gamma^N_\epsilon(\mathcal X_n, \tilde{d})$ be the graph Laplacian, and let $\{\phi_k, \dots, \phi_l\}$ be an orthonormal eigenfunctions in $L^2(\mathcal{X}_n, \vol_\Gamma)$ for the corresponding eigenvalues $\lambda_k(\Gamma), \dots, \lambda_l(\Gamma)$.
  Let $p:L^2(\mathcal X_n,\vol_{\Gamma})\to \mathrm{span}\{\phi_k,\dots,\phi_l\}$ be the orthogonal projection.
  Then, with the same probability as in Theorem~\ref{thm:LimAppEigenv},
  \begin{enumerate}
    \item for every \(f\in \mathrm{span}\{f_k,\dots,f_l\}\),
      \begin{align}
        \|(I-p)(f\vert_{\data}) \|_{L^2(\data,\vol_{\Gamma})} \leq&  2\sqrt{F}\| f\|_{L^2(M,\rho^i\volmm)}, \label{eq:LimEigenfG1} \\ 
        \Bigl|
        \|f\|_{L^2(M,\rho \volmm)}
        -
        \|p\bigl(f\vert_{\mathcal{X}_n}\bigr)\|_{L^2(M,\rho^i\vol_g)}
        \Bigr|
        \le & 2F\|f\|_{L^2(M,\rho \volmm)}; 
        \label{eq:LimEigenfG2} 
      \end{align}
    \item there exists an orthonormal basis $\{\tilde{f}_k,\dots,\tilde{f}_l\}$ of $\mathrm{span}\{f_k,\dots,f_l\}$ such that for each $j \in \{k, \dots, l\}$,
      \begin{equation}\label{eq:LimEigenfG3} 
        \|\tilde{f}_j\vert_{\mathcal{X}_n} - \phi_j \|_{L^2(\mathcal{X}_n,\vol_{\Gamma})} \leq \sqrt{F}.
      \end{equation}
  \end{enumerate}
\end{theorem}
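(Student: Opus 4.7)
The plan is to mirror the proof of Theorem~\ref{thm:LimAppEigenv}, upgrading eigenvalue transfer to eigenfunction transfer. I fix an approximating sequence $\{(M_t,g_t,\rho_t)\}_{t=1}^\infty$ with maps $\Phi_t\colon M_t\to M$ and errors $\delta_t\to 0$ as in Definition~\ref{def:ClassLimit}. By Theorem~\ref{thm:mGHSpecConv}(b), (c), after passing to a subsequence, I select orthonormal eigenfunctions $\{f^t_j\}_{0\le j\le l+1}$ of $\Delta^N_{\rho_t}$ converging in $L^2$ to $\{f_j\}_{0\le j\le l+1}$; the spectral parameters $\gamma(\rho_t),s(\rho_t)$ then converge to $\gamma(\rho),s(\rho)$.

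Next, I pick $\tilde\epsilon\in(\epsilon/2,\epsilon)$ and restrict attention to the ``good'' set $W_{\tilde\epsilon}$ of data sets whose pairwise pseudo-distances avoid $[\tilde\epsilon,\epsilon)$. By Remark~\ref{rem:PMet}, $\Phi_t^*\tilde d\in\mathcal I_{L,L\delta_t}(M_t,d_{g_t})$, and on $W_{\tilde\epsilon}$ the graph $\Gamma^N_\epsilon(\mathcal X_n,\tilde d)$ on $M$ is canonically isomorphic to $\Gamma^N_{\tilde\epsilon}(\mathcal X_n^t,\Phi_t^*\tilde d)$ on $M_t$ whenever $\Phi_t(\mathcal X_n^t)=\mathcal X_n$; in particular, the graph eigenfunctions coincide under this identification. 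I then apply Theorem~\ref{thm:AppEigenNonSingf}(ii) with parameters $\tilde\epsilon$ and $\tau=L\delta_t$ to $(M_t,g_t,\rho_t,\Phi_t^*\tilde d)$: this gives, with the required high probability, orthonormal $\{\tilde f^t_j\}_{j=k}^l\subset\mathrm{span}\{f^t_k,\dots,f^t_l\}$ satisfying $\|\tilde f^t_j|_{\mathcal X_n^t}-\phi^t_j\|_{L^2}\le\sqrt{F_t}$, where $F_t$ differs from $F$ only by an extra $L\delta_t\tilde\epsilon^{-1}$ term that vanishes as $t\to\infty$.

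Taking $t\to\infty$, the weak-$*$ convergence $((\Phi_t)_*(\rho_t\vol_{g_t}))^{\otimes n}\to(\rho\volmm)^{\otimes n}$ transfers the probability bound from $M_t$ to $M$, following exactly the route of Theorem~\ref{thm:LimAppEigenv}. I pick a limiting orthonormal family $\{\tilde f_j\}_{j=k}^l\subset\mathrm{span}\{f_k,\dots,f_l\}$ as cluster points of the unitary coefficient matrices $U^t=(U^t_{ji})$ expressing $\tilde f^t_j=\sum_i U^t_{ji}f^t_i$, using compactness of the finite-dimensional unitary group. The $L^2$-convergence $f^t_i\to f_i\circ\Phi_t$ together with $U^t\to U$ yields $\tilde f^t_j\to \tilde f_j\circ\Phi_t$ in $L^2(M_t,\rho_t^2\vol_{g_t})$, whence the manifold bound descends to the stated inequalities \eqref{eq:LimEigenfG1}, \eqref{eq:LimEigenfG2}, and \eqref{eq:LimEigenfG3} on $\mathcal X_n$. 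Finally, I let $\tilde\epsilon\uparrow\epsilon$ to remove the good-set restriction, using the upper semi-continuity of $V_{p,\cdot}(M)$ and $S_\cdot(M,d_M,\tilde d)$ together with $\mathbb P(W_{\tilde\epsilon}^{\,c})\to 0$.

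The main technical obstacle is the passage from $L^2(M_t,\rho_t^2\vol_{g_t})$-convergence of the eigenfunctions $\tilde f^t_j$ to convergence of their evaluations on the random samples $\mathcal X_n^t$, which is what is actually needed to translate the manifold bound into a bound on $\mathcal X_n$. This gap is bridged by the uniform Lipschitz bounds on $\tilde f^t_j$ provided by Lemma~\ref{lem:EstCEigenF} (whose constants depend only on the allowed parameters), combined with Lemma~\ref{lem:Bern}: together they control the stochastic fluctuation of sample $L^2$-norms in terms of the $L^2$-defect $\|\tilde f^t_j - \tilde f_j\circ\Phi_t\|_{L^2(M_t,\rho_t^2\vol_{g_t})}$, which tends to zero. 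A secondary subtlety lies in making the choice of $\{\tilde f_j\}$ consistent across $t$; this is resolved by the unitary-matrix compactness argument above.
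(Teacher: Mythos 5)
Your high-level strategy matches the paper: fix the approximating sequence from Definition~\ref{def:ClassLimit}, pass to the manifolds $M_t$ via the graph identification on the good set $W_{\tilde\epsilon}$, invoke Theorem~\ref{thm:AppEigenNonSingf}, push the probability estimate through the weak-$*$ convergence $((\Phi_t)_*(\rho_t\vol_{g_t}))^{\otimes n}\to(\rho\volmm)^{\otimes n}$, and then let $\tilde\epsilon\uparrow\epsilon$. The gap is in how you propose to produce the limiting orthonormal family $\{\tilde f_j\}$.

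You want to apply Theorem~\ref{thm:AppEigenNonSingf}(ii) on each $M_t$ to obtain random orthonormal bases $\{\tilde f^t_j\}_{j=k}^{l}\subset\mathrm{span}\{f^t_k,\dots,f^t_l\}$ with unitary coefficient matrices $U^t$, and then extract a cluster point $U$ by compactness. But $U^t$ is a measurable function of the random sample $\mathcal X_n^t$ on $M_t$, and there is no canonical coupling of samples across different $t$'s that simultaneously (a) lands each $\mathcal X_n^t$ in the high-probability event of Theorem~\ref{thm:AppEigenNonSingf}(ii), and (b) makes $\{U^t\}_t$ a fixed sequence in the unitary group for which a cluster point is meaningful. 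If you instead try to \emph{lift} a fixed $\mathcal X_n$ on $M$ to samples $\mathcal X_n^t$ on $M_t$, you lose the probabilistic guarantee, since Theorem~\ref{thm:AppEigenNonSingf}(ii) only controls random samples from $\rho_t\vol_{g_t}$, not deterministic lifts. Relatedly, your remedy for the ``$L^2$ to sample'' passage — Lemma~\ref{lem:EstCEigenF} plus Lemma~\ref{lem:Bern} applied to $\tilde f^t_j-\tilde f_j\circ\Phi_t$ — runs into the same problem: Bernstein's inequality requires the integrand to be fixed prior to sampling, but $\tilde f^t_j$ is sample-dependent.

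The paper circumvents both issues by proving part~(i) first. It fixes deterministic $f\in\mathrm{span}\{f_k,\dots,f_l\}$, forms the deterministic $f^t=\sum_i\langle f,f_i\rangle f^t_i$, applies Theorem~\ref{thm:AppEigenNonSingf}(i) (not (ii)) on $M_t$ to bound $\bigl|\|f^t\|_{L^2(M_t)}-\|p_t(f^t|_{\data^t})\|\bigr|$, and then uses Lemma~\ref{lem:Bern} applied to the \emph{deterministic} difference $f^t-f\circ\Phi_t$ to relate sample norms on $\mathcal X_n^t$ to sample norms on $\Phi_t(\mathcal X_n^t)$. Because the events involved are scalar inequalities about deterministic test functions, the weak-$*$ transfer of probability is exactly as in Theorem~\ref{thm:LimAppEigenv}. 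Part~(ii) — the existence of the orthonormal basis — is then deduced from the already-transferred estimates in part~(i) by the same elementary linear-algebra step used in Theorem~\ref{thm:AppEigenNonSingf} itself. To repair your argument, you would either need to reverse the order as the paper does, or avoid the $U^t\to U$ limit altogether by working with a single sufficiently large $t$, formulating the good event entirely in terms of $\mathcal X_n^t$, and applying Bernstein only to the deterministic $f^t_i-f_i\circ\Phi_t$.
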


\begin{proof}
  We can assume that, for all $j\in \{1, \dots, n\}$, there exist $\{f^t_j \in  L^2(M,\volmm)\}_{t=1}^\infty $ such that $\Delta_\rho^N f^t_j = \lambda_j(\Delta_\rho^N)f^t_j$ and $f_j^t \to f_j$ in $L^2$ by Theorem~\ref{thm:mGHSpecConv}. Then,
  \begin{equation} 
    \|f^t_j - f_j\circ\Phi_t\|_{L^2(M_t,\rho^2_t\vol_{g_t})} \to 0
  \end{equation}
  as $t \to \infty$ for all $j \in \{1,\dots, k\}$. 
  For $f \in \spanv\{f_1,\dots,f_k\}$, set $f^t = \sum_{i=1}^k\langle f, f_i\rangle_{L^2(M,\rho^2\volmm)} f^t_i$.
  By Theorem~\ref{thm:mGHSpecConv}, for sufficiently large $t \in \N$, 
  \begin{equation}\label{eq:EigenAppStep1}
    \frac{C_2}{\gamma(\rho_t)}\sqrt{\delta_{p,\epsilon,a}(M_t) +4 s(\rho_t)\gamma(\rho_t)} -  \frac{C_2}{\gamma(\rho)}\sqrt{\delta_{p,\epsilon,a}(M) + 4s(\rho)\gamma(\rho)} - 2\|f^t - f\circ \Phi_t \|_{L^2(M_t,\rho^2_t\vol_{g_t})}
    \leq \sqrt{a}C
  \end{equation}
  holds.

  Hence, by Theorem~\ref{thm:AppEigenNonSingf} and Lemma~\ref{lem:Bern}, for every data set $\data^t$ from $\rho_t\vol_{g_t}$ with probability at least \[1- \exp(-na^2\epsilon^m)C(m,K,D,l)(\epsilon^{-2m} + n^3) - C(m,\alpha,v)n^2L\delta_t\epsilon^{m-1},\] the following statement holds.
  Set $\Gamma(t) = \Gamma^N_\epsilon(\Phi_t(\data^t), \tilde{d})$,
  and let $p_t$ denote the projection to the eigenspace corresponding to $\{\lambda_k(\Gamma(t)), \dots,\lambda_l(\Gamma(t))\}$ in $L^2(\Phi_t(\data^t),\vol_{\Gamma(t)})$,
  then we have the following estimates: 
  \begin{enumerate}
    \item For any $f^t \in \spanv\{f^t_k,\dots,f^t_l\}$, we have
      \begin{equation}\label{eq:EigenAppStep2}
        | \|f^t\|_{L^2(M_t,\rho_t^2\vol_{g_t})} - \| p_t (f^t|_{\data}) \|_{L^2(\data,\volgraph)} | \leq \| f \|_{L^2(M,\rho\volmm)}\frac{C_2}{\gamma(\rho_t)}\sqrt{\delta_{p,\epsilon,a}(M_t) + s(\rho_t)}.
      \end{equation}
    \item For any $f\in \spanv\{f_k,\dots, f_l\}$, $f^t = \sum_{j=k}^l \langle f, f_j\rangle f_j^t$ satisfy
      \begin{equation}\label{eq:EigenAppStep3}
        |\|f^t - f\circ \Phi_t \|_{L^2(\data,\vol_{\Gamma(t)})}^2 - \|f^t - f\circ \Phi_t \|_{L^2(M_t,\rho_t^2\vol_{g_t})}^2| \leq (\epsilon + a)C(m,K,D,l)\|f\|_{L^2(M,\rho^2\vol)}. 
      \end{equation}
  \end{enumerate}
  By inequalities \eqref{eq:EigenAppStep1}, \eqref{eq:EigenAppStep2}, and \eqref{eq:EigenAppStep3}, for any $f \in \spanv\{f_k,\dots,f_l\}$, 
  \begin{equation}
    | \|f\|_{L^2(M,\rho^2\volmm)} - \| p_t (f|_{\Phi_t(\data^t)}) \|_{L^2(\data^t,\vol_{\Gamma(t)})} | \leq \| f \|_{L^2(M,\rho^2\volmm)}\frac{C_2}{\gamma(\rho)}\sqrt{\delta_{p,\epsilon,a}(M) + s(\rho)}
  \end{equation}
  for data set $\data^t:\Omega \to M_t^n$ from $\rho_t\vol_{g_t}$ with this probability.
  Therefore, using weak* convergence $(\Phi_t)_*(\rho_t\vol_{g_t}) \to \rho\volmm$, we obtain \eqref{eq:LimEigenfG2}.
  We can show \eqref{eq:LimEigenfG1} similarly.
  Then the inequality \eqref{eq:LimEigenfG3} is an easy consequence of \eqref{eq:LimEigenfG1} and \eqref{eq:LimEigenfG2}.
\end{proof}

\appendix
\section{$L^\infty$ and gradient bounds for eigenfunctions of Laplacians on weighted Riemannian manifolds}\label{app:EstEigen}

In this appendix, we derive supremum and gradient bounds for eigenfunctions of the Laplacians $\Delta_\rho$ and $\Delta_\rho^N$.
We apply these estimates in Section~\ref{sec:DiscMap}.
A similar analysis appears in \cite{MR4804972}*{Section~A} when a lower bound on the injectivity radius is available.
Our approach does not rely on injectivity radius bounds.
Instead, we use an upper bound on the Hessian of $\rho$.

Throughout this appendix, for any measurable function \(f \colon  M \to \R\) and \(p \in [1,\infty)\), we define
\[
  \|f\|_{p}^p
  \;=\;
  \frac{1}{\vol_g(M)} \int_M |f|^p \,d\vol_g,
  \quad\text{and}\quad
  \|f\|_{\infty} = \esssup_{x\in M} |f(x)|.
\]

\begin{lemma}\label{lem:EstCEigenFStep1}
  For $m \in \N$ and $K, D, \lambda$, where $\lambda \ge 0$, there exists a constant $C= C(m, K, D,\lambda)>0$ such that for every $(M, g) \in \RicClass$ and any non-negative function $f \in H^{1, 2}(M)$, if $\|\diffcont f^q \|_2 \leq \sqrt{q\lambda}\|f\|_{2q}^q$ for all $q\in \N$, then we have $\|f\|_{\infty} \leq C\|f\|_1$.
\end{lemma}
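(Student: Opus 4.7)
The plan is a Moser iteration based on the scale-invariant Neumann--Sobolev inequality for $\mathcal{M}_m(K,D)$. For $m\ge 3$, set $p=m/(m-2)$; for $m\in\{1,2\}$ use any fixed $p>1$ instead. The diameter bound forces $M=B(x,D)$ for any $x\in M$, so Saloff-Coste's uniform local Sobolev inequality for manifolds with $\Ric_g\ge-(m-1)K$ gives, in the normalized $L^q$-norms of the appendix,
\[
  \|u-\bar u\|_{2p}^{2}\le C(m,K,D)\,\|\diffcont u\|_{2}^{2},
  \qquad u\in H^{1,2}(M),
\]
where $\bar u=\vol_g(M)^{-1}\int_M u\,d\vol_g$. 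The constant is independent of $\vol_g(M)$, which is crucial because the lemma imposes no volume lower bound. Combined with $|\bar u|\le\|u\|_2$, this yields
\[
  \|u\|_{2p}^{2}\le C\bigl(\|\diffcont u\|_{2}^{2}+\|u\|_{2}^{2}\bigr).
\]

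Applying this to $u=f^{q}$ and using the hypothesis $\|\diffcont f^{q}\|_{2}^{2}\le q\lambda\,\|f\|_{2q}^{2q}$ together with $\|f^{q}\|_{2}^{2}=\|f\|_{2q}^{2q}$, we get
\[
  \|f\|_{2qp}^{\,2q}\le C(q\lambda+1)\,\|f\|_{2q}^{\,2q},
  \qquad\text{i.e.}\qquad
  \|f\|_{2qp}\le \bigl[C(q\lambda+1)\bigr]^{1/(2q)}\|f\|_{2q}.
\]
Iterating along the geometric sequence $q_{k}=p^{k}$ and setting $a_{k}=\log\|f\|_{2q_{k}}$,
\[
  a_{k+1}\le a_{k}+\tfrac{1}{2q_{k}}\log\bigl[C(q_{k}\lambda+1)\bigr].
\]
Because $q_{k}$ grows exponentially, the series $\sum_{k\ge 0}q_{k}^{-1}\log(q_{k}\lambda+1)$ converges to a finite number depending only on $m,K,D,\lambda$. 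Telescoping and letting $k\to\infty$ gives a constant $C_{1}=C_{1}(m,K,D,\lambda)$ with
\[
  \|f\|_{\infty}=\lim_{k\to\infty}\|f\|_{2q_{k}}\le C_{1}\,\|f\|_{2}.
\]

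To pass from $\|f\|_{2}$ to $\|f\|_{1}$, I would interpolate via $\|f\|_{2}^{2}\le\|f\|_{\infty}\|f\|_{1}$, which combined with the previous bound gives $\|f\|_{\infty}\le C_{1}^{2}\|f\|_{1}$, the desired estimate with $C=C_{1}^{2}$. The only non-routine step is invoking the Sobolev inequality with a constant depending only on $m,K,D$; this can be deduced from Bishop--Gromov (Theorem~\ref{thm:RiccComp}) together with the standard local Neumann Poincar\'e inequality on manifolds with a Ricci lower bound, both of which are already in the paper's toolkit. Everything else is a direct Moser iteration.
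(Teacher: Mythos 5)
Your proposal is a Moser iteration like the paper's, but built on the Neumann--Sobolev inequality $\|u-\bar u\|_{2p}\le C(m,K,D)\|\diffcont u\|_2$, while the paper's proof runs the iteration through a Nash inequality $\|u\|_2^{2+4/\nu}\le\bigl(C\|\diffcont u\|_2^2+\|u\|_2^2\bigr)\|u\|_1^{4/\nu}$ (citing Saloff-Coste). Most of your steps are sound: the Sobolev inequality with a constant depending only on $(m,K,D)$ is indeed available on $\mathcal M_m(K,D)$, the one-step estimate $\|f\|_{2qp}\le [C(q\lambda+1)]^{1/(2q)}\|f\|_{2q}$ is correctly derived, and the final interpolation $\|f\|_2^2\le\|f\|_\infty\|f\|_1$ correctly converts $\|f\|_\infty\le C_1\|f\|_2$ into $\|f\|_\infty\le C_1^2\|f\|_1$.

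The gap is in the iteration exponents. You take $q_k=p^k$ with $p=m/(m-2)$, but the hypothesis $\|\diffcont f^q\|_2\le\sqrt{q\lambda}\,\|f\|_{2q}^q$ is only supplied for $q\in\mathbb N$. For $m\ge 5$ one has $p\in(1,2)$, so $p^k$ is not an integer and the gradient bound for $f^{q_k}$ is not available from the stated hypothesis. This is not just cosmetic: if you floor the exponents, the first step gives $q_1=\lfloor p\rfloor=1=q_0$ and the sequence never leaves $1$; if you instead run the iteration along integer $q=1,2,3,\dots$, the growth is linear and $\sum_q q^{-1}\log(q\lambda+1)$ diverges. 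A genuine repair is needed for $m\ge 5$ --- e.g.\ start the geometric phase at $q_0\ge 2/(p-1)$ and bridge the initial range $2\le r\le 2q_0$ by a separate finite argument, or first deduce a Nash-type inequality from your Sobolev inequality via H\"older and then iterate by factor $2$. The latter is essentially what the paper does: the Nash inequality forces an iteration step of fixed factor $2$, so the exponents $q_k=2^k$ stay in $\mathbb N$ for every dimension $m$, which is exactly what makes the hypothesis usable at every stage. (For $m\le 4$, where $p\ge 2$, your scheme goes through if you simply replace $p^k$ by $2^k$.)
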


\begin{proof}
  For any $(M,g) \in \mathcal{M}_m(K,D)$, there exist $C>0$ and $\nu > 2$, depending only on $m, K, D$, such that the following Nash inequality holds:
  \begin{equation}
    \| f \|_{2}^{2 +\frac{4}{\nu}} 
    \leq \left( C\|\diffcont f \|_{2}^2 + \|f\|_{2}^{2} \right) \|f\|_{1}^{\frac{4}{\nu}}
  \end{equation}
  for all $f\in H^{1,2}(M)\cap L^1(M)$.
  See p.~31 of \cite{MR1150597}.
  Combining this with $\|\diffcont f^q\|_{2} \leq \sqrt{q\lambda} \|f\|_{2q}^q $, we have
  \begin{equation}
    \|f\|_{{2^{k+1}}} \leq \exp\left(2^{-k/2}C(m,K,D,\lambda)\right)\|f\|_{{2^{k}}}
  \end{equation} 
  for every $k\in\N$.
  Hence, iterating this inequality, we obtain the desired inequality.
\end{proof}

\begin{lemma}\label{lem:EstCEigenF}
  For $m,k\in \N$ and $K,D,\alpha,\mathcal L,  H\geq1$, there exist constants $C_1=C_1(m,K,D,\alpha,k)$ and $C_2=C_2(m,K,D,\alpha,\mathcal L,  H,k)>0$ such that the following property holds.
  Let $(M,g) \in \RicClass$ with $\diam (M, d_g) \ge D^{-1}$, and let $\rho\in \mathcal{P}(M\colon\alpha,\mathcal{L}, H)$.
  Then for any solution $f\in H^{1,2}(M)$ to the equation 
  $\Delta_\rho^N f = \lambda_k(\Delta_\rho^N) f$ or $\Delta_\rho f = \lambda_k(\Delta_\rho) f$,
  we have $\|f\|_{\infty}\leq C_1\|f\|_{2}$ and $\Lip(f)\leq C_2 \|f\|_{2}$.
\end{lemma}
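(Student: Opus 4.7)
The plan is to apply Lemma~\ref{lem:EstCEigenFStep1} twice: first to $|f|$ for the $L^\infty$ bound, and then to a non-negative combination of $|\nabla f|^2$ and $f^2$ for the Lipschitz bound. Throughout, the normalisation $\int_M\rho\,d\vol_g=1$ and the ratio bound $\max\rho/\min\rho\le\alpha$ imply $\rho\in\bigl[(\alpha\vol_g(M))^{-1},\,\alpha/\vol_g(M)\bigr]$, whence $(\min\rho)^{-2}\max\rho\le\alpha^2\vol_g(M)$; together with the bound $\lambda:=\lambda_k\le C(m,K,D,\alpha,k)$ supplied by Remark~\ref{rem:EstCEigenV}, this reduces every quantitative constant to the admissible parameters. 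Smoothness of $f$, needed below, follows from elliptic regularity applied to $\Delta_\rho^N$.

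For the $L^\infty$ bound I would test the identity $\int_M\langle\nabla f,\nabla v\rangle\rho^2\,d\vol_g=\int_M(\Delta_\rho^N f)\,v\,\rho^2\,d\vol_g$ against $v=f\,|f|^{2q-2}$; this is legitimate in both cases since $\Delta_\rho f=\lambda f$ entails $\Delta_\rho^N f=\lambda(\vol_g(M)\rho)^{-1}f$. A direct computation using $\nabla v=(2q-1)|f|^{2q-2}\nabla f$ and the bound on $\rho$ yields
\[
\|\nabla|f|^q\|_2^{2}\;\le\;q\,\alpha^2\lambda\,\|f\|_{2q}^{2q}
\qquad\text{for every }q\in\N,
\]
which is exactly the hypothesis of Lemma~\ref{lem:EstCEigenFStep1} applied to $|f|$ with $\alpha^2\lambda$ in place of $\lambda$. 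Hence $\|f\|_\infty\le C_1\|f\|_1\le C_1\|f\|_2$ for some $C_1=C_1(m,K,D,\alpha,k)$.

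For the gradient bound, I would invoke the weighted Bakry-Émery Bochner formula (with the paper's convention $\Delta_g=-\operatorname{tr}\Hess$):
\[
\tfrac12\Delta_\rho^N|\nabla f|^2
=-|\Hess f|^2+\langle\nabla\Delta_\rho^N f,\nabla f\rangle-\bigl(\Ric-2\Hess\log\rho\bigr)(\nabla f,\nabla f).
\]
The bounds $\Ric\ge-(m-1)K$ and $\Hess\log\rho\le H$ control the curvature term by $((m-1)K+2H)|\nabla f|^2$, and $\langle\nabla\Delta_\rho^N f,\nabla f\rangle$ is expanded directly from the eigenvalue equation. Dropping the non-positive Hessian term yields a pointwise inequality of the form $\Delta_\rho^N|\nabla f|^2\le A\,|\nabla f|^2+B\,f^2$ with $A,B$ depending on $m,K,D,\alpha,\mathcal L,H,k$. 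Using $\Delta_\rho^N f^2=2f\Delta_\rho^N f-2|\nabla f|^2$, one chooses $C'>0$ so that $W:=|\nabla f|^2+C'f^2\ge0$ satisfies the pointwise bound $\Delta_\rho^N W\le C\,W$. Repeating the Moser computation on $W$ verifies the hypothesis of Lemma~\ref{lem:EstCEigenFStep1}, giving $\|W\|_\infty\le C\|W\|_1\le C\|f\|_2^{2}$; the last step uses $\|\nabla f\|_2^{2}\le\lambda\alpha^2\|f\|_2^{2}$ from the eigenfunction identity together with the already established $\|f\|_\infty\le C_1\|f\|_2$. Therefore $\Lip(f)\le\sqrt{\|W\|_\infty}\le C_2\|f\|_2$.

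The main obstacle is the $\Delta_\rho$ case, in which $f$ is only a \emph{pseudo}-eigenfunction of $\Delta_\rho^N$ with variable multiplier $\lambda(\vol_g(M)\rho)^{-1}$: the term $\nabla\Delta_\rho^N f$ then picks up an extra contribution of the form $-\lambda f\,\nabla\rho/(\vol_g(M)\rho^2)$ in the Bochner step. Absorbing it via the $\mathcal L$-Lipschitz bound on $\rho$ and AM-GM is precisely where the dependence of $C_2$ on $\mathcal L$ enters, and is also what forces the auxiliary function to be $W=|\nabla f|^2+C'f^2$ rather than $|\nabla f|^2$ alone.
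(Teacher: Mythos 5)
Your proof is correct, and it agrees with the paper's argument for the $L^\infty$ bound and for the gradient bound in the $\Delta_\rho^N$ case, but it closes the $\Delta_\rho$ case by a genuinely different device. The paper iterates the Moser step directly on $h=|\nabla f|^2$, which satisfies only the mixed inequality $\Delta_\rho^N|\nabla f|^2 \le C(|\nabla f|^2+f^2)$; to control the stray $f^2$ term at each iteration it then imports a Poincar\'e inequality in the form $\|f\|_{4q}\le C\|\nabla f\|_{4q}+\|f\|_2$ together with the spectral gap bound $\lambda_1(\Delta_g)\ge C(m,K,D)>0$ from \cite{MR1736580}, so that $\|f\|_2/\|\nabla f\|_{4q}$ stays bounded along the iteration. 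You instead absorb the $f^2$ term pointwise by iterating on $W=|\nabla f|^2+C'f^2$: since $\Delta_\rho^N f^2 = \tfrac{2\lambda f^2}{\vol_g(M)\rho}-2|\nabla f|^2$, one can tune $C'$ so that $\Delta_\rho^N W\le CW$, which feeds straight into Lemma~\ref{lem:EstCEigenFStep1}. Both routes are valid and give constants with the same dependence; yours avoids the external spectral-gap input and is self-contained once the Bochner identity is in place, whereas the paper's version keeps the auxiliary function simple and instead puts the burden on a standard Poincar\'e/eigenvalue estimate. One small remark: the final use of the $L^\infty$ bound in bounding $\|W\|_1$ is not actually needed, since $\|W\|_1=\|\nabla f\|_2^2+C'\|f\|_2^2$ and $\|\nabla f\|_2^2\le\lambda\alpha^2\|f\|_2^2$ already suffice.
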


\begin{proof}
  For every $h\colon M\to [0,\infty)$, $\lambda \geq 0$, and $q\in \N$, if $h\in H^{1,2}(M)$ and $\Delta_\rho^N h \leq \lambda h$, we have
  \begin{align}
    \int_M | \diffcont h^{q}|^2 \rho^2 \, d\vol_g =  & \frac{q^2}{2q-1} \int_M \langle \diffcont h, \diffcont h^{2q-1} \rangle \, \rho^2 \vol_g\\
    \leq & \frac{q^2\lambda}{2q-1} \int_M h^{2q} \rho^2 \, d\vol_g.
  \end{align}
  Now $\Delta_\rho^N |f| \leq C(m,K,D,\alpha,k) |f|$ holds for our choice of $f$ by Remark~\ref{rem:EstCEigenV}.
  Thus $\|\diffcont f^q\|_{2} \leq \sqrt{2q C} \|f\|_{2q}^q$ for any $q\in\N$.
  Combining this with Lemma~\ref{lem:EstCEigenFStep1} yields $\|f\|_{\infty}\leq C(m,K,D,\alpha,k)\|f\|_{2}$.

  Applying the Bochner formula, we obtain
  \begin{equation}
    \Delta_\rho^N |\diffcont f|^2 = \langle \diffcont f, \diffcont \Delta_\rho^N f \rangle + 2 \Hess({\log\rho})(\diffcont f, \diffcont f) - \Ric_g(\diffcont f, \diffcont f) -|\Hess f|^2,
  \end{equation}
  if $\Delta_\rho^N f = \lambda_k(\Delta_\rho^N) f$, we obtain $\Delta_\rho^N |\diffcont f|^2 \leq C(m,K,D,\alpha, H,k) |\diffcont f|^2$.
  Hence $\Lip(f) \leq C\|f\|_{2}$.
  In the case of $\Delta_\rho f = \lambda_k(\Delta_\rho) f$, since
  \begin{equation}
    \Delta_\rho^N |\diffcont f|^2 \leq ((m-1)K + 2 H)|\diffcont f|^2 + \frac{\langle \diffcont f,\diffcont \Delta_\rho f \rangle}{\rho} - \frac{\langle \diffcont \rho, \diffcont f\rangle\Delta_\rho f}{\rho^2},
  \end{equation} 
  we have $\Delta_\rho^N |\diffcont f|^2 \leq C(m,K,D,\alpha,\mathcal L, H)(|\diffcont f|^2 + |f|^2)$.
  Then,
  \begin{align}
    \int_M \Bigl| \diffcont |\diffcont f|^{2q}\Bigr|^2 \rho^2 \, d\vol_g & = \frac{q^2}{2q-1} \int_M |\diffcont f|^{4q-2}\Bigl(\Delta_\rho^N|\diffcont f|^{2}\Bigr) \rho^2 \, d\vol_g\\
                                                                         & \leq \frac{q^2C(m,K,D,\alpha,\mathcal L,  H) }{2q-1} \int_M \left(|\diffcont f|^{4q}  + |\diffcont f|^{4q-2}|f|^2\right) \rho^2\, d\vol_g,
  \end{align}
  so we have $\bigl\|\diffcont |\diffcont f|^{2q} \bigr\| \leq \sqrt{q}C \left( \| \diffcont f \|_{4q}^{2q} + \|\diffcont f\|_{{4q}}^{2q-1} \|f\|_{{4q}}\right)$.
  By the Poincar\'e inequality, $\|f\|_{4q} \leq C(m,K,D)\|\diffcont f\|_{4q} + \|f\|_{2}$ holds, we obtain
  \begin{align}
    \| \diffcont |\diffcont f|^{2q} \|_{2} \leq &\sqrt{q} C(m,K,D,\alpha,\mathcal L,  H) \| \diffcont f \|_{4q}^{2q}\left( 1  +  \frac{\|f\|_{{2}}}{\|\diffcont f\|_{2}} \right)\\
    \leq& \sqrt{q} C(m,K,D,\alpha,\mathcal L,  H)\| \diffcont f \|_{{4q}}^2.
  \end{align}
where we used $\lambda_1(M) \geq C(m,K,D)>0$ (\cite{MR1736580}*{Theorem B}) in the last inequality.
  Thus, by Lemma~\ref{lem:EstCEigenFStep1}, we obtain $\Lip(f) \leq C\|f\|_{2}$.
\end{proof}

\bibliographystyle{plain}
\bibliography{list}
\end{document}